\documentclass[12pt,reqno]{amsart}
\usepackage{txfonts}
\usepackage{a4wide}
\allowdisplaybreaks \numberwithin{equation}{section}
\usepackage{color}
\usepackage{exscale}
\usepackage{relsize}
\usepackage{graphicx}
\usepackage[pagewise]{lineno}
\usepackage[titletoc, title]{appendix}

 \usepackage{verbatim}
\numberwithin{equation}{section}

\newtheorem{theorem}{Theorem}[section]
\newtheorem{proposition}[theorem]{Proposition}
\newtheorem{corollary}[theorem]{Corollary}
\newtheorem{lemma}[theorem]{Lemma}
\newtheorem*{theoremA}{Theorem A}
\newtheorem*{theoremB}{Theorem B}
\newtheorem*{theoremC}{Theorem C}

\theoremstyle{definition}

\theoremstyle{remark}
\newtheorem{remark}[theorem]{Remark}

\begin{document}
\title[nontrivial  normalized solutions   for   Gross-Pitaevskii systems  ]
{Topological degree and existence of
nontrivial  normalized solutions for  mass-critical    Gross-Pitaevskii systems}

 \author{Fengshuang Gao}

\address{  School of Mathematics, Nanjing University of Aeronautics and Astronautics, Nanjing, P.R.China}
\email{gfs@nuaa.edu.cn}
 \author{Yuxia  Guo}
\address{  Department of Mathematical Science, Tsinghua University, Beijing, P.R.China}
\email{yguo@mail.tsinghua.edu.cn}
 \author{Shusen Yan}
\address{ School of Mathematics and Statistics, Key Laboratory of Nonlinear Analysis and Applications
		(Ministry of Education), Central China Normal University, Wuhan 430079, China.}
\email{syan@ccnu.ac.cn}

 \author{Weilin Yu}
\address{  School of Mathematics and Statistics, Jiangxi Normal University, Nanchang, P. R. China.}
\email{weilinyu@amss.ac.cn}

\thanks{F. Gao is partially supported by NSFC(No.12201293). Y. Guo is  supported
by National Key R\&D Program (No. 2023YFA1010002) and
 NSFC(No. 12271283).  S.Yan is supported by National Key R\&D Program (No. 2023YFA1010002) and NSFC (No.
12571118).
W. Yu is supported by NSFC (No.12501262) and Jiangxi Provincial Natural Science Foundation (No. 20252BAC200148) }

\begin{abstract}
In this paper, we study the existence of nontrivial solutions for the following   Gross-Pitaevskii system involving  mass-critical exponent:
\[
\left\{
\begin{array}{ll}
-\Delta u_{1}+V_1(x)u_{1}=a_{1}u_{1}^3+\beta u_{1}u_{2}^2+\mu u_{1}& \hbox{ in }\Omega,\\
-\Delta u_{2}+V_2(x)u_{2}=a_{2}u_{2}^3+\beta u_{2}u_{1}^2+\mu u_{2}&\hbox{ in }\Omega,\\
 u_{1},u_{2}\ge 0 &\hbox{ in }\Omega,
 \\
 u_1=u_2=0 &\hbox{ on }\partial\Omega,
\end{array}\right.
\]
with the  constraint
\[
\int_\Omega (u_1^2+u_2^2)=1,
\]
where $\Omega$ is an unbounded smooth domain in $\mathbb{R}^2$,  $a_1, a_2, \beta$ are positive parameters, $V_i$  are  trapping potentials, and $\mu\in\mathbb{R}$ is an unknown  Lagrange multiplier.  We derive the existence of solutions
by computing the Leray-Schauder degree for the parameters  $a_1,a_2, \beta$, which
are away from some critical values. The system may have semi-trivial solutions of the form $(u_1, 0)$
or $(0, u_2)$. Our novelty is that we provide  mechanisms  ensuring that the solutions we find
are nontrivial, i.e., $u_1>0$ and $u_2>0$.

\end{abstract}

\maketitle
\section{introduction}\label{1}
Let $\Omega$ be an unbounded smooth domain in $\mathbb{R}^2$. We consider  the following nonlinear Gross-Pitaevskii system
\begin{equation}\label{1-1}
\left\{
\begin{array}{ll}
-\Delta u_{1}+V_1(x)u_{1}=a_{1}u_{1}^3+\beta u_{1}u_{2}^2+\mu u_{1}& \hbox{ in }\Omega,\\
-\Delta u_{2}+V_2(x)u_{2}=a_{2}u_{2}^3+\beta u_{2}u_{1}^2+\mu u_{2}&\hbox{ in }\Omega,\\
  u_{1},u_{2}\ge 0 &\hbox{ in }\Omega,
 \\
 u_1=u_2=0 &\hbox{ on }\partial\Omega,\end{array}\right.
\end{equation}
with the  constraint
\begin{equation}\label{1-2}
\int_{\Omega} (u_{1}^2+u_{2}^2)=1,
\end{equation}
where  $V_i$ are trapping potentials, $\mu\in\mathbb{R}$ is the Lagrange multiplier, and $a_1, a_2, \beta$ are positive parameters.  The pair $(u_1,u_2)$ belongs to the space $\mathcal{H}=H_1(\Omega)\times H_2(\Omega)$  equipped with the norm
\[
\|(u_1,u_2)\|_{\mathcal{H}}=\|u_1\|_{H_1}+\|u_2\|_{H_2}=\sum\limits_{i=1}^2\left(\int_{\Omega}|\nabla u_i|^2+V_i(x)|u_i|^2\right)^{\frac{1}{2}},
\]
where
\[
H_i(\Omega)=\left\{u\in H_0^1(\Omega):\int_\Omega V_i(x)|u|^2<+\infty\right\}.
\]

 System \eqref{1-1} arises as a relevant model for various physical phenomena, such as two-component Bose-Einstein condensates, nonlinear optics and fluid mechanics.  Its solutions exhibit different qualitative behaviors depending on the  parameters $a_1$, $a_2$ and $\beta$. In particular, a positive (resp. negative) sign of $a_1$, $a_2$ represents a self-focusing (resp. defocusing) intraspecies interaction within each component, while $\beta>0$ (resp. $\beta<0$) implies a cooperative (resp. competitive) interspecies interaction. In this paper, we restrict our investigation to  the case where $a_1, a_2, \beta>0$.

Solutions of \eqref{1-1}--\eqref{1-2} can be found by looking for critical points of the following  functional
\[
\begin{aligned}
E_{a_1,a_2,\beta}(u_1,u_2)=&\int_{\Omega} (|\nabla u_1|^2+|\nabla u_2|^2)+\int_\Omega (V_1(x)|u_1|^2+V_2(x)u_2^2)\\
&-\int_{\Omega}\left(\frac{a_1}{2}|u_1|^4+\frac{a_2}{2}|u_2|^4+\beta |u_1|^2|u_2|^2\right),
\end{aligned}
\]
on the manifold
$
\mathcal{M}=\left\{(u_1,u_2)\in \mathcal{H}:\int_\Omega (u_1^2+ u_2^2)=1\right\}.
$
In this paper, we aim to find solutions to \eqref{1-1}--\eqref{1-2} with $u_1\neq 0$ and $u_2\neq 0$.
Hence, we need certain mechanisms to ensure that we do not end up
with semi-trivial solutions, i.e., solutions of the form
 $(u_1, 0)$ with $u_1>0$ or $(0,u_2)$ with $u_2>0$.

We call a solution $(u_1,u_2)$ of \eqref{1-1}-\eqref{1-2} a {\bf semi-trivial solution} if
one of $u_i$ is zero; a {\bf nontrivial solution} if $u_1\neq 0$ and $u_2\neq 0$.

To obtain a solution for \eqref{1-1}--\eqref{1-2}, the first attempt  is to study the following minimization problem
\begin{equation}\label{1-30-7}
\inf_{(u_1,u_2)\in  \mathcal{M} } E_{a_1,a_2,\beta}(u_1,u_2).
\end{equation}
It is proved in \cite{BC,glwz}  that $E_{a_1,a_2,\beta}$ admits minimizers  on $\mathcal{M}$ if
\[
0<a_1<a^*, \quad 0<a_2<a^*, \quad 0<\beta<\beta_1^*,
\]
where
\begin{equation}\label{1-3}
\beta_1^*=a^*+\sqrt{(a^*-a_1)(a^*-a_2)},\quad a^*=\int_{\mathbb R^2} Q^2,
\end{equation}
and   $Q$ is the unique radial  solution of
\begin{equation}\label{1-4}
-\Delta Q+Q=Q^3\quad \hbox{in }\mathbb{R}^2.
\end{equation}
Moreover, in \cite{glwz},  it is shown that for
fixed $a_i\in (0, a^*)$,
$i=1, 2$, if $V_1$ and $V_2$ have at least one common
minimum point,  then as $\beta\to \beta_1^*-0$, any minimizer $(u_1, u_2)$ of \eqref{1-30-7}
must blow up. In addition,  from   the asymptotic behavior of the minimizer as
$\beta\to \beta_1^*-0$, one can derive that
any minimizer is nontrivial  if  $\beta$ is sufficiently close to $\beta_1^*$.
However, it is not clear whether any  minimizer of \eqref{1-30-7} is nontrivial
for all $0<a_1,a_2<a^*$  and $0<\beta<\beta_1^*$.

Another effective way to prove the existence of nontrivial solutions for
$a_i$ or $\beta$ near certain critical values is to construct blow-up
solutions via a reduction argument as in \cite{lpwy}. With this idea,
in \cite{gx,gy}, it shows that if $V_1$ and $V_2$ have $k$ common non-degenerate critical points,
then \eqref{1-1}--\eqref{1-2} has a nontrivial solution provided
  $0<a_1, a_2<a^*$, and  $\beta$ is close to
$
ka^*+\sqrt{(ka^*-a_1)(ka^*-a_2)}.
$

The existence of nontrivial solutions (even semi-trivial ones) for  \eqref{1-1}--\eqref{1-2}
is largely
unknown for most parameter values $a_i$ and $\beta$, due to the lack of effective methods
to handle normalized solutions with higher energy for mass-critical problems. Indeed, \eqref{1-1}--\eqref{1-2}
is very sensitive to the potentials $V_i$ and the domain $\Omega$.  For example,
if $V_1=V_2=V$ and $\beta>\max(a_1, a_2)$, then from \cite{wy}, we know that
either $(u_1, u_2)$ is semi-trivial, or

 \[
 (u_1,u_2)=\left(\sqrt{\frac{\beta-a_2}{2\beta-(a_1+a_2)}} u, \sqrt{\frac{\beta-a_1}{2\beta-(a_1+a_2)}}u\right),
 \]
where $u$  satisfies
\begin{equation}\label{1-13}
\begin{cases}
-\Delta u+V(x)u=a u^3+\mu u\;\;\;\text{in}\;\; \Omega,\\
0\leq u\in H_0^1(\Omega),
\\
\int_\Omega u^2=1,
\end{cases}
\end{equation}
with $a=\frac{\beta^2-a_1a_2}{2\beta-(a_1+a_2)}$.
Moreover, it is proved in \cite{cyy} that for $V=|x|^2$ and $\Omega=\mathbb R^2$,
\eqref{1-13}  has no solution if $a\ge a^*$.  This result immediately implies
for $V_1=V_2=|x|^2$ and $\Omega=\mathbb R^2$,
\eqref{1-1}--\eqref{1-2} has no nontrivial solution if $\beta>\max\{a_1, a_2,\beta_1^*\}$
(and no semi-trivial solution if $a_i\ge a^*$, $i=1, 2$).

 In this paper, we aim
 to investigate   the effect of the  topology of $\Omega$
  and the properties of $V_i$ on  the existence of nontrivial solutions to \eqref{1-1}-\eqref{1-2} for  $a_1,\, a_2,\,\beta$ , which
are away from some critical values.  To this end, we assume  that
\begin{equation}\label{1-6}
\Omega=\mathbb{R}^2\setminus \cup_{i=1}^k\bar{O}_i,
\end{equation}
where $O_i$, $i=1,\cdots, k$,  are $k$ bounded, simply connected open subsets of $\mathbb{R}^2$ with $\bar{O}_i\cap\bar{O}_j=\emptyset$ for $i\neq j$.  For the potential $V$, we assume it
satisfies the following conditions:

\begin{enumerate}
\item[$(V_1)$] $V\in C^2(\bar{\Omega})$, $\lim\limits_{|x|\to+\infty} V(x)=+\infty$ and $0\leq V(x)\leq Ce^{\alpha|x|}$ for some $\alpha>0$ small.
\item[$(V_2)$] $\nabla V(x)\cdot\vec{n}_x\geq c_0$ as $|x|\to+\infty$, where $c_0>0$ is a constant and $\vec{n}_x=\frac{x}{|x|}$.
\item[$(V_3)$] As $|x|\to+\infty$, we have
\[
\frac{V(x+y)}{V(x)}\to 1, \quad \frac{\nabla V(x+y)\cdot \vec{n}_x}{\nabla V(x)\cdot\vec{n}_x}\to 1,
\]
uniformly for $y$ with $|y|=o(|x|)$.
\item[$(V_4)$] $\frac{\partial V}{\partial \nu}>0$ on $\partial\Omega$, where $\nu$ denotes the outward unit normal to $\partial\Omega$.
\end{enumerate}
Note that $(V_1)$--$(V_4)$ are local conditions on $V$.

 First, we recall the following
result.

\begin{theoremA}[\cite{cyy}]
Suppose that $V$ satisfies $(V_1)$--$(V_4)$. Given any small $\delta>0$, there exists $C_\delta>0$,
such that
 for any $a\in [m a^*+\delta, (m+1) a^*-\delta]$, and any solution $(u,\mu)$ of \eqref{1-13}, it holds 
\[
\|u\|_{L^\infty(\Omega)},\; |\mu|\le C_\delta.
\]

\end{theoremA}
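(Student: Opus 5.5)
The plan is a blow-up analysis by contradiction. Suppose there are $a_n\in[ma^*+\delta,(m+1)a^*-\delta]$ and solutions $(u_n,\mu_n)$ of \eqref{1-13} with $\|u_n\|_{L^\infty}+|\mu_n|\to\infty$.

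\emph{Step 1: reduce to blow-up with $\mu_n\to-\infty$.}
Since $V\ge0$ and $\int_\Omega u_n^2=1$, testing the equation with $u_n$ gives
\[
\mu_n=\int(|\nabla u_n|^2+Vu_n^2)-a_n\int u_n^4 .
\]
If $\mu_n$ stayed bounded, I would show that $\int u_n^4$ and $\int|\nabla u_n|^2$ stay bounded. If the energy blew up, the argument below would apply in any case. With bounded energy, local elliptic estimates and the decay forced by $V\to\infty$ give $L^\infty$ bounds. So we may assume $\varepsilon_n:=(-\mu_n)^{-1/2}\to0$, possibly after subsequences.

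\emph{Step 2: local profiles.}
Take a local maximum point $x_n$ and rescale,
\[
w_n(y)=\sqrt{a_n}\,\varepsilon_n\,u_n(\varepsilon_n y+x_n).
\]
Then $w_n$ solves $-\Delta w_n+(1+\varepsilon_n^2V)w_n=w_n^3$ on the rescaled domain. Since $\partial V/\partial\nu>0$ on $\partial\Omega$ by $(V_4)$, a Pohozaev/moving-plane argument near the boundary should rule out concentration at $\partial\Omega$ or at distance $O(\varepsilon_n)$ from it. Hence $w_n\to Q$ locally.

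Iterating this (standard multi-bump extraction with exponential decay away from the bumps) gives
\[
u_n\approx\sum_{j=1}^{l}\frac{1}{\sqrt{a_n}\,\varepsilon_n}\,Q\!\left(\frac{x-x_{n,j}}{\varepsilon_n}\right),
\]
with finitely many bumps $l$ and $|x_{n,i}-x_{n,j}|/\varepsilon_n\to\infty$. The mass constraint then gives
\[
1=\int u_n^2=\frac{l\,a^*}{a_n}+o(1),
\]
so $a_n\to la^*$. This contradicts $a_n\in[ma^*+\delta,(m+1)a^*-\delta]$ once we know that no mass escapes, i.e.\ that the remainder $u_n-\sum_j(\cdots)$ is $o(1)$ in $L^2$.

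\emph{Step 3: main obstacle — no mass loss.}
The hard part is showing that mass is not lost, either to infinity or to spread-out low-amplitude parts. Note that $V\to\infty$ alone does not stop the bumps $x_{n,j}$ from escaping to infinity.

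First, away from the bumps $u_n$ satisfies $-\Delta u_n+(V-\mu_n-a_nu_n^2)u_n=0$ with $-\mu_n\to+\infty$. A comparison argument then gives exponential decay $e^{-c|x-x_{n,j}|/\varepsilon_n}$, so the remainder carries $o(1)$ mass.

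Second, the bump locations must be controlled. For this I would use local Pohozaev identities on balls $B_\rho(x_{n,j})$, tested against $\partial_{\vec n}$. This gives $\int \nabla V\cdot \vec n\,u_n^2$ over the ball equal to exponentially small boundary terms. If $|x_{n,j}|\to\infty$, then $(V_2)$–$(V_3)$ make the left side at least $c_0\,a^*/a_n>0$, which is a contradiction. If a bump sat near $\partial\Omega$, $(V_4)$ would give the same kind of contradiction. This yields the exact quantization $a_n\to la^*$ and hence the theorem.
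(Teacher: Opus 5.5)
Your overall architecture matches the argument behind Theorem~A (from \cite{cyy}, reproduced in the proof of Theorem~\ref{th2-1}): rescale, obtain $Q$, exclude the half-plane, extract finitely many bumps, keep them bounded by a local Pohozaev identity using $(V_2)$--$(V_3)$, prove exponential decay, and quantize $a_n\to la^*$. The gap is in your normalization, Steps~1--2.

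\emph{Step~1.} Nothing you wrote shows that bounded $\mu_n$ gives bounded energy, and this is not elementary. Gagliardo--Nirenberg only gives $a_n\int u_n^4\le \frac{2a_n}{a^*}\int|\nabla u_n|^2$, which is useless once $a_n\ge a^*/2$. So the identity $\mu_n=\int(|\nabla u_n|^2+Vu_n^2)-a_n\int u_n^4$ allows both terms to diverge with bounded difference, and excluding this is itself a blow-up problem. Your fallback (``the argument below would apply'') fails exactly in that case, because Step~2 uses the scale $\varepsilon_n=(-\mu_n)^{-1/2}$, which then does not tend to $0$.

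\emph{Step~2.} Even when $\mu_n\to-\infty$, this step is circular. At a local maximum, $1+\varepsilon_n^2V(x_n)\le w_n(0)^2=a_n\varepsilon_n^2u_n(x_n)^2$, and neither $w_n(0)$ nor $\varepsilon_n^2V(x_n)$ is known to be bounded. A bump escaping to infinity with $V(x_n)\gg-\mu_n$ lives at the finer scale $(V(x_n)-\mu_n)^{-1/2}$, so ``$w_n\to Q$'' cannot be asserted before the bump locations are controlled. Yet your Step~3 relies on that profile (the mass $a^*/a_n$ in the ball) to control them.

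The repair is the order used in the paper.
\begin{enumerate}
\item Argue by contradiction on $M_n=\|u_n\|_{L^\infty(\Omega)}\to\infty$. The bound on $\mu$ is then free, since $\mu<\lambda_1(V)$ and $\mu\ge -a\int_\Omega u^4\ge -a\|u\|_{L^\infty(\Omega)}^2$.
\item Rescale by $M_n$ at a maximum point $z_n$. Evaluating the equation at $z_n$ gives $M_n^{-2}(V(z_n)-\mu_n)\le a_n$, and Liouville's theorem in $\mathbb R^2$ shows the limit $c$ is positive.
\item The half-plane limit is excluded by the identity of \cite{el}. You do not need $(V_4)$ for this, and concentration at distance $\gg M_n^{-1}$ from $\partial\Omega$ does no harm to the bound.
\item Extract the finitely many bumps $x_{jn}$ at scale $M_n^{-1}$, each with its own $c_j=\lim M_n^{-2}(V(x_{jn})-\mu_n)>0$.
\item Only now run the Pohozaev identity on a fixed ball $B_{\theta/2}(x_{jn})$. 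The exponential smallness on the annulus comes from $V-\mu_n\ge 3\sigma_0^2M_n^2$ there, via $(V_3)$, and the lower bound comes from $(V_2)$. This yields $|x_{jn}|\le C$.
\item Boundedness of the $x_{jn}$ forces all $c_j=\lim(-\mu_n)M_n^{-2}=c>0$. Thus $\sqrt{-\mu_n}\sim\sqrt{c}\,M_n$ is an output of the analysis, not an input.
\end{enumerate}
With this in place, your decay estimate and the quantization $1=la^*/a_n+o(1)$ go through as you wrote them.
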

On the other hand,
it follows from Theorem~1.2 in \cite{cyy} that if $a\in (a^*, 2 a^*)$ and $k>0$, then \eqref{1-13} has
a solution.  Also, \eqref{1-13} has
a solution if $a\in (0, a^*)$.

To state our results, we first define $\mathcal{E}$ as follows.

Let $V_1$ be the function in the first equation of \eqref{1-1},
satisfying $(V_1)$--$(V_4)$. We define

\[
\begin{split}
\mathcal{E}:=\Big\{V: & V\in C^2(\bar{\Omega}), V\geq 0, \;\;
\|V\|_{C^2(\bar \Omega\cap \{|x|\le M\})}\le C_M,\;\forall\;M>0,
\\
&\Bigl|\frac{V(x)}{ V_1(x)}-1\Bigr|+\Bigl|\frac{|\nabla V(x)|}{ |\nabla V_1(x)|}-1\Bigr|\le g(x)
\Big\},
\end{split}
\]
where  $g(x)\ge 0$ is a given function satisfying  $g(x)\to 0$ as $|x|\to  +\infty$ and as
$d(x,\partial\Omega)\to 0$.
 It is worth pointing out that for any
$V_n\in \mathcal E$, up to a subsequence, there exists $V\in\mathcal E$,
such that $V_n\to V$ in $C^1(\bar \Omega\cap\{|x|\le M\})$
for any $M>0$.

  For $a_2<k a^*$, we denote
\begin{equation}\label{1-5}
\beta_k^*=ka^*+\sqrt{(ka^*-a_1)(ka^*-a_2)}.
\end{equation}

Without loss of generality, we assume that $a_2\ge a_1$.
The first result on the existence of nontrivial solutions  is as follows.

\begin{theorem}\label{thm1-3}  Suppose that $V_2\in \mathcal{E}$.  If
 one of the following conditions holds:
\begin{enumerate}
\item[(i)] $a_1,a_2\in (0,a^*)$,
$\beta\in (a_2,\beta_1^*)$ and  $k\geq 0$;

\item[(ii)] $a_1,a_2\in (0,a^*)$,  $\beta\in  (\beta_1^*,\beta_2^*)$ and $k> 0$;

\item[(iii)] $a_1\in (0,a^*), a_{2}\in (a^*,2a^*)$,  $\beta\in   (a_2, \beta_2^*)$ and $k> 0$.
\end{enumerate}
 Then \eqref{1-1}-\eqref{1-2} has a nontrivial solution
 provided 
 
 \begin{equation}\label{30-17-8}
 \|V_2-V_1\|_{L^\infty(\Omega)}<\hat c_0,
 \end{equation}
  where $\hat c_0$ is the constant defined in \eqref{1-31-7}.
 
\end{theorem}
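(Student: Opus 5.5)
The plan is to compute the Leray--Schauder degree of the solution map for \eqref{1-1}--\eqref{1-2} on a large ball in $\mathcal H\times\mathbb R$, and to deform the problem to the symmetric case $V_2=V_1$. I write the system as a fixed point equation $(u_1,u_2,\mu)=T(u_1,u_2,\mu)$, with the constraint \eqref{1-2} encoded in the $\mu$-component. I then compute the degree $d$ of $I-T$ on a region containing all solutions, subtract the contribution $d_s$ of the semi-trivial solutions, and show that $d-d_s\neq 0$. Some nontrivial solution must then exist.

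\textbf{Step 1: a priori bounds.} I will show that for $V_2\in\mathcal E$ and parameters in each of the ranges (i)--(iii), kept away from the endpoints, all solutions satisfy $\|u_i\|_{L^\infty}+|\mu|\le C$ uniformly. I argue by contradiction with a blow-up analysis as in Theorem A. If the solutions blow up, then after rescaling each component concentrates at finitely many points, either near $\partial\Omega$ or near infinity. Each profile is a multiple of $Q$, and the local mass budget forces the parameters onto a critical value $ma^*$ or $\beta_m^*$. Conditions $(V_2)$--$(V_4)$, inherited by $V_2$ through $\mathcal E$ because $g\to0$ at infinity and at $\partial\Omega$, give the local Pohozaev contradiction. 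The intervals $(a_2,\beta_1^*)$, $(\beta_1^*,\beta_2^*)$ and $(a_2,\beta_2^*)$ are chosen precisely to avoid these critical values, and the condition $\beta>a_2\ge a_1$ rules out the degenerate mixed profiles. \emph{This step is the main obstacle.} In the coupled setting one has to classify the concentration profiles, which are solutions of the limit system $-\Delta w_i+w_i=a_iw_i^3+\beta w_iw_j^2$ with mass constraint in $\mathbb R^2$. My first attempt would use the fact that for $\beta>\max(a_1,a_2)$ the positive solutions of this limit system are synchronized multiples of $Q$, as in \cite{wy}, which gives the masses $ma^*$ and hence the thresholds $\beta_m^*$.

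\textbf{Step 2: homotopy to $V_2=V_1$.} Using Step 1 uniformly along $V_t=(1-t)V_1+tV_2\in\mathcal E$, the degree $d$ is independent of $t$. At $t=0$, \cite{wy} reduces the nontrivial solutions to synchronized multiples of solutions of \eqref{1-13} with $a=\frac{\beta^2-a_1a_2}{2\beta-(a_1+a_2)}$. In case (i) one has $a\in(0,a^*)$, and in cases (ii)--(iii) one has $a\in(a^*,2a^*)$. The degree of \eqref{1-13} in these ranges is known from \cite{cyy}: it is nonzero because $k>0$ in the second range. Combining this with a computation of the linearization in the transverse (anti-synchronous) direction gives $d-d_s\neq0$ at $t=0$.

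\textbf{Step 3: semi-trivial solutions.} The semi-trivial solutions $(u_1,0)$ and $(0,u_2)$ solve \eqref{1-13} with $V_i$ and $a=a_i$. Their contribution $d_s$ is computed from the scalar degrees, multiplied by the index in the vanishing direction. That index is determined by the sign of the first eigenvalue of $-\Delta+V_j-\beta u_i^2-\mu$.

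\textbf{Step 4: keeping the semi-trivial set separated.} The smallness \eqref{30-17-8}, with the constant $\hat c_0$, is used here. It keeps the eigenvalue from Step 3 of definite sign along the homotopy, so that no nontrivial branch bifurcates from the semi-trivial set. As a result, $d_s$ is constant in $t$ and the semi-trivial solutions stay isolated from the nontrivial ones. Hence $d-d_s$ is constant in $t$, and it is nonzero by Step 2. This yields a nontrivial solution for $V_2$.
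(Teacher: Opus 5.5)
Your route is viable, but it differs from the paper's.

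\textbf{How the paper proves it.} For this theorem the paper never subtracts a semi-trivial degree. It perturbs the system by $\varepsilon u_2$ and $\varepsilon u_1$. This makes the positivity-preserving constrained map $\mathbb{T}^{\varepsilon}_{a_1,a_2,\beta}$ well defined even when $\lambda_1(V_1)\neq\lambda_1(V_2)$, and it makes every fixed point nontrivial. The total degree is then computed in Theorem~\ref{thm1-2}. The deformation $V_2\to V_1$ needs only the bounds of Theorem~\ref{thm1-1}, with no smallness. After that, one gets $1$ in (i) by homotopy to small parameters and then to $(e_1,0)$. One gets $k$ in (ii) by a blow-up/Morse-index count of synchronized solutions as $\beta$ crosses $\beta_1^*$ (Sections~\ref{4}--\ref{5}). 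One gets $k$ in (iii) by a decoupling homotopy in $\beta$. The constant $\hat c_0$ enters only at the end, in Proposition~\ref{p1-5-8}, to stop solutions collapsing onto semi-trivial ones as $\varepsilon\to0$ at the given $V_2$.

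\textbf{How your route differs.} You keep $\varepsilon=0$ and use the same Rayleigh-quotient test to isolate the semi-trivial set during the deformation. Testing $-\Delta+V_j-\mu-\beta u_i^2$ with $u_i$ gives at most $\|V_t-V_1\|_{L^\infty(\Omega)}-(\beta-a_i)\int_\Omega u_i^4<0$. The definition \eqref{1-31-7}, through $c(a_1,V_1)$ and $\hat c(a_2)$, makes this negative uniformly along $V_t$. At $V_1=V_2$ you reduce the nontrivial degree, via the synchronization of \cite{wy}, to the scalar degrees $1$ and $k$ from \cite{cyy}. This is the excision strategy the paper uses for Theorem~\ref{thm1-4}, transplanted to $\beta>\max\{a_1,a_2\}$. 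It buys a much shorter treatment of (ii) and (iii), with no system blow-up analysis at $\beta_1^*$. The price is a reduction argument at $V_1=V_2$.

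\textbf{What you must supply.} There are three points.

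First, the fixed-point map must preserve positivity. The a priori bounds and your principal-eigenvalue argument concern nonnegative solutions only. In a smooth formulation, sign-changing solutions would enter the degree. The vanishing-direction index would then be $(-1)^{m}$, with $m$ the Morse index of $-\Delta+V_j-\mu-\beta u_i^2$. Higher eigenvalues crossing zero along $V_t$ would also create branches that $\hat c_0$ does not control. With the positivity-preserving map of Section~\ref{3}, however, the unperturbed operator is ill defined near $\{(0,u_2)\}$ when $\lambda_1(V_1)<\lambda_1(V_2)$ (Remark~\ref{re1-4-8}). So do not split into $d$ and $d_s$. Work directly with the degree on $\bar B_R\setminus(B_\delta\cup W_1\cup W_2)$; its invariance along $V_t$ is exactly your Step~4. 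In a cone setting $d_s$ is in fact $0$, since the principal eigenvalue is negative.

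Second, the transverse computation is the crux, and you should actually do it. Write $(u_1,u_2)=(\alpha u,\gamma u)$ with $\alpha^2=\frac{\beta-a_2}{2\beta-a_1-a_2}$ and $\gamma^2=\frac{\beta-a_1}{2\beta-a_1-a_2}$.
\begin{itemize}
\item Along $(\alpha,\gamma)$ the linearization is the scalar one with coefficient $3a$, carrying the constraint and the multiplier.
\item Along $(-\gamma,\alpha)$ it is $-\Delta+V-\mu-\nu u^2$ with $\nu=2(a_1\alpha^2+a_2\gamma^2)-a$.
\end{itemize}
Since $a-(a_1\alpha^2+a_2\gamma^2)=\frac{(\beta-a_1)(\beta-a_2)}{2\beta-a_1-a_2}>0$, the transverse operator is positive definite. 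Hence the normal index is $+1$ on the whole solution set. Scalar solutions may be degenerate, so use a homotopy shrinking the normal component rather than an index sum.

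Third, in Step~1 for case (iii) the mass budget also allows $m$ synchronized peaks plus one pure $u_2$-peak. Their critical values $\beta_m^+$ (Lemma~\ref{lem2-3}) are not of the form $\beta_m^*$, and you must check that $\beta_m^+\ge\beta_2^*$. Also, concentration occurs at bounded interior points. The boundary and infinity are what the Liouville and Pohozaev arguments exclude.
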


 Theorem \ref{thm1-3} provides existence results for  $\beta>a_2= \max\{a_1,a_2\}$. Next we consider the case $0<\beta<a_1$.

\begin{theorem}\label{thm1-4}  Suppose that  $V_2\in\mathcal{E}$,
and $k\ge 0$. 
Assume  that  $0\le \beta<a_1$,
$a_1\in (0,a^*)$ and $a_2\in ((m-1)a^*,ma^*)$,  where $m\in \mathbb{Z}_+$. Then
 \eqref{1-1}-\eqref{1-2} has a nontrivial solution provided
\begin{equation}\label{1-13-8}
-c_2(a_2)<\inf \{V_1-V_2\} \leq \sup \{V_1-V_2\} <c_1(a_1),
\end{equation}
 where $c_1(a_1)$ and $c_2(a_2)$ are positive constants defined in
 \eqref{n6-22} and  \eqref{6-22} respectively.
\end{theorem}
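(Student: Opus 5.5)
The plan is to compute the Leray--Schauder degree of the solution map of \eqref{1-1}--\eqref{1-2} on a suitable bounded open set. We then subtract the contributions of the semi-trivial solutions and show that the remainder is nonzero.

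We write the problem as a fixed-point equation $(u_1,u_2,\mu)=T(u_1,u_2,\mu)$ in $\mathcal H\times\mathbb R$. The mass constraint is imposed through the $\mu$-component, for instance $\mu\mapsto \mu+\int_\Omega(u_1^2+u_2^2)-1$, and positivity is handled by replacing $u_i^3$ with $(u_i^+)^3$. Compactness of the embedding $\mathcal H\hookrightarrow L^p$ comes from $V_i\to\infty$, so $T$ is a compact perturbation of the identity. We deform along the homotopy $V_2^t=V_1+t(V_2-V_1)$ and $\beta_t=t\beta$ for $t\in[0,1]$.

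At $t=0$ with $\beta=0$ and $V_2=V_1$, the system decouples. Solutions are then triples: a solution of \eqref{1-13} for $V_1$ with coefficient $a_1$ and mass $s$, a solution with coefficient $a_2$ and mass $1-s$, and a common $\mu$. After rescaling, a single-equation solution of mass $s$ and coefficient $a$ corresponds to mass $1$ and coefficient $sa$. Hence the degree of the decoupled problem can be computed from the one-component degrees $d(a)$ of \eqref{1-13}. These are known from \cite{cyy}: $d(a)$ is constant on each interval $(ma^*,(m+1)a^*)$, with one value for $a<a^*$ and another for $a\in(a^*,2a^*)$.

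The hypotheses $a_1<a^*$ and $a_2\in((m-1)a^*,ma^*)$ fix the interval structure. The admissible-set boundary must avoid masses $s$ with $sa_1$ or $(1-s)a_2$ a multiple of $a^*$. This is where Theorem A enters, since $\mu$ and the $L^\infty$ norms stay bounded away from critical masses.

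The first key step is a priori bounds uniform in $t$. Suppose solutions blow up. Then, as in \cite{cyy}, after blow-up rescaling each component concentrates like $Q$ at boundary or infinity points. Blow-up at infinity is excluded by $(V_2)$--$(V_3)$ together with a Pohozaev identity, and blow-up at $\partial\Omega$ by $(V_4)$. Blow-up at interior points is excluded by local Pohozaev identities, because $\mathcal E$ forces $\nabla V_2$ to behave like $\nabla V_1$. The constraint $\beta<a_1\le a_2$ makes the coupled limit problem have only semi-trivial ground profiles. The resulting quantization shows that concentrated masses sit at multiples of $a^*/a_i$, and such masses are excluded.

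The second key step isolates the semi-trivial solutions. For $(u_1,0)$, the linearized second equation is $-\Delta\phi+V_2\phi-\beta u_1^2\phi-\mu\phi$. Condition \eqref{1-13-8} with $c_1(a_1)$ should guarantee that this operator is nondegenerate, with a definite Morse index, uniformly along the homotopy: compare $\mu$ with the first eigenvalue of $-\Delta+V_2-\beta u_1^2$, using $\beta<a_1$ and $V_2\ge V_1-c_1$. The $(0,u_2)$ branch is handled symmetrically with $c_2(a_2)$. It follows that semi-trivial solutions stay in a region with computable local degree, namely (index of the transverse direction) times $d(a_i)$.

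Finally we compare total and semi-trivial degrees at $t=0$. If they differ, a nontrivial solution exists for $t=1$. The main obstacle I expect is the transversal nondegeneracy and uniform sign control of the semi-trivial solutions along the homotopy. Achieving it needs the gap constants $c_1,c_2$ and the bounds from Theorem A. It also needs an argument that no nontrivial solution bifurcates from or collapses onto a semi-trivial branch; I would try to exclude this by a contradiction argument on $\|u_2\|\to0$ with normalized $u_2/\|u_2\|$ converging to a first eigenfunction.
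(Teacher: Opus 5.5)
\textbf{The decisive step is missing.} Nothing in your plan shows that the degree carried by the nontrivial solutions is nonzero.

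You propose to deform to $\beta=0$, $V_2=V_1$ and to ``compare total and semi-trivial degrees'' there. You assert that the decoupled degree can be read off from one-component degrees by mass splitting. But at $\beta=0$ the problem is not a product: both components share $\mu$ and the single mass constraint. Its nontrivial solutions (splittings $s$ for which the multipliers of the $sa_1$- and $(1-s)a_2$-problems coincide) are exactly what you are trying to count, and you give no way to evaluate its total degree.

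Moreover, any argument that needs the single-equation degree at $a_2$ cannot prove the statement as posed. That degree is available only for $a<a^*$ and $a\in(a^*,2a^*)$, while the theorem allows $a_2\in((m-1)a^*,ma^*)$ for every $m$.

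The paper's mechanism is an identity in which this unknown cancels:
\begin{itemize}
\item It first moves $V_2$ to $V_1$, which is allowed by the separation Lemma~\ref{lem6-2}.
\item It writes $d_{a_1,a_2,\beta}=d_{a_1,a_2,\beta}^{non-trivial}+d_{a_1,a_2,\beta}^{1,semi-trivial}+d_{a_1,a_2,\beta}^{2,semi-trivial}$.
\item It gets $d_{a_1,a_2,\beta}^{1,semi-trivial}=1$ by switching off $\beta$ and then $a_2$, and reducing to the single equation with $a_1<a^*$.
\item It proves $d_{a_1,a_2,\beta}=d_{a_1,a_2,\beta}^{2,semi-trivial}$ through the homotopy $T\bigl(ta_1|u_1|^3+t\beta u_2^2|u_1|,\ a_2|u_2|^3+t\beta u_1^2|u_2|+(1-t)\varepsilon|u_1|\bigr)$. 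Its endpoint maps into $\{0\}\times H_2(\Omega)$, and the term $\varepsilon|u_1|$ keeps $T$ defined. So the Leray--Schauder reduction produces exactly the restricted map that also computes $d_{a_1,a_2,\beta}^{2,semi-trivial}$.
\end{itemize}
Hence $d_{a_1,a_2,\beta}^{non-trivial}=-1$, and the degree at $a_2$ is never computed.

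\textbf{Your formulation also has to be changed.} In $\mathcal H\times\mathbb R$ with $(u_i^+)^3$, fixed points need not be nonnegative. For example, $(-e_1(V_1),0,\lambda_1(V_1))$ is a fixed point for every $t$. At your endpoint $t=0$, the whole arc $\{(c_1e_1,c_2e_1,\lambda_1):\ c_1,c_2\le 0,\ c_1^2+c_2^2=1\}$ consists of fixed points. This arc joins $(-e_1,0)$ to $(0,-e_1)$ and crosses the boundaries of the semi-trivial neighbourhoods. So your splitting of the degree is not even defined there, and a nonzero ``nontrivial'' degree would not yield a solution of \eqref{1-1}. The paper instead uses the solution operator $T$ of \eqref{3-2}, whose values are nonnegative with $\mu<\lambda_1$.

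\textbf{The obstacle you single out is not the real one.} The transversal nondegeneracy and Morse-index control are not needed. (With $|u_2|$ in $\mathbb{T}_{a_1,a_2,\beta}$ there is no transverse linearization at $(u_1,0)$ anyway.) All that is required is the uniform bound $\|u_1\|_{L^2(\Omega)},\|u_2\|_{L^2(\Omega)}>\sigma$ for nontrivial solutions along each homotopy. That is precisely the contradiction argument you sketch at the end:
\begin{itemize}
\item The normalized limit $v$ is a positive eigenfunction of $-\Delta+V_2-\beta u^2$ with eigenvalue $\mu\le\mu^*(a_1,V_1)$.
\item $\mu$ is the first eigenvalue of $-\Delta+V_1-a_1u^2$; test its variational characterization with $v$.
\item Combine this with $\int_\Omega(|\nabla v|^2+V_1v^2)\ge\lambda_1(V_1)$.
\end{itemize}
These give exactly the threshold $c_1(a_1)$, and the symmetric argument gives $c_2(a_2)$.
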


 It is worth pointing out that if $\beta=0$, the two equations in \eqref{1-1} decouple.
 But  \eqref{1-1}-\eqref{1-2} still  has a nontrivial solution. This phenomenon
 cannot always occur if the constraint \eqref{1-2} is replaced by

 \[
 \int_{\Omega} u_1^2=1,\quad \int_{\Omega} u_2^2=1,
 \]
 since for $V_2(x)=|x|^2$, $k=0$ and $a_2\ge a^*$, \eqref{1-13} has no solution.

We remark that the positive constants $\hat c_0$ and $c_1(a_1)$ and $c_2(a_2)$
in Theorems~\ref{thm1-3} and \ref{thm1-4} respectively are defined explicitly.
Hence Theorems~\ref{thm1-3} and \ref{thm1-4} are not perturbation results
of those for the case $V_1=V_2$.

In the rest of this section, we outline the proof of Theorems~\ref{thm1-3} and \ref{thm1-4}.

We first convert the existence of nontrivial solutions into a fixed point problem
for a compact operator.
To be more precise,
for any   $f_1,f_2\in L^2_+(\Omega)$, we consider the following  problem
\begin{equation}\label{1-11}
\left\{\begin{array}{ll}
-\Delta u_1+V_1(x)u_1=f_1+\mu u_1&\hbox{ in }\Omega,\\
-\Delta u_2+V_2(x)u_2=f_2+\mu u_2&\hbox{ in }\Omega,\\
u_1,u_2\geq 0 &\hbox{ in }\Omega,
\\
u_1=u_2=0&\hbox{ on }\partial\Omega,
\\
\int_\Omega (u_1^2+u_2^2)=1,
\end{array}\right.
\end{equation}
where
\begin{equation}\label{1-10}
L^2_+(\Omega)=\left\{ u\in L^2(\Omega): \;  u\geq 0,\;\; u\not\equiv 0\right\}.
\end{equation}
As shown in Lemma \ref{lem3-3},    \eqref{1-11} has  a unique solution $(u_1,u_2,\mu)$. Thus the operator
\[
T: L_+^2(\Omega)\times L_+^2(\Omega) \to \mathcal{H};\;\; T(f_1,f_2)=(u_1,u_2)
\]
is well-defined.
Consequently, the operator

\[
\mathbb{T}_{a_1,a_2,\beta}(u_1,u_2)=T\left(a_1|u_1|^3+\beta |u_1|u_2^2, a_2|u_2|^3+\beta u_1^2|u_2|\right),
\]
is well-defined for $u_1\ne 0$ and $u_2\ne 0$. Thus, if $\mathbb{T}_{a_1,a_2,\beta}$ has
a fixed point $(u_1,u_2)$ with $u_1\ne 0$ and $u_2\ne 0$, then \eqref{1-1}-\eqref{1-2} has a nontrivial solution.

Note that it is difficult to prove that $\mathbb{T}_{a_1,a_2,\beta}$ has a fixed point with $u_1\neq 0$ and $u_2\neq 0$, since $\mathbb{T}_{a_1,a_2,\beta}$ may not be well-defined for all $(u_1, u_2)\ne (0,0)$. See Remark~\ref{re1-4-8}. To overcome
this difficulty, we instead consider the following perturbed operator

\begin{equation}\label{1-6-8}
\mathbb{T}_{a_1,a_2,\beta}^{\varepsilon}(u_1,u_2)=T\left(a_1|u_1|^3+\beta |u_1|u_2^2+\varepsilon |u_2|, a_2|u_2|^3+\beta u_1^2|u_2|+\varepsilon |u_1|\right).
\end{equation}
 For each small $\varepsilon >0$,
 $\mathbb{T}_{a_1,a_2,\beta}^{\varepsilon}$ is well-defined
 for all $(u_1, u_2)\ne (0,0)$. Moreover,    $(u_1,u_2)$ is a fixed point of $\mathbb{T}_{a_1,a_2,\beta}^{\varepsilon}$ if and only if
$(u_1,u_2,\mu)$ is a solution to the following perturbed problem

\begin{equation}\label{1-9}
\left\{
\begin{array}{ll}
-\Delta u_{1}+V_1(x)u_{1}=a_{1}u_{1}^3+\beta u_{1}u_{2}^2+\varepsilon u_2+\mu u_{1}& \hbox{ in }\Omega,\\
-\Delta u_{2}+V_2(x)u_{2}=a_{2}u_{2}^3+\beta u_{2}u_{1}^2+\varepsilon u_1+\mu u_{2}&\hbox{ in }\Omega,\\
  u_{1},u_{2}\geq 0 &\hbox{ in }\Omega,
 \\
 u_1=u_2=0 &\hbox{ on }\partial\Omega,
 \\
 \int_{\Omega}(u_{1}^2+u_{2}^2)=1.
 \end{array}\right.
\end{equation}

Note that any solution $(u_{1,\varepsilon},u_{2,\varepsilon})$ of \eqref{1-9}
is nontrivial.
The general idea is to prove the existence of a fixed point $(u_{1,\varepsilon},u_{2,\varepsilon})$ for
$\mathbb{T}_{a_1,a_2,\beta}^{\varepsilon}$. Then, with
suitable estimates on $(u_{1,\varepsilon},u_{2,\varepsilon})$, we prove
that it converges to a nontrivial solution of
\eqref{1-1}-\eqref{1-2}.

To this end, we will compute the Leray-Schauder degree
\begin{equation}\label{1-12}
d_{a_1,a_2,\beta}^{\varepsilon}:=\text{deg}(I-\mathbb{T}_{a_1,a_2,\beta}^{\varepsilon}, \bar{B}_{R}\setminus B_\delta, 0)
\end{equation}
for some large $R>0$ and small $\delta>0$, where
\[
B_{r}=\Big\{(u_1,u_2)\in\mathcal{H}:\|(u_1,u_2)\|_{\mathcal{H}}< r \Big\}.
\]

In view of the compact embedding $H_i(\Omega)\hookrightarrow L^2(\Omega)$, there exists $\delta>0$ such that for any $(u_1,u_2)\in \mathcal{H}$ with
\[
\int_{\Omega} (u_1^2+u_2^2)=1,
\]
we have
\[
\|(u_1,u_2)\|_{\mathcal{H}}> \delta.
\]
Hence, for the Leray-Schauder degree to be well-defined, we need to identify
  the values of $a_1$, $a_2$ and $\beta$ at which solutions of \eqref{1-9} are bounded
  in $\mathcal{H}$ for all  $\varepsilon\ge 0$ sufficiently small. Or equivalently, we  need to identify
  the values of $a_1, a_2$ and $\beta$ at which solutions of \eqref{1-9} blow up. However, due to the interaction of different types of peak solutions for \eqref{1-9},
  the blow-up parameter values are
  not as simple as in Theorem~A for the single equation.  For example, if $V_i$
  has $k_i$ different non-degenerate critical points $y^{(i)}_j$, then for given
  $b_1>0$ and $b_2>0$ and large $-\mu>0$, the following problem

  \[
  \left\{
\begin{array}{ll}
-\Delta v_{1}+V_1(x)v_{1}=b_{1}v_{1}^3+\beta v_{1}v_{2}^2+\mu v_{1}& \hbox{ in }\Omega,\\
-\Delta v_{2}+V_2(x)v_{2}=b_{2}v_{2}^3+\beta v_{2}v_{1}^2+\mu v_{2}&\hbox{ in }\Omega,\\
  v_{1},v_{2}> 0 &\hbox{ in }\Omega,
 \\
 v_1=v_2=0 &\hbox{ on }\partial\Omega,\end{array}\right.
 \]
  has a solution of the form

  \[
  (v_1(x), v_2(x))= \Bigl( \sum_{j=1}^{k_1}\sqrt{\frac{-\mu}{b_1}}Q\left(\sqrt{-\mu}(x-y^{(1)}_{j})\right)(1+o(1)),
  \sum_{j=1}^{k_2}\sqrt{\frac{-\mu}{b_2}}Q\left(\sqrt{-\mu}(x-y^{(2)}_{j})\right)(1+o(1)) \Bigr).
  \]
  Then,

  \[
  \int_{\Omega} (v_1^2+v^2_2)=\left(\frac{k_1}{b_1}+\frac{k_2}{b_2}\right)a^*+o(1).
  \]
   Let

  \[
  (u_1(x), u_2(x))=\frac{(v_1(x), v_2(x))}{\Bigl(\int_{\Omega} (v_1^2+v^2_2)\Bigr)^{\frac12}}.
  \]
We see that  $(u_1, u_2)$ satisfies  \eqref{1-1}--\eqref{1-2} with

  \[
  a_i=b_i \left(\frac{k_1}{b_1}+\frac{k_2}{b_2}\right)a^*+o(1), \quad  i=1, 2.
  \]
 This shows that blow-up for \eqref{1-1}--\eqref{1-2} can occur for $(a_1, a_2, \beta)$ with any $\beta>0$. This large number of blow-up solutions makes computing
 the degree for all of them very difficult. To simplify the computation,
 we identify a range of parameters $a_1$, $a_2$ and $\beta$ for which
  all blow-up solutions can be classified.

Though by Theorem~\ref{lemB-1} in Appendix~B, it is possible to classify all the values at which solutions of \eqref{1-9} blow up,
 in this paper, we just
prove the following relatively simple result, which is sufficient for us
to prove the existence result.

\begin{theorem}\label{thm1-1}
Let $\delta>0$ be a fixed small constant. Suppose  that $\bar{V}_1,\bar{V}_2,\; \hat{V}_1,\hat{V}_2\in \mathcal{E}$. If one
of the following conditions holds:

\begin{enumerate}
\item[(a)] $a_1,a_2\in (0,a^*-\delta]$,  and either $\beta\in (0, \beta_1^*-\delta]$ or
$\beta\in [\beta_m^*+\delta, \beta_{m+1}^*-\delta]$ for some integer $m>0$;

\item [(b)]  $a_1\in (0, a^*-\delta]$ and $a_{2}\in [a^*+\delta,2a^*-\delta]$,
 $\beta\in  [a_1, \beta_2^*-\delta]$;

\item[(c)] $a_1\in (0,a^*-\delta]$ and $a_{2}\in [ma^*+\delta,(m+1)a^*-\delta]$  for some integer $m\ge 0$,
$\beta\in(0,a_1)$,
\end{enumerate}
 then there exist a positive constant $C_{\delta}$ and a small constant
 $\varepsilon_0>0$ such that for any
 $\varepsilon\in [0,\varepsilon_0)$ and
\[
V_1=t\bar{V}_1+(1-t)\hat{V}_1,\;\; V_2=t\bar{V}_2+(1-t)\hat{V}_2, \;\; 0\leq t\leq 1,
\]
it holds 
\[
\|u_1\|_{L^\infty(\Omega)}+\|u_2\|_{L^\infty(\Omega)}, \|(u_1,u_2)\|_{\mathcal{H}}, |\mu|\leq C_{\delta}.
\]
for any solution $\left(u_1,u_2,\mu\right)\in \mathcal{H}\times \mathbb{R}$ of \eqref{1-9}.

\end{theorem}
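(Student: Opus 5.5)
We argue by contradiction and use a blow-up analysis. Suppose there are $\varepsilon_n\in[0,\varepsilon_0)$ (with $\varepsilon_n\to 0$ possible), $t_n\in[0,1]$, parameters $(a_{1,n},a_{2,n},\beta_n)$ in the admissible set (compact after including endpoints, so we may assume convergence to $(a_1,a_2,\beta)$ still satisfying the $\delta$-separated conditions), and solutions $(u_{1,n},u_{2,n},\mu_n)$ of \eqref{1-9} with $V_i=V_{i,n}=t_n\bar V_i+(1-t_n)\hat V_i$ such that one of the three quantities is unbounded.

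\textbf{Reduction to blow-up.} If $\mu_n$ is bounded from below, then testing the equations against $u_{i,n}$ and using the Gagliardo--Nirenberg inequality in a blow-up-free setting gives $\mathcal H$ bounds. A Moser iteration, with the $V_i\to\infty$ confinement, then gives $L^\infty$ bounds. Similarly $\mu_n\le C$, since by the maximum principle and $V_i\ge0$ a large positive $\mu$ is incompatible with the normalization. Hence we may assume $\mu_n\to-\infty$. Set $\varepsilon_n':=\varepsilon_n/|\mu_n|\to0$ and rescale $v_{i,n}(y)=|\mu_n|^{-1/2}u_{i,n}(x_n+|\mu_n|^{-1/2}y)$ around local maxima. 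Because $\varepsilon\le\varepsilon_0$ is small, the coupling term $\varepsilon u_j$ becomes $\varepsilon_n' v_j$ after scaling and disappears in the limit.

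\textbf{Blow-up profiles.} The rescaled functions converge to solutions of the limit system
\[
-\Delta v_1+v_1=a_1v_1^3+\beta v_1v_2^2,\qquad -\Delta v_2+v_2=a_2v_2^3+\beta v_2v_1^2
\]
in $\mathbb R^2$ or a half-plane. The half-plane case is excluded via the Pohozaev identity together with $(V_4)$ near $\partial\Omega$, exactly as in \cite{cyy}. Using $(V_2)$ and $(V_3)$, one also shows that blow-up points cannot escape to infinity: a local Pohozaev identity gives $\nabla V\cdot\vec n_x\ge c_0$, which contradicts balance, so the points stay in a bounded set. The uniform $C^1$ control from the class $\mathcal E$ makes this uniform in $t_n$. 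Next we need a finite-peak decomposition. Let $x_{n,j}$, $j=1,\dots,J$, be the separated peaks. Near each peak the limit is a positive solution, either semi-trivial $(Q/\sqrt{a_1},0)$ or $(0,Q/\sqrt{a_2})$, or a synchronized one. Away from the peaks, exponential decay holds. Passing mass to the limit through the normalization gives
\[
1=\sum_j\int (v_{1,j}^2+v_{2,j}^2)\cdot(\text{same scale})+o(1).
\]
The correct scaling should be $u=\sqrt{|\mu|}\,v$ in $L^2$ invariant in 2D, so $\int u_{i,n}^2=\sum_j\int v_{i,j}^2+o(1)$.

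\textbf{Main obstacle: classifying the limit profiles and contradicting the parameter ranges.} Each local limit must be identified. For $\beta>\max(a_1,a_2)$ or $\beta<\min(a_1,a_2)$, a Liouville/classification argument (as in \cite{wy}) shows that positive solutions are either semi-trivial or proportional, $v_i=c_iQ$, with mass $(\beta-a_{i'})\cdot a^*/(\beta^2-a_1a_2)$. For $\beta<a_1$, i.e.\ case (c), only semi-trivial profiles should exist; this needs a separate argument, e.g.\ integrating the cross terms. The normalization then yields an identity of the form
\[
1=\frac{p\,a^*}{a_1}+\frac{q\,a^*}{a_2}+r\,\frac{(2\beta-a_1-a_2)a^*}{\beta^2-a_1a_2}
\]
for nonnegative integers $p,q,r$ that are not all zero. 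We then check that this forces $a_1,a_2$, or $\beta$ to be at a critical value: $ma^*$, or $\beta=\beta_m^*$, which are the roots of $(2\beta-a_1-a_2)ma^*=\beta^2-a_1a_2$ as in \eqref{1-5}, or a mixed combination. In each of cases (a)--(c), the $\delta$-distance from these values, together with the restricted ranges, excludes every combination. This finite arithmetic check, together with the rigorous justification of the profile classification uniformly in $\varepsilon_n$, is the main difficulty. I expect the mixed combinations in case (b), where one component carries two peaks while coupling is present, to require the most care. I would first handle them by showing $r\ge1$ forces $\beta\ge\beta_1^*$-type bounds that conflict with $\beta\le\beta_2^*-\delta$.
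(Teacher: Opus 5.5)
You have the paper's architecture: blow-up at the maxima, Pohozaev identities to exclude half-plane limits and escaping peaks, a finite-peak decomposition with exponential decay, and mass quantization. Writing the endgame as a single identity in $(p,q,r)$, instead of the paper's case split on $\rho_n=M_{1n}/M_{2n}$, is harmless. But the two steps you postpone as the ``main obstacle'' carry the content of the theorem, and your plan for them fails.

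\textbf{Case (c).} Here $0<\beta<\min\{a_1,a_2\}$, and synchronized limits do exist: $(\alpha_1Q,\alpha_2Q)$ with $\alpha_1^2=\frac{a_2-\beta}{a_1a_2-\beta^2}$ and $\alpha_2^2=\frac{a_1-\beta}{a_1a_2-\beta^2}$ is a positive solution of \eqref{B-1}. By Theorem~\ref{lemB-1} it is the only one with both components positive. So no argument ``integrating the cross terms'' can show that only semi-trivial profiles occur. The paper keeps $r\ge1$ and excludes it arithmetically. First, $p=0$ since $a^*/a_1>1$. The mass identity then reads $(\beta-c)^2=(c-a_1)(c-a_2)$ with $c=\frac{ra^*}{1-qa^*/a_2}\ge ra^*>a_1$. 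Its real roots $c\pm\sqrt{(c-a_1)(c-a_2)}$ are all $\ge\min\{a_1,a_2\}$ (Lemma~\ref{adlem2-3}). Hence $r=0$, so $a_2=qa^*$, which the $\delta$-separation forbids.

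\textbf{Case (b).} The range $[a_1,\beta_2^*-\delta]$ contains $[a_1,a_2]$, where your list of possible limits is not justified by the classification you cite. You need the fact that \eqref{B-1} has no solution with both components positive when $\min\{a_1,a_2\}\le\beta\le\max\{a_1,a_2\}$ (unless $a_1=a_2=\beta$). This forces $\beta>a_2$ as soon as $r\ge1$. The bound that produces a contradiction is $\beta\ge\beta_2^*$, not a ``$\beta_1^*$-type'' bound. Indeed $\beta_1^*$ is not real for $a_2>a^*$, and $\beta\ge\beta_1^*$ would not conflict with $\beta\le\beta_2^*-\delta$ anyway. The actual check runs as follows.
\begin{itemize}
\item $q\le1$, since $2a^*/a_2>1$; so $u_2$ never carries two semi-trivial peaks.
\item $q=1,r=0$ gives $a_2=a^*$.
\item $q=0,r=1$ is impossible, because $(a^*-a_1)(a^*-a_2)<0$.
\item $q=0,r\ge2$ gives, after discarding the root $\le a_2$, $\beta=\beta_r^*\ge\beta_2^*$.
\item $q=1,r\ge1$ gives, after discarding the root $<a_2$, $\beta=c_r+\sqrt{(c_r-a_1)(c_r-a_2)}$ with $c_r=\frac{ra_2a^*}{a_2-a^*}>2a^*$, hence $\beta>\beta_2^*$ (Lemma~\ref{lem2-3}).
\end{itemize}

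\textbf{Reduction to $\mu_n\to-\infty$.} This step is not valid as written. Gagliardo--Nirenberg gives no $\mathcal H$ bound when $a_2>a^*$ or $\beta$ is large. Bounded $\mu_n$ with $M_n\to\infty$ must be excluded inside the blow-up analysis itself. Rescale by $M_n$; Liouville then forces $M_n^{-2}(V(x_n)-\mu_n)\to c>0$, so the peak would have to escape to infinity, which the local Pohozaev identity rules out. This same argument gives $|\mu_n|\sim M_n^2$, which your $|\mu_n|^{-1/2}$ rescaling tacitly assumes.
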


 It follows from  Theorem~\ref{thm1-1} that the Leray-Schauder degree
\eqref{1-12} is well-defined for $a_1$, $a_2$ and $\beta$ satisfying the conditions
in Theorem~\ref{thm1-1}. One of our main results
is the following degree formula.

\begin{theorem}\label{thm1-2}
Let  $0<\varepsilon<\varepsilon_0$ and $V_2\in \mathcal{E}$.
\begin{enumerate}

\item[(i)] If $a_1,a_2\in (0,a^*)$ and $\beta \in (0,\beta_1^*)$, then
$d_{a_1,a_2,\beta}^{\varepsilon}=1$.

\item[(ii)] If $a_1,a_2\in (0,a^*)$ and $\beta \in (\beta_1^*,\beta_2^*)$, then
$d_{a_1,a_2,\beta}^{\varepsilon}=k$, where $k$ is the number of holes in $\Omega$.

\item[(iii)] Suppose either $a_1\in (0,a^*)$ and $a_{2}\in (a^*,2a^*)$.
If  $\beta\in   [a_1, \beta_2^*)$, then  $d_{a_1,a_2,\beta}^{\varepsilon}=k$.
\end{enumerate}

\end{theorem}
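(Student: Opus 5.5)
The plan is to compute $d^\varepsilon_{a_1,a_2,\beta}$ by homotopy invariance of the Leray--Schauder degree. Theorem~\ref{thm1-1} supplies the a priori bounds, uniform in $\varepsilon\in[0,\varepsilon_0)$, along homotopies in the parameters $(a_1,a_2,\beta)$ and in the potentials (through $V_2^t=tV_2+(1-t)V_1$, both ends lying in $\mathcal E$). So it suffices to deform each case to a model configuration whose degree is explicit.

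\textbf{Step 1 (potential).} Deform $V_2$ to $V_1$ along $V_2^t=tV_2+(1-t)V_1$. Within the parameter ranges of (a) and (b) in Theorem~\ref{thm1-1}, all fixed points of $\mathbb T^\varepsilon$ lie in $B_{C_\delta}\setminus \bar B_{\delta}$. Hence the degree does not depend on $t$, and we may take $V_1=V_2=V$.

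\textbf{Step 2 (parameters).} In case (i), deform $(a_1,a_2,\beta)$ to $(a_1,a_2,\beta_0)$ with $\beta_0>0$ small, staying inside range (a) with $\beta\le \beta_1^*-\delta$. Then send $\beta$ toward the symmetric regime $a_1=a_2=a<a^*$.

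\textbf{Step 3 (case (i), degree $1$).} With $a_1=a_2=a$ and $\beta$ slightly above $a$, the system reduces on the diagonal $u_1=u_2$ to \eqref{1-13} with coefficient $\frac{a+\beta}{2}<a^*$. Alternatively, take the parameters small and homotope $\mathbb T^\varepsilon$ to the linear map $T(\varepsilon|u_2|,\varepsilon|u_1|)$ (or push the nonlinearity down to $0$). The fixed points are then normalized first eigenfunctions of
\[
\begin{pmatrix}-\Delta+V & -\varepsilon\\ -\varepsilon & -\Delta+V\end{pmatrix},
\]
i.e. $(\phi,\phi)/\sqrt2$ with $\phi$ the positive first eigenfunction. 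This fixed point is unique and nondegenerate in the constrained sense, so its index is computed from the linearization. The expected answer is $1$, exactly as for the scalar problem, where the degree in $(0,a^*)$ equals $1$ (cf.\ \cite{cyy}).

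\textbf{Step 4 (cases (ii) and (iii), degree $k$).} For (ii), move $\beta$ inside $(\beta_1^*,\beta_2^*)$ with $a_1=a_2=a$ fixed. Choose the symmetric point so that the effective diagonal coefficient $\frac{\beta^2-a^2}{2\beta-2a}=\frac{a+\beta}{2}$ lies in $(a^*,2a^*)$; this is consistent with $\beta\in(\beta_1^*,\beta_2^*)$ when $a_1=a_2$, since then $\beta_m^*=2ma^*-a$. For (iii), deform $a_2$ and $\beta$ within range (b), keeping $\beta\ge a_1$, to reach a point of type (ii) or a symmetric point. One must check that the path can avoid $\beta_1^*$, which is only defined for $a_2<a^*$.

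At the symmetric point, by \cite{wy} (as recalled in the introduction) every nontrivial solution is synchronized, and semi-trivial ones are excluded by $\varepsilon>0$. The degree therefore reduces to the degree of the scalar problem \eqref{1-13} with $a\in(a^*,2a^*)$. Using the construction in \cite{cyy}, I expect this to equal $k$: one blowing-up peak concentrates near each hole boundary as $a\to a^*+0$, with $\partial V/\partial\nu>0$ from $(V_4)$ making each peak nondegenerate of index $1$. The degree is then obtained by a local reduction near $a=a^*$, counting the $k$ peak solutions together with the contribution of the bounded branch.

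\textbf{Main obstacle.} The hardest step is the transfer from the system degree to the scalar one. One must show that the full index of the synchronized solutions of the system, including the transverse direction $u_1-u_2$, equals the scalar index. This requires proving that the linearized operator in the antisymmetric direction is positive, hence contributes sign $+1$, for $\beta$ near the chosen symmetric value and $\varepsilon>0$ small. The blow-up count near $a^*$ also requires care. My first attempt would be to take $\beta$ close to $a$, where the antisymmetric linearization is a small perturbation of $-\Delta+V-\mu-a u^2$ shifted by $2\varepsilon$. Then compare it with the symmetric linearization through the Lagrange multiplier, using $\mu<\lambda_1$.
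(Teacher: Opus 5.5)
Your Step~1 (deforming $V_2$ to $V_1$ using Theorem~\ref{thm1-1}) is exactly what the paper does. Beyond that, the plan fails in (iii) and leaves the essential step of (ii) unproved.

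\textbf{Case (iii).} The deformation you describe is not admissible. Range (b) of Theorem~\ref{thm1-1} keeps $a_1\le a^*-\delta<a^*+\delta\le a_2$. So you can reach neither a symmetric point $a_1=a_2$ nor a point of type (ii) without letting $a_2$ (or $a_1$) cross $a^*$. But $a_2=a^*$ is precisely the blow-up value singled out by Theorem~\ref{th2-2}, so the degree is not controlled along such a path. The obstruction is $a_2=a^*$, not $\beta_1^*$. The paper stays inside range (b). It first moves $\beta$ to the endpoint $\beta=a_1$. It then uses the homotopy
$$H_t^\varepsilon=T\bigl(t(a_1|u_1|^3+a_1u_2^2|u_1|+\varepsilon|u_2|),\,a_2|u_2|^3+ta_1u_1^2|u_2|+\varepsilon|u_1|\bigr),$$
which is again of type (b), with both the first self-coupling and $\beta$ equal to $ta_1$. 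At $t=0$ the first component solves $-\Delta\xi_1+V\xi_1=\mu\xi_1$ with $\mu<\lambda_1$ (Lemma~\ref{lem3-2}), so $\xi_1\equiv 0$. The Leray--Schauder reduction theorem then turns the degree into that of the scalar $u_2$-problem with $a_2\in(a^*,2a^*)$, which equals $k$ by \cite{cyy}.

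\textbf{Case (ii).} The transfer from the system degree to the scalar degree is the whole content, and your set-up is inconsistent. The synchronization result of \cite{wy} holds for $\varepsilon=0$. But at $\varepsilon=0$ the degree also counts the semi-trivial fixed points $(u,0)$ and $(0,u)$, with $u$ solving \eqref{1-13} for a coefficient $<a^*$. For $\varepsilon>0$, synchronization is not available without proof.

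Local transverse positivity is not the real difficulty. At any synchronized solution with $\beta>a$, set $A=\frac{a+\beta}{2}$. The antisymmetric operator is
$$-\Delta+V-\mu-(2a-A)u^2=(-\Delta+V-\mu-Au^2)+(\beta-a)u^2,$$
which is strictly positive because $u>0$ is the ground state of the first bracket. So you do not need $\beta$ close to $a$. That choice would in any case force $a\approx\beta\approx a^*$, since $\beta>2a^*-a>a$ in case (ii), and there the bounds of Theorem~\ref{thm1-1} degenerate. What is missing is a global argument converting this positivity into an equality of degrees.

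The paper never computes the degree at a fixed $\beta$. It takes $\varepsilon=0$, $a_1=a_2=a_0\in(a^*/2,a^*)$ and $V$ satisfying $(H_1)$--$(H_2)$, and computes the jump of the degree across $\beta_1^*$. This jump involves only blow-up solutions:
\begin{itemize}
\item By Theorem~B, these are synchronized peaks at the critical points $P_j$ of $V$.
\item Lemma~\ref{lem4-3} and Theorem~\ref{thm4-1} show that their index equals that of the scalar peaks.
\item Theorem~\ref{thm5-1} and Corollary~\ref{cor4-2} give each local degree as $(-1)^{i(P_j)}$.
\end{itemize}
Hence $d^+-d^-=-\sum_j(-1)^{\mathrm{ind}\,P_j}=k-1$, and $d^+=k$ because $d^-=1$ by (i). Your heuristic of one index-one peak per hole is also not the mechanism behind the scalar degree $k$. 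That degree comes from $\sum_j(-1)^{\mathrm{ind}\,P_j}=1-k$, with the peaks located at critical points of $V$.

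\textbf{Case (i).} The sentence ``the expected answer is $1$'' must be replaced by an actual computation. For example, use the paper's homotopy of the small-parameter operator to the constant map $(e_1,0)$, which has degree $1$.
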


A consequence of Theorem~\ref{thm1-2} is that for small $\varepsilon>0$, \eqref{1-9}
has  a family of solutions $\{(u_{1,\varepsilon},u_{2,\varepsilon})\}$ with
both components positive,
provided $a_1$, $a_2$ and $\beta$ satisfy the conditions in Theorem~\ref{thm1-2}. By Theorem~\ref{thm1-1}, they are uniformly bounded in $\mathcal{H}$  with respect to $\varepsilon$.
Then, up to a subsequence, we may assume that
$(u_{1,\varepsilon},u_{2,\varepsilon})\to (u_{1},u_{2})$ strongly in $L^2(\mathbb R^2)\times L^2(\mathbb{R}^2)$ as
 $\varepsilon\to 0$. The next crucial step is to provide mechanisms to ensure that 
 $u_1>0$ and $u_2>0$.
 We will prove that if  $\|V_2-V_1\|_{L^\infty(\Omega)}<\hat c_0$, where
 $\hat c_0$ is given in
 \eqref{1-31-7}, then $u_1>0$ and $u_2>0$. Thus, $(u_{1},u_{2})$ is a nontrivial solution
 of \eqref{1-1}--\eqref{1-2}. The condition $\beta>a_2$ in (iii) of Theorem~\ref{thm1-3} is 
 to make $\hat c_0>0$, though  the degree computation in (iii) of Theorem~\ref{thm1-2}
 just requires that $\beta>a_1$.

\smallskip

We now briefly discuss the procedure in  the proof of Theorem \ref{thm1-2},
focusing on the case $a_1,\, a_2\in (0, a^*)$ and $\beta \in (\beta_1^*,\beta_2^*)$.

\bigskip

{\bf Step 1.} By  Theorem~\ref{thm1-1} and the homotopy invariance of degree,  to  compute the degree $d_{a_1,a_2,\beta}$ for any $a_1,a_2\in (0,a^*)$ and $\beta \in (\beta_1^*,\beta_2^*)$,
we may take $V_1=V_2=V$.

{\bf Step 2.} Using the  homotopy invariance of degree, we prove that
 for $\varepsilon>0$ small enugh, if  $a_1,a_2\in (0,a^*)$ and $0<\beta<\beta_1^*$,
 then
\begin{equation}\label{1-14}
d_{a_1,a_2,\beta}^{\varepsilon}=1.
\end{equation}

{\bf Step 3.} The value of $d_{a_1,a_2,\beta}$ changes when $\beta$ crosses $\beta_1^*$, since
\eqref{1-1}--\eqref{1-2} has solutions blowing up as $\beta\to \beta_1^*$. Thus,
it is crucial to find all such blow-up solutions and compute their contributions
to the degree.  To simplify these computations, using the  homotopy invariance of degree,
we may further deform $V$ so that it satisfies the following conditions:

\begin{enumerate}
\item [$(H_1)$] $V$ has finitely many critical points, all of which are non-degenerate.

\item[$(H_2)$] For each critical point $x$ of $V$, $\Delta V(x)\neq 0$.
\end{enumerate}
With these extra assumptions, we can classify all the blow-up solutions
for \eqref{1-1}--\eqref{1-2}  as $\beta\to \beta_1^*$ and compute their degrees.
Combining Step~2 with the contributions to the degree
of all the blow-up solutions  as $\beta\to \beta_1^*$, we obtain
the value of $d_{a_1,a_2,\beta}$ for   $a_1,a_2\in (0,a^*)$ and $\beta \in (\beta_1^*,\beta_2^*)$.

 \begin{remark}
 To compute $d_{a_1,a_2,\beta}$ for   $a_1,a_2\in (0,a^*)$ and $\beta \in (\beta_2^*,\beta_3^*)$,
 we need to find all the blow-up solutions for \eqref{1-1}--\eqref{1-2}  as
 $\beta$ crosses $\beta^*_2$.  This is more complex than the previous case, because
 as $\beta\to \beta^*_2$, \eqref{1-1}--\eqref{1-2} has double-peaked solutions concentrating
 at the same point.

 \end{remark}

\bigskip

We now turn to discuss the proof of Theorem~\ref{thm1-4}.

Since $a_1\in (0,a^*)$, \eqref{1-1}--\eqref{1-2} always has a semi-trivial solution $(u_1, 0)$.
To prove the existence of a nontrivial solution, we will show that the following degree for
all nontrivial solutions is well-defined and nonzero:

\begin{equation}\label{1-15}
\begin{split}
d_{a_1,a_2,\beta}^{non-trivial}:=\text{deg}(I-\mathbb{T}_{a_1,a_2,\beta}, \bar{B}_{R}\setminus (B_\delta \cup W_1\cup W_2), 0),
\end{split}
\end{equation}
where
\begin{equation}\label{1-15b}
W_1=\{  (u_1,u_2)\in  \bar{B}_{R}\setminus B_\delta :\; \|u_2\|_{L^2(\Omega)}<\sigma  \},\;\;\; W_2=\{  (u_1,u_2)\in  \bar{B}_{R}\setminus B_\delta :\; \|u_1\|_{L^2(\Omega)}<\sigma  \},
\end{equation}
and $\sigma>0$ is a suitable small constant. This consists of two steps.

{\bf Step~1}.  We  prove that there exists a constant
$\sigma>0$, such that for any $V_2\in \mathcal E$, if \eqref{1-13-8} holds, then 
 any nontrivial solution $(u_1,u_2;\mu)$ of \eqref{1-1}-\eqref{1-2} satisfies
\begin{equation}\label{1-14b}
\|u_1\|_{L^2(\Omega)}>\sigma,\quad \|u_2\|_{L^2(\Omega)}>\sigma.
\end{equation}
Thus, $d_{a_1,a_2,\beta}^{non-trivial}$ is well-defined. By the homotopy
invariance, we can choose $V_1=V_2$ to calculate $d_{a_1,a_2,\beta}^{non-trivial}$.  As a by-product of \eqref{1-14b},
 the following topological degree for all semi-trivial solutions is also well-defined
 if $V_1=V_2$:
\begin{equation}\label{nn1-15}
\begin{split}
d_{a_1,a_2,\beta}^{semi-trivial}:=\text{deg}(I-\mathbb{T}_{a_1,a_2,\beta}, W_1\cup W_2, 0).
\end{split}
\end{equation}

{\bf Step~2}. Now for $V_1=V_2$, we have

\begin{equation}\label{1-11-8}
\begin{split}
d_{a_1,a_2,\beta}=&d_{a_1,a_2,\beta}^{non-trivial}
+d_{a_1,a_2,\beta}^{semi-trivial}\\
=&d_{a_1,a_2,\beta}^{non-trivial}
+d_{a_1,a_2,\beta}^{1,semi-trivial}+d_{a_1,a_2,\beta}^{2,semi-trivial},
\end{split}
\end{equation}
where

\[
d_{a_1,a_2,\beta}^{i,semi-trivial}=\text{deg}(I-\mathbb{T}_{a_1,a_2,\beta}, W_i, 0).
\]
Using the homotopy invariance of the degree and $a_1\in (0, a^*)$, we can show that
$d_{a_1,a_2,\beta}^{1,semi-trivial}=1$. Though the exact values of 
$d_{a_1,a_2,\beta}$ and $d_{a_1,a_2,\beta}^{2,semi-trivial}$
may not be computed, we use the
Leray-Schauder degree reduction theorem (see for
example Theorem~8.17 in \cite{D}), together with a suitable homotopy, to derive

\[
d_{a_1,a_2,\beta}^{2,semi-trivial}=d_{a_1,a_2,\beta},
\]
and hence $d_{a_1,a_2,\beta}^{non-trivial}=-1$.

Let us point out that it is possible to replace \eqref{30-17-8} and \eqref{1-13-8} by other conditions to a obtain
similar result as in Theorems~\ref{thm1-3} and \ref{thm1-4}. For example, if the first eigenvalue $\lambda_1(V_i)$
for $-\Delta +V_i$ satisfy $\lambda_1(V_1)=\lambda_1(V_2)$, then by
Lemma~\ref{l1-4-8},  $\mathbb{T}_{a_1,a_2,\beta}$ is well defined in $B_R\setminus B_\delta $.
We can compute the
Leray-Schauder degree directly without deforming $V_2$ to $V_1$ and 
prove that $d_{a_1,a_2,\beta}^{non-trivial}=-1$ provided  $\beta\in (0,\beta_0)$ for some $\beta_0>0$.
See the discussion in Section~\ref{8}.

Results on the existence of solutions 
 to the mean field equations 
 by computing the Leray-Schauder degree can be found in \cite{li,l,cl-1,cl}.

Before closing this section, we mention that  results on the Gross-Pitaevskii system with  constraints $\int_{\Omega}u_1^2=b_1$ and $\int_{\Omega}u_2^2=b_2$ can be found in \cite{bs,bjs,bzz,gzz,ntv2,ntv1} and the references therein.  Most of these
papers  study the super-mass-critical cases using variational methods.

 \bigskip
 This paper is organized as follows. In Section \ref{3}, we make some preparations so that
the existence of solutions can be reformulated as a fixed point problem for a compact
operator.
 In Section \ref{2},
 we  use blow-up arguments, together with the classification of positive solutions of the
  nonlinear Schr\"odinger system in Appendix~B, to prove Theorem \ref{thm1-1}.
  We compute  the Morse index of each blow-up solution as $\beta\to\beta_1^*$
  in  Section~\ref{4}, and  the total Leray-Schauder degree for all
  the blow-up solutions as $\beta\to\beta_1^*$  in Section \ref{5}.
   Sections \ref{6} and \ref{100} are devoted to the proofs of Theorem~\ref{thm1-3}
   and Theorem~\ref{thm1-4}, respectively. We also discuss briefly other conditions
   ensuing the existence of nontrivial solutions for \eqref{1-1}-\eqref{1-2} in
   Section~8.

\section{preliminary }\label{3}

This section collects several results, from which we can change  the existence
of solutions for  \eqref{1-1}--\eqref{1-2} to  a fixed point problem
for a compact operator. As some of the proofs are entirely similar to those in \cite{cyy}, we omit them here.
Suppose that   $V_1,\,V_2\in \mathcal{E}$.   Let $\lambda_1(V_i)$ be the first eigenvalue of $-\Delta+V_i(x)$ on $\Omega$ with zero Dirichlet boundary condition, and $e_1(V_i)$ be the unique positive eigenfunction corresponding to $\lambda_1(V_i)$ with $\int_\Omega e_1^2(V_i)=1$.
Without loss of generality, we may assume that
\begin{equation}\label{3-1}
\lambda_1(V_1)\leq \lambda_1(V_2).
\end{equation}

\begin{lemma}[\cite{cyy}, Lemma A.3]\label{le6-3-1}
Let $\lambda\leq \lambda_1(V_i)$ and $w\in\mathcal{H}$ satisfy
\[
-\Delta w+V_i(x)w-\lambda w\geq 0\quad \hbox{in }\Omega.
\]

If $\lambda=\lambda_1(V_i)$, then either $w>0$, $w\equiv 0$, or $w<0$.

If $\lambda<\lambda_1(V_i)$, then either $w>0$ or $w\equiv0$.
\end{lemma}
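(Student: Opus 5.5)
The plan is to prove the strong maximum principle for $-\Delta+V_i-\lambda$ on the unbounded domain $\Omega$ by testing against the principal eigenfunction $e_1:=e_1(V_i)$ and using the variational characterization of $\lambda_1(V_i)$. Throughout, the inequality $-\Delta w+V_iw-\lambda w\ge 0$ is understood weakly: $\int_\Omega(\nabla w\cdot\nabla\varphi+V_iw\varphi-\lambda w\varphi)\ge0$ for all $0\le\varphi\in\mathcal H$. All tests below are admissible because $H_i(\Omega)$ is stable under truncation, and the compact embedding gives $\lambda_1(V_i)=\min\{\int|\nabla\phi|^2+V_i\phi^2:\ \int\phi^2=1\}$, attained exactly by $\pm e_1$.

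\emph{Step 1 (negative part vanishes or is an eigenfunction).} Write $w=w^+-w^-$ and test with $\varphi=w^-\ge0$. Since $\nabla w\cdot\nabla w^-=-|\nabla w^-|^2$ and $w\,w^-=-(w^-)^2$ a.e., this gives
\[
\int_\Omega |\nabla w^-|^2+V_i(w^-)^2-\lambda (w^-)^2\le 0 .
\]
By the Rayleigh characterization, the left side is at least $(\lambda_1(V_i)-\lambda)\int (w^-)^2$.
\begin{itemize}
\item If $\lambda<\lambda_1(V_i)$, this forces $w^-\equiv0$, i.e.\ $w\ge0$.
\item If $\lambda=\lambda_1(V_i)$ and $w^-\not\equiv0$, then $w^-$ attains the minimum of the Rayleigh quotient, so $w^-=c\,e_1$ with $c>0$.
\end{itemize}

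\emph{Step 2 (the case $\lambda=\lambda_1(V_i)$).} Here I test the inequality against $e_1$ itself. Since $e_1>0$ in $\Omega$ and is admissible,
\[
0\le\int_\Omega \nabla w\cdot\nabla e_1+(V_i-\lambda_1)we_1=0,
\]
where the equality follows from the eigen-equation for $e_1$ tested with $w$. Hence $f:=-\Delta w+(V_i-\lambda_1)w\ge0$ satisfies $\int f e_1=0$. Because $e_1>0$, this gives $f\equiv0$, so $w$ is itself an eigenfunction for $\lambda_1$. Simplicity of $\lambda_1$ then yields $w=ce_1$, and the trichotomy $w>0$, $w\equiv0$, $w<0$ follows from the sign of $c$.

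\emph{Step 3 (the case $\lambda<\lambda_1(V_i)$).} We already have $w\ge0$ and $-\Delta w+(V_i-\lambda)w\ge0$. Assuming $w\not\equiv0$, I will obtain $w>0$ from the strong maximum principle. Locally $V_i$ is bounded, so $-\Delta w+ (V_i-\lambda)^+w\ge -(V_i-\lambda)^-w$ is not immediately signed; instead I write the inequality as $-\Delta w+cw\ge0$ with $c=V_i-\lambda+M\ge0$ on a ball for large $M$, which is legitimate because $w\ge0$ means adding $Mw$ keeps the inequality. The weak Harnack inequality for supersolutions then propagates positivity across overlapping balls, and connectedness of $\Omega$ gives $w>0$ everywhere.

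The main obstacle is technical rather than conceptual: justifying that $e_1$, $w^-$ are admissible test functions on the unbounded domain, which holds because they lie in $H_i(\Omega)$, and invoking the weak Harnack inequality for merely $H^1_{loc}$ supersolutions. Both are standard, and the argument is exactly that of \cite{cyy}, Lemma A.3.
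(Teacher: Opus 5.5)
Your proof is correct and takes the standard route: test with $w^-$ and use the Rayleigh quotient, test with $e_1$ at $\lambda=\lambda_1(V_i)$ to force $w=ce_1$, then apply the weak Harnack inequality on the connected domain $\Omega$. The paper gives no proof of its own and imports the lemma from \cite{cyy}.

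The one step worth writing out is ``$\int fe_1=0$ with $e_1>0$ implies $f\equiv0$''. Set $\ell(\varphi)=\int_\Omega\nabla w\cdot\nabla\varphi+(V_i-\lambda_1(V_i))w\varphi$. Any $0\le\varphi\in C_c^\infty(\Omega)$ satisfies $\varphi\le Ce_1$, since $e_1$ is continuous and positive on the support of $\varphi$, so $0\le\ell(\varphi)\le C\ell(e_1)=0$. Splitting signs then gives $\ell=0$ on $C_c^\infty(\Omega)$, and density in $H_i(\Omega)$ gives $\ell\equiv0$. This also makes the second bullet of your Step 1 unnecessary.
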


Let
\[
L^2_+(\Omega)=\left\{ u\in L^2(\Omega): \;  u\geq 0,\;\; u\not\equiv 0\right\}.
\]
For any  $f_1,f_2\in L^2_+(\Omega)$, we consider the following problem
\begin{equation}\label{3-2}
\left\{\begin{array}{l}
-\Delta u_1+V_1(x) u_1=f_1+\mu u_1,\\
-\Delta u_2+V_2(x)u_2=f_2+\mu u_2,\\
u_1,u_2\geq 0,
 \int_\Omega (u_1^2+u_2^2)=1.
\end{array}\right.
\end{equation}
To find a solution of  \eqref{3-2}, it  is natural to consider the following minimization problem:
\[
\lambda:=\inf\limits_{(u_1,u_2)\neq (0,0), \|u_1\|^2_2+\|u_2\|^2_2=1}\int_{\Omega}\left(|\nabla u_1|^2+|\nabla u_2|^2+V_1(x) u_1^2+ V_2(x) u_2^2\right)-2\int_\Omega\left(f_1u_1+f_2u_2\right).
\]
By a standard minimization argument, $\lambda$ is attained.
Then the existence of a solution to \eqref{3-2} follows.

\begin{lemma}\label{lem3-2}  If $f_1,f_2\in L_+^2(\Omega)$, then $\mu <\lambda_1(V_1)$.
\end{lemma}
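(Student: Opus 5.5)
We test each equation of \eqref{3-2} against the positive first eigenfunction of the corresponding operator, and then use the standing assumption \eqref{3-1}.

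\textbf{Step 1: testing against eigenfunctions.} Let $(u_1,u_2,\mu)$ solve \eqref{3-2} with $f_1,f_2\in L^2_+(\Omega)$. Multiply the $i$-th equation by $e_1(V_i)>0$ and integrate by parts. Both $u_i$ and $e_1(V_i)$ lie in $H_i(\Omega)$ and vanish on $\partial\Omega$, so the boundary terms vanish and the symmetric quadratic form gives
\[
\bigl(\lambda_1(V_i)-\mu\bigr)\int_\Omega u_i\,e_1(V_i)=\int_\Omega f_i\,e_1(V_i),\qquad i=1,2 .
\]
Since $f_i\ge 0$, $f_i\not\equiv0$ and $e_1(V_i)>0$ in $\Omega$, the right-hand side is strictly positive. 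Because $u_i\ge0$, the integral $\int_\Omega u_i e_1(V_i)$ is nonnegative. It must in fact be nonzero, for otherwise the left side would vanish. Hence $\int_\Omega u_i e_1(V_i)>0$, and therefore $\lambda_1(V_i)-\mu>0$.

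\textbf{Step 2: conclusion.} Step 1 gives $\mu<\lambda_1(V_i)$ for both $i=1$ and $i=2$. By \eqref{3-1}, $\min_i\lambda_1(V_i)=\lambda_1(V_1)$, so $\mu<\lambda_1(V_1)$. Only one $f_i$ needs to be nontrivial for this argument; in particular $i=1$ alone already suffices.

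\textbf{Alternative route via Lemma~\ref{le6-3-1}.} Suppose $\mu\ge\lambda_1(V_1)$. Then $u_1$ is a nonnegative nontrivial supersolution of $-\Delta w+V_1w-\lambda_1(V_1)w\ge f_1\ge0$, and pairing with $e_1(V_1)$ yields the same contradiction.

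\textbf{Expected obstacle.} The only delicate point is justifying the integration by parts on the unbounded domain. The weak formulation in $H_i(\Omega)$ handles this: both functions belong to $H_0^1$ and satisfy $\int V_i u^2<\infty$, so the identity $\int(\nabla u_i\cdot\nabla e_1+V_iu_ie_1)=\lambda_1(V_i)\int u_ie_1$ follows by testing the eigenvalue equation with $u_i$. Everything else is routine.
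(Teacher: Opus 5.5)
Your argument is correct, and it takes a more direct route than the paper. The paper notes $u_1\not\equiv 0$, normalizes $\tilde u_1=u_1/\|u_1\|_2$ and $\tilde f_1=f_1/\|u_1\|_2$ to obtain a scalar problem with unit $L^2$ constraint, and then cites \cite[Lemma A.5]{cyy} for that scalar problem without giving the argument. You instead pair the first equation with $e_1(V_1)$ to get $(\lambda_1(V_1)-\mu)\int_\Omega u_1e_1(V_1)=\int_\Omega f_1e_1(V_1)>0$. This identity gives $u_1\not\equiv0$ and $\mu<\lambda_1(V_1)$ at once, with no normalization step.

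It also shows that neither $f_2$ nor the ordering \eqref{3-1} is needed. Your Step 2 detour through $i=2$ and \eqref{3-1} is therefore redundant, though harmless.

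The paper's version is shorter given the reference; yours is self-contained. It also makes clear that the bound comes from the component with nontrivial forcing. When $f_1=0$, the same identity forces either $u_1\equiv0$ or $\mu=\lambda_1(V_1)$, which is exactly the source of the non-uniqueness in Remark~\ref{re1-4-8}.

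Your ``alternative route'' does not really use Lemma~\ref{le6-3-1}; it is the same pairing argument again.
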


\begin{proof} Let $f_1,f_2\in L_+^2(\Omega)$. Since  $f_1\not\equiv 0$, from the first equation of \eqref{3-2} we have $u_1\not\equiv 0$.
Let $\tilde{u}_1= u_1/\|u_1\|_2$, $\tilde{f}_1= f_1/\|u_1\|_2$. Then
\[
\begin{cases}
-\Delta \tilde{u}_1+V_1(x) \tilde{u}_1=\tilde{f}_1+\mu \tilde{u}_1\quad \text{in}\;\; \Omega,
\\
0\leq \tilde{u}_1\in H_0^1(\Omega),
\\
\int_\Omega \tilde{u}_1^2=1.
\end{cases}
\]
Then by \cite[Lemma A.5]{cyy}, we have $\mu<\lambda_1(V_1)$.
\end{proof}

\begin{lemma}\label{lem3-3}
If $f_1,f_2\in L_+^2(\Omega)$, then the solution $(u_1,u_2,\mu)$ of \eqref{3-2} is unique.

\end{lemma}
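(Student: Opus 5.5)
Uniqueness will come from a monotonicity argument in the Lagrange multiplier $\mu$. By Lemma~\ref{lem3-2}, every solution of \eqref{3-2} has $\mu<\lambda_1(V_1)\le\lambda_1(V_2)$ (using \eqref{3-1}). So for each $\mu\in(-\infty,\lambda_1(V_1))$ the operators $-\Delta+V_i-\mu$ are coercive on $H_i(\Omega)$. Hence each linear equation
\[
-\Delta u_i+V_i u_i-\mu u_i=f_i,\qquad u_i=0 \text{ on }\partial\Omega,
\]
has a unique solution $u_i(\mu)=(-\Delta+V_i-\mu)^{-1}f_i$. By Lemma~\ref{le6-3-1} (case $\mu<\lambda_1(V_i)$), $u_i(\mu)>0$ since $f_i\ge0$, $f_i\not\equiv0$. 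Consequently any solution $(u_1,u_2,\mu)$ of \eqref{3-2} is determined by $\mu$ alone, and $\mu$ must satisfy
\[
F(\mu):=\|u_1(\mu)\|_2^2+\|u_2(\mu)\|_2^2=1 .
\]
The task reduces to showing that $F$ is strictly monotone on $(-\infty,\lambda_1(V_1))$.

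Next, differentiate in $\mu$. The resolvent is analytic in $\mu$ away from the spectrum, and $w_i:=\partial_\mu u_i$ solves
\[
(-\Delta+V_i-\mu)w_i=u_i>0 .
\]
Lemma~\ref{le6-3-1} again gives $w_i>0$. Then
\[
F'(\mu)=2\sum_i\int_\Omega u_iw_i=2\sum_i\int_\Omega w_i(-\Delta+V_i-\mu)w_i>0,
\]
by coercivity, so $F$ is strictly increasing. A monotone function takes the value $1$ at most once, so $\mu$ is unique, and with it $(u_1,u_2)$.

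An equivalent route avoids differentiation. Suppose two solutions have $\mu<\mu'$. The difference $u_i(\mu')-u_i(\mu)$ satisfies
\[
(-\Delta+V_i-\mu')\bigl(u_i(\mu')-u_i(\mu)\bigr)=(\mu'-\mu)u_i(\mu)>0,
\]
so $u_i(\mu')>u_i(\mu)$ pointwise by Lemma~\ref{le6-3-1}. Then $\|u_1(\mu')\|_2^2+\|u_2(\mu')\|_2^2>\|u_1(\mu)\|_2^2+\|u_2(\mu)\|_2^2$, which contradicts the common constraint value $1$. I would present this comparison version, since it needs no regularity in $\mu$.

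The main point requiring care is justifying the reduction: that any solution of \eqref{3-2} with the given $\mu$ coincides with the resolvent solution. This needs $\mu<\lambda_1(V_i)$ for both $i$, which Lemma~\ref{lem3-2} together with \eqref{3-1} supplies. It also needs Lemma~\ref{le6-3-1} to apply to $w\in\mathcal H$, which holds because the resolvent maps $L^2$ into $H_i$.
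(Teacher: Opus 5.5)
Your comparison argument is correct and is essentially the paper's proof: the paper takes two solutions with $\mu\ge\tilde\mu$ and applies Lemma~\ref{le6-3-1} to $w_i=u_i-\tilde u_i$, which satisfies $(-\Delta+V_i-\mu)w_i=(\mu-\tilde\mu)\tilde u_i\ge 0$ with $\mu<\lambda_1(V_1)\le\lambda_1(V_2)$. It concludes $w_i\ge 0$, which is incompatible with both pairs having $L^2$-mass $1$ unless $w\equiv 0$. The only differences are cosmetic: you handle the case of equal multipliers through uniqueness of the resolvent rather than folding it into the same comparison, and your derivative-in-$\mu$ variant is a valid but unnecessary alternative.
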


\begin{proof}
Suppose there exists another solution $(\tilde{u}_1,\tilde{u}_2,\tilde{\mu})$ such that
\[
\left\{\begin{array}{l}
-\Delta \tilde{u}_1+V_1(x) \tilde{u}_1=f_1+\tilde{\mu} \tilde{u}_1,\hbox{ in }\Omega,\\
-\Delta \tilde{u}_2+V_2(x)\tilde{u}_2=f_2+\tilde{\mu} \tilde{u}_2,\hbox{ in }\Omega,\\
\tilde{u}_1,\tilde{u}_2\geq 0, \int_\Omega (\tilde{u}_1^2+\tilde{u}_2^2)=1.
\end{array}\right.
\]
Without loss of generality, we may assume $\mu\geq \tilde{\mu}$. Let  $w_i=u_i-\tilde{u}_i$. Then $(w_1,w_2)$ satisfies
\begin{equation}\label{3-3}
\left\{\begin{array}{l}
-\Delta w_1+V_1(x) w_1-\mu w_1=(\mu-\tilde{\mu})\tilde{u}_1\geq 0,\\
-\Delta w_2+V_2(x)w_2-\mu w_2=(\mu-\tilde{\mu})\tilde{u}_2\geq 0.\\
\end{array}\right.
\end{equation}
By Lemma~\ref{lem3-2}, we have $\mu<\lambda_1$.
On the other hand,  from Lemma \ref{le6-3-1}, we get $w_1,w_2\geq 0$. If $(w_1,w_2)\neq (0,0)$, then
\[
1=\int_\Omega (u_1^2+u_2^2)> \int_\Omega (\tilde{u}_1^2+\tilde{u}_2^2)=1,
\]
which is impossible. Thus, $w_1\equiv w_2\equiv 0$. This gives  $u_1=\tilde{u}_1$ and $u_2=\tilde{u}_2$,
and hence  $\mu=\tilde{\mu}$.
\end{proof}

By Lemma~\ref{lem3-3}, we can define an operator $T:L_+^2(\Omega)\times L_+^2(\Omega)\to \mathcal{H}$ by
\[
T(f_1,f_2)=(u_1,u_2),
\]
where $(u_1,u_2)$ is the unique solution of \eqref{3-2}.

\begin{remark}\label{re1-4-8}
Note that if $\lambda_1(V_1)<\lambda_1(V_2)$, $T$ is not well-defined at $(0, f_2)$ for any $f_2\in L_+^2(\Omega)$  .  Indeed, for $f_2= \alpha e_1(V_2)$, $\alpha\in (0, \lambda_1(V_2)-\lambda_1(V_1))$,
\eqref{3-2}  has a solution given  by

\[
(u_1, u_2,\mu)=\Bigl(\sqrt{ 1-\frac{\alpha^2}{(\lambda_1(V_2)-\lambda_1(V_1))^2} }e_1(V_1),  \frac{\alpha}{\lambda_1(V_2)-\lambda_1(V_1)}e_1(V_2),\lambda_1(V_1) \Bigr).
\]
But \eqref{3-2}  also has another solution given  by

\[
(u_1, u_2,\mu)=(0, w, \mu^*),
\]
where $(w, \mu^*)$ is the solution of

\[
\begin{cases}
-\Delta w +V_2(x) w= \alpha e_2 +\mu^* w,\;\; w>0, &\text{in}\; \Omega,\\
\int_{\Omega}w^2=1, w=0,&\text{on}\; \partial\Omega.
\end{cases}
\]
Thus, the solution to \eqref{3-2} is not unique.
\end{remark}

On the other hand, from the proof of Lemma~\ref{lem3-3}, the following lemma holds.

\begin{lemma}\label{l1-4-8}
If $\lambda_1(V_1)=\lambda_1(V_2)$ (in particular, if $V_1=V_2$), then for any nonnegative $f_1,f_2\in L^2(\Omega)$ with $(f_1,f_2)\neq (0,0)$,
the solution $(u_1,u_2,\mu)$ of \eqref{3-2} is unique.

\end{lemma}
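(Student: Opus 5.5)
The plan is to rerun the comparison argument from the proof of Lemma~\ref{lem3-3}, and to use the hypothesis $\lambda_1(V_1)=\lambda_1(V_2)=:\lambda_1$ to recover the two places where that proof used $f_1,f_2\in L^2_+(\Omega)$. Existence follows from the minimization described before Lemma~\ref{lem3-2}, so only uniqueness needs proof.

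\textbf{Bound on $\mu$.} First I will show that any solution $(u_1,u_2,\mu)$ of \eqref{3-2} with $f_i\ge0$, $(f_1,f_2)\neq(0,0)$ satisfies $\mu<\lambda_1$. Suppose $f_1\not\equiv0$. Then $u_1\not\equiv0$, and the argument of Lemma~\ref{lem3-2} (via \cite[Lemma A.5]{cyy}) gives $\mu<\lambda_1(V_1)=\lambda_1$. If instead $f_2\not\equiv0$, the same argument applied to the second equation gives $\mu<\lambda_1(V_2)=\lambda_1$. In either case $\mu<\lambda_1(V_i)$ for both $i=1,2$, and this is exactly where the equality of the first eigenvalues is used. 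In Remark~\ref{re1-4-8}, with $f_1=0$, one could have $\mu=\lambda_1(V_1)<\lambda_1(V_2)$, which is the failure mode we are ruling out.

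\textbf{Comparison step.} Take two solutions and assume $\mu\ge\tilde\mu$. Set $w_i=u_i-\tilde u_i$, so that \eqref{3-3} holds, with right-hand sides $(\mu-\tilde\mu)\tilde u_i\ge0$. Since $\mu<\lambda_1(V_i)$ for both $i$, the second alternative of Lemma~\ref{le6-3-1} applies to each component and gives $w_i>0$ or $w_i\equiv0$. A component with $u_i=\tilde u_i\equiv0$ is harmless, since then $w_i\equiv0$.

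\textbf{Conclusion.} If some $w_i\not\equiv0$, then $u_i>\tilde u_i\ge0$ on $\Omega$ for that $i$, while $u_j\ge\tilde u_j\ge0$ for the other index. Hence $\int_\Omega(u_1^2+u_2^2)>\int_\Omega(\tilde u_1^2+\tilde u_2^2)$, contradicting the constraint that both integrals equal $1$. So $w_1\equiv w_2\equiv0$. To get $\mu=\tilde\mu$, pick $i$ with $u_i\not\equiv0$ (one exists by the constraint) and subtract the two $i$-th equations, which gives $(\mu-\tilde\mu)u_i=0$.

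\textbf{Main obstacle.} The only delicate point is the bound $\mu<\lambda_1$ when one of the $f_i$ vanishes, for example $f_1=0$ with $u_1$ possibly nonzero. I handle it by applying \cite[Lemma A.5]{cyy} only to the component whose forcing term is nonzero, and then transferring the resulting bound to the other component through $\lambda_1(V_1)=\lambda_1(V_2)$.
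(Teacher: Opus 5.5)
Your proposal is correct and matches the paper, which justifies this lemma only by noting that the proof of Lemma~\ref{lem3-3} carries over. The one point needing care is the one you isolate: whichever $f_i$ is nonzero gives $\mu<\lambda_1(V_i)$, and the hypothesis $\lambda_1(V_1)=\lambda_1(V_2)$ transfers this bound to the other component, so Lemma~\ref{le6-3-1} applies to both $w_1$ and $w_2$.
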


 By Lemma~\ref{lem3-3}, for any $a_1,a_2,\beta, \varepsilon>0$ and $(u_1,u_2)\in \mathcal{H}\setminus\{(0,0)\}$, the  operator 
\begin{equation}\label{3-4}
\mathbb{T}_{a_1,a_2,\beta}^\varepsilon (u_1,u_2):=T(a_1|u_1|^3+\beta |u_1|u_2^2+\varepsilon |u_2|,a_2|u_2|^3+\beta |u_2|u_1^2+\varepsilon |u_1|)
\end{equation}
is well-defined. Moreover, it follows from Lemma~\ref{l1-4-8} that
$\mathbb{T}_{a_1,a_2,\beta}^0$ is also well-defined in $\mathcal{H}\setminus\{(0,0)\}$, if $V_1=V_2$.

\begin{proposition}\label{pro3-4}
Suppose either $V_1\neq V_2$ and $a_1,a_2,\beta,\varepsilon>0$, or $V_1=V_2$ and $a_1,a_2,\beta>0$, $\varepsilon\geq 0$. Then for any small $\delta>0$, $\mathbb{T}_{a_1,a_2,\beta}^\varepsilon$ is a compact operator from $\mathcal{H}\setminus B_\delta$ into  itself, where
\[
B_{\delta}=\Big\{(u_1,u_2)\in\mathcal{H}:\|(u_1,u_2)\|_{\mathcal{H}}< \delta \Big\}.
\]
\end{proposition}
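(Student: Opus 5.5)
The plan is to split $\mathbb{T}^\varepsilon_{a_1,a_2,\beta}$ as a continuous nonlinear map followed by the solution operator $T$, and prove that $T$ is continuous and maps bounded sets of data into bounded sets of $\mathcal{H}$. Compactness will then come from the compact embedding $H_i(\Omega)\hookrightarrow L^p(\Omega)$, $2\le p<\infty$, which holds because $V_i\to+\infty$.

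\textbf{Step 1 (the Nemytskii map).} Set
\[
N(u_1,u_2)=\bigl(a_1|u_1|^3+\beta |u_1|u_2^2+\varepsilon |u_2|,\; a_2|u_2|^3+\beta |u_2|u_1^2+\varepsilon |u_1|\bigr).
\]
Since $\mathcal{H}\hookrightarrow L^6\times L^6$ compactly, $N$ maps bounded subsets of $\mathcal{H}\setminus B_\delta$ into relatively compact subsets of $L^2\times L^2$. It is also continuous into $L^2\times L^2$.

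For $(u_1,u_2)\ne(0,0)$, the image consists of nonnegative functions. If $\varepsilon>0$, both components are nonzero, so $N(u_1,u_2)\in L^2_+\times L^2_+$ and $T$ is defined there by Lemma~\ref{lem3-3}. If $V_1=V_2$ and $\varepsilon=0$, the pair $(f_1,f_2)$ is nonzero, and Lemma~\ref{l1-4-8} applies instead.

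\textbf{Step 2 (a priori bound on $T$).} Take $(f_1,f_2)$ in a bounded set of $L^2$ whose images are defined, with solution $(u_1,u_2,\mu)$. Lemma~\ref{lem3-2} gives $\mu<\lambda_1(V_1)$; the same holds in the $\varepsilon=0$, $V_1=V_2$ case by the argument of \cite[Lemma A.5]{cyy}. Testing the equations with $u_i$ and using $\int(u_1^2+u_2^2)=1$ gives
\[
\|(u_1,u_2)\|_{\mathcal H}^2\lesssim \mu+\|f_1\|_2+\|f_2\|_2,
\]
which bounds the $\mathcal{H}$-norm from above.

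For a lower bound on $\mu$, I test with $u_i$ and use $f_i\ge0$, $u_i\ge0$:
\[
\mu=\sum_i\int\bigl(|\nabla u_i|^2+V_iu_i^2\bigr)-\sum_i\int f_iu_i\ \ge\ -\|f_1\|_2-\|f_2\|_2.
\]
So $\mu$ stays bounded, and $T$ of a bounded set is bounded in $\mathcal{H}$.

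\textbf{Step 3 (continuity of $T$ and conclusion).} Let $(f^n_1,f^n_2)\to(f_1,f_2)$ in $L^2$, all in the domain. By Step 2 the solutions $(u^n,\mu_n)$ are bounded, so up to a subsequence $\mu_n\to\mu$ and $u^n\rightharpoonup u$ in $\mathcal{H}$, with strong convergence in $L^2$. The limit solves \eqref{3-2} with the constraint, since strong $L^2$ convergence preserves $\int(u_1^2+u_2^2)=1$ and $u_i\ge0$. By uniqueness (Lemma~\ref{lem3-3} or Lemma~\ref{l1-4-8}), the limit is $T(f_1,f_2)$, so the whole sequence converges.

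Strong convergence in $\mathcal{H}$ follows by subtracting the equations and testing with $u^n_i-u_i$. The right-hand sides $f^n_i-f_i+\mu_nu^n_i-\mu u_i$ converge in $L^2$, while $u^n_i-u_i$ stays bounded in $L^2$, so $\|u^n-u\|_{\mathcal H}\to0$. Composing, $\mathbb{T}^\varepsilon=T\circ N$ is continuous on $\mathcal{H}\setminus B_\delta$. It maps bounded sets to relatively compact sets, because $N$ of a bounded set is relatively compact in $L^2$ and $T$ is continuous.

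One more point is needed: the image must lie outside $B_\delta$ for small $\delta$. This holds because every image satisfies $\int(u_1^2+u_2^2)=1$, and the compact-embedding observation before \eqref{1-12} shows $\|(u_1,u_2)\|_{\mathcal H}>\delta$ on that set.

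\textbf{Main obstacle.} The delicate step is continuity of $T$ at points where the data lie on the edge of the domain. When $\varepsilon=0$ and $V_1=V_2$, one component $f_i$ may vanish or tend to zero. The uniqueness from Lemma~\ref{l1-4-8} and the bound $\mu\ge-\|f\|_2$ still make the limiting argument work, provided $(f_1,f_2)$ stays away from $(0,0)$. That is guaranteed here: $u\notin B_\delta$ together with the convergence gives $(f_1,f_2)\ne(0,0)$ in the limit.
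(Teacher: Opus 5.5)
Your decomposition $\mathbb{T}^\varepsilon_{a_1,a_2,\beta}=T\circ N$ is sound. So are the bounds $-\|f_1\|_2-\|f_2\|_2\le\mu<\lambda_1(V_1)$ and the $\mathcal{H}$-bound they give, the continuity argument of Step~3, and the remark that the image lies on the $L^2$-sphere. That last remark is all the paper verifies before deferring compactness to \cite{cyy}.

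\textbf{The gap: relative compactness.} You argue that $N(A)$ is relatively compact in $L^2$ and $T$ is continuous. But $T$ is only defined and continuous on a non-closed subset of $L^2\times L^2$, and for $A=\bar B_R\setminus B_\delta$ the $L^2$-closure of $N(A)$ contains $(0,0)$.

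Your claim in the last paragraph, that $u\notin B_\delta$ keeps the limit datum away from $(0,0)$, is false in this setting. The compact embedding only gives $u^n\rightharpoonup u$ weakly in $\mathcal{H}$, and the bound $\|u^n\|_{\mathcal{H}}\ge\delta$ does not pass to weak limits. For instance, take $u^n=(\delta\phi_n,0)$ with $(\phi_n)$ orthonormal in $H_1(\Omega)$. It lies on $\partial B_\delta$, yet $u^n\rightharpoonup0$ and $N(u^n)\to(0,0)$ in $L^2$.

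You cannot rescue the argument by extending $T$ continuously to $(0,0)$ either. For $V_1=V_2=V$ one checks that $T(\tfrac1n e_1,0)=(e_1,0)$ and $T(0,\tfrac1n e_1)=(0,e_1)$ for every $n$.

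\textbf{The repair.} It uses only what you already have and needs no uniqueness. Given $u^n\in A$, let $(w^n,\mu_n)$ solve \eqref{3-2} with data $f^n=N(u^n)$. Step~2 bounds $w^n$ in $\mathcal{H}$ and $\mu_n$. Pass to a subsequence with $f^n\to f$ in $L^2$, $\mu_n\to\mu$, $w^n\rightharpoonup w$ in $\mathcal{H}$ and $w^n\to w$ in $L^2$. Then $w$ is a weak solution of $-\Delta w_i+V_iw_i=f_i+\mu w_i$. Subtracting the equations and testing with $w^n_i-w_i$ gives
\[
\|w^n_i-w_i\|_{H_i}^2=\int_\Omega\bigl(f^n_i-f_i+\mu_nw^n_i-\mu w_i\bigr)(w^n_i-w_i)\to0 .
\]
So $(w^n)$ has a strongly convergent subsequence whether or not $f=(0,0)$.

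Uniqueness, and your observation that the limit datum is nonzero, are needed only for continuity of $\mathbb{T}^\varepsilon$. There $u^n\to u$ strongly in $\mathcal{H}$, and since $\mathcal{H}\setminus B_\delta$ is closed, you do get $u\neq0$.
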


\begin{proof} For any $(u_1,u_2)\in \mathcal{H}\setminus B_\delta $, we have
\[
\mathbb{T}_{a_1,a_2,\beta}^\varepsilon (u_1,u_2)\in \mathcal{H}\cap \Big\{\int_\Omega (u_1^2+u_2^2)=1\Big\}.
\]
Since the embedding $H_i(\Omega)\hookrightarrow L^2(\Omega)$ is compact, there exists $\delta_0>0$ such that for any $(u_1,u_2)\in \mathcal{H}$ with
\[
\int_{\Omega} (u_1^2+u_2^2)=1,
\]
we have
\[
\|(u_1,u_2)\|_{\mathcal{H}}> \delta_0.
\]
Thus for small $\delta >0$,  $\mathbb{T}_{a_1,a_2,\beta}^\varepsilon$ maps $\mathcal{H}\setminus B_\delta$ into  itself.

Using a similar argument as in \cite[Proposition A.6]{cyy}, we can prove that $\mathbb{T}_{a_1,a_2,\beta}^\varepsilon$ is compact.
\end{proof}

 From the discussion in this section, we conclude that the existence of a solution
 for \eqref{1-9} is equivalent to the existence of a fixed point for the compact
 operator $\mathbb{T}_{a_1,a_2,\beta}^\varepsilon$.

At the end of  this section, assuming Theorem \ref{thm1-1}
 at the moment, we prove that if $\|V_2-V_1\|_{L^\infty(\Omega)}< \hat c_0$
 for some $\hat c_0>0$, then as $\varepsilon\to 0$,
  any solution $(u_{1\varepsilon}, u_{2\varepsilon})$
  of \eqref{1-9} converges to a 
  nontrivial solution
 to \eqref{1-1}--\eqref{1-2}.

 To determine the constant $\hat c_0>0$, for any $V\in \mathcal E$, we consider
 the following problem
 
\begin{equation}\label{6-6}
\begin{cases}
-\Delta u+V(x)u=a u^3+\mu u,\; u>0,\;\;\;\text{in}\;\;\Omega,
\\
u=0\;\;\; \text{on}\;\;\partial\Omega,
\\
\int_\Omega u^2=1,
\end{cases}
\end{equation}
where $a\in (ma^*, (m+1)a^*)$.

Set

\begin{equation}\label{n6-7}
 c(a, V)=\inf \Big\{ \int_\Omega u^4:\; \text{$u$ is a positive solution of \eqref{6-6}
} \Big\}.
\end{equation}

Define
\begin{equation}\label{6-7}
\hat c(a)=\inf \Big\{c(a, V):\;
\forall\; V\in \mathcal E,\; \|V-V_1\|_{L^\infty(\Omega)}\le c(a_1, V_1)(\beta-a_1) \Big\}.
\end{equation}
By Theorem~A,  if \eqref{6-6} has solution for  $a\in (ma^*, (m+1)a^*)$,
we can prove that  $  c(a, V)$ is achieved by some $u>0$ and hence $ c(a, V)>0$. Otherwise,
we regard $  c(a, V)=+\infty$.  Similarly, if \eqref{6-6} has solution for some $V\in \mathcal E$
with $\|V-V_1\|_{L^\infty(\Omega)}\le c(a, V_1)(\beta-a)$ and $a\in (ma^*, (m+1)a^*)$,
then  $  \hat c(a)$ is achieved by some $V$ and hence $ \hat c(a)>0$. Otherwise,
we regard $  \hat c(a)=+\infty$.

We define $ \hat c_0>0$ as follows.  We assume that $a_1\in (0, a^*)$.
Let

 \begin{equation}\label{1-31-7}
\hat c_0=\min\{ c(a_1, V_1)(\beta-a_1), \hat c(a_2)(\beta-a_2)\}.
 \end{equation}

 \begin{proposition}\label{p1-5-8}
 Assume that $a_1$, $a_2$ and $\beta$ satisfy the conditions in Theorem \ref{thm1-3}.
  If
   $\|V_2-V_1\|_{L^\infty(\Omega)}< \hat c_0$,
   then there exists $\sigma>0$ such that for any sufficiently small $\varepsilon\in (0,\varepsilon_0)$ and any solution $(u_{1\varepsilon}, u_{2\varepsilon})$
  of \eqref{1-9}, it holds
\begin{equation}\label{6-8}
\|u_{1\varepsilon}\|_{L^2(\Omega)}> \sigma,\quad \|u_{2\varepsilon}\|_{L^2(\Omega)}> \sigma.
\end{equation}

\end{proposition}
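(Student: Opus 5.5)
I argue by contradiction and compactness. Suppose there are $\varepsilon_n\to 0$ and solutions $(u_{1n},u_{2n},\mu_n)$ of \eqref{1-9} with $\varepsilon=\varepsilon_n$ such that, say, $\|u_{2n}\|_{L^2(\Omega)}\to 0$; the case $\|u_{1n}\|_{L^2}\to0$ is handled symmetrically. Under the hypotheses of Theorem~\ref{thm1-3}, the parameters satisfy the conditions of Theorem~\ref{thm1-1}. Hence $\|u_{in}\|_{L^\infty}$, $\|(u_{1n},u_{2n})\|_{\mathcal H}$ and $|\mu_n|$ are uniformly bounded. By the compact embedding $H_i(\Omega)\hookrightarrow L^2(\Omega)$ and elliptic estimates, up to a subsequence $(u_{1n},u_{2n},\mu_n)\to(u_1,0,\mu)$, strongly in $L^2\cap L^4$ and in $C^1_{loc}$. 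Here $\int_\Omega u_1^2=1$ and $u_1>0$ solves \eqref{6-6} with $V=V_1$, $a=a_1$. Consequently $\int_\Omega u_1^4\ge c(a_1,V_1)$.

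Next I extract the identity governing the vanishing component. Normalize $w_n=u_{2n}/\|u_{2n}\|_{L^2}$, so that
\[
-\Delta w_n+V_2w_n=a_2u_{2n}^2w_n+\beta u_{1n}^2w_n+\varepsilon_n\frac{u_{1n}}{\|u_{2n}\|_2}+\mu_nw_n .
\]
Test this with $u_{1n}$, test the first equation of \eqref{1-9} with $w_n$, and subtract. The Laplacian terms and the $\mu_n$ terms cancel, leaving
\[
\int_\Omega (V_2-V_1)u_{1n}w_n=\int_\Omega\bigl((\beta-a_1)u_{1n}^3w_n+(a_2-\beta)u_{2n}^2u_{1n}w_n\bigr)+\varepsilon_n\int_\Omega\Bigl(\frac{u_{1n}^2}{\|u_{2n}\|_2}-u_{2n}w_n\Bigr).
\]
The last bracket is nonnegative up to a vanishing error, since $\int u_{2n}w_n=\|u_{2n}\|_2\to0$. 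The term with $u_{2n}^2$ is also small, because $u_{2n}\to0$ in $L^\infty$: by the uniform $L^\infty$ bound and interpolation/elliptic estimates, $\|u_{2n}\|_\infty\to 0$.

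Now pass to the limit. $w_n$ is bounded in $\mathcal H$, so $w_n\to w\ge0$ strongly in $L^2$ with $\|w\|_2=1$. The limit $w$ satisfies $-\Delta w+V_2w-\beta u_1^2w-\mu w=\text{(nonnegative measure)}\ge 0$. Here I use the sign of the $\varepsilon$ term; if $\varepsilon_n/\|u_{2n}\|_2$ stays bounded it gives an extra term $\kappa u_1$. The limit identity then reads
\[
\int(V_2-V_1)u_1w\ \ge\ (\beta-a_1)\int u_1^3w .
\]
Note $\beta>a_2\ge a_1$ in all cases of Theorem~\ref{thm1-3}. To compare with $c(a_1,V_1)$, I want to relate $w$ to $u_1$. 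Comparing the equations for $u_1$ and $w$ suggests testing instead with weights that produce $\int u_1^4$. Concretely, I would use the equation for $u_1$ tested against $u_1$ and apply Lemma~\ref{le6-3-1}-type arguments to show that $w$ is comparable to $u_1$; ideally $w=u_1$ when $V_1=V_2$. Perturbatively, this should yield
\[
\|V_2-V_1\|_\infty\ \ge\ (\beta-a_1)\int u_1^4\ \ge\ c(a_1,V_1)(\beta-a_1)\ \ge\ \hat c_0,
\]
which is a contradiction.

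The symmetric case $\|u_{1n}\|\to0$ produces a limit $u_2$ solving \eqref{6-6} with $V=V_2$, $a=a_2$. By the hypothesis $\|V_2-V_1\|_\infty<\hat c_0\le c(a_1,V_1)(\beta-a_1)$, $V_2$ is admissible in \eqref{6-7}, so $\int u_2^4\ge\hat c(a_2)$, and the same identity yields the contradiction with $\hat c(a_2)(\beta-a_2)$. The main obstacle is the step replacing $\int u_1^3w$ by $\int u_1^4$, i.e.\ identifying the limiting direction $w$ of the vanishing component. I would first try testing the $w_n$-equation with $u_{1n}$ and the $u_{1n}$-equation with $u_{1n}\cdot(\int u_{1n}w_n)$, exploiting that the limiting linear operator $-\Delta+V_1-a_1u_1^2-\mu$ annihilates $u_1$. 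If that fails, I would bound $\int|V_2-V_1|u_1w\le\|V_2-V_1\|_\infty\int u_1w$ and normalize by $\int u_1 w$ instead, proving $\int u_1^3w\ge\int u_1^4\int u_1w$ via a maximum-principle comparison.
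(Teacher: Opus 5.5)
Your setup is correct: the contradiction and compactness step, the limit $u_1$ solving \eqref{6-6} with $V=V_1$, $a=a_1$, and the cross-tested identity. The identity even rules out $\varepsilon_n/\|u_{2n}\|_{L^2}\to\infty$ directly. Its left side is bounded by $\|V_2-V_1\|_{L^\infty}$, and $\int u_{2n}^2u_{1n}w_n\le\|u_{2n}\|_\infty\|u_{1n}\|_\infty\|u_{2n}\|_{L^2}\to0$. Yet $\frac{\varepsilon_n}{\|u_{2n}\|_{L^2}}\int u_{1n}^2$ would blow up.

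\textbf{The gap.} The argument stops at the decisive step, which you flag yourself: getting from $(\beta-a_1)\int u_1^3w$ to $(\beta-a_1)\int u_1^4\ge c(a_1,V_1)(\beta-a_1)$. None of your suggested fixes closes it.
\begin{itemize}
\item Cross-testing uses only the Euler--Lagrange equations for $u_1$ and $w$. It cannot see that $\mu$ is the bottom of the spectrum of the $w$-problem, and that is exactly the information that produces $\int u_1^4$.
\item The inequality $\int u_1^3w\ge\int u_1^4\int u_1w$ is a correlation statement about how $w$ is distributed relative to $u_1$. It depends globally on $V_2-V_1$. A maximum-principle comparison gives pointwise sign information, not a weighted-average inequality of this kind.
\item A ``perturbative'' argument around $V_1=V_2$ cannot reach the explicit, non-perturbative threshold $c(a_1,V_1)(\beta-a_1)$ in $\hat c_0$.
\end{itemize}

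\textbf{The missing idea is variational, and it is what the paper uses.} Let $\sigma_0=\lim\varepsilon_n/\|u_{2n}\|_{L^2}$. The limit $w$ is a positive solution of
\[
-\Delta w+V_2w=\beta u_1^2w+\sigma_0u_1+\mu w,\qquad \|w\|_{L^2}=1.
\]
A minimizer of
\[
J(\phi)=\int_\Omega\bigl(|\nabla\phi|^2+V_2\phi^2-\beta u_1^2\phi^2-2\sigma_0u_1\phi\bigr)\quad\text{on }\{\|\phi\|_{L^2}=1\}
\]
can be replaced by its modulus. Positive solutions of this problem are unique, by the argument of Lemma~\ref{lem3-3}; for $\sigma_0=0$, $w$ is simply the first eigenfunction. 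Hence $w$ is the minimizer, and testing the equation with $w$ gives $J(w)=\mu-\sigma_0\int u_1w$.

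Now use $u_1$ itself as a competitor, together with $\mu=\int(|\nabla u_1|^2+V_1u_1^2-a_1u_1^4)$ and $\int u_1^2=1$:
\[
\mu-\sigma_0\int_\Omega u_1w\;\le\;J(u_1)\;=\;\mu-2\sigma_0+\int_\Omega(V_2-V_1)u_1^2+(a_1-\beta)\int_\Omega u_1^4 .
\]
With $\int u_1w\le1$ this gives $(\beta-a_1)\int u_1^4\le\|V_2-V_1\|_{L^\infty}-\sigma_0$. Since $u_1$ has unit $L^2$-norm, the potential term is controlled directly, and you never need to identify $w$. This yields the contradiction with $\hat c_0$.

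Your symmetric case goes through with the same replacement, using the admissibility of $V_2$ in \eqref{6-7} exactly as you observed, and $\beta>a_2$.
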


\begin{proof}
We suppose, for contradiction, that up to a subsequence,
\[
\int_\Omega u_{1\varepsilon}^2\to 1, \;\;\; \int_\Omega u_{2\varepsilon}^2\to 0, \;\;\; \varepsilon \to 0.
\]
By Theorem \ref{thm1-1}, $u_{i\varepsilon}$ is bounded in $H_i(\Omega)$ and $L^\infty(\Omega)$ for $i=1,2$,  and $\mu_\varepsilon$ is bounded.  So up to a subsequence, we can assume that $u_{1\varepsilon}\rightharpoonup u_1$ in $H_1(\Omega)$,  and $u_{1\varepsilon}\to u_1$ in $L^p(\Omega)$  for $p\geq 2$, and $\mu_\varepsilon \to \mu$.  Then by the first equation of \eqref{1-9},  $u_1$ is a positive solution of
\begin{equation}\label{6-9}
\begin{cases}
-\Delta u_1+V_1(x)u_1=a_1u^3_1+\mu u_1\;\;\;\text{in}\;\;\Omega,
\\
u_1=0\;\;\; \text{on}\;\;\partial\Omega,
\\
\int_\Omega u_1^2=1,
\end{cases}
\end{equation}
which gives
\begin{equation}\label{6-9a}
\mu=\int_\Omega \big(|\nabla u_1|^2+V_1(x) u_1^2- a_1u_1^4\big).
\end{equation}

We now consider the asymptotic behavior of $u_{2\varepsilon}$.

First, we assume that $\varepsilon/ \|u_{2\varepsilon}\|_{L^2(\Omega)}\to +\infty$. Setting $v_\varepsilon= u_{2\varepsilon}/\varepsilon$, then we have  $ \|v_{\varepsilon}\|_{L^2(\Omega)}\to 0$.
From the second equation of \eqref{1-9},  we get
\begin{equation}\label{6-10}
-\Delta v_{\varepsilon}+V_2(x)v_{\varepsilon}=a_2 \varepsilon^2 v_{\varepsilon}^3+\beta v_{\varepsilon}u_{1\varepsilon}^2+u_{1\varepsilon}+\mu_\varepsilon v_{\varepsilon}\;\;\;\text{in}\;\;\Omega,\quad v_\varepsilon=0 \;\;\;\text{on}\;\;\partial\Omega.
\end{equation}
Then using the Gagliardo-Nirenberg-Sobolev inequality, we obtain
\[
\begin{split}
\int_\Omega (|\nabla v_\varepsilon|^2+V_2(x)v_{\varepsilon}^2)=&a_2 \varepsilon^2 \int_\Omega v_{\varepsilon}^4+\beta \int_\Omega v_{\varepsilon}^2u_{1\varepsilon}^2+\int_\Omega v_{\varepsilon} u_{1\varepsilon}+\mu_\varepsilon\int_\Omega v_{\varepsilon}^2
\\
\leq & o(1) \int_\Omega |\nabla v_\varepsilon|^2 +o(1),
\end{split}
\]
which implies
\[
\int_\Omega (|\nabla v_\varepsilon|^2+V_2(x)v_{\varepsilon}^2)\to 0.
\]
Then multiplying \eqref{6-10} by $\varphi\in C_0^\infty(\Omega)$, integrating by parts, and letting $\varepsilon \to 0$, we get
 \[
 \int_\Omega u_1 \varphi =0.
 \]
Since $\varphi$ is arbitrary, we have $u_1=0$, which
contradicts to  $\int_\Omega u_1^2=1$. Thus the case $\varepsilon/ \|u_{2\varepsilon}\|_{L^2(\Omega)}\to +\infty$ cannot occur.

Now we assume that $\varepsilon/ \|u_{2\varepsilon}\|_{L^2(\Omega)}\to \sigma_0\geq 0$. Setting $v_{\varepsilon}=u_{2\varepsilon}/ \|u_{2\varepsilon}\|_{L^2(\Omega)}$,  we have
$ \|v_{\varepsilon}\|_{L^2(\Omega)}=1$. From the second equation of \eqref{1-9},  we get
\begin{equation}\label{6-11}
-\Delta v_{\varepsilon}+V_2(x)v_{\varepsilon}=a_2 \|u_{2\varepsilon}\|_{L^2(\Omega)}^2 v_{\varepsilon}^3+\beta v_{\varepsilon}u_{1\varepsilon}^2+\frac{\varepsilon}{\|u_{2\varepsilon}\|_{L^2(\Omega)}}u_{1\varepsilon}+\mu_\varepsilon v_{\varepsilon}\;\;\;\text{in}\;\;\Omega,\quad v_\varepsilon=0 \;\;\;\text{on}\;\;\partial\Omega.
\end{equation}
Then we have
\[
\begin{split}
\int_\Omega (|\nabla v_\varepsilon|^2+V_2(x)v_{\varepsilon}^2)=&a_2 \|u_{2\varepsilon}\|_{L^2(\Omega)}^2 \int_\Omega v_{\varepsilon}^4+\beta \int_\Omega v_{\varepsilon}^2u_{1\varepsilon}^2+\frac{\varepsilon}{\|u_{2\varepsilon}\|_{L^2(\Omega)}}\int_\Omega v_{\varepsilon} u_{1\varepsilon}+\mu_\varepsilon\int_\Omega v_{\varepsilon}^2
\\
\leq & o(1) \int_\Omega |\nabla v_\varepsilon|^2 +C,
\end{split}
\]
which shows that $v_{\varepsilon}$ is bounded in $H_2(\Omega)$.
So up to a subsequence, we can assume that $v_{\varepsilon}\rightharpoonup v$ in $H_2(\Omega)$,  and $v_{\varepsilon}\to v$ in $L^p(\Omega)$  for $p\geq 2$. Moreover, $v$ is a positive solution of
\begin{equation}\label{6-12}
\begin{cases}
-\Delta v+V_2(x)v=\beta v u_1^2+\sigma_0 u_1+\mu v\;\;\;\text{in}\;\;\Omega,
\\
v=0\;\;\; \text{on}\;\;\Omega,
\\
\int_\Omega v^2=1.
\end{cases}
\end{equation}
Since $v$ is a positive solution of \eqref{6-12}, it is a positive minimizer of the following problem
\[
\inf_{\|w\|_{L^2(\Omega)}=1}\int_\Omega \big( |\nabla w|^2+V_2(x)w^2-\beta u_1^2 w^2-2\sigma_0 u_1 w  \big).
\]
 Then by \eqref{6-9a} and \eqref{6-12}, we have
\[
\begin{split}
\mu-\sigma_0 \int_\Omega u_1 v=&\int_\Omega \big( |\nabla v|^2+V_2(x)v^2-\beta u_1^2 v^2-2\sigma_0 u_1 v  \big)\\
=&\inf_{\|w\|_{L^2(\Omega)}=1}\int_\Omega \big( |\nabla w|^2+V_2(x)w^2-\beta u_1^2 w^2-2\sigma_0 u_1 w  \big)
\\
\leq& \int_\Omega \big( |\nabla u_1|^2+V_2(x)u_1^2-\beta u_1^4-2\sigma_0 u_1^2 \big)
\\
=&\mu -2\sigma_0 +\int_\Omega (V_2-V_1)u_1^2+(a_1-\beta)\int_\Omega u_1^4,
\end{split}
\]
which yields
\[
\begin{split}
(\beta-a_1)\int_\Omega u_1^4\leq& \int_\Omega (V_2-V_1)u_1^2+\sigma_0 \int_\Omega u_1 v-2\sigma_0
\\
\leq& \|V_2-V_1\|_{L^\infty(\Omega)} \|u_1\|_{L^2(\Omega)}^2+\sigma_0 \|u_1\|_{L^2(\Omega)}\|v\|_{L^2(\Omega)}-2\sigma_0
\\
=&\|V_2-V_1\|_{L^\infty(\Omega)}-\sigma_0
\leq \|V_2-V_1\|_{L^\infty(\Omega)}.
\end{split}
\]
By \eqref{6-7}, we have $\int_\Omega u_1^4\ge c(a_1, V_1)$. So we  obtain that
\begin{equation}\label{6-14}
\|V_2-V_1\|_{L^\infty(\Omega)}\geq c(a_1,V_1)(\beta -a_1)\geq\hat c_0.
\end{equation}
Therefore, if $\|V_2-V_1\|_{L^\infty(\Omega)}<\hat c_0$, the case $\varepsilon/ \|u_{2\varepsilon}\|_{L^2(\Omega)}\to \sigma_1\geq 0$  cannot occur either.
Hence, we have proved that $\|u_{2\varepsilon}\|_{L^2(\Omega)}> \sigma$.
We can also prove $\|u_{1\varepsilon}\|_{L^2(\Omega)}> \sigma$ in a similar way.
\end{proof}


\section{Profile of blow-up solutions}\label{2}

In this section, we prove Theorem~\ref{thm1-1}  using the uniqueness of
positive solution to \eqref{B-1}.
Let  $\{(u_{1n},u_{2n},\mu_n)\}$ be a blow-up sequence satisfying
\begin{equation}\label{2-1}
\left\{
\begin{array}{ll}
-\Delta u_{1n}+V_{1n}(x)u_{1n}=a_{1n}u_{1n}^3+\beta_n u_{1n}u_{2n}^2+\varepsilon_nu_{2n}+\mu_n u_{1n}& \hbox{ in }\Omega,\\
-\Delta u_{2n}+V_{2n}(x)u_{2n}=a_{2n}u_{2n}^3+\beta_n u_{2n}u_{1n}^2+\varepsilon_nu_{1n}+\mu_n u_{2n}&\hbox{ in }\Omega,\\
u_{1n},u_{2n}\geq 0,  \hbox{ and }\int_{\Omega}u_{1n}^2+u_{2n}^2=1,
\end{array}\right.
\end{equation}
where  $V_{in}=t_n\bar{V}_i+(1-t_n)\hat{V}_i$, $t_n\in [0,1]$, $\varepsilon_n\in [0,\varepsilon_0)$ and as $n\to+\infty$,
\begin{equation}\label{2-2}
\begin{split}
a_{in}\to a_i, \;\;\; \beta_n\to \beta,\;\;\; \max u_{1n}+\max u_{2n}\to +\infty.
\end{split}
\end{equation}

\subsection{The case $a_i\in (0,a^*)$ and  $\beta\in (0, +\infty)$.} {$\\$}

First, we point out that Lemma~\ref{lemA-1} implies that blow up
does not occur if $a_i\in (0,a^*)$ and $\beta\in (0, \beta_1^*)$.
In this subsection, assuming $a_i\in (0,a^*)$ and $\beta\in (0, +\infty)$,
we  prove that blow-up occurs only at $\beta= \beta_m^*$,
and after scaling, $\{(u_{1n},u_{2n})\}$ converges locally to the
nontrivial solution of  \eqref{B-1}.

Let $z_{in}\in \Omega$ be such that $$u_{in}(z_{in})=M_{in}:=\max\limits_{x\in\Omega}u_{in}(x), \quad\rho_n=\frac{M_{1n}}{M_{2n}}.
$$
We have the following result.

\begin{theorem}\label{th2-1}
There exist $\rho_0>0,c>0$, and $m$ points $x_{1n},\cdots, x_{mn}\in \Omega$ with $|x_{jn}|\leq C$ such that, up to a subsequence,
\begin{equation}\label{ad2-3}
\begin{split}
&\rho_n\to \rho_0,\;\;\; -\mu_n M_{1n}^{-2}\to c,\;\;\;
\sqrt{-\mu_n}d(x_{in},\partial \Omega)\to+\infty,\;\;\; \sqrt{-\mu_n} |x_{in}-x_{jn}|\to+\infty \;\;\text{for}\;\; i\neq j,
\\
&\left\|
\frac{u_{1n}}{\sqrt{-\mu_n}}-\sqrt{\frac{\beta-a_2}{\beta^2-a_1a_2}}
\sum\limits_{j=1}^{m}Q\left(\sqrt{-\mu_n}(\cdot-x_{jn})\right)\right\|_{L^\infty(\Omega)}\to 0,\\
&\left\|\frac{u_{2n}}{\sqrt{-\mu_n}}-\sqrt{\frac{\beta-a_1}{\beta^2-a_1a_2}}\sum\limits_{j=1}^{m}
Q\left(\sqrt{-\mu_n}(\cdot-x_{jn})\right)\right\|_{L^\infty(\Omega)}\to 0.
\end{split}
\end{equation}
as $n\to +\infty$.
Moreover, we have $\beta= \beta_m^*$, where
\begin{equation}\label{2-3}
 \beta_m^*:=ma^*+\sqrt{(ma^*-a_1)(ma^*-a_2)}.
\end{equation}
\end{theorem}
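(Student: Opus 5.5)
Throughout, $a_i\in(0,a^*)$ and the blow-up sequence \eqref{2-1}--\eqref{2-2} is given. The strategy is a standard blow-up analysis: rescale at the maximum points, pass to a limit system on $\mathbb R^2$, classify its positive solutions, and count the peaks using the mass constraint.

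\textbf{Step 1: $\mu_n\to-\infty$ and the rescaling.} Multiplying the equations by $u_{in}$ and using $\int(u_{1n}^2+u_{2n}^2)=1$, the Gagliardo--Nirenberg inequality and the boundedness of $\varepsilon_n$, I expect the maxima to be comparable with $\sqrt{-\mu_n}$. Write $\lambda_n:=\max\{M_{1n},M_{2n}\}\to\infty$ and $\tilde u_{in}(x)=\lambda_n^{-1}u_{in}(z_n+\lambda_n^{-1}x)$, with $z_n$ the point where the larger maximum is attained. The rescaled functions satisfy
\[
-\Delta\tilde u_{1n}+\lambda_n^{-2}V_{1n}\tilde u_{1n}=a_{1n}\tilde u_{1n}^3+\beta_n\tilde u_{1n}\tilde u_{2n}^2+\lambda_n^{-2}\varepsilon_n\tilde u_{2n}+\lambda_n^{-2}\mu_n\tilde u_{1n},
\]
and similarly for $\tilde u_{2n}$. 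Here $\lambda_n^{-2}\mu_n$ is bounded from above, because $\mu_n<\lambda_1(V_{1n})$ by Lemma~\ref{lem3-2}. The key point is to show $\lambda_n^{-2}\mu_n\to -c<0$.
\begin{itemize}
\item If $\lambda_n^{-2}\mu_n\to 0$ (or is positive along a subsequence), the limit is a bounded nonnegative solution of a system $-\Delta w_i\ge0$-type (Liouville, with nonnegative zero-order term) on $\mathbb R^2$ or on a half-plane. This contradicts the Liouville theorem together with $\max(\tilde u_1+\tilde u_2)(0)\ge1$; alternatively, it contradicts the finiteness of $\lambda_n^{2}\int\tilde u^2$.
\item If $\lambda_n^{-2}\mu_n\to-\infty$, rescaling instead by $\sqrt{-\mu_n}$ shows that the amplitude would be $o(\sqrt{-\mu_n})$, which contradicts the choice of the maximum point.
\end{itemize}
It follows that $-\mu_n M_{1n}^{-2}$ is bounded and bounded away from zero along a subsequence. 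A Pohozaev/moving-plane argument near $\partial\Omega$ (as in \cite{cyy}, using $(V_4)$ and the decay of the solution) shows that $\sqrt{-\mu_n}\,d(z_n,\partial\Omega)\to\infty$, so the limit domain is all of $\mathbb R^2$. The bound $|z_n|\le C$ comes from $(V_2)$--$(V_3)$ through a local Pohozaev identity, exactly as in Theorem A.

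\textbf{Step 2: the limit system and its classification.} Rescaling now by $\sqrt{-\mu_n}$, the functions converge in $C^2_{loc}$ to a nonnegative solution $(w_1,w_2)\ne0$ of
\[
-\Delta w_1+w_1=a_1w_1^3+\beta w_1w_2^2,\qquad -\Delta w_2+w_2=a_2w_2^3+\beta w_2w_1^2 \quad\text{in }\mathbb R^2,
\]
because the potential terms and the $\varepsilon_n$ terms vanish after scaling. By the classification in Appendix B (Theorem~\ref{lemB-1}), such a solution is either semi-trivial, $(a_i^{-1/2}Q,0)$ or $(0,a_j^{-1/2}Q)$, or synchronized, $\bigl(\sqrt{\tfrac{\beta-a_2}{\beta^2-a_1a_2}}Q,\sqrt{\tfrac{\beta-a_1}{\beta^2-a_1a_2}}Q\bigr)$ up to translation.

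\textbf{Step 3: exhausting the mass and excluding semi-trivial bubbles (main obstacle).} Iterating Step 2 at successive local maxima gives points $x_{jn}$ whose mutual distances, multiplied by $\sqrt{-\mu_n}$, tend to infinity. Exponential decay estimates, obtained by comparison with $e^{-(1-\tau)\sqrt{-\mu_n}|x-x_{jn}|}$ away from the peaks, then give the $L^\infty$ approximation in \eqref{ad2-3}. The total mass must equal $1$, while each bubble carries a mass $\asymp(-\mu_n)^0$. The $L^2$ norm of $u_{in}$ at scale $\sqrt{-\mu_n}$ is then $a^*$ times the squared coefficients, independent of $\mu_n$. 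Writing the normalization as in the introduction, the rescaled constraint reads
\[
\sum_{\text{peaks}}\bigl(\text{coefficient}_1^2+\text{coefficient}_2^2\bigr)a^*=1 .
\]
The obstacle is that a semi-trivial bubble $(a_1^{-1/2}Q,0)$ alone would give a mass $a^*/a_1>1$, which is impossible since $a_1<a^*$. More generally, any combination containing a semi-trivial bubble has mass exceeding $1$, because $a^*/a_i>1$. I would try to make this exclusion rigorous first. After that, only synchronized bubbles remain. For $m$ of them the mass identity is
\[
m\,a^*\,\frac{2\beta-a_1-a_2}{\beta^2-a_1a_2}=1,
\]
which is the quadratic $\beta^2-2ma^*\beta+ma^*(a_1+a_2)-a_1a_2=0$. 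Its root exceeding $\max\{a_1,a_2\}$ is $\beta=ma^*+\sqrt{(ma^*-a_1)(ma^*-a_2)}=\beta_m^*$. Finally, $\rho_n\to\rho_0=\sqrt{(\beta-a_2)/(\beta-a_1)}$, and the constant $c$ is determined by the same normalization, which completes the proof.
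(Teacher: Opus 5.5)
Your route is the paper's. You rescale at maxima, exclude semi-trivial bubbles because one alone carries mass $a^*/a_i>1$, iterate, bound the peaks by the local Pohozaev identity, prove exponential decay, and read $\beta$ off the mass identity. Your semi-trivial exclusion is already rigorous; it is exactly the paper's Steps 1--2, so it is not really the main obstacle.

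The genuine gap is that you never prove $\beta>\max\{a_1,a_2\}$, yet you use it twice.
\begin{itemize}
\item Theorem~\ref{lemB-1} only classifies positive solutions when $\beta>\max\{a_1,a_2\}$ or $\beta<\min\{a_1,a_2\}$. Your Step~2 dichotomy ``semi-trivial or synchronized'' therefore does not follow from it alone.
\item Your last step picks ``the root exceeding $\max\{a_1,a_2\}$'' without knowing that $\beta$ lies there.
\end{itemize}

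The paper closes this with Lemma~\ref{lemA-1}. Every nontrivial limit bubble has mass at least $a^*/\max\{a_1,a_2,\beta\}$, so $1\ge m a^*/\max\{a_1,a_2,\beta\}$. Since $a_1,a_2<a^*$, this forces $\beta\ge a^*>\max\{a_1,a_2\}$; see \eqref{2-14a} and \eqref{2-22}. Only then is Theorem~\ref{lemB-1} applied and the root $\beta_m^*$ selected.

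You could instead repair it by hand:
\begin{itemize}
\item For $\beta\in[\min\{a_1,a_2\},\max\{a_1,a_2\}]$ there are no positive solutions unless $a_1=a_2=\beta$ (the remark after Theorem~\ref{lemB-1}). In that exceptional case the bubbles $\beta^{-1/2}(\cos\theta\, Q,\sin\theta\, Q)$ have mass $a^*/\beta>1$ and are excluded.
\item For $\beta<\min\{a_1,a_2\}$ synchronized bubbles do exist. However, both roots $ma^*\pm\sqrt{(ma^*-a_1)(ma^*-a_2)}$ of the mass identity are $\ge\min\{a_1,a_2\}$, so no such $\beta$ satisfies it.
\end{itemize}

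There is also a secondary ordering problem in your Step~1.
\begin{itemize}
\item The Liouville argument must be run on the combined coefficient $c_1=\lim\lambda_n^{-2}\bigl(V_{in}(z_n)-\mu_n\bigr)$, not on $\lambda_n^{-2}\mu_n$ alone. A priori $z_n$ may escape to infinity with $V(z_n)\sim\lambda_n^2$.
\item You also need the same limit for both components, which uses $V_2/V_1\to1$ at infinity from the definition of $\mathcal E$.
\item The half-plane limit is excluded by the Pohozaev identity for the system; $(V_4)$ is not needed there.
\item Only after proving $|x_{jn}|\le C$ for every peak by the local Pohozaev identity can you say that all $c_j$ equal $\lim(-\mu_n M_n^{-2})$. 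The same bound is what makes the potential drop out after rescaling by $\sqrt{-\mu_n}$.
\item Statements about $-\mu_nM_{1n}^{-2}$ require $\rho_n\to\rho_0\in(0,\infty)$ first. Phrase Step~1 with $\lambda_n$ and recover $\rho_0$ from the final profile, as you do at the end.
\end{itemize}
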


\begin{proof}We proceed  in several steps.

{\bf Step 1.} We  have  $\rho_n\to \rho_0>0$.

Suppose, for contradiction, that   $\rho_n\to 0$ or $\rho_n\to+ \infty$.
We first consider the case $\rho_n\to +\infty$.
Set $M_n:=M_{1n}$, $x_{1n}=z_{1n}$ and
\[
\left\{\begin{array}{l}
\tilde{u}_{1n}=M_n^{-1}u_{1n}(M_n^{-1}y+x_{1n}),\\
\tilde{u}_{2n}=M_n^{-1}u_{2n}(M_n^{-1}y+x_{1n}),
\end{array}\right.
\]
then $(\tilde{u}_{1n},\tilde{u}_{2n})$ satisfies
\begin{equation}\label{2-4}
\left\{\begin{array}{l}
-\Delta\tilde{u}_{1n}+M_n^{-2}\Big(V_{1n}\left(M_n^{-1}y+x_{1n})-\mu_n\right)\Big)\tilde{u}_{1n}=a_{1n}\tilde{u}_{1n}^3+\beta_n\tilde{u}_{1n}\tilde{u}_{2n}^2+M_n^{-2}\varepsilon_n \tilde{u}_{2n},\\
-\Delta\tilde{u}_{2n}+M_n^{-2}\Big(V_{2n}\left(M_n^{-1}y+x_{1n})-\mu_n\right)\Big)\tilde{u}_{2n}=a_{2n}\tilde{u}_{2n}^3+\beta_n\tilde{u}_{2n}\tilde{u}_{1n}^2+M_n^{-2}\varepsilon_n \tilde{u}_{1n}.
\end{array}\right.
\end{equation}
Since $\tilde{u}_{1n}(0)=1$  is the maximum value of $\tilde{u}_{1n}$, we have $-\Delta\tilde{u}_{1n}(0)\geq 0$. Taking $y=0$ in the first equation of \eqref{2-4},  we get
\[
 M_n^{-2}(V_{1n}(x_{1n})-\mu_n)\leq a_{1n}+\beta_n\rho_n^{-2}+M_n^{-2}\varepsilon_n \rho_n^{-1}\leq C.
\]
In addition, since the right-hand side of \eqref{2-4} is nonnegative, we 
deduce  that $\mu_n\leq \min\{\lambda_{1n},\lambda_{2n}\}\leq C$, where $\lambda_{in}$ is the first eigenvalue of the operator
$
-\Delta +V_{in}(x).
$
 Thus there exists   $c_1\geq 0$ such that
\[
M_n^{-2}(V_{1n}(x_{1n})-\mu_n)\to c_1.
\]
As $\rho_n\to 0$,  up to a subsequence, we have that either  $\tilde{u}_{1n}\to u_1$ in $C_{loc}^2(\mathbb{R}^2)$ with
\[
-\Delta u_1+c_1u_1=a_1u_1^3 \hbox{ in }\mathbb{R}^2, ~~~~0\leq u_1\leq u_1(0)=1,
\]
or $\tilde{u}_{1n}\to u_1$ in $C_{loc}^2(\Pi)$ with
\[
-\Delta u_1+c_1u_1=a_1u_1^3 \hbox{ in }\Pi, \;\; u=0\hbox{ on }\partial \Pi,\;~~~~0\leq u_1\leq u_1(0)=1,
\]
where $\Pi$ is a half-plane. By Liouville's theorem, the latter case cannot occur.
Moreover, in the first case, we have $c_1>0$.
 Thus,   $u_1(y)=a_2^{-\frac{1}{2}}c_1^{\frac{1}{2}}Q(c_1^{\frac{1}{2}}y)$. Then  for any $R>0$ large,
\[
M_n^{-1}\left\|u_{1n}-M_n a_1^{-\frac{1}{2}}c_1^{\frac{1}{2}}Q\left(c_1^{\frac{1}{2}}M_n(\cdot-x_{1n})\right)
\right\|_{L^\infty\left(B_{RM_n^{-1}}(x_{1n})\right)}\to 0.
\]
As a result, since $0<a_1<a^*$, for sufficiently large $R$ and $n$,
\begin{equation}\label{2-5}
\begin{aligned}
1=&\int_{\Omega}u_{1n}^2+u_{2n}^2\geq \int_{B_{RM_n^{-1}}(x_{1n})}u_{1n}^2
=\int_{B_{RM_n^{-1}}(x_{1n})}\left(M_n a_1^{-\frac{1}{2}}c_1^{\frac{1}{2}}Q\left(c_1^{\frac{1}{2}}M_n(\cdot-x_{1n})\right)\right)^2+o_n(1)\\
=&\frac{1}{a_1}\int_{B_{c_1^{\frac{1}{2}}R}(0)}Q^2+o_n(1)
=\frac{a^*}{a_1}+o_R(1)+o_n(1)>1,
\end{aligned}
\end{equation}
a contradiction. Thus, $\rho_n\to +\infty$ cannot occur.

Similarly, we can prove that $\rho_n\to 0$  cannot occur by swapping   $u_{2n}$ and $u_{1n}$. So,  there exists $\rho_0>0$ such that  $\rho_n\to \rho_0>0$.

\textbf{Step 2. } Denote  $x_{1n}:=z_{1n}$ and
\[
\left\{\begin{array}{l}
\tilde{u}_{1n}=M_n^{-1}u_{1n}(M_n^{-1}y+x_{1n}),\\
\tilde{u}_{2n}=M_n^{-1}u_{2n}(M_n^{-1}y+x_{1n}).
\end{array}\right.
\]
Then $(\tilde{u}_{1n},\tilde{u}_{2n})$ satisfies
\begin{equation}\label{2-6}
\left\{\begin{array}{l}
-\Delta\tilde{u}_{1n}+M_n^{-2}\Big(V_{1n}\left(M_n^{-1}y+x_{1n})-\mu_n\right)\Big)\tilde{u}_{1n}=a_{1n}\tilde{u}_{1n}^3+\beta_n\tilde{u}_{1n}\tilde{u}_{2n}^2+M_n^{-2}\varepsilon_n \tilde{u}_{2n},\\
-\Delta\tilde{u}_{2n}+M_n^{-2}\Big(V_{2n}\left(M_n^{-1}y+x_{1n})-\mu_n\right)\Big)\tilde{u}_{2n}=a_{2n}\tilde{u}_{2n}^3+\beta_n\tilde{u}_{2n}\tilde{u}_{1n}^2+M_n^{-2}\varepsilon_n \tilde{u}_{1n}.
\end{array}\right.
\end{equation}
As in Step 1, we have that
\begin{equation}\label{10-4-8}
M_n^{-2}(V_{1n}(x_{1n})-\mu_n)\to c_1\geq 0.
\end{equation}

If for any $R>0$, $M_n^{-1}\|u_{2n}\|_{L^\infty\left(B_{RM_n^{-1}}(x_{1n})\right)}\to 0$, then  we can use a similar argument as in Step 1 to obtain that
\[
M_n^{-1}\|u_{1n}-M_na_2^{-\frac{1}{2}}c_1^{\frac{1}{2}}Q(c^{\frac{1}{2}}_1M_n(\cdot-x_{1n}))\|_{L^\infty\left(B_{RM_n^{-1}}(x_{1n})\right)}\to 0,
\]
which gives  a contradiction as in \eqref{2-5}.  So, there exist $R_0,\sigma_0>0$ such that for large $n$,
\[
M_n^{-1}\|u_{2n}\|_{L^\infty\left(B_{R_0M_n^{-1}}(x_{1n})\right)}\geq \sigma_0>0.
\]
 We
claim that it also holds

\[
M_n^{-2}(V_{2n}(x_{1n})-\mu_n)\to c_1\geq 0.
\]
Indeed, if $|x_{1n}|\le C$, then this follows directly from \eqref{10-4-8}, while if
$|x_{1n}|\to +\infty$,  it can be derived  from \eqref{10-4-8} and $\bigl|
\frac{V_1(x)}{V_2(x)}-1\bigr|\le g(x)\to 0$ as $|x|\to +\infty$.
So we see that
there exist  $c_1\geq 0$ and positive functions $u_1^{(1)},u_2^{(1)}\in C^2_{loc}(\Pi)$ such that
\begin{equation}\label{2-7}
\left\{\begin{array}{ll}
-\Delta u_1^{(1)}+c_1 u_1^{(1)} =a_1 (u_1^{(1)})^3+\beta u_1^{(1)} (u_2^{(1)})^2& \hbox{ in } \Pi,\\
-\Delta u_2^{(1)}+c_1 u_2^{(1)}=a_2(u_2^{(1)})^3+\beta u_2^{(1)} (u_1^{(1)})^2& \hbox{ in } \Pi,
\end{array}\right.
\end{equation}
where $\Pi=\mathbb{R}^2_+$ or $\mathbb{R}^2$, and $(u_1^{(1)},u_2^{(1)})=(0,0)$ on $\partial \Pi$ if $\Pi=\mathbb{R}_+^2$.
By the Pohozaev identity in \cite{el},  \eqref{2-7} has no solution for $\Pi=\mathbb{R}^2_+$. So,  $\Pi=\mathbb{R}^2$.  Moreover, it follows from the Liouville theorem in \cite{ttvw} that $c_1>0$.
Consequently,  $M_n d(x_{1n},\partial \Omega)\to +\infty$ and
for any $R>0$,
\begin{equation}\label{2-8}
M_n^{-1}\left\|u_{1n}-M_n u_1^{(1)}\left(M_n(\cdot-x_{1n})\right)\right\|_{L^\infty\left(B_{RM_n^{-1}}(x_{1n})\right)}
+M_n^{-1}\left\|u_{2n}-M_n u_2^{(1)}\left(M_n(\cdot-x_{1n})\right)\right\|_{L^\infty\left(B_{RM_n^{-1}}(x_{1n})\right)}\to 0.
\end{equation}

\textbf{Step 3.} If
\[
M_n^{-1}\left\|u_{1n}-M_n u_1^{(1)}\left(M_n(\cdot-x_{1n})\right)\right\|_{L^\infty\left(\Omega\right)}
+M_n^{-1}\left\|u_{2n}-M_n u_2^{(1)}\left(M_n(\cdot-x_{1n})\right)\right\|_{L^\infty\left(\Omega\right)}\to 0,
\]
as $n\to +\infty$, then the proof is completed.

We now consider the case
\[
M_n^{-1}\left\|u_{1n}-M_n u_1^{(1)}\left(M_n(\cdot-x_{1n})\right)\right\|_{L^\infty\left(\Omega\right)}\geq  c_0,\;\; \text{or}\;\; M_n^{-1}\left\|u_{2n}-M_n u_2^{(1)}\left(M_n(\cdot-x_{1n})\right)\right\|_{L^\infty\left(\Omega\right)}\geq c_0,
\]
where $c_0>0$ is a constant.  We first suppose that   $M_n^{-1}\left\|u_{1n}-M_n u_1^{(1)}\left(M_n(\cdot-x_{1n})\right)\right\|_{L^\infty\left(\Omega\right)}\geq c_0$.  Then there exists a local maximum point $x_{2n}$ of $u_{1n}$ such that
\[
M_n|x_{1n}-x_{2n}|\to +\infty,\;\;\; M_n^{-1}u_{1n}(x_{2n})\geq c_0/2>0.
\]
Denote
\[
\left\{\begin{array}{l}
\tilde{u}_{1n}=M_n^{-1}u_{1n}(M_n^{-1}y+x_{2n}),\\
\tilde{u}_{2n}=M_n^{-1}u_{2n}(M_n^{-1}y+x_{2n}).
\end{array}\right.
\]
Then $(\tilde{u}_{1n},\tilde{u}_{2n})$ satisfies
\[
\left\{\begin{array}{l}
-\Delta\tilde{u}_{1n}+M_n^{-2}\Big(V_{1n}\left(M_n^{-1}y+x_{2n})-\mu_n\right)\Big)\tilde{u}_{1n}=a_{1n}\tilde{u}_{1n}^3+\beta_n\tilde{u}_{1n}\tilde{u}_{2n}^2+M_n^{-2}\varepsilon_n \tilde{u}_{2n},\\
-\Delta\tilde{u}_{2n}+M_n^{-2}\Big(V_{2n}\left(M_n^{-1}y+x_{2n})-\mu_n\right)\Big)\tilde{u}_{2n}=a_{2n}\tilde{u}_{2n}^3+\beta_n\tilde{u}_{2n}\tilde{u}_{1n}^2+M_n^{-2}\varepsilon_n \tilde{u}_{1n}.
\end{array}\right.
\]
As in Steps~1 and 2, we can prove that there exists $c_2>0$ such that
\[
M_n^{-2}\big(V_{in}\left(x_{2n})-\mu_n\right)\big)\to c_2,
\]
  $M_n d(x_{2n},\partial \Omega)\to +\infty$, and
for any $R>0$,
\begin{equation}\label{2-9}
M_n^{-1}\left\|u_{1n}-M_n u_1^{(2)}\left(M_n(\cdot-x_{2n})\right)\right\|_{L^\infty\left(B_{RM_n^{-1}}(x_{2n})\right)}
+M_n^{-1}\left\|u_{2n}-M_n u_2^{(2)}\left(M_n(\cdot-x_{2n})\right)\right\|_{L^\infty\left(B_{RM_n^{-1}}(x_{2n})\right)}\to 0,
\end{equation}
where $u_1^{(2)}, u_2^{(2)}\in C^2_{loc}(\mathbb{R}^2)$ satisfy
\begin{equation}\label{2-10}
\left\{\begin{array}{ll}
-\Delta u_1^{(2)}+c_2 u_1^{(2)}=a_1 (u_1^{(2)})^3+\beta u_1^{(2)} (u_2^{(2)})^2& \hbox{ in } \mathbb{R}^2,\\
-\Delta u_2^{(2)}+c_2 u_2^{(2)}=a_2 (u_2^{(2)})^3+\beta u_2^{(2)} (u_1^{(2)})^2& \hbox{ in } \mathbb{R}^2.
\end{array}\right.
\end{equation}

In the case $M_n^{-1}\left\|u_{2n}-M_n u_2^{(1)}\left(M_n(\cdot-x_{1n})\right)\right\|_{L^\infty\left(\Omega\right)}\geq  c_0$, we similarly obtain \eqref{2-9}-\eqref{2-10}.

\textbf{Step 4. }Repeating the procedure in Step 3, we obtain $m$ points $x_{1n},\cdots, x_{mn}$ such that
\[
M_n|x_{in}-x_{jn}|\to +\infty\;\; \text{for}\;\; i\neq j, \quad M_nd(x_{in},\partial\Omega)\to +\infty,
\]
\[
M_n^{-2}\left(V_{in}(x_{jn})-\mu_n\right)\to c_j>0,
\]
and for any $R>0$,
\begin{equation}\label{2-11}
M_n^{-1}\left\|u_{1n}-M_n u_1^{(j)}\left(M_n(\cdot-x_{jn})\right)\right\|_{L^\infty\left(B_{RM_n^{-1}}(x_{jn})\right)}
+M_n^{-1}\left\|u_{2n}-M_n u_2^{(j)}\left(M_n(\cdot-x_{jn})\right)\right\|_{L^\infty\left(B_{RM_n^{-1}}(x_{jn})\right)}\to 0,
\end{equation}
where  $\left(u_1^{(j)},u_2^{(j)}\right)$ satisfies
 \begin{equation}\label{2-12}
\left\{\begin{array}{ll}
-\Delta u_1^{(j)}+c_j u_1^{(j)}=a_1(u_1^{(j)})^3+\beta u_1^{(j)} (u_2^{(j)})^2& \hbox{ in } \mathbb{R}^2,\\
-\Delta u_2^{(j)}+c_j u_2^{(j)}=a_2 (u_2^{(j)})^3+\beta u_2^{(j)} (u_1^{(j)})^2& \hbox{ in } \mathbb{R}^2.
\end{array}\right.
\end{equation}

By Lemma~\ref{lemA-1} in Appendix~A, we have
 \begin{equation}\label{2-13}
 \int_{\mathbb{R}^2}((u_1^{(j)})^2+ (u_2^{(j)})^2)\geq \frac{a^*}{\max\{a_1,a_2,\beta\}}.
\end{equation}
Then by  \eqref{2-11} and \eqref{2-13}, we obtain
\begin{equation}\label{2-14b}
\begin{aligned}
1=&\int_{\Omega}(u_{1n}^2+u_{2n}^2)\geq\sum\limits_{j=1}^{m}\int_{B_{RM_n^{-1}}(x_{jn})}(u_{1n}^2+u_{2n}^2)\\
=&\sum\limits_{j=1}^{m}\int_{B_{RM_n^{-1}}(x_{jn})} \Big[\left(M_n u_1^{(j)}(M_n(x-x_{jn}))\right)^2+    \left(M_n u_2^{(j)}(M_n(x-x_{jn}))\right)^2\Big] +o_n(1)\\
=&\sum\limits_{j=1}^m\int_{B_{R}(0)} \left((u_1^{(j)})^2+(u_2^{(j)})^2\right)+o_n(1)=\sum\limits_{j=1}^m\int_{\mathbb{R}^2} \left((u_1^{(j)})^2+(u_2^{(j)})^2\right)+o_n(1)+o_R(1)\\
\geq &\frac{ma^*}{\max\{a_1,a_2,\beta\}}+o_R(1)+o_n(1).
\end{aligned}
\end{equation}
Thus,   this procedure terminates after finitely many steps, say $m$.  And
\begin{equation}\label{2-14}
\begin{split}
M_n^{-1}\left\|u_{1n}-\sum\limits_{j=1}^{m}M_n u_1^{(j)}\left(M_n(\cdot-x_{jn})\right)\right\|_{L^\infty(\Omega)}
+M_n^{-1}\left\|u_{2n}-\sum\limits_{j=1}^{m}M_n u_2^{(j)}\left(M_n(\cdot-x_{jn})\right)\right\|_{L^\infty(\Omega)}\to 0.
\end{split}
\end{equation}
Moreover, from  $a_1,a_2\in (0,a^*)$  and \eqref{2-14b}, we obtain
\begin{equation}\label{2-14a}
\beta\geq a^*.
\end{equation}

\textbf{Step 5.}  We claim that  $|x_{in}|\leq C$ for $i=1,\dots, m$.
The proof is similar to that of  Lemma 2.3 in \cite{cyy}. For completeness, we  sketch it here.

Suppose, for contradiction, that $|x_{1n}|\to +\infty$. For the sequence $\{x_{in}\}$, we have  $|x_{in}-x_{1n}|\to 0$, $i=1,\cdots, i_0$,  $|x_{in}-x_{1n}|\geq \sigma_{i}>0$, $i=i_0+1,\cdots, m$.
Let $\theta>0$ be such that $2\theta<\min_{i\ge i_0+1}\{\sigma_{i}\}$.
Then by Step 4, we have $a_{1n}u_{1n}^2+\beta_n u_{2n}^2\leq \sigma_0^2M_n^2$ and $a_{2n}u_{2n}^2+\beta_n u_{1n}^2\leq \sigma_0^2M_n^2$ \hbox{ in } $B_{\theta}(x_{1n})\setminus B_{\frac{\theta}{4}}(x_{1n})$.
Moreover  by $|x_{1n}|\to +\infty$,  we obtain $V_{jn}(x_{1n})=(1+o(1))V_1(x_{1n})$, which
together with   $(V_3)$  yields
\[
M_n^{-2}\left(V_{jn}(x)-\mu_n\right)=M_n^{-2}\left((1+o_\theta(1))V_1(x_{1n})-\mu_n\right)\geq 3\sigma_0^2\hbox{ in }B_{\theta}(x_{1n}),
\]
for some $\sigma_0>0$. Thus,
\begin{align*}
-\Delta u_{1n}+2\sigma_0^2M_n^2 u_{1n}\leq \varepsilon_n u_{2n} \hbox{ and }
-\Delta u_{2n}+2\sigma_0^2 M_n^2u_{2n}\leq  \varepsilon_n u_{1n}\hbox{ in }B_{\theta}(x_{1n})\setminus B_{\frac{\theta}{4}}(x_{1n}).
\end{align*}
So we get
\begin{equation}\label{2-15}
-\Delta (u_{1n}+u_{2n})+ (2\sigma_0^2 M_n^2-\varepsilon_n\big)(u_{1n}+u_{2n})\leq 0  \hbox{ in }B_{\theta}(x_{1n})\setminus B_{\frac{\theta}{4}}(x_{1n}).
\end{equation}
Since $2\sigma_0^2 M_n^2-\varepsilon_n> \sigma_0^2 M_n^2$ and $u_{1n},u_{2n}\geq 0$,  it follows from the maximum principle and $L^p$ estimates that
\begin{equation}\label{2-16}
0\leq u_{1n}(x),u_{2n}(x)\leq CM_n\left(e^{\frac{1}{8}\sigma_0\theta M_n-\frac{1}{2}\sigma_0M_n|x-x_{1n}|}+e^{\frac{1}{2}\sigma_0M_n|x-x_{1n}|-\frac{1}{2}\sigma_0\theta M_n}\right)\hbox{ in }B_{\theta}(x_{1n})\setminus  B_{\frac{\theta}{4}}(x_{1n}),
\end{equation}
and
\begin{equation}\label{2-17}
|\nabla u_{1n}|, |\nabla u_{2n}|\leq CM_n^2e^{-\frac{1}{16}\sigma_0\theta M_n}\hbox{ on }\partial B_{\frac{\theta}{2}}(x_{1n}).
\end{equation}

Now  we set $\vec{n}=\frac{x_{1n}}{|x_{1n}|}$ and recall the following Pohozaev identity
\begin{equation}\label{2-18}
\begin{aligned}
&-\int_{\partial B_{\frac{\theta}{2}}(x_{1n})}\frac{\partial u_{1n}}{\partial \vec{n}}\frac{\partial u_{1n}}{\partial \vec{\nu}}+\frac{\partial u_{2n}}{\partial \vec{n}}\frac{\partial u_{2n}}{\partial \vec{\nu}}+\frac{1}{2}\int_{\partial B_{\frac{\theta}{2}}(x_{1n})}(|\nabla u_{1n}|^2+|\nabla u_{2n}|^2)\vec{n}\cdot\vec{\nu}\\
&-\int_{\partial B_{\frac{\theta}{2}}(x_{1n})}\frac{\beta_n}{2}u_{1n}^2u_{2n}^2\vec{n}\cdot\vec{\nu}-\int_{\partial B_{\frac{\theta}{2}}(x_{1n})}\left(\frac{a_{1n}}{4}u_{1n}^4+\frac{a_{2n}}{4}u_{2n}^4\right)\vec{n}\cdot\vec{\nu}\\
&+\frac{1}{2}\int_{\partial B_{\frac{\theta}{2}}(x_{1n})}\left(V_1(x)-\mu_n\right)u_{1n}^2\vec{n}\cdot\vec{\nu}
+\frac{1}{2}\int_{\partial B_{\frac{\theta}{2}}(x_{1n})}\left(V_2(x)-\mu_n\right)u_{2n}^2\vec{n}\cdot\vec{\nu}
\\&-\varepsilon_n \int_{\partial B_{\frac{\theta}{2}}(x_{1n})} \vec{n}\cdot\vec{\nu} (u_{1n} u_{2n})\\
=&\frac{1}{2}\int_{B_{\frac{\theta}{2}}(x_{1n})}\Bigl(\frac{\partial V_1}{\partial \vec{n}}u_{1n}^2+\frac{\partial V_2}{\partial \vec{n}}u_{2n}^2\Bigr).
\end{aligned}
\end{equation}
Combining \eqref{2-16}-\eqref{2-17}, we deduce
\begin{equation}\label{2-19}
\begin{aligned}
\text{LHS of \eqref{2-18}}= O\left( M_n^4e^{-\frac{1}{8}\sigma_0\theta M_n} \right).
\end{aligned}
\end{equation}
On the other hand,  
\[
\begin{aligned}
\frac{1}{2}\int_{B_{\frac{\theta}{2}}(x_{1n})}\Bigl(\frac{\partial V_1}{\partial \vec{n}}u_{1n}^2+\frac{\partial V_2}{\partial \vec{n}}u_{2n}^2\Bigr)
=\frac{1}{2}(1+o_\theta(1))(\nabla V_1(x_{1n})\cdot \vec{n})\int_{B_{\frac{\theta}{2}}(x_{1n})}(u_{1n}^2+u_{2n}^2).
\end{aligned}
\]
Noting that
\[
\int_{B_{\frac{\theta}{2}}(x_{1n})}(u_{1n}^2+u_{2n}^2)\geq a_0>0,
\]
we obtain for large $n$ and small $\theta$,
\begin{equation}\label{2-20}
\frac{1}{2}\int_{B_{\frac{\theta}{2}}(x_{1n})}\Bigl(\frac{\partial V_1}{\partial \vec{n}}u_{1n}^2+\frac{\partial V_2}{\partial \vec{n}}u_{2n}^2\Bigr)\geq \frac{a_0\sigma_0}{4},
\end{equation}
which, combining  \eqref{2-18} and  \eqref{2-19}, leads to
\[
 CM_n^4e^{-\frac{1}{8}\sigma_0\theta M_n}\geq \frac{a_0\sigma_0}{4}.
\]
This is a contradiction.
Thus  $|x_{in}|\leq C$ for $i=1,\dots, m$.

Since $\{x_{jn}\}$ is bounded, we see that
\[
c:=\lim\limits_{n\to\infty}M_n^{-2}(V_{in}(x_{jn})-\mu_n)=\lim\limits_{n\to\infty}-M_n^{-2}\mu_n,
\]
namely, $c_1=c_2=\cdots=c_m=c>0$. Moreover, for $j=1,\cdots,m$,  $(u_1^{(j)},u_2^{(j)})$ is a positive solution of
\begin{equation}\label{2-21}
\left\{\begin{array}{ll}
-\Delta u_1+cu_1=a_1u_1^3+\beta u_1 u_2^2& \hbox{ in } \mathbb{R}^2,\\
-\Delta u_2+cu_2=a_2u_2^3+\beta u_2u_1^2& \hbox{ in } \mathbb{R}^2.
\end{array}\right.
\end{equation}
Since $a_1,a_2\in (0,a^*)$, by \eqref{2-14a} we have
\begin{equation}\label{2-22}
 \beta>\max\{a_1,a_2\}.
 \end{equation}
 Then, it follows from Theorem~\ref{lemB-1} that
\[
(u_1,u_2)=\left(\sqrt{\frac{\beta-a_2}{\beta^2-a_1a_2}c}Q(\sqrt{c}x),\sqrt{\frac{\beta-a_1}{\beta^2-a_1a_2}c}Q(\sqrt{c}x)\right).
\]
Hence from \eqref{2-14} and  $\sqrt{-\mu_n}=\sqrt{c}M_n(1+o(1))$,
  \eqref{ad2-3} follows.

\textbf{Step 6.}
Using the comparison principle, we can prove that
\[
0\leq u_{1n}(x)+u_{2n}(x)\leq CM_n\sum_{j=1}^me^{-\sigma_0M_n|x-x_{jn}|} \hbox{ for }x\in\Omega\setminus \cup_{j=1}^m B_{RM_n^{-1}}(x_{jn}).
\]
As a result,
\[
\begin{aligned}
1=&\int_{\Omega}u_{1n}^2+u_{2n}^2=\sum_{j=1}^m \int_{B_{RM_n^{-1}}(x_{jn})}(u_{1n}^2+u_{2n}^2) +o_R(1)\\
=&\sum_{j=1}^m \int_{B_{RM_n^{-1}}(x_{jn})}\left(M_n\sqrt{\frac{\beta-a_2}{\beta^2-a_1a_2}c}Q(\sqrt{c}M_n(\cdot-x_{jn}))\right)^2
\\
&+\sum_{j=1}^m \int_{B_{RM_n^{-1}}(x_{jn})}\left(M_n\sqrt{\frac{\beta-a_1}{\beta^2-a_1a_2}c}Q(\sqrt{c}M_n(\cdot-x_{jn}))\right)^2
+o_n(1)+o_R(1)
\\
=&\frac{ma^*\big(2\beta-(a_1+a_2)\big)}{\beta^2-a_1a_2}+o_R(1)+o_n(1).
\end{aligned}
\]
Letting $n\to +\infty$ and then $R\to +\infty$, we obtain
\begin{equation}\label{1-5-8}
\frac{ma^*\big(2\beta-(a_1+a_2)\big)}{\beta^2-a_1a_2}=1\;\;\;\text{or}\;\; \beta^2-2ma^*\beta+ma^*(a_1+a_2)-a_1a_2=0.
\end{equation}
Solving \eqref{1-5-8} yields
\[
\beta=ma^*\pm \sqrt{(ma^*-a_1)(ma^*-a_2)}.
\]
By \eqref{2-22}, we have
\[
\beta=\beta_m^*:=ma^*+ \sqrt{(ma^*-a_1)(ma^*-a_2)}.
\]
Hence, we complete the proof.
\end{proof}

\smallskip
\subsection{The case $a_1 \in (0,a^*)$, $a_{2}\in [a^*,2a^*)$ and  $\beta\in [a_1, \beta_2^*)$. } {$\\$}\label{sec2-2}

In this subsection, we consider the case $a_1 \in (0,a^*)$, $a_{2}\in [a^*,2a^*)$,
 and  $\beta\in [a_1, \beta_2^*)$.   We prove that blow-up occurs only at $a_2= a^*$,
and after scaling, $\{(u_{1n},u_{2n})\}$ converges locally to the
semi-trivial solution of  \eqref{B-1}.

Let $z_{in}\in \Omega$ be such that $$
u_{in}(z_{in})=M_{in}:=\max\limits_{x\in\Omega}u_{in}(x), \quad\rho_n=\frac{M_{1n}}{M_{2n}}.
$$
We have the following result.

\begin{theorem}\label{th2-2}
There exist $c>0$ and   $x_{n}\in \Omega$ with $|x_{n}|\leq C$ such that, up to a subsequence,
\begin{equation}\label{2-24}
\begin{split}
& -\mu_n M_n^{-2}\to c,\;\;\;
\sqrt{-\mu_n}d(x_{n},\partial \Omega)\to+\infty,
\\
&\left\|\frac{u_{2n}}{\sqrt{-\mu_n}}-a_2^{-\frac{1}{2}}Q\left(\sqrt{-\mu_n }(\cdot-x_{n})\right)\right\|_{L^\infty(\Omega)}+\frac{1}{\sqrt{-\mu_n}}\left\|u_{1n}\right\|_{L^\infty(\Omega)}\to 0,
\end{split}
\end{equation}
as $n\to \infty$. Moreover, we have $a_2= a^*$.
\end{theorem}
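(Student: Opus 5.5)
I will follow the blow-up scheme of Theorem~\ref{th2-1}: rescale at the maximum points, classify the limit profiles with Theorem~\ref{lemB-1}, and then use the mass constraint to pin down the parameters. Set $M_n=\max\{M_{1n},M_{2n}\}$ and rescale around the point where $M_n$ is attained. As in Steps~1--2 of Theorem~\ref{th2-1}, $\mu_n\le C$ and $M_n^{-2}(V_{in}(x_n)-\mu_n)\to c\ge 0$. The comparison between $V_{1n}$ and $V_{2n}$ again comes from the definition of $\mathcal E$. The limit is a nonnegative solution of \eqref{2-7} with $c_1=c$, on $\mathbb R^2$ or on a half-plane. The half-plane is excluded by the Pohozaev identity of \cite{el}, and $c>0$ follows from the Liouville theorem of \cite{ttvw}. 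Hence $M_n d(x_n,\partial\Omega)\to\infty$.

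\textbf{Classifying each bubble.} Repeating Steps~3--4 of Theorem~\ref{th2-1}, I extract finitely many bubbles $(u_1^{(j)},u_2^{(j)})$, $j=1,\dots,m$, each a nonnegative nonzero solution of \eqref{2-21}. The bound $|x_{jn}|\le C$ follows from the Pohozaev argument of Step~5, so all the constants $c_j$ equal $c$. By Theorem~\ref{lemB-1}, each bubble is one of three types:
\begin{itemize}
\item semi-trivial $(a_1^{-1/2}\sqrt c\,Q(\sqrt c\,\cdot),0)$, with mass $a^*/a_1>1$;
\item semi-trivial $(0,a_2^{-1/2}\sqrt c\,Q(\sqrt c\,\cdot))$, with mass $a^*/a_2$;
\item synchronized nontrivial, which requires $\beta>\max\{a_1,a_2\}$ and has mass $a^*(2\beta-a_1-a_2)/(\beta^2-a_1a_2)$.
\end{itemize}
If $\beta<a_2$, nontrivial solutions may still exist, but Theorem~\ref{lemB-1} should give them a mass lower bound. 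The first type is excluded immediately by the constraint, as in \eqref{2-5}.

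\textbf{The mass count (main obstacle).} I must show that only a single bubble of the second type survives. Since $a_2<2a^*$, we have $a^*/a_2>1/2$, so two bubbles of the second type would carry mass greater than $1$. For a synchronized bubble, the mass is at most $1$ exactly when $\beta^2-2a^*\beta+a^*(a_1+a_2)-a_1a_2\ge 0$, which means $\beta\ge\beta_1^*$-type root. I would check that under $a_2\in[a^*,2a^*)$ and $\beta<\beta_2^*$ the mass of one synchronized bubble exceeds $1/2$; this reduces to $\beta<\beta_2^*$, since $\beta_2^*$ is where the mass equals $1/2$. Then one synchronized bubble alone would have to carry mass exactly $1$. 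But $a_2\ge a^*$ makes the quadratic's root structure incompatible: the root $\beta_1^*$ is not real or falls below $a_2$. This case analysis over $\beta\in[a_1,\beta_2^*)$, including $\beta\le a_2$, where Theorem~\ref{lemB-1} must exclude nontrivial solutions or bound their mass, is the main obstacle. Also, a synchronized bubble combined with a second-type bubble would carry mass greater than $1$.

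\textbf{Conclusion.} Once the only surviving configuration is a single second-type bubble, $u_{1n}/M_n\to0$ in $L^\infty$ by the sup-norm decomposition \eqref{2-14}. The exponential decay away from the bubbles (Step~6) gives mass conservation $a^*/a_2=1$, so $a_2=a^*$. Finally, $\sqrt{-\mu_n}=\sqrt c\,M_n(1+o(1))$ yields \eqref{2-24}.
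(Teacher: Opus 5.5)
\textbf{The gap.} The step you flag as the main obstacle is left open, and the route you suggest for it would fail. For $a_1\le\beta\le a_2$ you write that bubbles with both components positive ``may still exist'' and that a mass lower bound should dispose of them. But Lemma~\ref{lemA-1} only gives mass $\ge a^*/\max\{a_1,a_2,\beta\}=a^*/a_2\le 1$. That bound cannot exclude a \emph{single} such bubble carrying all the mass. Such a bubble would directly contradict the conclusion $\|u_{1n}\|_{L^\infty(\Omega)}/\sqrt{-\mu_n}\to 0$.

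What you need is nonexistence. Since $a_1<a^*\le a_2$, the limit system has no solution with both components positive when $a_1\le\beta\le a_2$. This is the remark after Theorem~\ref{lemB-1}, and the paper uses it in Step~1 of Lemma~\ref{lem2-5}. It takes one line: multiply the first equation by $u_2$ and the second by $u_1$, subtract, and integrate, to get
$\int u_1u_2\big[(a_1-\beta)u_1^2+(\beta-a_2)u_2^2\big]=0$.
Here the bracket is $\le 0$, and it is not identically zero because $a_1<a_2$, which is a contradiction. With this, any bubble with nonzero first component is either the excluded $(u_1,0)$ type, or it forces $\beta>a_2$. In the latter case Theorem~\ref{lemB-1} gives the synchronized profile.

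\textbf{Comparison with the paper.} Once that hole is filled, your argument is a valid reorganization of the paper's proof. The paper splits into cases on $\rho_n=M_{1n}/M_{2n}$ (Lemmas~\ref{lem2-4}, \ref{lem2-5} and the final $\rho_n\to0$ argument). You instead run a single bubble decomposition normalized by $\max\{M_{1n},M_{2n}\}$ and classify every bubble. Your mass count is correct and is simpler than the paper's Lemma~\ref{lem2-3}.

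The synchronized mass $a^*(2\beta-a_1-a_2)/(\beta^2-a_1a_2)$ has derivative with the sign of $-(\beta-a_1)(\beta-a_2)$. So it decreases strictly on $(a_2,\infty)$, from $a^*/a_2\le1$ as $\beta\to a_2^+$ to $1/2$ at $\beta_2^*$. On $(a_2,\beta_2^*)$ it therefore lies strictly between $1/2$ and $1$. Combined with $a^*/a_2>1/2$, this rules out every configuration except a single $(0,u_2)$-bubble. That bubble gives $a^*/a_2=1$, i.e.\ $a_2=a^*$. The paper instead solves the quadratic for $m$ synchronized bubbles plus one $u_2$-bubble and shows its only admissible root is $\ge\beta_2^*$.
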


To prove Theorem~\ref{th2-2}, we need the following technical lemma.

\begin{lemma}\label{lem2-3}
Suppose that $a_1\in (0,a^*)$, $a_2\in (a^*,2a^*)$.  Suppose further that $m\geq 1$.
Then   the following equation for   $\beta\in [a_2,+\infty)$
\begin{equation}\label{2-25}
\frac{a^*}{a_2}+\frac{ma^* \big(2\beta -(a_1+a_2)\big)}{\beta^2-a_1a_2}=1
\end{equation}
has a unique solution $\beta=\beta_m^+$, where
\[
\beta_m^+=c_m+\sqrt{(c_m-a_1)(c_m-a_2)}, \;\;\; c_m= \frac{m a_2a^*}{a_2-a^*}.
\]
Moreover, we have $\beta_m^+\geq \beta_2^*$.
\end{lemma}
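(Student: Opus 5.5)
The plan is to clear denominators in \eqref{2-25}, which turns it into a quadratic in $\beta$. Then I locate $a_2$ relative to the two roots of that quadratic, and finish with a monotonicity argument for the comparison with $\beta_2^*$.

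\textbf{Step 1: reduction to a quadratic.} For $\beta\ge a_2$ we have $\beta^2-a_1a_2\ge a_2(a_2-a_1)>0$, because $a_1<a^*<a_2$. So multiplying by $\beta^2-a_1a_2$ is legitimate. Moreover $1-\frac{a^*}{a_2}=\frac{a_2-a^*}{a_2}>0$. Hence \eqref{2-25} is equivalent, on $[a_2,+\infty)$, to
\[
\beta^2-a_1a_2=c_m\bigl(2\beta-(a_1+a_2)\bigr),\qquad c_m=\frac{ma_2a^*}{a_2-a^*},
\]
that is, to $f(\beta):=\beta^2-2c_m\beta+c_m(a_1+a_2)-a_1a_2=0$.

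\textbf{Step 2: the roots and which one lies in $[a_2,+\infty)$.} Since $a_2<2a^*$, we have $a_2-a^*<a^*$, and therefore $c_m>ma_2\ge a_2>a_1$. Thus $(c_m-a_1)(c_m-a_2)>0$, and the two real roots are $c_m\pm\sqrt{(c_m-a_1)(c_m-a_2)}$. A direct computation gives
\[
f(a_2)=a_2^2-a_1a_2-c_m(a_2-a_1)=(a_2-a_1)(a_2-c_m)<0 .
\]
So $a_2$ lies strictly between the two roots. Consequently exactly one root lies in $[a_2,+\infty)$, namely the larger one $\beta_m^+=c_m+\sqrt{(c_m-a_1)(c_m-a_2)}$. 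This proves existence and uniqueness.

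\textbf{Step 3: comparison with $\beta_2^*$.} Set $h(c)=c+\sqrt{(c-a_1)(c-a_2)}$. This function is increasing on $[a_2,+\infty)$, since both summands are nondecreasing there. Because $a_2<2a^*$, \eqref{1-5} gives $\beta_2^*=h(2a^*)$. It remains to check $c_m\ge 2a^*$. Since $c_m\ge c_1$, it suffices to have
\[
\frac{a_2a^*}{a_2-a^*}\ge 2a^* \iff a_2\le 2a^*,
\]
which holds. Hence $\beta_m^+=h(c_m)\ge h(2a^*)=\beta_2^*$.

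No step is a real obstacle. The only point needing care is the sign of $f(a_2)$, which is what excludes the smaller root; this uses $a_2>a_1$ and $c_m>a_2$, and both follow from $a_1<a^*<a_2<2a^*$.
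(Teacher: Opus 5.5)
Your proof is correct and follows essentially the same route as the paper: clear denominators to get $(\beta-c_m)^2=(c_m-a_1)(c_m-a_2)$, keep the larger root, and use $c_m\ge 2a^*$ to compare with $\beta_2^*$. Two of your steps make explicit what the paper only asserts: the sign computation $f(a_2)=(a_2-a_1)(a_2-c_m)<0$ justifies the bound $c_m-\sqrt{(c_m-a_1)(c_m-a_2)}<a_2$, and the monotonicity of $c\mapsto c+\sqrt{(c-a_1)(c-a_2)}$ on $[a_2,+\infty)$ justifies the final inequality $\beta_m^+\ge\beta_2^*$.
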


\begin{proof} Equation \eqref{2-25} is equivalent to
\[
\beta^2-2c_m\beta +(a_1+a_2)c_m-a_1a_2=0,\;\;\text{or}\;\; (\beta-c_m)^2=(c_m-a_1)(c_m-a_2).
\]
Since $a_1\in (0,a^*)$ and $a_2\in (a^*,2a^*)$, we have
\[
c_m= \frac{m a_2a^*}{a_2-a^*}\geq  \frac{a_2 }{a_2-a^*}a^*> \frac{ 2a^*}{2a^*-a^*}a^*=2a^*>a_2>a_1.
\]
Solving \eqref{2-25} yields
\[
c_m\pm \sqrt{(c_m-a_1)(c_m-a_2)}.
\]
On the other hand, since $a_1<c_m-\sqrt{(c_m-a_1)(c_m-a_2)}< a_2$, we have
\[
\beta=\beta_m^+:=c_m+\sqrt{(c_m-a_1)(c_m-a_2)}
\]
if $\beta\in [a_2,+\infty)$. Since $c_m\geq 2a^*$, we have
\[
\beta_m^+=c_m+\sqrt{(c_m-a_1)(c_m-a_2)}\geq 2a^*+\sqrt{(2a^*-a_1)(2a^*-a_2)}=\beta_2^*.
\]
\end{proof}

We now  prove Theorem~\ref{th2-2}.
Up to a subsequence, we may assume that
\begin{equation}\label{2-26}
\rho_n\to 0,\;\;\;\text{or}\;\;\; \rho_n\to \rho_0>0,\;\;\;\text{or}\;\;\; \rho_n\to +\infty.
\end{equation}

\begin{lemma} \label{lem2-4}
The case $\rho_n\to+\infty$ cannot occur.
\end{lemma}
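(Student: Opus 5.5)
We argue by contradiction, exactly as in Step~1 of the proof of Theorem~\ref{th2-1}. Suppose $\rho_n=M_{1n}/M_{2n}\to+\infty$. We set $M_n:=M_{1n}$, $x_{1n}:=z_{1n}$, and rescale
\[
\tilde u_{in}(y)=M_n^{-1}u_{in}(M_n^{-1}y+x_{1n}),\qquad i=1,2,
\]
so that $(\tilde u_{1n},\tilde u_{2n})$ solves \eqref{2-4}. By construction $\tilde u_{1n}(0)=1=\max\tilde u_{1n}$, while $0\le\tilde u_{2n}\le \rho_n^{-1}\to0$ uniformly.

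\textbf{Step 1: bound the rescaled potential.} At $y=0$ we have $-\Delta\tilde u_{1n}(0)\ge0$. The first equation of \eqref{2-4} then gives
\[
M_n^{-2}\bigl(V_{1n}(x_{1n})-\mu_n\bigr)\le a_{1n}+\beta_n\rho_n^{-2}+M_n^{-2}\varepsilon_n\rho_n^{-1}\le C .
\]
Since the right-hand sides of \eqref{2-1} are nonnegative, $\mu_n\le\lambda_{1}(V_{1n})\le C$. Hence, up to a subsequence, $M_n^{-2}(V_{1n}(x_{1n})-\mu_n)\to c_1\ge0$. On compact sets in $y$ the potential $V_{1n}$ varies by $o(M_n^2)$: this follows from the local $C^2$ bounds in $\mathcal E$ when $|x_{1n}|\le C$, and from $(V_3)$ together with the closeness $V_{1n}/V_1\to1$ when $|x_{1n}|\to\infty$. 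The rescaled coefficient therefore converges locally uniformly to the constant $c_1$.

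\textbf{Step 2: pass to the limit.} We use elliptic estimates and the facts $\tilde u_{2n}\to0$ and $M_n^{-2}\varepsilon_n\to0$. Up to a subsequence, $\tilde u_{1n}\to u_1$ in $C^2_{loc}$ of either $\mathbb R^2$ or a half-plane $\Pi$, where
\[
-\Delta u_1+c_1u_1=a_1u_1^3,\qquad 0\le u_1\le u_1(0)=1 .
\]
The half-plane case (with Dirichlet data) is excluded by the Liouville/Pohozaev result already used in Step~1 of Theorem~\ref{th2-1} (cf.\ \cite{el}). On $\mathbb R^2$, if $c_1=0$ the nonnegative nontrivial solution contradicts the Liouville theorem (cf.\ \cite{ttvw}), so $c_1>0$. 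Uniqueness of positive solutions then gives $u_1=a_1^{-1/2}c_1^{1/2}Q(c_1^{1/2}y)$.

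\textbf{Step 3: mass contradiction.} We undo the scaling on $B_{RM_n^{-1}}(x_{1n})$, as in \eqref{2-5}:
\[
1\ge\int_{B_{RM_n^{-1}}(x_{1n})}u_{1n}^2=\frac1{a_1}\int_{B_{c_1^{1/2}R}}Q^2+o_n(1)=\frac{a^*}{a_1}+o_R(1)+o_n(1).
\]
Because $a_1\in(0,a^*)$, the right-hand side exceeds $1$ for $R$ and $n$ large, which is a contradiction. Hence $\rho_n\to+\infty$ cannot occur.

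The only step needing care is the uniform control of the rescaled potential in Step~1 when $|x_{1n}|\to\infty$, since the potentials $V_{in}$ are convex combinations from $\mathcal E$. This is handled by the comparability built into $\mathcal E$ and $(V_3)$, exactly as in the claim following \eqref{10-4-8}. Everything else is verbatim from Step~1 of Theorem~\ref{th2-1}. Note that the hypothesis $a_2\ge a^*$ plays no role here: only $a_1<a^*$ is used, which is why the case $\rho_n\to+\infty$ is excluded while $\rho_n\to0$ must be treated separately.
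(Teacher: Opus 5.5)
Your proposal is correct and follows the paper's proof, which simply refers to Step~1 of the proof of Theorem~\ref{th2-1}. That step runs exactly as you describe: rescale at the maximum of $u_{1n}$, bound $M_n^{-2}(V_{1n}(x_{1n})-\mu_n)$ and pass to its limit, exclude the half-plane limit and $c_1=0$, identify the limit profile as a rescaled $Q$, and derive the mass contradiction $a^*/a_1>1$. Your explicit control of the rescaled potential when $|x_{1n}|\to\infty$ (via $(V_3)$ and the comparability built into $\mathcal E$) fills in a detail the paper leaves implicit at that point.
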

\begin{proof} The proof is given in  Step 1 of the proof of Theorem~\ref{th2-1}.
\end{proof}

\begin{lemma}\label{lem2-5}
The case $\rho_n\to \rho_0>0$ cannot occur.
\end{lemma}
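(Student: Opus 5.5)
We argue by contradiction: assume $\rho_n\to\rho_0>0$, so $M_{1n}$ and $M_{2n}$ are comparable. Set $M_n:=M_{1n}$ and run Steps 2--5 of the proof of Theorem~\ref{th2-1} essentially unchanged. Those steps did not use $a_2<a^*$ except in the final mass count. We blow up around local maxima, use the Pohozaev identity of \cite{el} to exclude half-plane limits, and use the Liouville theorem of \cite{ttvw} to get $c_j>0$. We use the Pohozaev identity \eqref{2-18} together with $(V_2)$, $(V_3)$ to show $|x_{jn}|\le C$. This yields finitely many points $x_{1n},\dots,x_{ln}$ with mutual distances $\gg M_n^{-1}$, a common limit $c=\lim(-\mu_nM_n^{-2})>0$, and limit profiles $(u_1^{(j)},u_2^{(j)})$ which are nonnegative solutions of \eqref{2-21}. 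Termination of the procedure follows from \eqref{2-13}.

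Next we classify the profiles. Each $(u_1^{(j)},u_2^{(j)})$ is a nonnegative solution of \eqref{2-21}, so by Theorem~\ref{lemB-1} it is one of three types. The first is nontrivial, of synchronized form $\bigl(\sqrt{\tfrac{\beta-a_2}{\beta^2-a_1a_2}c}\,Q,\sqrt{\tfrac{\beta-a_1}{\beta^2-a_1a_2}c}\,Q\bigr)(\sqrt c\,\cdot)$; this requires $\beta>a_2$, or in the degenerate case $\beta=a_1=a_2$ a family. The second is $(0,\sqrt{c/a_2}Q(\sqrt c\cdot))$. The third is $(\sqrt{c/a_1}Q(\sqrt c\cdot),0)$. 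The third type has $L^2$ mass $a^*/a_1>1$, so it is excluded immediately. Since $\rho_0>0$, $u_{1n}$ has a genuine peak of height $\sim M_n$ somewhere, and at that point the profile must have a nonzero first component. Hence at least one profile is of nontrivial type, which forces $\beta>a_2$ (using $\beta\ge a_1$ and $a_2\ge a^*>a_1$). Let $m\ge1$ be the number of nontrivial bumps and $p\ge0$ the number of bumps of type $(0,Q)$.

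Then we count mass. Using the exponential decay away from the peaks (Step~6 of Theorem~\ref{th2-1}), we get
\[
1=\frac{p\,a^*}{a_2}+\frac{m a^*\bigl(2\beta-(a_1+a_2)\bigr)}{\beta^2-a_1a_2}.
\]
If $p=0$, this is \eqref{1-5-8}, so $\beta=\beta_m^*$. But $\beta^*_m$ requires $ma^*\ge a_2>a^*$, hence $m\ge2$, and then $\beta\ge\beta_2^*$, contradicting $\beta<\beta_2^*$. For $p=1$ we invoke Lemma~\ref{lem2-3}, which gives $\beta=\beta_m^+\ge\beta_2^*$, again a contradiction. For $p\ge2$ we have $2a^*/a_2>1$ since $a_2<2a^*$, so the mass exceeds $1$. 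When $a_2=a^*$ and $p=1$, the first term alone equals $1$, leaving no mass for $m\ge1$, which is also impossible. In every case we reach a contradiction.

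The main obstacle is the classification step. We must ensure Theorem~\ref{lemB-1} covers every nonnegative solution of \eqref{2-21} when $a_2\ge a^*$ and $\beta\ge a_1$, including the borderline $\beta=a_2$, where no positive synchronized solution exists. We also need the limits to be genuine whole-plane solutions even though the weak coupling term $\varepsilon_n$ enters after scaling; this is harmless, since it scales as $M_n^{-2}\varepsilon_n\to0$. I would first check that the borderline $\beta\in[a_1,a_2]$ yields only semi-trivial profiles, which then contradicts $\rho_0>0$ directly.
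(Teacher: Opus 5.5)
Your proposal is correct and follows essentially the paper's argument, with the bookkeeping unified: the paper first collects the $u_1$-peaks (all of synchronized type, which forces $\beta>a_2$), then uses \eqref{2-31} to show $u_{2n}$ must carry an extra semi-trivial peak, caps the number of such peaks at one by mass, and finally contradicts Lemma~\ref{lem2-3} — exactly your split into $p=0$, $p=1$, $p\ge 2$. One small slip: for $a_2=a^*$ your claim $ma^*\ge a_2>a^*$ fails, so the case $p=0$, $m=1$ is not excluded by solvability; it does, however, yield $\beta=a^*=a_2$, which contradicts $\beta>a_2$, as the paper notes.
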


\begin{proof} Suppose that $\rho_n\to \rho_0>0$.  We shall follow
the proof of Theorem~\ref{th2-1} closely. The only difference is that
the arguments there  apply to $u_1$, but not to $u_2$, because
here we assume that $a_2\ge a^*$.

\textbf{Step 1. } Denote  $x_{1n}=z_{1n}$, $M_n=M_{1n}$. Then,
in view of  $a_1\in (0,a^*)$,  similar to \eqref{2-8}, we can prove that  $M_n^{-1}d(x_{1n},\partial \Omega)\to +\infty$, and for any $R>0$,
\[
M_n^{-1}\left\|u_{1n}-M_n u_1^{(1)}\left(M_n(\cdot-x_{1n})\right)\right\|_{L^\infty\left(B_{RM_n^{-1}}(x_{1n})\right)}
+M_n^{-1}\left\|u_{2n}-M_n u_1^{(2)}\left(M_n(\cdot-x_{1n})\right)\right\|_{L^\infty\left(B_{RM_n^{-1}}(x_{1n})\right)}\to 0,
\]
where $(u_1^{(1)},u_1^{(2)})$ is a positive solution of
\[
\left\{\begin{array}{ll}
-\Delta u_1^{(1)}+c_1 u_1^{(1)}=a_1(u_1^{(1)})^3+\beta u_1^{(1)} (u_1^{(2)})^2& \hbox{ in } \mathbb{R}^2,\\
-\Delta u_1^{(2)}+c_1 u_1^{(2)}=a_2(u_1^{(2)})^3+\beta u_1^{(2)} (u_1^{(1)})^2& \hbox{ in } \mathbb{R}^2,
\end{array}\right.
\]
for some $c_1>0$. Note that for $\beta\in [\min \{a_1,a_2\}, \max\{a_1,a_2\}]$, the above equation does not have any positive solution. So we have
\begin{equation}\label{2-28}
\beta>\max\{a_1,a_2\}.
\end{equation}

\textbf{Step 2.} If $M_n^{-1}\left\|u_{1n}-M_n u_1\left(M_n(\cdot-x_{1n})\right)\right\|_{L^\infty(\Omega)}\geq \sigma_0>0$, then we can find a local maximum point $x_{2n}$ of $u_{1n}$,
 such that $M_n^{-1}u_{1n}(x_{2n})>\sigma_0/2>0$ and $M_n|x_{1n}-x_{2n}|\to +\infty$.  Similar to Step 1, there exists $c_2>0$ such that
 for any $R>0$,
\[
M_n^{-1}\left\|u_{1n}-M_n u_1^{(2)}\left(M_n(\cdot-x_{2n})\right)\right\|_{L^\infty\left(B_{RM_n^{-1}}(x_{2n})\right)}
+M_n^{-1}\left\|u_{2n}-M_nu_2^{(2)}\left(M_n(\cdot-x_{2n})\right)\right\|_{L^\infty\left(B_{RM_n^{-1}}(x_{2n})\right)}\to 0,
\]
where $(u_1^{(2)}, u_2^{(2)})$ is a positive solution of
\[
\left\{\begin{array}{ll}
-\Delta u_1^{(2)}+c_2 u_1^{(2)}=a_1(u_1^{(2)})^3+\beta u_1^{(2)} (u_2^{(2)})^2& \hbox{ in } \mathbb{R}^2,\\
-\Delta u_2^{(2)}+c_2 u_2^{(2)}=a_2(u_2^{(2)})^3+\beta u_2^{(2)}(u_1^{(2)})^2& \hbox{ in } \mathbb{R}^2.
\end{array}\right.
\]
Moreover, we have $M_n^{-1}d(x_{2n},\partial \Omega)\to +\infty$.

Repeating  this procedure, we find that there are $m\geq 1$ and  points $x_{1n},\cdots,x_{mn}$ such that
\[
M_n|x_{in}-x_{jn}|\to +\infty\;\; \text{for}\;\; i\neq j, \quad M_nd(x_{in},\partial\Omega)\to +\infty,
\]
\[
M_n^{-2}\left(V_{1n}(x_{jn})-\mu_n\right)\to c_j>0,
\]
and for any $R>0$,
\[
M_n^{-1}\left\|u_{1n}-M_n u_1^{(j)}\left(M_n(\cdot-x_{jn})\right)\right\|_{L^\infty\left(B_{RM_n^{-1}}(x_{jn})\right)}
+M_n^{-1}\left\|u_{2n}-M_n u_2^{(j)}\left(M_n(\cdot-x_{jn})\right)\right\|_{L^\infty\left(B_{RM_n^{-1}}(x_{jn})\right)}\to 0,
\]
where  $(u_1^{(j)}, u_2^{(j)})$ satisfies
\begin{equation}\label{2-29}
\left\{\begin{array}{ll}
-\Delta u_1^{(j)}+c_j u_1^{(j)}=a_1(u_1^{(j)})^3+\beta u_1^{(j)} (u_2^{(j)})^2& \hbox{ in } \mathbb{R}^2,\\
-\Delta u_2^{(j)}+c_j u_2^{(j)}=a_2(u_2^{(j)})^3+\beta u_2^{(j)} (u_1^{(j)})^2& \hbox{ in } \mathbb{R}^2.
\end{array}\right.
\end{equation}
for some $c_j>0$. Similar to Step 4 in the proof of Theorem \ref{th2-1},
this procedure terminates after finitely many steps, say $m$. And for any $R>0$,
\begin{equation}\label{2-30}
\begin{split}
&M_n^{-1}\left\|u_{1n}-\sum\limits_{j=1}^{m}M_n
u_1^{(j)}\left(M_n(\cdot-x_{jn})\right)\right\|_{L^\infty(\Omega)}\to 0,
\\
&M_n^{-1}\left\|u_{2n}-M_nu_2^{(j)}\left(M_n(\cdot-x_{jn})\right)\right\|_{L^\infty\left(B_{RM_n^{-1}}(x_{jn})\right)}\to 0,\;\;\; j=1,\cdots,m.
\end{split}
\end{equation}
It is worth pointing out that we cannot claim that the second
relation in \eqref{2-30} holds in $L^\infty(\Omega)$ as in \eqref{2-14}, because in view
of $a_2\ge a^*$, the argument cannot apply to $u_{2n}$.

\textbf{Step 3.}
Suppose that
\[
M_n^{-1}\left\|u_{2n}-\sum\limits_{j=1}^{m}M_n u_2^{(j)}\left(M_n(\cdot-x_{jn})\right)\right\|_{L^\infty(\Omega)}\to 0.
\]
Then we are in the same situation as in \eqref{2-14}. As a result,
 Steps~5 and 6 in the proof of Theorem \ref{th2-1} yield
\begin{equation}\label{2-31}
 (\beta-ma^*)^2=(ma^*-a_1)(ma^*-a_2).
\end{equation}
See \eqref{1-5-8}.

Note that $a_1\in (0,a^*)$ and $a_2\in [a^*,2a^*)$. Then for $m=1$,  \eqref{2-31} has a solution if and only if $a_2=a^*$, in which case $\beta=a^*$.  This contradicts \eqref{2-28}. Thus, $m\geq 2$.   As a result,
\[
\beta=ma^*+ \sqrt{(ma^*-a_1)(ma^*-a_2)}\geq 2a^*+ \sqrt{(2a^*-a_1)(2a^*-a_2)} =\beta_2^*,
\]
which contradicts $\beta<\beta_2^*$.  Hence,  we have proved that
\begin{equation}\label{2-32}
M_n^{-1}\left\|u_{2n}-\sum\limits_{j=1}^{m}M_n
u_2^{(j)}\left(M_n(\cdot-x_{jn})\right)\right\|_{L^\infty(\Omega)}\geq \sigma_0>0.
\end{equation}

\textbf{Step 4.}
From  \eqref{2-32},  there exists a local maximum point $y_{1n}$ of $u_{2n}$ such that
\[
M_n|y_{1n}-x_{in}|\to +\infty,~~i=1,2,\dots,m,~~~~\hbox{ and }~~~~ M_n^{-1}u_{2n}(y_{1n})\geq \sigma_0/2>0.
\]
Denote
\[
\left\{\begin{array}{l}
\tilde{u}_{1n}=M_n^{-1}u_{1n}(M_n^{-1}y+y_{1n}),\\
\tilde{u}_{2n}=M_n^{-1}u_{2n}(M_n^{-1}y+y_{1n}).
\end{array}\right.
\]
Then $(\tilde{u}_{1n},\tilde{u}_{2n})$ satisfies
\begin{equation}\label{2-27}
\left\{\begin{array}{l}
-\Delta\tilde{u}_{1n}+M_n^{-2}\Big(V_{1n}\left(M_n^{-1}y+y_{1n})-\mu_n\right)\Big)\tilde{u}_{1n}=a_{1n}\tilde{u}_{1n}^3+\beta_n\tilde{u}_{1n}\tilde{u}_{2n}^2+M_n^{-2}\varepsilon_n \tilde{u}_{2n},\\
-\Delta\tilde{u}_{2n}+M_n^{-2}\Big(V_{2n}\left(M_n^{-1}y+y_{1n})-\mu_n\right)\Big)\tilde{u}_{2n}=a_{2n}\tilde{u}_{2n}^3+\beta_n\tilde{u}_{2n}\tilde{u}_{1n}^2+M_n^{-2}\varepsilon_n \tilde{u}_{1n}.
\end{array}\right.
\end{equation}
As in Step 1 in the proof of Theorem \ref{th2-1}, it can be verified that
\[
M_n^{-2}(V_{2n}(y_{1n})-\mu_n)\to b_1\geq 0.
\]
Moreover, it follows from \eqref{2-30} that for any $R>0$,  $M_n^{-1}\|u_{1n}\|_{L^\infty\left(B_{RM_n^{-1}}(y_{1n})\right)}\to 0$. Thus,
by the Liouville theorem, we can prove
\[
b_1>0,\hbox{ and } M_nd(y_{1n},\partial \Omega)\to +\infty,
\]
\[
M_n^{-1}\|u_{2n}-M_na_2^{-\frac{1}{2}}b_1^{\frac{1}{2}}Q(b^{\frac{1}{2}}_1M_n(\cdot-y_{1n}))\|_{L^\infty\left(B_{RM_n^{-1}}(y_{1n})\right)}\to 0.
\]

\textbf{Step 5. } Repeat the procedure in Step 4, we obtain  $k$ points $y_{1n},\cdots, y_{kn}$ such that
\[
M_n|y_{in}-y_{jn}|\to +\infty, \hbox{ for }i\neq j,\hspace{8pt} M_nd(y_{in},\partial\Omega)\to +\infty,
\]
\[
M_n^{-2}\left(V_{2n}(y_{in})-\mu_n\right)\to b_i>0,\hspace{8pt}M_n|x_{jn}-y_{in}|\to +\infty, \hbox{ for all }i,j,
\]
and
\[
M_n^{-1}\|u_{1n}\|_{L^\infty\left(B_{RM_n^{-1}}(y_{in})\right)}+M_n^{-1}\left\|u_{2n}-M_na_2^{-\frac{1}{2}}b_i^{\frac{1}{2}}Q\left(b^{\frac{1}{2}}_iM_n(\cdot-y_{in})\right)\right\|_{L^\infty\left(B_{RM_n^{-1}}(y_{in})\right)}\to 0.
\]
Then combining with \eqref{2-13}, we have
\begin{equation}\label{2-34}
\begin{aligned}
1=&\int_{\Omega}(u_{1n}^2+u_{2n}^2)\geq\sum\limits_{j=1}^{m}\int_{B_{RM_n^{-1}}(x_{jn})}(u_{1n}^2+u_{2n}^2)+\sum\limits_{i=1}^{k}\int_{B_{RM_n^{-1}}(y_{in})}u_{2n}^2\\
=&\sum\limits_{j=1}^{m}\int_{B_{RM_n^{-1}}(x_{jn})} \Big[\left(M_n u_1^{(j)}(M_n(x-x_{jn}))\right)^2+    \left(M_n u_2^{(j)}(M_n(x-x_{jn}))\right)^2\Big]\\
&+\sum\limits_{i=1}^k\int_{B_{RM_n^{-1}}(y_{in})}\left( M_na_2^{-\frac{1}{2}}b_i^{\frac{1}{2}}Q\left(b^{\frac{1}{2}}_iM_n(\cdot-y_{in})\right)\right)^2 +o_n(1)\\
=&\sum\limits_{j=1}^m\int_{B_{R}(0)} \left((u_1^{(j)})^2+(u_2^{(j)})^2\right)+\sum\limits_{i=1}^k\frac{1}{a_2}\int_{B_{b_i^{1/2}R}(0)}Q^2+o_n(1)\\
=&\sum\limits_{j=1}^m\int_{\mathbb{R}^2}\left((u_1^{(j)})^2+(u_2^{(j)})^2\right) +\frac{ka^*}{a_2}+o_n(1)+o_R(1)\\
\geq &\frac{ma^*}{\max\{a_1,a_2,\beta\}}+\frac{ka^*}{a_2}+o_R(1)+o_n(1).
\end{aligned}
\end{equation}
Thus,   this procedure terminates after finitely many  $k$ steps. Moreover,  since $a^*\leq a_2<2a^*$,
we have $k=1$. Thus as $n\to +\infty$
\begin{equation}\label{2-35}
\begin{split}
&M_n^{-1}\left\|u_{1n}-\sum\limits_{j=1}^{m}M_n u_1^{(j)}\left(M_n(\cdot-x_{jn})\right)\right\|_{L^\infty(\Omega)}\to 0,
\\
&M_n^{-1}\left\|u_{2n}-\sum\limits_{j=1}^{m}M_n u_2^{(j)}\left(M_n(\cdot-x_{jn})\right)-M_na_2^{-\frac{1}{2}}b_1^{\frac{1}{2}}Q\left(b^{\frac{1}{2}}_1M_n(\cdot-y_{1n})\right)\right\|_{L^\infty(\Omega)}\to 0.
\end{split}
\end{equation}

\textbf{Step 6.} Now  as in Step 5 of Theorem \ref{th2-1}, we can prove
that $|x_{jn}|$, $|y_{1n}|\leq C$, for $i=1,\cdots, m$. Thus
\[
c:=\lim\limits_{n\to\infty}M_n^{-2}(V_{in}(x_{jn})-\mu_n)
=\lim\limits_{n\to\infty}M_n^{-2}(V_{in}(y_{1n})-\mu_n)=\lim\limits_{n\to\infty}(-M_n^{-2}\mu_n).
\]
That is $b_1=c_1=c_2=\cdots=c_m=c>0$.
Moreover, we have  $\beta>\max\{a_1,a_2\}=a_2$, and for $j=1,\cdots,m$,
\[
(u_1^{(j)},u_2^{(j)})=\left(\sqrt{\frac{\beta-a_2}{\beta^2-a_1a_2}c}Q(\sqrt{c}x),\sqrt{\frac{\beta-a_1}{\beta^2-a_1a_2}c}Q(\sqrt{c}x)\right).
\]
Furthermore, \hbox{ for }$x\notin\left(\cup_{j=1}^m B_{RM_n^{-1}}(x_{jn})\cup B_{RM_n^{-1}}(y_{1n})\right)$, we can use the comparison principle to prove that
\[
0\leq u_{1n}(x)+u_{2n}(x)\leq  CM_ne^{-\sigma_0M_n|x-y_{1n}|}+CM_n\sum_{j=1}^me^{-\sigma_0M_n|x-x_{jn}|}.
\]
As a result
\[
\begin{aligned}
1=&\int_{\Omega}(u_{1n}^2+u_{2n}^2)=\sum\limits_{j=1}^{m}\int_{B_{RM_n^{-1}}(x_{jn})}(u_{1n}^2+u_{2n}^2)+\int_{B_{RM_n^{-1}}(y_{1n})}u_{2n}^2+o_R(1)\\
=&\sum_{j=1}^m \int_{B_{RM_n^{-1}}(x_{jn})}\left(M_n\sqrt{\frac{\beta-a_2}{\beta^2-a_1a_2}c}Q(\sqrt{c}M_n(\cdot-x_{jn}))\right)^2\\
&+\sum_{j=1}^m \int_{B_{RM_n^{-1}}(x_{jn})}\left(M_n\sqrt{\frac{\beta-a_1}{\beta^2-a_1a_2}c}Q(\sqrt{c}M_n(\cdot-x_{jn}))\right)^2\\
&+\int_{B_{RM_n^{-1}}(y_{1n})}\left( M_na_2^{-\frac{1}{2}}c^{\frac{1}{2}}Q\left(c^{\frac{1}{2}}M_n(\cdot-y_{1n})\right)\right)^2 +o_R(1)+o_n(1)\\
=&\frac{ma^*\big(2\beta-(a_1+a_2)\big)}{\beta^2-a_1a_2}+\frac{a^*}{a_2}+o_R(1)+o_n(1).
\end{aligned}
\]
Since $\beta_2^*>\beta>a_2$ and $m\geq 1$, by Lemma \ref{lem2-3}, we obtain a contradiction.
\end{proof}

\begin{proof}[Proof of Theorem~\ref{th2-2}]

Combining  Lemma \ref{lem2-4} and \ref{lem2-5}, we have $\rho_n\to  0$.  Thus, similar to Step 1 in the proof of Theorem~\ref{th2-1} again, there exists $c>0$ such that for any $R>0$,
\[
M_{n}^{-1}\left\|u_{2n}-M_{n} a_2^{-\frac{1}{2}}c^{\frac{1}{2}}Q\left(c^{\frac{1}{2}}M_{n}(\cdot-x_{n})\right)\right\|_{L^\infty\left(B_{RM_n^{-1}}(x_{n})\right)}\to 0,
\]
where $x_n=z_{2n}$,
\[
M_n^{-2}(V_{2n}(x_{n})-\mu_n)\to c>0,\hspace{8pt}M_nd(x_n,\partial\Omega)\to +\infty.
\]
We claim that
\begin{equation}\label{2-36}
M_{n}^{-1}\left\|u_{2n}-M_{n} a_2^{-\frac{1}{2}}c^{\frac{1}{2}}Q\left(c^{\frac{1}{2}}M_{n}(\cdot-x_n)\right)\right\|_{L^\infty(\Omega)}\to 0.
\end{equation}
If not, then we can prove  that there exists another local maximum point $y_{n}$ and a positive constant $\tilde{c}$ such that $M_n|x_n-y_n|\to +\infty$ and for any $R>0$
\[
M_{n}^{-1}\left\|u_{2n}-M_{n} a_2^{-\frac{1}{2}}\tilde{c}^{\frac{1}{2}}Q\left(\tilde{c}^{\frac{1}{2}}M_{n}(\cdot-y_{n})\right)\right\|_{L^\infty\left(B_{RM_n^{-1}}(y_{n})\right)}\to 0.
\]
As a result,
\[
\begin{split}
1=\int_\Omega (u_{1n}^2+u_{2n}^2)\geq \int_\Omega u_{2n}^2\geq \int_{B_{RM_n^{-1}}(x_{n})}u_{2n}^2+\int_{B_{RM_n^{-1}}(y_{n})}u_{2n}^2=\frac{2a^*}{a_2}+o_n(1)+o_R(1)>1
\end{split}
\]
 which   is impossible since $a_2\in [a^*,2a^*)$. Thus \eqref{2-36} holds. Moreover, as in Steps~ 5 and 6  of Theorem \ref{th2-1}, we can show $|x_{n}|\leq C$ and
\[
0\leq u_{1n}(x)+u_{2n}(x)\leq  CM_ne^{-\sigma_0M_n|x-x_{n}|},\hbox{ for }x\in \Omega\setminus B_{RM_n^{-1}}(x_n).
\]
 Thus
\[
c:=\lim\limits_{n\to\infty}M_n^{-2}(V_{2n}(x_n)-\mu_n)=\lim\limits_{n\to\infty}M_n^{-2}(-\mu_n),
\]
and
\[
\begin{split}
1=\int_\Omega (u_{1n}^2+u_{2n}^2)=\int_{B_{RM_n^{-1}}(x_{n})}(u_{1n}^2+u_{2n}^2)+o_R(1)=\frac{a^*}{a_2}+o_n(1)+o_R(1).
\end{split}
\]
We complete the proof.
\end{proof}

\smallskip
\subsection{The case $a_1 \in (0,a^*)$, $a_{2}\in (0,+\infty)$ and  $\beta\in (0, a_1)$} {$\\$}

In this subsection, we consider the case $a_1 \in (0,a^*)$, $a_{2}\in (0,+\infty)$,
 and  $\beta\in (0, a_1)$. 
  We will prove that blow-up occurs only at $a_2= ka^*$,
and after scaling, $\{(u_{1n},u_{2n})\}$ converges locally to the
semi-trivial solution of  \eqref{B-1}.

We have the following result.
\begin{theorem}\label{adth2-2}
There exist $c>0$ and   $x_{1n}, x_{2n},\dots,x_{kn}\in \Omega$ with $|x_{in}|\leq C$
 such that, up to a subsequence,
\begin{equation}\label{ad2-24}
\begin{split}
& -\mu_n M_n^{-2}\to c,\;\;\;
\sqrt{-\mu_n}d(x_{in},\partial \Omega)\to+\infty,
\\
&\left\|\frac{u_{2n}}{\sqrt{-\mu_n}}-\sum\limits_{i=1}^ka_2^{-\frac{1}{2}}Q\left(\sqrt{-\mu_n }(\cdot-x_{in})\right)\right\|_{L^\infty(\Omega)}+\frac{1}{\sqrt{-\mu_n}}\left\|u_{1n}\right\|_{L^\infty(\Omega)}\to 0,
\end{split}
\end{equation}
as $n\to +\infty$. Moreover, we have $a_2=ka^*$ for some $k\geq 1$.
\end{theorem}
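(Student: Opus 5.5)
We follow the same scheme as in the proofs of Theorems~\ref{th2-1} and \ref{th2-2}, with the dichotomy on $\rho_n=M_{1n}/M_{2n}$ from \eqref{2-26}. The first step is to exclude $\rho_n\to+\infty$ by Lemma~\ref{lem2-4}, i.e. Step~1 of Theorem~\ref{th2-1}. There, blow-up driven by $u_{1n}$ alone forces a profile $a_1^{-1/2}c_1^{1/2}Q(c_1^{1/2}\cdot)$ carrying mass $a^*/a_1>1$, which is impossible since $a_1<a^*$.

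The second step excludes $\rho_n\to\rho_0>0$. At the maximum point $x_{1n}$ of $u_{1n}$ we rescale by $M_n=M_{1n}$ as in Step~2 of Theorem~\ref{th2-1}. If $u_{2n}$ vanishes at that scale near $x_{1n}$, we get the same mass contradiction $a^*/a_1>1$. Otherwise the limit is a pair $(u_1^{(1)},u_2^{(1)})$ with both components positive that solves \eqref{2-7} on $\mathbb R^2$ with $c_1>0$; the half-plane is excluded by the Pohozaev identity and $c_1=0$ by the Liouville theorem. The key point is that for $\beta<a_1\le\max\{a_1,a_2\}$, Theorem~\ref{lemB-1} in Appendix~B forces such a positive solution to exist only if $\beta\in(0,\min\{a_1,a_2\})$, in which case it is the known synchronized solution with both components a multiple of $Q$. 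I expect this to be the main obstacle. The coupled ground state must be shown to carry mass larger than 1, or else excluded some other way. The mass of the synchronized solution is $a^*(2\beta-a_1-a_2)/(\beta^2-a_1a_2)$, and for $\beta<\min\{a_1,a_2\}$ the numerator and denominator are both negative. The quantity equals $a^*\big(\tfrac{a_1-\beta}{a_1a_2-\beta^2}+\tfrac{a_2-\beta}{a_1a_2-\beta^2}\big)$, and I would show it is at least $a^*/a_1>1$ when $a_1\le a_2$. Indeed, $(a_1-\beta)+(a_2-\beta)\ge (a_1a_2-\beta^2)/a_1$ is equivalent to $a_1^2-2a_1\beta+\beta^2\ge0$, which always holds. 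If $a_2<a_1$, the bound $\ge a^*/a_2$ holds symmetrically, and I would combine it with $a_2<a_1<a^*$. This gives the contradiction by the mass estimate \eqref{2-13}--\eqref{2-14b}, with the lower bound from Lemma~\ref{lemA-1} replaced by this exact mass. Hence $\rho_n\to0$.

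The third step treats $\rho_n\to0$. We rescale at the maximum point $x_{1n}$ of $u_{2n}$ with $M_n=M_{2n}$. Since $M_n^{-1}u_{1n}\to0$ uniformly, the limit is $a_2^{-1/2}c_1^{1/2}Q(c_1^{1/2}\cdot)$ with $c_1>0$, and $M_nd(x_{1n},\partial\Omega)\to\infty$ by the Liouville theorem. We then iterate as in Steps~3--4 of Theorem~\ref{th2-1}. Any other bump of $u_{2n}$ of height $\gtrsim M_n$ again gives a $Q$-profile, while bumps of $u_{1n}$ at scale $M_n$ are excluded as above, either by the mass $a^*/a_1>1$ or by the coupled-profile contradiction. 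Each $u_{2n}$ bump carries mass $a^*/a_2$, so the process stops after $k$ steps with $k\le a_2/a^*$. The Pohozaev argument of Step~5 of Theorem~\ref{th2-1}, using $(V_2)$, $(V_3)$ and the class $\mathcal E$, gives $|x_{in}|\le C$. Consequently all the constants $c_i$ equal $c=\lim(-\mu_nM_n^{-2})$.

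The final step uses the comparison principle for $u_{1n}+u_{2n}$ away from the bumps, as in \eqref{2-15}--\eqref{2-16}, to get exponential decay outside $\cup_iB_{RM_n^{-1}}(x_{in})$. The estimate absorbs the $\varepsilon_n$ terms as there, and it yields $M_n^{-1}\|u_{1n}\|_{L^\infty(\Omega)}\to0$. The mass identity $1=ka^*/a_2+o_n(1)+o_R(1)$ then gives $a_2=ka^*$, and \eqref{ad2-24} follows from $\sqrt{-\mu_n}=\sqrt c\,M_n(1+o(1))$ after normalizing $c$ into the scaling.
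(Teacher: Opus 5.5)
Your proposal is correct. For $\rho_n\to+\infty$ and $\rho_n\to0$ it follows the paper; the genuine difference is in excluding $\rho_n\to\rho_0>0$.

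The paper reruns the whole multi-peak decomposition of Steps 3--6 of Lemma~\ref{lem2-5}:
\begin{itemize}
\item it collects $m\ge1$ coupled peaks and any pure $u_2$-peaks;
\item it bounds their locations by the Pohozaev argument and proves exponential decay;
\item it derives the exact mass identity $\frac{ka^*}{a_2}+\frac{ma^*(2\beta-(a_1+a_2))}{\beta^2-a_1a_2}=1$, and rules this out for $\beta<\min\{a_1,a_2\}$ via Lemma~\ref{adlem2-3} together with \eqref{ad2-33}.
\end{itemize}

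You instead observe that a single synchronized coupled peak already carries mass $a^*\frac{(a_1-\beta)+(a_2-\beta)}{a_1a_2-\beta^2}\ge a^*/a_1>1$. So the purely local bound $1\ge\int_{B_{RM_n^{-1}}(x_{1n})}(u_{1n}^2+u_{2n}^2)$ is already contradictory. You need no further peaks, no location bounds and no decay estimates. Your inequality also explains why Lemma~\ref{adlem2-3} holds: each coupled term alone exceeds $1$.

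The paper's route has the advantage of being literally the same machinery as Lemma~\ref{lem2-5} for $\beta>\max\{a_1,a_2\}$. There a single coupled peak can have mass below $1$ (it equals $1$ at $\beta_1^*$ and decreases to $0$ as $\beta\to\infty$), so the full identity is genuinely needed. Your shortcut exploits the specific regime $\beta<\min\{a_1,a_2\}$.

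Three minor points:
\begin{itemize}
\item Your inequality reduces to $(a_1-\beta)^2\ge0$, which does not use $a_1\le a_2$, so the case split is unnecessary.
\item Nonexistence of positive coupled solutions for $\min\{a_1,a_2\}\le\beta<a_1$ comes from the closing remark of Appendix~B, not from Theorem~\ref{lemB-1} itself.
\item $M_n^{-1}\|u_{1n}\|_{L^\infty(\Omega)}\to0$ is immediate from $\rho_n\to0$. What the global comparison estimate actually supplies is $\int_\Omega u_{1n}^2\to0$, which your final mass identity $1=ka^*/a_2+o(1)$ uses tacitly.
\end{itemize}
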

To prove Theorem \ref{adth2-2}, we need the following lemma.
\begin{lemma}\label{adlem2-3}
Suppose that $a_1\in (0,a^*)$, $a_2\in (ka^*,+\infty)$ for some  $k\geq 0$.  Suppose further that $m\geq 1$.
Then  the following equation (with respect to $\beta$)
\begin{equation}\label{ad2-25}
\frac{ka^*}{a_2}+\frac{ma^* \big(2\beta -(a_1+a_2)\big)}{\beta^2-a_1a_2}=1
\end{equation}
has no solution in the range $\beta\in (0,\min\{a_1,a_2\})$.
\end{lemma}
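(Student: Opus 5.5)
The plan is a direct sign-and-size comparison: I will show that for every $\beta\in(0,\min\{a_1,a_2\})$ the left-hand side of \eqref{ad2-25} is strictly greater than $1$. The bound comes entirely from the second term. The first term $\frac{ka^*}{a_2}$ is simply nonnegative, so the hypothesis $a_2>ka^*$ is not needed for this part.

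First I check the signs. Since $0<\beta<\min\{a_1,a_2\}$, we have $\beta^2<a_1a_2$, so the denominator $\beta^2-a_1a_2$ is negative. Also $2\beta<a_1+a_2$, so the numerator $2\beta-(a_1+a_2)$ is negative too. Hence
\[
\frac{2\beta-(a_1+a_2)}{\beta^2-a_1a_2}=\frac{a_1+a_2-2\beta}{a_1a_2-\beta^2}>0 .
\]

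Next I compare this quotient with $\frac1{a_1}$. Clearing the positive denominators $a_1$ and $a_1a_2-\beta^2$ gives
\[
a_1(a_1+a_2-2\beta)-(a_1a_2-\beta^2)=a_1^2-2a_1\beta+\beta^2=(a_1-\beta)^2>0,
\]
which is strict because $\beta<a_1$. Therefore
\[
\frac{a_1+a_2-2\beta}{a_1a_2-\beta^2}>\frac1{a_1}.
\]

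Finally I combine this with $m\ge1$, $a_1\in(0,a^*)$ and $k\ge0$:
\[
\frac{ka^*}{a_2}+\frac{ma^*\big(2\beta-(a_1+a_2)\big)}{\beta^2-a_1a_2}>\frac{ma^*}{a_1}\ge\frac{a^*}{a_1}>1 .
\]
So \eqref{ad2-25} has no solution with $\beta\in(0,\min\{a_1,a_2\})$.

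The only step requiring any idea is spotting the perfect square $(a_1-\beta)^2$ in the comparison with $\frac1{a_1}$. Everything else is sign bookkeeping, so I do not expect a real obstacle.
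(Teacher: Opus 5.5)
Your argument is correct and complete, but it takes a different route from the paper's.

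The paper clears denominators, using $a_2>ka^*$ so that $1-\frac{ka^*}{a_2}>0$. It rewrites \eqref{ad2-25} as $(\beta-c_{m,k})^2=(c_{m,k}-a_1)(c_{m,k}-a_2)$ with $c_{m,k}=\frac{ma^*}{1-ka^*/a_2}\ge ma^*>a_1$. It then does a case analysis on $c_{m,k}$ versus $a_2$ and places the explicit roots $c_{m,k}\pm\sqrt{(c_{m,k}-a_1)(c_{m,k}-a_2)}$ to the right of $\min\{a_1,a_2\}$. This mirrors Lemma~\ref{lem2-3}, where the explicit root is used to compare with $\beta_2^*$.

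You instead bound the left-hand side below by $\frac{ma^*}{a_1}>1$ on the whole interval, via the identity $a_1(a_1+a_2-2\beta)-(a_1a_2-\beta^2)=(a_1-\beta)^2$.

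Your route has three advantages:
\begin{itemize}
\item It is shorter and needs only $k\ge 0$, not $a_2>ka^*$.
\item It avoids a small imprecision in the paper. When $a_1=a_2$, the paper's smaller root equals $\min\{a_1,a_2\}$ rather than exceeding it, which is harmless only because the interval is open.
\item It makes the mechanism behind the lemma's use in Theorem~\ref{adth2-2} transparent. For $\beta<\min\{a_1,a_2\}$, a single synchronized bubble from Theorem~\ref{lemB-1} already has mass $a^*\frac{a_1+a_2-2\beta}{a_1a_2-\beta^2}>\frac{a^*}{\min\{a_1,a_2\}}>1$, using also the companion identity with $(a_2-\beta)^2$. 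So the unit-mass constraint fails whatever the semi-trivial bubbles contribute.
\end{itemize}

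In exchange, the paper's approach gives the full location of the roots, not just their absence from the interval.
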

\begin{proof}
As  in Lemma \ref{lem2-3},  \eqref{ad2-25} is equivalent to
\[
\beta^2-2c_{m,k}\beta+(a_1+a_2)c_{m,k}-a_1a_2=0,\hbox{ or }(\beta-c_{m,k})^2=(c_{m,k}-a_1)(c_{m,k}-a_2),
\]
where
\[
c_{m,k}=\frac{ma^*}{1-\frac{ka^*}{a_2}}\geq ma^*>a_1.
\]
Thus \eqref{ad2-25} has solutions if and only $c_{m,k}\geq a_2$. Moreover, if $c_{m,k}=a_2$, then
\[
\beta=c_{m,k}=a_2=(k+m)a^*>a_1.
\]
If $c_{m,k}>a_2$, then \eqref{ad2-25} admits solutions
\[
\beta_m^+=c_{m,k}+\sqrt{(c_{m,k}-a_1)(c_{m,k}-a_2)}>\max\{a_1,a_2\},
\]
and
\[
\beta_m^-=c_{m,k}-\sqrt{(c_{m,k}-a_1)(c_{m,k}-a_2)}>\min\{a_1,a_2\}.
\]
Therefore, \eqref{ad2-25} has no solution in the range $\beta\in (0,\min\{a_1,a_2\})$.
\end{proof}

\begin{proof}[Proof of Theorem  \ref{adth2-2}]

Since $a_1\in (0, a^*)$,  as in Lemma \ref{lem2-4},
we can prove  that  $\rho_n\to+\infty$ cannot occur.

On the other hand,
\begin{equation}\label{ad2-33}
(\beta-ma^*)^2=(ma^*-a_1)(ma^*-a_2)
\end{equation}
has no solution for $\beta<a_1$. Combining this result and Lemma \ref{adlem2-3},
we can prove  that  $\rho_n\to \rho_0>0$ cannot occur either. See Steps~3--6
in the proof of Lemma \ref{lem2-5}.

Thus $\rho_n\to 0$.  We can proceed  as in the proof of Theorem \ref{th2-2}
to  obtain the desired result.
\end{proof}

Now we are ready to prove Theorem \ref{thm1-1}.

\begin{proof}[Proof of Theorem \ref{thm1-1}]
From Theorems~\ref{th2-1}, \ref{th2-2} and \ref{adth2-2},
for any solution
  $\left(u_1,u_2,\mu\right)\in \mathcal{H}\times \mathbb{R}$ of \eqref{1-9}, we have

\[
\|u_1\|_{L^\infty(\Omega)}+\|u_2\|_{L^\infty(\Omega)}\leq C_\delta.
\]
By \eqref{1-9}, it holds
\[
\begin{split}
\int_\Omega \big( |\nabla u_1|^2+|\nabla u_2|^2+V_1u_1^2+V_2u_2^2
\big)=&\int_\Omega \big(
a_1u_1^4+a_2u_2^4+2\beta u_1^2u_2^2+2\varepsilon u_1u_2
\big)+\mu
\\
\leq& a_1 \|u_1\|_{L^\infty(\Omega)}^2+a_2 \|u_2\|_{L^\infty(\Omega)}^2+2\beta \|u_1\|_{L^\infty(\Omega)}^2 +2\varepsilon +\mu\leq C_\delta^2.
\end{split}
\]
Thus, $\|(u_1,u_2)\|_{\mathcal{H}}\leq C_\delta$.  Moreover, from
\[
\mu=\int_\Omega \big( |\nabla u_1|^2+|\nabla u_2|^2+V_1u_1^2+V_2u_2^2
\big)-\int_\Omega \big(
a_1u_1^4+a_2u_2^4+2\beta u_1^2u_2^2+2\varepsilon u_1u_2
\big),
\]
we get $|\mu|\leq C_\delta. $
\end{proof}


\section{An eigenvalue problem}\label{4}

By Theorem~\ref{thm1-1} and the homotopy invariance of degree,  to  calculate the degree $d_{a_1,a_2,\beta}^\varepsilon$ for any $a_1,a_2\in (0,a^*)$, $\varepsilon\in (0,\varepsilon_0)$ and $\beta \in (\beta_1^*,\beta_2^*)$,
we may take $V_1=V_2=V$,   $\varepsilon=0$ and $a_1=a_2=a_0$ for some $a_0\in (0,a^*)$, then $\beta \in (\beta_1^*, \beta_2^*)= (2a^*-a_0,4a^*-a_0)$ . Moreover, we can further assume that  $V$ satisfies $(H_1)-(H_2)$ and
\begin{equation}\label{eq4-1}
\frac{a^*}{2}<a_0<a^*.
\end{equation}
Thus  $\eqref{1-9}$ becomes
\begin{equation}\label{eq4-2}
\left\{
\begin{array}{ll}
-\Delta u_{1}+V(x)u_{1}=a_{0}u_{1}^3+\beta u_{1}u_{2}^2+\mu u_{1}& \hbox{ in }\Omega,\\
-\Delta u_{2}+V(x)u_{2}=a_{0}u_{2}^3+\beta u_{2}u_{1}^2+\mu u_{2}&\hbox{ in }\Omega,\\
  0\leq u_{1},u_{2}\in H_0^1(\Omega),
 \\
 \int_\Omega (u_1^2+u_2^2)=1.
 \end{array}\right.
\end{equation}

In this section,  we study the Morse index of bubbling solutions of \eqref{eq4-2} for $\beta$ close to $2a^*-a_0$. By \cite[Theorem 4.2]{wy}, for any $\beta>a_0$, all nontrivial solutions of \eqref{eq4-2} are necessarily of the form 
\begin{equation}\label{eq4-3}
(u_1,u_2)=\left(\frac{\sqrt{2}}{2}u,\frac{\sqrt{2}}{2}u\right).
\end{equation}
Moreover,  $u$ solves
\begin{equation}\label{eq4-4}
\left\{
\begin{array}{ll}
-\Delta u+V(x)u=au^3+\mu u& \hbox{ in }\Omega,\\
  0\leq u\in H_0^1(\Omega),
 \\
 \int_\Omega u^2=1,
 \end{array}\right.
\end{equation}
where $a=(a_0+\beta)/2$. Note that $\beta$ is close to $2a^*-a_0$ if and only if $a$ is close to $a^*$. In what follows, we always assume that $a>a_0$, which gives $\beta>a_0$. Then  \eqref{eq4-3} gives a one-to-one map  between  nontrivial solutions of \eqref{eq4-2} and solutions of \eqref{eq4-4}.
On the other hand, since $a_0<a^*$, it follows from Theorem~A in the introduction that
\eqref{eq4-2} has no blow-up semi-trivial solution. Thus,
  as $\beta\to 2a^*-a_0$, all blow-up solutions of \eqref{eq4-2} must be nontrivial and of the form \eqref{eq4-3}.

All solutions of \eqref{eq4-4} blowing up as $a\to a^*$ are
 classified  as follows.

\begin{theoremB}[\cite{lpwy}] Suppose that $V$ satisfies $(V_1)$--$(V_4)$ and $(H_1)$--$(H_2)$.
Then

\begin{itemize}
\item If $(u_a,\mu_a)$ is a blow-up solution of \eqref{eq4-4} as $a\to a^*$, then
$u_a$ blows up at a single point $x_0$, which is a critical point of $V$.
Moreover, $\mu_a\to -\infty$ and

\begin{equation}\label{1-7-8}
(a-a^*)\mu_a^2=\frac12 \Delta V(x_0)\int_{\mathbb R^2} |x|^2 Q^2(x)+ o(1).
\end{equation}

\item
If $x_0\in\Omega$ is a non-degenerate critical point of $V$ with $\Delta V(x_0)\neq 0$, then there is a small constant $\theta>0$ such that \eqref{eq4-4}  has exactly one  solution $(u_a,\mu_a)$ of the following form
\begin{equation}\label{4-5}
u_a=\frac{1}{\sqrt{a}}PQ_{-\mu_a,x_a}+\omega_a,
\end{equation}
provided
\[
(i)\, a\in [a^*-\theta,a^*),\,\, \text{if}\,\, \Delta V(x_0)>0;\quad (ii)\, a\in (a^*,a^*+\theta],\,\, \text{if}\,\, \Delta V(x_0)<0,
\]
where  $PQ_{-\mu_a,x_a}$ is the unique solution
\begin{equation}\label{4-6}
\begin{cases}
-\Delta PQ_{-\mu_a,x_a}-\mu_a PQ_{-\mu_a,x_a}=Q_{-\mu_a,x_a}^3\;\;\; &\text{in}\;\;\Omega,
\\
PQ_{-\mu_a,x_a}=0\;\;\; &\text{on}\;\;\partial \Omega,
\end{cases}
\end{equation}
and $Q_{-\mu_a,x_a}(y)=\sqrt{-\mu_a} Q(\sqrt{-\mu_a} |y-x_a|)$.

Moreover, as $a \to a^*$, we have $\mu_a \to -\infty$, $x_a\to x_0$, and
\begin{equation}\label{4-7}
\|\omega_a\|_{L^\infty(\Omega)}=o(\sqrt{-\mu_a}),\quad \int_\Omega \Bigl(\frac{1}{-\mu_a}|\nabla \omega_a|^2+\Big(1-\frac{V(x)}{\mu_a}\Big)\omega_a^2\Bigr)=O\Big(\frac{1}{\mu_a^2}\Big).
\end{equation}
\end{itemize}
\end{theoremB}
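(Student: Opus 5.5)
The proof of Theorem~B has two parts: a blow-up analysis for the single equation \eqref{eq4-4}, giving the first bullet, and a Lyapunov--Schmidt reduction with a local uniqueness argument, giving the second bullet.

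\textbf{Blow-up analysis.} I would run the argument of Theorem~\ref{th2-1} with one component. Let $M_a=\max u_a\to\infty$ and rescale at a maximum point. As in Step~1 there, the limit is $c^{1/2}a^{*\,-1/2}Q(c^{1/2}\cdot)$ on the whole plane. The half-plane case is excluded by the Liouville theorem, and $-\mu_aM_a^{-2}\to c>0$.

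\begin{itemize}
\item \emph{A single peak.} The mass bound $1\ge k a^*/a+o(1)$ forces exactly one peak, since $a$ is close to $a^*<2a^*$.
\item \emph{Location.} The Pohozaev argument of Step~5, which uses $(V_2)$--$(V_3)$, keeps the peak bounded. The estimate $\sqrt{-\mu_a}\,d(x_a,\partial\Omega)\to\infty$ keeps it away from $\partial\Omega$; here $(V_4)$ is used to rule out boundary concentration at finite scale.
\item \emph{Critical point.} Exponential decay away from $x_a$ and the local Pohozaev identity with the translation field $\partial_j$ on $B_\theta(x_a)$ give $\int \partial_jV\,u_a^2=O(e^{-c\sqrt{-\mu_a}})$. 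Hence $\nabla V(x_0)=0$.
\end{itemize}

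For \eqref{1-7-8} I would expand $u_a=a^{-1/2}Q_{-\mu_a,x_a}+\omega_a$, first proving the energy estimate \eqref{4-7} from the invertibility of the linearized operator on the orthogonal complement of $\{\partial_jQ\}$. I would then use the constraint $\int u_a^2=1$ together with the scaling Pohozaev identity (multiply by $(x-x_a)\cdot\nabla u_a$). The latter yields
\[
\mu_a=\int V u_a^2+\frac12\int (x-x_a)\cdot\nabla V\,u_a^2-\frac a2\int u_a^4+\text{(exponentially small)}.
\]
Comparing this with the Nehari identity $\int|\nabla u_a|^2+\int(V-\mu_a)u_a^2=a\int u_a^4$ and with the sharp Gagliardo--Nirenberg relation for $Q$, the leading terms cancel because $a\approx a^*$. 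What remains at order $\mu_a^{-1}$ is $\frac{a^*-a}{a^*}(-\mu_a)$ against $\frac12\Delta V(x_0)\int|x|^2Q^2/(a^*(-\mu_a))$, after Taylor expanding $V$ to second order at $x_0$, where $\nabla V(x_0)=0$. Multiplying out gives \eqref{1-7-8}.

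\textbf{Existence and local uniqueness.} I would use $(\mu,x)$ as parameters, with $\lambda=\sqrt{-\mu}$ large and $x$ near $x_0$.

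\begin{itemize}
\item \emph{Projection.} Solve the projected equation for $\omega$ orthogonal to $\partial_j PQ_{\lambda^2,x}$ by a contraction, using the nondegeneracy of $Q$. This gives $\omega$ satisfying \eqref{4-7}.
\item \emph{Reduced equations.} Three finite-dimensional equations remain: two translation equations and the mass constraint. The translation equations read $\nabla V(x)+O(\lambda^{-2})=0$ and are solved near $x_0$ by degree or the implicit function theorem, since $x_0$ is non-degenerate.
\item \emph{Mass equation.} This equation is degenerate, because $\int (\lambda Q(\lambda\cdot))^2$ does not depend on $\lambda$ in dimension two; this is the main obstacle. Its expansion is
\[
\frac{a^*}{a}-1+\frac{\Delta V(x_0)\int|y|^2Q^2}{2a^*\lambda^4}\,(1+o(1))=0.
\]
This is solvable for $\lambda$ exactly when $a-a^*$ has the sign of $-\Delta V(x_0)$, which gives cases (i) and (ii). The $\lambda$-derivative of the left side is nonzero because $\Delta V(x_0)\ne0$, so the solution is unique.
\end{itemize}

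Making the mass expansion rigorous requires controlling $\omega$ to order $\lambda^{-2}$ in the weighted norm of \eqref{4-7}. That control is exactly what I expect to be the delicate step.

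For uniqueness, suppose there were two such solutions. I would normalize their difference, pass to the limit in the linearized equation, and use local Pohozaev identities together with the mass constraint to show the limit is zero in the kernel directions $\partial_jQ$, in the scaling direction $Q+y\cdot\nabla Q$, and in the mass direction. The nondegeneracy of $\nabla^2V(x_0)$ and $\Delta V(x_0)\neq0$ enter here, and this yields a contradiction.
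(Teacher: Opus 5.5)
Your overall architecture is the standard route of \cite{lpwy}, which the paper only cites. It has three parts: blow-up analysis for the first bullet, a Lyapunov--Schmidt reduction in $(\mu,x)$ (two translation equations plus the mass constraint), and local uniqueness via local Pohozaev identities. The qualitative part of the first bullet is fine: single peak, interior critical point, $\mu_a\to-\infty$. The gap is in the quantitative core, namely the relation between $a-a^*$ and $\mu_a$ that decides cases (i)--(ii).

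\textbf{Sign and constant.} Your two computations contradict each other.
\begin{itemize}
\item Balancing $\frac{a^*-a}{a^*}(-\mu_a)$ against the positive multiple $\frac12\Delta V(x_0)\int|x|^2Q^2/(a^*(-\mu_a))$ gives $(a^*-a)\mu_a^2=\frac12\Delta V(x_0)\int|x|^2Q^2$. That is the opposite sign to \eqref{1-7-8} as printed, so ``multiplying out gives \eqref{1-7-8}'' is false.
\item Your displayed mass equation has a $+$ in front of the $\Delta V$ term. It would force $a>a^*$ when $\Delta V(x_0)>0$, the opposite of (i). It also contradicts the fact, recalled in the introduction, that for $V=|x|^2$ on $\mathbb R^2$ there is no solution for $a\ge a^*$. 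The solvability condition you then state does not follow from your own equation.
\end{itemize}
The correct computation goes as follows. Set $L=-\Delta+1-3Q^2$ and $R=Q+y\cdot\nabla Q$; then $LR=-2Q$, and $\int QR=0$ in dimension two. The $\lambda^{-4}$-correction $\phi$ to the rescaled profile solves $L\phi=-\frac12\bigl(y\cdot\nabla^2V(x_0)y\bigr)Q$. Using $\int (y\cdot Ay)Q^2=\frac12\mathrm{tr}A\int|y|^2Q^2$, it satisfies $\int Q\phi=-\frac18\Delta V(x_0)\int|y|^2Q^2$. Hence the mass equation is $\frac{a^*}{a}-1-\frac{\Delta V(x_0)}{4a\lambda^4}\int|y|^2Q^2\,(1+o(1))=0$, and $(a^*-a)\mu_a^2\to\frac14\Delta V(x_0)\int|x|^2Q^2$. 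This is consistent with (i)--(ii) and with the use of Theorem B in Section 6, where $a>a^*$ pairs with $\Delta V<0$. The printed \eqref{1-7-8} has the sign reversed, and by this computation its constant $\frac12$ should be $\frac14$. You should have flagged this rather than claimed to reproduce it.

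\textbf{Control of $\omega_a$.} The estimate \eqref{4-7}, which you single out as the delicate input, is not enough to justify either expansion. Write $\lambda^2=-\mu_a$ and $U=a^{-1/2}Q_{-\mu_a,x_a}$. Then \eqref{4-7} gives only $\|\omega_a\|_{-\mu_a}=O(\lambda^{-2})$, hence $\int\omega_a^2=O(\lambda^{-4})$ and $\int|\nabla\omega_a|^2+\lambda^2\int\omega_a^2-3a\int U^2\omega_a^2=O(\lambda^{-2})$. These are exactly the orders of the terms you are balancing:
\begin{itemize}
\item $\frac{a^*}{a}-1\sim\lambda^{-4}$ in the mass equation;
\item $\lambda^2(a^*-a)\sim\lambda^{-2}$ in the identity $\int|\nabla u_a|^2-\frac a2\int u_a^4=\frac12\int(x-x_a)\cdot\nabla V\,u_a^2$.
\end{itemize}
So with \eqref{4-7} alone the sign is undetermined.

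The missing observation is that the $O(\lambda^{-2})$ part of $\omega_a$ is not generic. It comes from the constant $V(x_0)$ in the potential, and after rescaling it equals $\frac{V(x_0)}{2\lambda^2}R+o(\lambda^{-2})$. Along $R$, both $\int QR$ and $\langle LR,R\rangle=-2\int QR$ vanish. The clean way to use this is to expand $u_a$ around $a^{-1/2}Q$ at the shifted frequency $V(x_0)-\mu_a$. Then prove, from the nondegeneracy of $Q$ and $x_a\to x_0$, that the remainder is $o(\lambda^{-2})$ in the rescaled $H^1$ norm. After that, every term quadratic in the remainder is negligible, and the Nehari--Pohozaev identity together with $\int u_a^2=1$ yields the asymptotics above.

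\textbf{Uniqueness of the reduced root.} A nonzero $\lambda$-derivative does not give uniqueness unless you also control $\partial_\lambda$ of the $o(\lambda^{-4})$ error terms. Rely on your separate local-Pohozaev uniqueness argument instead.
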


 From
Theorem B, the blow-up solutions of \eqref{eq4-2} as  $\beta\to 2a^*-a_0$  are given by
\begin{equation}\label{eq4-8}
(u_{1\beta},u_{2\beta})=\left(\frac{\sqrt{2}}{2}u_a,\frac{\sqrt{2}}{2}u_a\right),
\end{equation}
where $u_a$ is a blow-up solution of \eqref{eq4-4}.

To compute the Leray-Schauder degree of the blow-up solutions of \eqref{eq4-2} as  $\beta\to 2a^*-a_0$,
in this section, we study an eigenvalue problem associated to the linearized
operator of a blow-up solution of \eqref{eq4-2}.
Recall that the operator $\mathbb{T}_{a_0,a_0,\beta}=\mathbb{T}_{a_0,a_0,\beta}^0$ is
defined in \eqref{1-6-8}.
 Formally, we can linearize $\mathbb{T}_{a_0,a_0,\beta}(u_1,u_2)$ at $(u_{1\beta},u_{2\beta})$ as follows.  For any $(\xi_1,\xi_2)\in \mathcal{H}$, we write $\mathbb{T}_{a_0,a_0,\beta}(u_{1\beta}+t\xi_1, u_{2\beta}+t\xi_2)=\mathbb{T}_{a_0,a_0,\beta}(u_{1\beta},u_{2\beta})+tK(\xi_1,\xi_2)+o(t)$ for small $t$.
 Then,  $(u_1,u_2)=K(\xi_1,\xi_2)$ satisfies
\begin{equation}\label{eq4-9}
\left\{\begin{array}{ll}
-\Delta u_1+V(x)u_1-\mu_a u_1=au_a^2(2\xi_2+\xi_1)+a_0u_a^2(\xi_1-\xi_2)+\mu u_{a}&\hbox{ in }\Omega,\\
-\Delta u_2+V(x)u_2-\mu_a u_2=au_a^2(2\xi_1+\xi_2)+a_0u_a^2(\xi_2-\xi_1)+\mu u_{a}&\hbox{ in }\Omega,\\
\int_{\Omega}\left(u_1+u_2\right)u_a=0,
\\
u_1=u_2=0 \hbox{ on }\partial\Omega,
\end{array}\right.
\end{equation}
where $\mu$ is a Lagrange multiplier. Let
\[
\mathcal{N}=\left\{(\xi_1,\xi_2)\in\mathcal{H}:\int_\Omega (\xi_1+\xi_2) u_a=0\right\}.
\]
We consider the following eigenvalue problem
\begin{equation}\label{eq4-10}
\lambda K(\xi_1,\xi_2)=(\xi_1,\xi_2),\;\;\; (\xi_1,\xi_2)\in \mathcal{N}.
\end{equation}
The Morse index of $(u_{1\beta},u_{2\beta})$ is the number of eigenvalues of \eqref{eq4-10} strictly less than one. To compute this number, we introduce the following matrix
\begin{equation}\label{eq4-11}
G(x)=\left(\begin{array}{ccc}
\frac{\partial^2 V(x)}{\partial x_1^2}&\frac{\partial^2 V(x)}{\partial x_1\partial x_2}&0\\
\frac{\partial^2 V(x)}{\partial x_2\partial x_1}&\frac{\partial^2 V(x)}{\partial x_2^2}&0\\
0&0&\Delta V(x)
\end{array}\right).
\end{equation}

The main result of this section is as follows.
\begin{theorem}\label{thm4-1}
$\lambda=1$ is not an eigenvalue of \eqref{eq4-10}.  The number of eigenvalues of \eqref{eq4-10} less than one equals the number of negative eigenvalues of $G(x_0)$.
\end{theorem}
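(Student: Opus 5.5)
The approach is to diagonalize \eqref{eq4-9} by passing to the symmetric and antisymmetric combinations. This splits the eigenvalue problem \eqref{eq4-10} into a scalar problem, which is the linearization of the single equation \eqref{eq4-4} at $u_a$, and a decoupled antisymmetric problem that we show has no eigenvalue $\le 1$.

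\textbf{Step 1: decoupling.} Set $\varphi=\xi_1+\xi_2$, $\psi=\xi_1-\xi_2$, $w_1=u_1+u_2$, $w_2=u_1-u_2$ for $(u_1,u_2)=K(\xi_1,\xi_2)$. Adding and subtracting the two equations of \eqref{eq4-9} gives
\[
-\Delta w_1+(V-\mu_a)w_1=3a u_a^2\varphi+2\mu u_a,\qquad \int_\Omega w_1u_a=0,
\]
\[
-\Delta w_2+(V-\mu_a)w_2=(2a_0-a)u_a^2\psi .
\]
The constraint defining $\mathcal N$ becomes $\int_\Omega\varphi u_a=0$, and it involves only $\varphi$. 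Hence $\lambda K(\xi)=\xi$ holds if and only if one of two things happens:
\begin{itemize}
\item[(S)] $\lambda(-\Delta+V-\mu_a)\varphi=3a u_a^2\varphi+\tilde\mu u_a$, with $\int_\Omega\varphi u_a=0$ and $\psi=0$;
\item[(A)] $\lambda(-\Delta+V-\mu_a)\psi=(2a_0-a)u_a^2\psi$, with $\varphi=0$.
\end{itemize}
The eigenvalues of \eqref{eq4-10} are therefore the union, with multiplicity, of those of (S) and (A).

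\textbf{Step 2: the symmetric part.} Problem (S) is exactly the eigenvalue problem for the linearization of the single-equation fixed-point operator of \cite{cyy,lpwy} at the blow-up solution $u_a$ of Theorem~B. For that problem, the known Morse index computation says two things. First, $1$ is not an eigenvalue. Second, the number of eigenvalues below $1$ equals the number of negative eigenvalues of $G(x_0)$. The $2\times2$ block of $G(x_0)$ comes from the translation modes $\partial_{x_j}Q$ and the Hessian of $V$ at $x_0$. The entry $\Delta V(x_0)$ comes from the scaling/mass direction, through \eqref{1-7-8}.

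I would either quote this result or reprove it with the standard argument. That argument expands $u_a$ by \eqref{4-5}--\eqref{4-7} and rescales by $\sqrt{-\mu_a}$. In the limit, eigenfunctions with eigenvalue near $1$ converge to the kernel $\mathrm{span}\{\partial_1Q,\partial_2Q\}$ of $-\Delta+1-3Q^2$, together with the mass-scaling direction. One then uses Pohozaev-type identities to compute $\lambda-1$ to leading order; its sign is that of the corresponding eigenvalue of $G(x_0)$, and $(H_1)$--$(H_2)$ ensure this sign is nonzero. Eigenvalues bounded away from $1$ are counted by the limit operator: the Morse index of $Q$ is $1$, and this direction is removed by the constraint $\int\varphi u_a=0$.

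\textbf{Step 3: the antisymmetric part, the main point.} We show every eigenvalue of (A) is $>1$, uniformly as $a\to a^*$. Since $a_0\in(a^*/2,a^*)$ by \eqref{eq4-1} and $a\to a^*$, we have $2a_0-a\to 2a_0-a^*\in(0,a^*)$. The first eigenvalue of (A) is
\[
\lambda_1^A=\inf_{\psi\neq0}\frac{\int_\Omega|\nabla\psi|^2+(V-\mu_a)\psi^2}{(2a_0-a)\int_\Omega u_a^2\psi^2}.
\]
Suppose, for contradiction, that $\lambda_1^A\le 1+o(1)$ along a sequence $a\to a^*$, with normalized minimizers $\psi_a$. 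Rescale $\tilde\psi(y)=\psi_a(x_a+y/\sqrt{-\mu_a})$. By \eqref{4-5}--\eqref{4-7}, $u_a^2/(-\mu_a)\to Q^2(y)/a^*$ locally uniformly, and $u_a^2/(-\mu_a)$ decays exponentially away from $x_a$ on the $1/\sqrt{-\mu_a}$ scale. Since $V\ge0$, the denominator localizes, so we normalize by $\int u_a^2\psi_a^2$ and pass to a nonzero limit $\tilde\psi\in H^1(\mathbb R^2)$. The limit satisfies
\[
\int_{\mathbb R^2}|\nabla\tilde\psi|^2+\tilde\psi^2\le \frac{2a_0-a^*}{a^*}\int_{\mathbb R^2}Q^2\tilde\psi^2 .
\]
But $Q>0$ solves $(-\Delta+1)Q=Q^2\cdot Q$, so the first eigenvalue of $(-\Delta+1)\psi=\nu Q^2\psi$ is $\nu=1$, which gives $\int|\nabla\tilde\psi|^2+\tilde\psi^2\ge\int Q^2\tilde\psi^2$. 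Combining the two inequalities forces $\tfrac{2a_0-a^*}{a^*}\ge1$, a contradiction. Hence $\lambda^A\ge\lambda_1^A\ge\frac{a^*}{2a_0-a^*}-o(1)>1$.

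The delicate part of this step is the non-vanishing of the limit. I would get it by splitting $\int u_a^2\psi_a^2$ into the region $B_{R/\sqrt{-\mu_a}}(x_a)$ and its complement. On the complement, $u_a^2\le\delta(-\mu_a)$ for large $R$, while the quadratic form is at least $-\mu_a\int\psi_a^2$; so that part is negligible and the mass must concentrate near $x_a$.

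Combining Steps 2 and 3: $1$ is never an eigenvalue of \eqref{eq4-10}, and the eigenvalues below $1$ come only from (S). Their number equals the number of negative eigenvalues of $G(x_0)$. I expect Step 2 to carry the technical weight, if it has to be redone rather than quoted. Within this proof, the genuinely new ingredient is the uniform spectral gap in Step 3, and it is exactly where the assumption $a_0>a^*/2$ is used.
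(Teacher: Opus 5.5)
Your proposal is correct and follows the paper's argument. The symmetric part is exactly \eqref{eq4-12}, which you handle by quoting Theorem~C, as Lemma~\ref{lem4-4} does; your Step~3 is Lemma~\ref{lem4-3}; and your change of variables $(\varphi,\psi)$ turns the multiplicity count into a direct-sum statement instead of the paper's orthogonality argument. One slip needs fixing: $\lambda K(\xi)=\xi$ means $K(\xi)=\xi/\lambda$, so the two problems should read $(-\Delta+V-\mu_a)\varphi=\lambda\,3a u_a^2\varphi+\tilde\mu u_a$ and $(-\Delta+V-\mu_a)\psi=\lambda(2a_0-a)u_a^2\psi$, with $\lambda$ on the right. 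That is the convention your Step~3 Rayleigh quotient actually uses; as you wrote them, the antisymmetric eigenvalues would be the reciprocals and would all lie below $1$.
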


As an application of Theorem~\ref{thm4-1}, we can use a similar argument as in \cite[Corollary 4.2]{cyy}  to show the following result. We omit the proof here.
\begin{corollary}\label{cor4-2}
Let $i(x_0)$ be the number of negative eigenvalues of $G(x_0)$.  Then we have
\[
deg(I-K, B_\varepsilon(0), 0)=(-1)^{i(x_0)}.
\]
\end{corollary}

Since all blow-up solutions for  \eqref{eq4-2} come from
those of \eqref{eq4-4}, we investigate the connections between the eigenvalues
 of \eqref{eq4-10} and those
 of
 the following eigenvalue problem

\begin{equation}\label{eq4-12}
\lambda \tilde{K} (\xi)=\xi ,\;\;\; \xi\in H_1(\Omega),\;\;\; \int_\Omega \xi u_a=0.
\end{equation}
Here, $u=\tilde{K} (\xi)$ is the unique  solution of
\begin{equation}\label{eq4-13}
\left\{\begin{array}{ll}
-\Delta u+V(x)u-\mu_a u=3au_a^2\xi+\mu u_{a}&\hbox{ in }\Omega,\\
\int_{\Omega}u u_a=0,
\\
u=0\;\;\;  \hbox{ on }\partial\Omega.
\end{array}\right.
\end{equation}

We recall the following result proved in \cite[Theorem 4.1]{cyy}.

\begin{theoremC}[\cite{cyy}] $\tilde{\lambda}=1$ is not an eigenvalue of \eqref{eq4-12}.  The number of eigenvalues of \eqref{eq4-12} less than one equals $i(x_0)$.
\end{theoremC}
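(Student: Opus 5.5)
The plan is a blow-up analysis of the linearized operator at $u_a$, rescaled at scale $\varepsilon_a=(-\mu_a)^{-1/2}$. Let $(\lambda,\xi)$ solve \eqref{eq4-12}, i.e.
\[
-\Delta\xi+V\xi-\mu_a\xi=3a\lambda u_a^2\xi+\mu'u_a,\qquad \int_\Omega\xi u_a=0,
\]
with $\lambda$ ranging over $(0,\Lambda]$ for a fixed $\Lambda>1$, and normalize $\xi$ in the weighted norm $\int_\Omega\bigl(\varepsilon_a^2|\nabla\xi|^2+(1-V/\mu_a)\xi^2\bigr)=\varepsilon_a^2$.

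\textbf{Step 1: passage to the limit.} Set $\varphi_a(y)=\varepsilon_a\xi(x_a+\varepsilon_a y)$. By Theorem B and \eqref{4-5}--\eqref{4-7}, $\sqrt a\,\varepsilon_a u_a(x_a+\varepsilon_a\cdot)\to Q$ uniformly, and $a\to a^*$. Using the exponential decay of $u_a$ away from $x_a$ and the fact that $V/\mu_a\to0$ locally, $\varphi_a$ converges (after absorbing the multiplier) to a solution of
\[
-\Delta\varphi+\varphi=3\lambda Q^2\varphi+cQ\quad\text{in }\mathbb R^2,\qquad \int_{\mathbb R^2}\varphi Q=0 .
\]
The mass can neither vanish nor escape, by the decay of $Q^2$.

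\textbf{Step 2: the limit constrained problem.} I will show that in $(0,\Lambda]$ this problem has no eigenvalue other than $\lambda=1$, and that at $\lambda=1$ the eigenspace is exactly
\[
\mathrm{span}\{\partial_1Q,\ \partial_2Q,\ \Lambda Q\},\qquad \Lambda Q:=Q+y\cdot\nabla Q .
\]
The ingredients are the following.
\begin{itemize}
\item The nondegeneracy of $L_0=-\Delta+1-3Q^2$, namely $\ker L_0=\mathrm{span}\{\partial_jQ\}$.
\item The mass-critical identities $L_0\Lambda Q=-2Q$ and $\int_{\mathbb R^2} Q\,\Lambda Q=0$. These make $\Lambda Q$ an admissible eigenfunction carrying a nonzero Lagrange multiplier.
\item The fact that the unique negative direction of $L_0$ (the $Q$-direction) is removed by the constraint. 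Combined with $\int Q\,L_0^{-1}Q=0$, the Weinstein/Kwong spectral argument shows that the constrained quadratic form $\int(|\nabla\varphi|^2+\varphi^2-3Q^2\varphi^2)$ is nonnegative on $\{\int\varphi Q=0\}$, and vanishes only on the three-dimensional space above.
\end{itemize}
It follows that, for $a$ close to $a^*$, exactly three eigenvalues $\lambda_{a,1},\lambda_{a,2},\lambda_{a,3}$ of \eqref{eq4-12} converge to $1$, and all other eigenvalues stay $\ge 1+c_0$. Multiplicity is controlled by a standard orthogonality argument with the weight $u_a^2$.

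\textbf{Step 3: expansion of the three small eigenvalues.} I will take approximate eigenfunctions from the derivatives of the family of Theorem B:
\[
\partial_{x_j}\bigl(a^{-1/2}PQ_{-\mu,x}\bigr)\ (j=1,2),\qquad \partial_{\mu}\bigl(a^{-1/2}PQ_{-\mu,x}\bigr),
\]
each corrected to satisfy the constraint. Testing the eigen-equation against these functions and using Pohozaev-type identities on $B_\delta(x_a)$, the translation modes produce errors governed by $\nabla V$ near $x_0$. This should give
\[
\lambda_{a,j}-1=-\kappa\,\varepsilon_a^2\,\nu_j\bigl(D^2V(x_0)\bigr)+o(\varepsilon_a^2),\qquad \kappa>0,
\]
where $\nu_j$ are the eigenvalues of the Hessian. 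For the scaling mode, the analogous computation produces a term proportional to $(a-a^*)\mu_a^2$. Substituting \eqref{1-7-8} converts it into
\[
\lambda_{a,3}-1=-\kappa'\,\varepsilon_a^2\,\Delta V(x_0)+o(\varepsilon_a^2),\qquad \kappa'>0 .
\]
The boundary terms in $PQ$ are exponentially small, so they do not affect these expansions.

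\textbf{Conclusion.} By $(H_1)$--$(H_2)$, all three leading coefficients are nonzero. Hence no $\lambda_{a,j}$ equals $1$, and $\lambda_{a,j}<1$ exactly when the corresponding eigenvalue of $G(x_0)$ is negative. This proves both claims of Theorem C.

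The main obstacle is the scaling mode in Step 3. Its leading term mixes the mass-constraint multiplier, the deviation $a-a^*$, and the second-order expansion of $V$. Getting the sign and a nonzero coefficient requires the precise expansion of $u_a$ to order $\varepsilon_a^2$, including $\omega_a$ from \eqref{4-7}, and a careful use of \eqref{1-7-8}. I would first carry this out with the explicit first-order correction of $\omega_a$ generated by $\tfrac12\langle D^2V(x_0)y,y\rangle$.
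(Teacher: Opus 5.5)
The paper does not prove Theorem~C; it quotes it from \cite{cyy} (Theorem~4.1 there), so your outline has to stand on its own.

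Steps~1--2 are sound. The constrained limit problem has eigenvalues $1,1,1$ followed by values $\ge 1+c_0$, precisely because $\langle L_0^{-1}Q,Q\rangle=-\tfrac12\int Q\,\Lambda Q=0$. One minor slip: normalize so that the rescaled $\varphi_a$ has norm one; with your normalization its norm is $\varepsilon_a$, so the limit would be zero.

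\textbf{The gap is Step~3.} This is the step that actually produces the count, and as written it has both the wrong sign and the wrong order.

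\emph{Sign.} On $\{\int\xi u_a=0\}$, testing \eqref{eq4-12} with $\xi$ gives $\lambda-1=\langle L_a\xi,\xi\rangle\big/\bigl(3a\int_\Omega u_a^2\xi^2\bigr)$, where $L_a:=-\Delta+V-\mu_a-3au_a^2$. Since $L_a\partial_ju_a=-(\partial_jV)u_a$, you get $\langle L_a\partial_ju_a,\partial_ku_a\rangle=\frac12\int_\Omega\partial_{jk}V\,u_a^2\to\frac12\partial_{jk}V(x_0)$. So a translation eigenvalue has $\lambda-1$ of the \emph{same} sign as $\nu_j$. Likewise, the dilation direction is negative exactly when $a>a^*$, i.e.\ $\Delta V(x_0)<0$ by Theorem~B(ii). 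With $\kappa,\kappa'>0$, your displayed formulas give $\lambda_{a,j}<1$ iff the eigenvalue of $G(x_0)$ is \emph{positive}. That is a count of $3-i(x_0)$, and your concluding sentence asserts the opposite of what the formulas say.

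\emph{Order.} The denominator satisfies $3a\int u_a^2(\partial_ju_a)^2\sim\mu_a^2$, so the translation eigenvalues are $1+O(\mu_a^{-2})=1+O(\varepsilon_a^4)$. For the dilation mode, $|a-a^*|=O(\mu_a^{-2})$ as well. Hence the $\varepsilon_a^2$-coefficient you claim is actually zero, and a remainder $o(\varepsilon_a^2)$ carries no sign information. Every error must be $o(\mu_a^{-2})$.

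Test functions built from $PQ_{-\mu,x}$ cannot reach that accuracy using \eqref{4-7}. That estimate only gives $\|\omega_a\|_{-\mu_a}=O(\varepsilon_a^2)$, and $\omega_a$ genuinely contains an $O(\varepsilon_a^2)$ dilation component, because the ansatz uses $-\mu_a$ instead of $V(x_a)-\mu_a$.

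Beware also that \eqref{1-7-8} as printed contradicts Theorem~B(i)--(ii): for $\Delta V(x_0)>0$ it forces $a>a^*$. In fact $(a^*-a)\mu_a^2$ tends to a \emph{positive} multiple of $\Delta V(x_0)$. Substituting \eqref{1-7-8} literally therefore flips the sign of the scaling mode.

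\textbf{A repair.} Test with the \emph{exact} generators $\partial_ju_a$ and $\psi_a:=u_a+(x-x_a)\cdot\nabla u_a$. The latter satisfies $\int\psi_au_a=0$ and $L_a\psi_a=-\bigl[2(V-\mu_a)+(x-x_a)\cdot\nabla V\bigr]u_a$. Pair the eigen-equation $(-\Delta+V-\mu_a)\xi_a=3a\lambda_au_a^2\xi_a+c_au_a$ with these functions on $B_\delta(x_a)$. The multiplier terms drop out by the constraints, and up to $O(e^{-c/\varepsilon_a})$,
\[
3a(\lambda_a-1)\int u_a^2\xi_a\,\partial_ju_a=-\int\partial_jV\,u_a\xi_a,\qquad 3a(\lambda_a-1)\int u_a^2\xi_a\psi_a=-\int\bigl[2(V-V(x_a))+(x-x_a)\cdot\nabla V\bigr]u_a\xi_a .
\]
Now insert the Step~1 limit $\varepsilon_a\sqrt a\,\xi_a(x_a+\varepsilon_a y)\to\sum_k\beta_k\partial_kQ+\beta_0\Lambda Q$. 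Use $\int y_kQ\,\partial_kQ=-\frac{a^*}{2}$, $\int|y|^2Q\,\Lambda Q=-\int|y|^2Q^2$, parity, and $\nabla V(x_a)=o(\varepsilon_a)$ (from the local Pohozaev identity for $u_a$). This gives
\[
\frac{6\int Q^2(\partial_1Q)^2}{a^*}\,(\lambda_a-1)\mu_a^2\,\beta=D^2V(x_0)\beta+o(1),\qquad 3\!\int Q^2(\Lambda Q)^2\,(\lambda_a-1)\mu_a^2\,\beta_0=\Delta V(x_0)\!\int|y|^2Q^2\,\beta_0+o(1).
\]
These identities also show that $(\lambda_a-1)\mu_a^2$ stays bounded. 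Hence the three eigenvalues near $1$ are $1+\mu_a^{-2}(m+o(1))$, where $m$ runs over the eigenvalues of $\mathrm{diag}\bigl(c_1D^2V(x_0),\,c_0\Delta V(x_0)\bigr)$ with $c_0,c_1>0$. Both claims of Theorem~C follow, with no second-order expansion of $u_a$ and no use of \eqref{1-7-8}.
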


Our main idea in the proof of  Theorem~\ref{thm4-1} is to
show that  \eqref{eq4-10} and
\eqref{eq4-12} have the same eigenvalues less than one.
 For this purpose, we will establish some  lemmas.

\begin{lemma}\label{lem4-4}
If $\lambda$ is an eigenvalue of \eqref{eq4-12} with eigenfunction
 $\xi$. Then $\lambda$ is  an eigenvalue of \eqref{eq4-10} with  eigenfunction
 $(\xi,\xi)$. Conversely, if $\lambda$ is  an eigenvalue of \eqref{eq4-10} with  eigenfunction
 $(\xi_1,\xi_2)$ and $\xi_1+\xi_2\ne 0$, then $\lambda$ is also an eigenvalue of  \eqref{eq4-12}.
\end{lemma}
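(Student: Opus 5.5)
The natural move is to diagonalise the linearised system \eqref{eq4-9} by passing to sum and difference variables. Adding and subtracting the two equations decouples the coupled problem into one equation for the sum and one for the difference.

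\emph{Setup.} Write $(u_1,u_2)=K(\xi_1,\xi_2)$ with Lagrange multiplier $\mu$. Adding the two equations of \eqref{eq4-9}, the $a_0$ terms cancel, and $s:=u_1+u_2$ satisfies
\[
-\Delta s+V(x)s-\mu_a s=3au_a^2(\xi_1+\xi_2)+2\mu u_a,\qquad \int_\Omega s\,u_a=0 .
\]
Comparing with \eqref{eq4-13}, this says exactly that $s=\tilde K(\xi_1+\xi_2)$, with multiplier $2\mu$. The condition $\int_\Omega(\xi_1+\xi_2)u_a=0$ is precisely the constraint in \eqref{eq4-12}, so $\tilde K(\xi_1+\xi_2)$ is defined. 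Uniqueness of the solution of \eqref{eq4-13} (Lagrange multiplier included) is taken from \cite{cyy}, as in Theorem~C.

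Subtracting the equations, $w:=u_1-u_2$ satisfies
\[
-\Delta w+Vw-\mu_a w=(2a_0-a)u_a^2(\xi_1-\xi_2),
\]
with no multiplier term.

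\emph{Forward direction.} Suppose $\lambda\tilde K(\xi)=\xi$ with $\int_\Omega \xi u_a=0$, and take $(\xi_1,\xi_2)=(\xi,\xi)$. Then $(\xi,\xi)\in\mathcal N$, since $\int_\Omega 2\xi u_a=0$. I would check directly that
\[
K(\xi,\xi)=\big(\tfrac12\tilde K(\xi),\tfrac12\tilde K(\xi)\big),
\]
with multiplier equal to half the multiplier of \eqref{eq4-13}. Indeed, inserting $\xi_1=\xi_2=\xi$ into either equation of \eqref{eq4-9}, the right-hand side becomes $3au_a^2\xi+\mu u_a$, i.e. the right-hand side of \eqref{eq4-13} with its multiplier halved; correspondingly each $u_i$ is $\tfrac12\tilde K(\xi)$. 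This is consistent with the identity $s=\tilde K(2\xi)$ from the sum computation. Hence
\[
\lambda K(\xi,\xi)=\big(\tfrac{\lambda}{2}\tilde K(\xi),\tfrac{\lambda}{2}\tilde K(\xi)\big).
\]
The linearisation should be normalised so that the symmetric direction reproduces $\tilde K$ exactly. I would recheck how \eqref{eq4-9} arises from $u_{i\beta}=u_a/\sqrt2$ (the factor $\sqrt2$ rescaling), so that the symmetric component gives $\lambda K(\xi,\xi)=(\xi,\xi)$. Equivalently, I would argue at the level of sums: for $(\xi,\xi)$ one has $u_1=u_2$ by symmetry, and $u_1+u_2=\tilde K(\xi_1+\xi_2)$.

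\emph{Converse direction.} Suppose $\lambda K(\xi_1,\xi_2)=(\xi_1,\xi_2)$ with $\xi_1+\xi_2\neq0$. Summing the two components gives
\[
\lambda\,\tilde K(\xi_1+\xi_2)=\xi_1+\xi_2 .
\]
Since $(\xi_1,\xi_2)\in\mathcal N$, the function $\eta:=\xi_1+\xi_2$ satisfies $\int_\Omega\eta u_a=0$, and $\eta\neq0$ by hypothesis. So $\eta$ is an eigenfunction of \eqref{eq4-12} with eigenvalue $\lambda$.

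\emph{Main obstacle.} The only delicate point is the bookkeeping of the normalisation between $K$ acting on $\xi_1+\xi_2$ and $\tilde K$ (the constants $a$ versus $3a$, and the multiplier). The sum identity $u_1+u_2=\tilde K(\xi_1+\xi_2)$ is the robust statement I would build both directions on. For the forward direction I would use symmetry under swapping $u_1\leftrightarrow u_2$ together with uniqueness for \eqref{eq4-9} to conclude that $u_1=u_2$ whenever $\xi_1=\xi_2$.
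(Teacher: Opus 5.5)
\textbf{Verdict: there is a real error in your forward direction.} Your converse direction is correct and is exactly the paper's argument. Adding the two equations of \eqref{eq4-9} gives $u_1+u_2=\tilde K(\xi_1+\xi_2)$, hence $\lambda\tilde K(\xi_1+\xi_2)=\xi_1+\xi_2$. Your sum and difference equations are also computed correctly. The forward direction, however, rests on a false identity.

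\textbf{The correct forward computation.} Put $\xi_1=\xi_2=\xi$ into \eqref{eq4-9}. Each equation then reads $(-\Delta+V-\mu_a)u_i=3au_a^2\xi+\mu u_a$. This is \eqref{eq4-13} itself with the same multiplier; nothing is halved.

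The difference $w=u_1-u_2$ solves $(-\Delta+V-\mu_a)w=0$. Since $\mu_a<\lambda_1(V)$ (indeed $\mu_a\to-\infty$ by Theorem~B), this operator is coercive, so $w=0$. This also replaces your appeal to an unstated uniqueness for \eqref{eq4-9}.

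The constraint $\int_\Omega(u_1+u_2)u_a=0$ then becomes $\int_\Omega u_1u_a=0$. Therefore $K(\xi,\xi)=(\tilde K(\xi),\tilde K(\xi))$, not $(\tfrac12\tilde K(\xi),\tfrac12\tilde K(\xi))$. It follows immediately that $\lambda K(\xi,\xi)=(\lambda\tilde K(\xi),\lambda\tilde K(\xi))=(\xi,\xi)$.

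Your claimed consistency check is also wrong. Since $\tilde K$ is linear, $u_1+u_2=\tilde K(2\xi)=2\tilde K(\xi)$. Together with $u_1=u_2$, this forces $u_i=\tilde K(\xi)$, which contradicts the factor $\tfrac12$.

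\textbf{Why the error matters, and the fix.} Taken at face value, your formula gives $\lambda K(\xi,\xi)=(\xi/2,\xi/2)$. That would make $(\xi,\xi)$ an eigenfunction of \eqref{eq4-10} with eigenvalue $2\lambda$, the opposite of what the lemma asserts.

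The remedy you propose, re-examining the $\sqrt2$ normalisation behind \eqref{eq4-9}, is a misdiagnosis. The lemma is a statement about $K$ as defined by \eqref{eq4-9}, so there is nothing to renormalise. In any case \eqref{eq4-9} is already correctly normalised: with $u_{i\beta}=u_a/\sqrt2$ and $\beta=2a-a_0$, the derivative of $a_0u_1^3+\beta u_1u_2^2$ in the direction $(\xi_1,\xi_2)$ is exactly $au_a^2(\xi_1+2\xi_2)+a_0u_a^2(\xi_1-\xi_2)$.

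Your fallback ``argue at the level of sums'' is the correct argument. Carried out properly, it yields $u_i=\tilde K(\xi)$. With that correction, your proof coincides with the paper's: direct computation for the forward direction, and adding the two equations for the converse.
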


\begin{proof}

Suppose that $(\xi, \lambda)$ satisfies \eqref{eq4-12}. Direct computation shows
that $(\xi, \xi, \lambda)$ satisfies  \eqref{eq4-10}.

On the other hand, if $(\xi_1,\xi_2,\lambda)$ satisfies  \eqref{eq4-10}, then
by adding the two equations in  \eqref{eq4-10}, we find that
$(\xi_1+\xi_2,\lambda)$ satisfies \eqref{eq4-12}. This shows that if
$\xi_1+\xi_2\ne 0$, then $\lambda$ is also an eigenvalue of  \eqref{eq4-12}.
\end{proof}

\begin{lemma}\label{lem4-3}
There exist $\delta,\theta>0$ small enough such that for $|a-a^*|<\theta$, if $(\xi_{1a},\xi_{2a};\lambda_a)$ is a nonzero solution of \eqref{eq4-10} satisfying $\xi_{1a}+\xi_{2a}=0$, then
\[
\lambda_a\geq 1+\delta.
\]
\end{lemma}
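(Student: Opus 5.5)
The plan is to write the eigenvalue problem \eqref{eq4-10} explicitly on the antisymmetric subspace $\xi_{1a}=-\xi_{2a}$, reduce it to a weighted eigenvalue problem for a single scalar function, and compare it with the equation satisfied by $u_a$ itself.

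\textbf{Step 1: reduction to a scalar problem.} Write $\xi_{1a}=\xi$ and $\xi_{2a}=-\xi$ with $\xi\not\equiv0$, and let $(u_1,u_2)=K(\xi,-\xi)$. The eigenvalue relation $\lambda_a K(\xi,-\xi)=(\xi,-\xi)$ forces $\lambda_a\neq 0$, and then $u_1=\xi/\lambda_a$ and $u_2=-\xi/\lambda_a$, so $u_1+u_2=0$. The right-hand sides in \eqref{eq4-9} become
\[
(2a_0-a)u_a^2\xi+\mu u_a \quad\text{and}\quad -(2a_0-a)u_a^2\xi+\mu u_a .
\]
Adding the two equations gives $2\mu u_a=0$, hence $\mu=0$ because $u_a>0$. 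The first equation then reads
\[
-\Delta \xi+(V-\mu_a)\xi=\lambda_a(2a_0-a)\,u_a^2\,\xi \quad\text{in }\Omega,\qquad \xi=0 \text{ on }\partial\Omega .
\]

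\textbf{Step 2: sign of $\lambda_a$.} Multiply this equation by $\xi$ and integrate. Since $V\ge0$ and $\mu_a\to-\infty$, the left-hand side is strictly positive. Moreover $a=(a_0+\beta)/2\to a^*$ and \eqref{eq4-1} gives $a^*/2<a_0<a^*$, so $2a_0-a\to 2a_0-a^*\in(0,a^*)$. Hence $\lambda_a(2a_0-a)>0$ for $|a-a^*|$ small.

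\textbf{Step 3: comparison with the first weighted eigenvalue.} Consider the weighted problem
\[
-\Delta\varphi+(V-\mu_a)\varphi=\Lambda\, a u_a^2\varphi .
\]
Its first eigenvalue is
\[
\Lambda_1=\inf_{\varphi\neq0}\frac{\int_\Omega\bigl(|\nabla\varphi|^2+(V-\mu_a)\varphi^2\bigr)}{\int_\Omega a u_a^2\varphi^2}.
\]
The infimum is attained because the quadratic form is coercive, since $V\to\infty$ and $-\mu_a>0$, and the weight $u_a^2$ is bounded. The minimizer can be taken positive, and positive eigenfunctions occur only at the first eigenvalue: test against the principal eigenfunction, or use Picone's identity. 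By \eqref{eq4-4}, $u_a>0$ solves this problem with $\Lambda=1$, so $\Lambda_1=1$. Our $\xi$ is an eigenfunction with $\Lambda=\lambda_a(2a_0-a)/a$, so $\lambda_a(2a_0-a)\ge a$, that is,
\[
\lambda_a\ge \frac{a}{2a_0-a}.
\]

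\textbf{Step 4: conclusion.} As $a\to a^*$, the bound $a/(2a_0-a)$ tends to $a^*/(2a_0-a^*)$, which is strictly greater than $1$ because $2a_0-a^*<a^*$. Choose $\theta$ small and set $\delta=\tfrac12\bigl(a^*/(2a_0-a^*)-1\bigr)$; then $\lambda_a\ge1+\delta$ for $|a-a^*|<\theta$.

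The only delicate point is Step 3, namely justifying that $\Lambda_1$ is attained and equals $1$ on the unbounded domain. This follows from the compact embedding $H_i(\Omega)\hookrightarrow L^2(\Omega)$ together with the standard principal-eigenvalue argument, in the spirit of Lemma~\ref{le6-3-1}.
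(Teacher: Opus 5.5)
Your proof is correct, and it takes a genuinely different route from the paper's.

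Both arguments rest on the same principle: a positive solution of a weighted linear problem is its principal eigenfunction. The paper applies it only in the limit. After deriving the same scalar equation $-\Delta\xi+(V-\mu_a)\xi=\lambda_a(2a_0-a)u_a^2\xi$, it argues by contradiction, assuming $\lambda_a\to\lambda_0\in[0,1]$. It normalizes in $\|\cdot\|_{-\mu_a}$, rescales by $\sqrt{-\mu_a}$ around $x_a$, and uses Theorem~B to pass to the limit problem $-\Delta\tilde\xi+\tilde\xi=\lambda_0\frac{2a_0-a^*}{a^*}Q^2\tilde\xi$ in $\mathbb{R}^2$. A nonzero limit is excluded because $Q>0$ makes $1$ the first eigenvalue for the weight $Q^2$. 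A zero limit contradicts the normalization, since $Q^2$ decays.

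You instead use $u_a$ itself, at finite $a$, as the positive eigenfunction for the weight $au_a^2$. This avoids the blow-up profile, the $H^1_{loc}$ compactness and the vanishing dichotomy entirely. It also yields the explicit bound $\lambda_a\ge a/(2a_0-a)$, valid for every $a\in(a_0,2a_0)$ and not only near $a^*$. Indeed $a/(2a_0-a)>1$ exactly when $a>a_0$, which is the standing assumption of Section~4. You get an explicit $\delta$ rather than one produced by contradiction, and you never need the blow-up asymptotics of Theorem~B. The paper's route fits its asymptotic framework (compare Proposition~\ref{pro5-2}) but proves nothing more here.

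The only soft spot is your justification that $\Lambda_1$ is attained: coercivity alone does not give this. You need the compact embedding $H_1(\Omega)\hookrightarrow L^2(\Omega)$, which you do invoke at the end. Alternatively, you can bypass attainment with the ground-state identity. For $\varphi=u_aw\in C_0^\infty(\Omega)$ it reads $\int_\Omega\bigl(|\nabla(u_aw)|^2+(V-\mu_a)u_a^2w^2\bigr)=\int_\Omega u_a^2|\nabla w|^2+a\int_\Omega u_a^4w^2$. Together with density, this gives $\int_\Omega\bigl(|\nabla\varphi|^2+(V-\mu_a)\varphi^2\bigr)\ge a\int_\Omega u_a^2\varphi^2$ directly, which is all your Step~3 uses.
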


\begin{proof} Since $\xi_{1a}=-\xi_{2a}$, we deduce from \eqref{eq4-9} and \eqref{eq4-10} that
\[
\begin{cases}
-\Delta \xi_{1a}+V(x)\xi_{1a}-\mu_a \xi_{1a}=\lambda_a(2a_0-a) u_a^2 \xi_{1a}+\lambda_a\mu u_a,
\\
-\Delta \xi_{1a}+V(x)\xi_{1a}-\mu_a \xi_{1a}=\lambda_a(2a_0-a) u_a^2 \xi_{1a}-\lambda_a\mu u_a.
\end{cases}
\]
Thus, $\lambda_a\mu=0$ and $\xi_{1a}\neq 0$ satisfies
\begin{equation}\label{eq4-14}
\begin{cases}
-\Delta \xi_{1a}+V(x)\xi_{1a}-\mu_a \xi_{1a}=\lambda_a(2a_0-a) u_a^2 \xi_{1a}\;\;\;\text{in}\;\; \Omega,
\\
\xi_{1a}=0\;\;\;\text{on}\;\; \partial \Omega.
\end{cases}
\end{equation}
So for $a<2a_0$,
\[
\lambda_a=\frac{\int_\Omega (|\nabla \xi_{1a}|^2+V(x)|\xi_{1a}|^2-\mu_a|\xi_{1a}|^2)}{(2a_0-a) \int_\Omega u_a^2 |\xi_{1a}|^2}>0.
\]

We suppose by contradiction that up to a subsequence, $\lambda_a\to \lambda_0\in [0,1]$ as $a\to a^*$. Since for any $c\in \mathbb{R}$, $c\xi_{1a}$ is also a solution of \eqref{eq4-14}, we may assume that
\[
\int_\Omega -\frac{1}{\mu_a}|\nabla \xi_{1a}|^2+\left(1-\frac{V(x)}{\mu_a}\right)| \xi_{1a}|^2=1.
\]
Define
\[
\tilde{\xi}_{1a}(y)=\frac{1}{\sqrt{-\mu_a}} \xi_{1a} \left(\frac{1}{\sqrt{-\mu_a}} y+ x_a \right).
\]
Then by Theorem B, we have
\begin{equation}\label{eq4-15}
-\Delta \tilde{\xi}_{1a} +\left(1-\frac{1}{\mu_a} V\left(\frac{1}{\sqrt{-\mu_a}} y+ x_a \right)\right)  \tilde{\xi}_{1a} =\lambda_a \frac{2a_0-a}{a} \big(Q^2(y)+o(1)\big)  \tilde{\xi}_{1a} \;\;\; \text{in}\;\; \Omega_a,
\end{equation}
where $\Omega_a=\Big\{y\in \mathbb{R}^2:\;\frac{1}{\sqrt{-\mu_a}} y+ x_a\in \Omega \Big\}.$ Since $V\geq 0$ and
\[
\int_{\Omega_a} \Bigl[|\nabla \tilde{\xi}_{1a}|^2+\left(1-\frac{1}{\mu_a} V\left(\frac{1}{\sqrt{-\mu_a}} y+ x_a \right)\right)|\tilde{\xi}_{1a}|^2 \Bigr]\,dy=\int_\Omega -\frac{1}{\mu_a}|\nabla \xi_{1a}|^2+\left(1-\frac{V(x)}{\mu_a}\right)| \xi_{1a}|^2dx = 1,
\]
we have that $\{\tilde{\xi}_{1a}\}$ is bounded in $H^1_{loc}(\mathbb{R}^2)$.
So, there exists $\tilde{\xi}\in H^1(\mathbb{R}^2)$ such that up to a subsequence,
\[
\begin{split}
&\tilde{\xi}_{1a}\rightharpoonup \tilde{\xi} \;\;\text{in} \;\; H^1_{loc}(\mathbb{R}^2),
\\
&\tilde{\xi}_{1a}\to \tilde{\xi} \;\;\text{in} \;\;  L^2_{loc}(\mathbb{R}^2),
\end{split}
\]
and  $\tilde{\xi}$ satisfies
\[
-\Delta \tilde{\xi} +\tilde{\xi}=\lambda_0 \frac{2a_0-a^*}{a^*} Q^2 \tilde{\xi}  \;\;\;\text{in}\;\; \mathbb{R}^2.
\]

Note that $\xi=Q>0$ satisfies $-\Delta \xi +\xi = Q^2 \xi$ in $\mathbb R^2$. Thus $1$ is the first
eigenvalue of $$-\Delta \xi +\xi = \lambda Q^2 \xi\hbox{ in }\mathbb R^2.$$ This gives

\[
\frac{\int_{\mathbb{R}^2} |\nabla\tilde{\xi}|^2+ |\tilde{\xi}|^2 dy  }{\int_{\mathbb{R}^2} Q^2 \tilde{\xi}^2dy}\geq 1,\quad \forall\; \xi\ne 0.
\]

Suppose now that  $\tilde{\xi} \neq 0$, then we have
\[
\lambda_0 \frac{2a_0-a^*}{a^*}=\frac{\int_{\mathbb{R}^2} |\nabla\tilde{\xi}|^2+ |\tilde{\xi}|^2 dy  }{\int_{\mathbb{R}^2} Q^2 \tilde{\xi}^2dy}\geq 1.
\]
But for $a_0$ satisfying \eqref{eq4-1}, $\lambda_0 \frac{2a_0-a^*}{a^*}\leq  \frac{2a_0-a^*}{a^*}<1$, which is a contradiction. As a result, $\tilde{\xi} =0$, namely, $\tilde{\xi}_{1a}\to 0$ in   $L^2_{loc}(\mathbb{R}^2)$. Then  \eqref{eq4-15}, together with
$ \int_{\Omega_a} |\tilde{\xi}_{1a}|^2\le 1  $, gives

\[
\begin{split}
1=&\int_{\Omega_a} |\nabla \tilde{\xi}_{1a}|^2+\left(1-\frac{1}{\mu_a} V\left(\frac{1}{\sqrt{-\mu_a}} y+ x_a \right)\right)|\tilde{\xi}_{1a}|^2 dy
\\
=&\lambda_a \frac{2a_0-a}{a} \int_{\Omega_a} \big(Q^2(y)+o(1)\big)  \tilde{\xi}_{1a}^2dy
\\
\to& 0, \;\;\;\text{as}\;\; a\to a^*.
\end{split}
\]
This is impossible, and thus
 the proof is  completed.
\end{proof}

\begin{proof}[Proof of Theorem~\ref{thm4-1}]
Let $\lambda$ be an  eigenvalue of \eqref{eq4-10} with $\lambda\le 1$.
It follows from Lemma~\ref{lem4-3} that $\lambda$ is also an  eigenvalue of \eqref{eq4-12}.
Then Theorem~C gives $\lambda<1$.

On the other hand, Lemmas~\ref{lem4-4} and \ref{lem4-3} imply that the eigenvalues $\lambda<1$
for \eqref{eq4-10} and  \eqref{eq4-12} coincide. Thus, to prove Theorem~\ref{thm4-1},
it remains to prove that any eigenvalue $\lambda<1$ has the same multiplicity for both
\eqref{eq4-10} and  \eqref{eq4-12}.

Suppose that the  eigenfunctions for \eqref{eq4-12} corresponding to $\lambda<1$ are
$\{\xi_1, \cdots, \xi_m\}$ for some $m\ge 1$, satisfying

\[
\int_\Omega  \xi_i \xi_j=0\;\;\;\text{for}\;\; i\neq j.
\]
Then $(\xi_i, \xi_i)$, $i=1,\cdots, m$ are eigenfunctions for \eqref{eq4-10}
corresponding to $\lambda$.

Suppose now  \eqref{eq4-10} has another eigenfunction $(\hat \xi_1, \hat \xi_2)$ for $\lambda$. Then $\hat \xi_1+ \hat \xi_2$ is an eigenfunction of \eqref{eq4-12}
for  $\lambda$, satisfying

\begin{equation}\label{4-18}
\int_\Omega (\hat \xi_1 \xi_j+ \hat \xi_2 \xi_j)=0,\quad j=1\cdots, m.
\end{equation}
This gives

\[
\hat \xi_1+ \hat \xi_2=\sum_{i=1}^m c_i \xi_i.
\]
From \eqref{4-18}, $c_i=0$ for $i=1,\cdots,m$. So, $\xi_1^{(j_0)}+\xi_2^{(j_0)}=0$
and we obtain  a contradiction.
\end{proof}


\section{The degree of the blow-up solutions}\label{5}

In this section, we assume that $a_1=a_2=a_0$, $\beta \in (\beta_1^*, \beta_2^*)= (2a^*-a_0,4a^*-a_0)$ for some $a_0\in (0,a^*)$ with $\frac{a^*}{2}<a_0<a^*$, and  $V_1=V_2=V$ satisfies $(H_1)-(H_2)$.

The discussion in Section~\ref{4} shows   that all the blow-up solutions of \eqref{eq4-2}
as $\beta\to \beta^*_1$ are of the form $(u_{1\beta},u_{2\beta})=\frac{\sqrt{2}}{2}(u_a,u_a)$
where $\beta=2a-a_0$, and $u_a$ is a blow-up solution of \eqref{eq4-4} as $a\to a^*$.
 Theorem B provides all blow-up solutions of \eqref{eq4-4}, which  are isolated for $\beta$ close to $\beta_1^*$. Thus the following Leray-Schauder degree is well-defined:
\begin{equation}\label{5-1}
d(u_{1\beta},u_{2\beta}):=deg\big(I-\mathbb{T}_{a_0,a_0,\beta}, B_\epsilon^a(u_{1\beta},u_{2\beta}), 0\big),
\end{equation}
where $\epsilon>0$ is a small constant, depending on $a$,
\[
B_\epsilon^a(u_{1\beta},u_{2\beta})=\left\{(u_1,u_2):\|(u_1-u_{1\beta},u_2-u_{2\beta})\|_{-\mu_a}<\epsilon\right\}
\]
and
\[
\|(u,v)\|_{-\mu_a}^2=\int_\Omega  \left(\frac{1}{-\mu_a} \big(|\nabla u|^2+|\nabla v|^2\big)+\Big(1-\frac{V(x)}{\mu_a}\Big) (u^2+v^2) \right).
\]
We recall that fixed points of $\mathbb{T}_{a_0,a_0,\beta}$ correspond to solutions of \eqref{eq4-2}.

In this section, we establish the following result.
\begin{theorem}\label{thm5-1}
We have
\[
deg \big(I-\mathbb{T}_{a_0,a_0,\beta}, B_\epsilon^a(u_{1\beta},u_{2\beta}), 0\big)=deg \big(I-K, B_\epsilon^a(0,0), 0\big),
\]
where the operator $K$ is defined in \eqref{eq4-9}.

\end{theorem}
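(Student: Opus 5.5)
The plan is the standard linearization argument for Leray--Schauder degree: homotope $I-\mathbb{T}_{a_0,a_0,\beta}$ on the small ball to its linear part $I-K$, and show no fixed point reaches the boundary along the way. Set $w=(u_1,u_2)-(u_{1\beta},u_{2\beta})$ and expand
\[
\mathbb{T}_{a_0,a_0,\beta}(u_{1\beta}+w_1,u_{2\beta}+w_2)=(u_{1\beta},u_{2\beta})+K(w_1,w_2)+R(w_1,w_2).
\]
The remainder $R$ comes from the quadratic and cubic terms of the nonlinearity and from the nonlinear dependence of the Lagrange multiplier and the constraint $\int(u_1^2+u_2^2)=1$ (the map $T$ projects onto the sphere, so $K$ is the tangential linearization on $\mathcal N$, and the normal component is quadratic in $w$). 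Consider the homotopy
\[
H_t(w)=w-K(w)-tR(w),\qquad t\in[0,1].
\]
At $t=1$ its zeros are exactly the fixed points of $\mathbb{T}$ near $(u_{1\beta},u_{2\beta})$; at $t=0$ it is $I-K$. Compactness of $K$ and $R$ follows as in Proposition~\ref{pro3-4}. Homotopy invariance then gives the claim, provided $H_t(w)\neq 0$ for $\|w\|_{-\mu_a}=\epsilon$.

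\textbf{Step 1 (uniform invertibility).} I would show there is $c_0>0$, independent of $a$ near $a^*$, with
\[
\|w-K(w)\|_{-\mu_a}\ge c_0\|w\|_{-\mu_a}.
\]
Argue by contradiction: take $w_a$ normalized with $w_a-K w_a\to 0$, rescale around $x_a$ by $\sqrt{-\mu_a}$ as in the proof of Lemma~\ref{lem4-3}, and pass to the limit. Using Theorem~\ref{thm4-1} (in particular that $1$ is not an eigenvalue), the decomposition into the sum mode $\xi_1+\xi_2$ (which reduces to \eqref{eq4-12} and the nondegeneracy behind Theorem~C) and the difference mode $\xi_1-\xi_2$ (handled by Lemma~\ref{lem4-3}), one gets a contradiction. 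The sum mode needs care because of the translation kernel $\partial_j Q$: it is nondegenerate only through the $O(1/\mu_a)$ corrections coming from $\nabla^2V(x_0)$ and $\Delta V(x_0)$, cf.\ \eqref{1-7-8} and $(H_1)$--$(H_2)$. So the bound should really be $\|w-Kw\|\ge c_a\|w\|$ with $c_a>0$ possibly degenerating in a controlled way, say like $|\mu_a|^{-1}$. It suffices to treat each fixed $a$, since $\epsilon$ is allowed to depend on $a$.

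\textbf{Step 2 (remainder estimate).} Show
\[
\|R(w)\|_{-\mu_a}\le C_a\|w\|_{-\mu_a}^2.
\]
This uses the $L^\infty$ bounds on $u_a$ from Theorem~B, the Gagliardo--Nirenberg inequality in the scaled norm, and the smooth dependence of $(u,\mu)$ on $(f_1,f_2)$ in \eqref{3-2}. The last point follows from the implicit function theorem, with uniqueness from Lemma~\ref{l1-4-8} since $V_1=V_2$, and with $\mu_a<\lambda_1$ by Lemma~\ref{lem3-2}, which makes the linearized problem invertible.

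\textbf{Conclusion and main obstacle.} Choosing $\epsilon<c_a/(2C_a)$, we get for $\|w\|=\epsilon$:
\[
\|H_t(w)\|\ge c_a\epsilon-C_a\epsilon^2>0,
\]
so the degrees agree. The main obstacle is Step 1 in the sum/translation directions, where the spectral gap is only of order $|\mu_a|^{-1}$. Since $a$ is fixed when choosing $\epsilon$, it is enough to know that $I-K$ is injective (Theorem~\ref{thm4-1}) together with compactness of $K$, which gives $c_a>0$ by Fredholm theory. This avoids needing uniformity, and I would take that route.
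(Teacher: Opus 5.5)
Your proposal is correct and is essentially the paper's proof. Your homotopy $w-K(w)-tR(w)$ is exactly the paper's $H(\xi,t)=\xi-K(\xi)+t\mathcal N(\xi)$ with $\mathcal N=-R$, and the conclusion likewise rests on $\|\xi-K\xi\|_{-\mu_a}\ge C_a\|\xi\|_{-\mu_a}$ for fixed $a$ (from Theorem~\ref{thm4-1} plus compactness of $K$) together with a quadratic remainder bound, with $\epsilon$ allowed to depend on $a$. The only difference is in how the quadratic bound is obtained: you get it from smooth dependence of $T$ on its data via the implicit function theorem, while the paper applies the implicit function theorem directly to the composite equation $F(\xi,\eta)=0$ (Proposition~\ref{pro5-2}, Lemmas~\ref{lem5-3}--\ref{lem5-4}), whose uniformity in $a$ is not actually needed for this statement.
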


Recall that formally  $I-K$ is the linearized operator of
$(I-\mathbb{T}_{a_0,a_0,\beta})(u_{1\beta}+\xi_1,u_{2\beta}+\xi_2)$. So it
is natural to expect that for $t\in [0,1]$ and $(\xi_1,\xi_2)\in \partial B_\epsilon^a(0,0)  $, it holds
\begin{equation}\label{20-7-8}
 t [(u_{1\beta}+\xi_1,u_{2\beta}+\xi_2)-\mathbb{T}_{a_0,a_0,\beta}(u_{1\beta}+\xi_1,u_{2\beta}+\xi_2)]
+(1-t)[ (\xi_1,\xi_2) -K(\xi_1,\xi_2)]\ne 0.
\end{equation}
Once \eqref{20-7-8} is proved, Theorem~\ref{thm5-1} follows.

To prove \eqref{20-7-8}, we need to   estimate the following term
\begin{equation}\label{5-2}
\mathcal{N}(\xi_1,\xi_2):=(I-\mathbb{T}_{a_0,a_0,\beta})(u_{1\beta}+\xi_1,u_{2\beta}+\xi_2)-(I-K)(\xi_1,\xi_2).
\end{equation}

The estimate of \eqref{5-2} consists of the following steps.

{\bf Step~1}. We begin by showing that for any $(\xi_1,\xi_2)\in B_\epsilon^a(0,0)$, there exists $(\eta_1(\xi_1,\xi_2), \eta_2(\xi_1,\xi_2))$ such that
\begin{equation}\label{5-3}
\mathbb{T}_{a_0,a_0,\beta}(u_{1\beta}+\xi_1,u_{2\beta}+\xi_2)=\left(u_{1\beta}+\eta_1(\xi_1,\xi_2), u_{2\beta}+\eta_2(\xi_1,\xi_2)\right).
\end{equation}

{\bf Step~2}. The pair $(\eta_1(\xi), \eta_2(\xi))=:\eta$ in Step~1 satisfies
$\|\eta(\xi)\|_{-\mu_a}\leq C\|\xi\|_{-\mu_a}$.

{\bf Step~3}. We prove that $\|\mathcal{N}(\xi)\|_{-\mu_a}\leq C\|\xi\|_{-\mu_a}^2$.

Once we obtain the above estimates, we see that  $I-K$ is indeed the linearized operator of
$(I-\mathbb{T}_{a_0,a_0,\beta})(u_{1\beta}+\xi_1,u_{2\beta}+\xi_2)$. Hence \eqref{20-7-8}
follows from the invertibility of $I-K$.

\bigskip
Now we proceed to Step~1.
Note that \eqref{5-3} is equivalent to the existence of $\eta$ such that
\begin{equation}\label{30-7-8}
F(\xi,\eta)=0 \quad \hbox{and} \quad  \mu\neq 0,
\end{equation}
where $F=(F_1,F_2)$ is defined as

\begin{align*}
F_1(\xi,\eta)=&-\Delta (u_{1a}+\eta_1)+V(x)(u_{1a}+\eta_1)-a_0|u_{1a}+\xi_1|^3\\
&-\beta|u_{1a}+\xi_1|(u_{2a}+\xi_2)^2-\mu(u_{1a}+\eta_1),\\
F_2(\xi,\eta)=&-\Delta (u_{2a}+\eta_2)+V(x)(u_{2a}+\eta_2)-a_0|u_{2a}+\xi_2|^3\\
&-\beta|u_{2a}+\xi_2|(u_{1a}+\xi_1)^2-\mu(u_{2a}+\eta_2),
\end{align*}
 $\xi=(\xi_1,\xi_2)$, $\eta=(\eta_1,\eta_2)$, and
\begin{equation}\label{0-9-8}
\begin{split}
\mu=&\int_{\Omega}\Bigl(|\nabla (u_{1\beta}+\eta_1)|^2+V(x)(u_{1\beta}+\eta_1)^2-a_0|u_{1\beta}+\xi_1|^3(u_{1\beta}+\eta_1)\Bigr)\\
&+\int_{\Omega}\Bigl(|\nabla (u_{2\beta}+\eta_2)|^2+V(x)(u_{2\beta}+\eta_2)^2-a_0|u_{2\beta}+\xi_2|(u_{2\beta}+\eta_2)\Bigr)\\
&-\int_{\Omega}\Bigl(\beta|u_{1\beta}+\xi_1|(u_{2\beta}+\xi_2)^2(u_{1\beta}+\eta_1)+\beta|u_{2\beta}
+\xi_2|(u_{1\beta}+\xi_1)^2(u_{2\beta}+\eta_2)\Bigr).
\end{split}
\end{equation}

  By direct calculations, we find that
\begin{align*}
(F_1)_\eta(0,0)[\varphi]=&-\Delta \varphi_1+V(x)\varphi_1-\mu_a\varphi_1\\
&-u_{1\beta}\left[2\int_\Omega\Bigl(\nabla u_{1\beta}\nabla \varphi_1+V(x)u_{1\beta}\varphi_1+\nabla u_{2\beta}\nabla \varphi_2+V(x)u_{2\beta}\varphi_2\Bigr)\right.\\
&\left.-\int_{\Omega}\Bigl(a_0u_{1\beta}^3\varphi_1+a_0u_{2\beta}^3\varphi_2 +\beta u_{1\beta}u_{2\beta}^2\varphi_1+\beta u_{2\beta}u_{1\beta}^2\varphi_2\Bigr)\right]
\\
=&-\Delta \varphi_1+V(x)\varphi_1-\mu_a\varphi_1-u_a\int_\Omega \Big(\frac{a}{2}u_a^3+\mu_au_a\Big)(\varphi_1+\varphi_2),
\end{align*}
and
\begin{align*}
(F_2)_\eta(0,0)[\varphi]=&-\Delta \varphi_2+V(x)\varphi_2-\mu_a\varphi_2\\
&-u_{2\beta}\left[2\int_\Omega\Bigl(\nabla u_{1\beta}\nabla \varphi_1+V(x)u_{1\beta}\varphi_1+\nabla u_{2\beta}\nabla \varphi_2+V(x)u_{2\beta}\varphi_2\Bigr)\right.\\
&\left.-\int_{\Omega}\Bigl(a_0u_{1\beta}^3\varphi_1+a_0u_{2\beta}^3\varphi_2 +\beta u_{1\beta}u_{2\beta}^2\varphi_1+\beta u_{2\beta}u_{1\beta}^2\varphi_2\Bigr)\right]
\\
=&-\Delta \varphi_2+V(x)\varphi_2-\mu_a\varphi_2-u_a\int_\Omega \Big(\frac{a}{2}u_a^3+\mu_au_a\Big)(\varphi_1+\varphi_2),
\end{align*}
where $\varphi=(\varphi_1,\varphi_2)$ and
$
\mu_a=\int_{\Omega}\left(|\nabla u_{a}|^2+V(x)u_{a}^2-au_{a}^4\right).
$

In order to use the implicit function theorem to solve \eqref{30-7-8},
we need the following result.

\begin{proposition}\label{pro5-2}
There exist $\sigma_0$, $\theta_0>0$ such that for $a\in(a^*-\theta_0,a^*+\theta_0)$ and $\varphi=(\varphi_1,\varphi_2)\in\mathcal{H}$,
\[
\|i\circ F_\eta(0,0)[(\varphi_{1},\varphi_{2})]\|_{-\mu_{a}}\geq \sigma_0\|(\varphi_1,\varphi_2)\|_{-\mu_{a}},
\]
where $i$ is the map from $\mathcal{H}^*$ (the dual of $\mathcal{H}$)  to $\mathcal{H}$, satisfying
\begin{align*}
&\langle i\circ  F_\eta(0,0)[(\varphi_{1},\varphi_{2})],(h_1,h_2)\rangle_{-\mu_\beta}\\
=&\int_{\Omega}\Bigl[-\frac{1}{\mu_\beta}\nabla \varphi_{1}\nabla h_1+\left(1-\frac{1}{\mu_\beta}V(x)\right)\varphi_{1}h_1\Bigr]+\int_{\Omega}
\Bigl[-\frac{1}{\mu_\beta}\nabla \varphi_{2}\nabla h_2
+\left(1-\frac{1}{\mu_\beta}V(x)\right)\varphi_{2}h_2\Bigr]\\
&+\frac{1}{\mu_a}\left(\int_\Omega u_{a}(h_1+h_2)\right)\int_\Omega \Big(\frac{a}{2}u_a^3+\mu_au_a\Big)(\varphi_1+\varphi_2).
\end{align*}
\end{proposition}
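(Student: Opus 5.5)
The plan is to argue by contradiction and use a blow-up/rescaling argument around the concentration point $x_a$ of $u_a$, as in the proof of Lemma~\ref{lem4-3}. First I would decouple the operator by passing to sum and difference variables: $\psi_+=\varphi_1+\varphi_2$ and $\psi_-=\varphi_1-\varphi_2$. The nonlocal term in $F_\eta(0,0)$ is identical in both components. Hence:

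\begin{itemize}
\item the difference component gives the pure operator $-\Delta\psi_-+(V-\mu_a)\psi_-$;
\item the sum component gives
\[
-\Delta\psi_++(V-\mu_a)\psi_+-2u_a\int_\Omega\Big(\tfrac a2u_a^3+\mu_au_a\Big)\psi_+ .
\]
\end{itemize}

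In the weighted inner product $\langle\cdot,\cdot\rangle_{-\mu_a}$, this splitting is orthogonal up to the factor $2$. So it suffices to prove the lower bound separately for $\psi_-$ and for $\psi_+$.

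For $\psi_-$ the bound is immediate. The map $i\circ(-\Delta+V-\mu_a)$, measured in $\|\cdot\|_{-\mu_a}$, is exactly the identity after dividing by $-\mu_a$. So $\sigma_0=1$ works there.

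For $\psi_+$, write $A\psi=\psi-P\psi$, where $P\psi=\frac{1}{\mu_a}\, w_a\int_\Omega(\frac a2u_a^3+\mu_au_a)\psi$ and $w_a$ is the Riesz representative of $u_a$ in $\langle\cdot,\cdot\rangle_{-\mu_a}$. This is a rank-one perturbation of the identity, so it suffices to show two things. First, that $1-\langle P w,\cdot\rangle$ stays uniformly away from zero. Second, that the norms of the relevant vectors are uniformly bounded.

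\begin{itemize}
\item \emph{Bounded vectors.} Using Theorem~B, after scaling $y=\sqrt{-\mu_a}(x-x_a)$ we have $u_a\approx\sqrt{-\mu_a/a}\,Q(y)$. One checks that $\|w_a\|_{-\mu_a}=O(1)$, because $\langle w_a,h\rangle=\int u_ah$ and $\|u_a\|_{L^2}=1$. Likewise the functional $\psi\mapsto\frac1{\mu_a}\int(\frac a2u_a^3+\mu_au_a)\psi$ has norm $O(1)$, since $\frac{1}{-\mu_a}\int u_a^4$ is bounded.
\item \emph{Invertibility.} $I-P$ is invertible with uniformly bounded inverse iff $1-\ell(w_a)$ stays away from $0$, where $\ell(w_a)=\frac1{\mu_a}\int(\frac a2u_a^3+\mu_au_a)w_a$. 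Here $w_a$ solves $-\frac1{\mu_a}(-\Delta+V-\mu_a)w_a=u_a$ in the weak sense, i.e. $w_a=-\mu_a(-\Delta+V-\mu_a)^{-1}u_a$. In rescaled variables this becomes $w_a\approx\sqrt{-\mu_a/a}\,(-\Delta+1)^{-1}Q$.
\end{itemize}

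Computing the limit of $\ell(w_a)$ is the main obstacle. It reduces to evaluating
\[
1-\lim\ell(w_a)=1+\frac{1}{a^*}\int\big(\tfrac12Q^3-Q\big)(-\Delta+1)^{-1}Q .
\]
Using $(-\Delta+1)^{-1}Q^3=Q$, we get $\int Q^3(-\Delta+1)^{-1}Q=\int Q^2=a^*$. The expression then becomes $1+\frac12-\frac1{a^*}\int Q(-\Delta+1)^{-1}Q$. The remaining integral $\int Q(-\Delta+1)^{-1}Q$ is strictly less than $\int Q^2=a^*$, since $(-\Delta+1)^{-1}\le I$ with strict inequality on nonconstant functions. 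Hence the limit is strictly greater than $\frac12$, in particular nonzero.

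I would make this rigorous by contradiction. Suppose $\varphi_n$ is a sequence with norm $1$ and $\|F_\eta\varphi_n\|\to0$. Then $\psi_+=c_nw_a+o(1)$ with $c_n(1-\ell(w_a))\to0$, which forces $c_n\to0$. This contradicts the normalization. The error terms from $\omega_a$ in \eqref{4-7} and from the projection $PQ$ versus $Q$ are handled by exponential decay away from $\partial\Omega$.
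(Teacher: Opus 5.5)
Your overall architecture is sound, and it differs from the paper's. The paper runs a blow-up contradiction: it tests against concentrated $h$, passes to the limit system \eqref{5-7}--\eqref{5-8}, and extracts $\int Q(\varphi_1+\varphi_2)=0$. You instead decouple into $\psi_\pm$, observe that $i\circ F_\eta(0,0)$ is the identity on $\psi_-$ and a rank-one perturbation of the identity on $\psi_+$, and reduce everything to a single scalar.

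\textbf{The gap.} That scalar, which you rightly call the crux, is computed for the wrong operator. Write $g_a=\frac a2u_a^3+\mu_au_a$. The formula in the statement says directly
\[
i\circ F_\eta(0,0)[\varphi]=\varphi+\frac{1}{\mu_a}\Big(\int_\Omega g_a(\varphi_1+\varphi_2)\Big)(w_a,w_a).
\]
So on the sum variable the operator is $\psi_+\mapsto\psi_++\frac{2}{\mu_a}\big(\int_\Omega g_a\psi_+\big)w_a$. This is also what your own displayed sum equation gives after applying $i$. Your $P$ has the opposite sign and loses the factor $2$. The correct invertibility constant is $1+\frac{2}{\mu_a}\int_\Omega g_aw_a$. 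With your (correct) asymptotics for $u_a$ and $w_a$ it tends to
\[
1-\frac{1}{a^*}\int_{\mathbb R^2}(Q^3-2Q)(-\Delta+1)^{-1}Q=\frac{2}{a^*}\int_{\mathbb R^2}Q\,(-\Delta+1)^{-1}Q,
\]
and not to $\frac32-\frac{1}{a^*}\int Q(-\Delta+1)^{-1}Q$.

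\textbf{Why it matters.} This is not mere bookkeeping. The $\frac a2u_a^3$ contribution cancels the identity exactly, through $\int Q^3(-\Delta+1)^{-1}Q=a^*$. So nondegeneracy is not the robust ``$>\frac12$'' your computation suggests; it is carried entirely by the $\mu_au_a$ term. The limit is nonzero because $\int Q(-\Delta+1)^{-1}Q>0$, i.e.\ because $(-\Delta+1)^{-1}$ is positive definite (in Fourier variables the integral is $\int|\hat Q|^2(1+|\xi|^2)^{-1}$). It is not because of the bound $(-\Delta+1)^{-1}\le I$ that you invoke. That positivity is exactly what the paper uses to pass from $\int Q(\varphi_1+\varphi_2)=0$ to $\varphi_1=\varphi_2=0$. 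Your nonvanishing conclusion happens to survive, but the argument you give for it concerns an operator that is not the one in the statement.

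\textbf{The fix.} Once $P$ is corrected, your rank-one argument does prove the proposition, and it is more quantitative than the paper's compactness argument. When passing to the limit in $\int_\Omega g_aw_a$ you need global, not local, convergence.
\begin{itemize}
\item The rescaled $v_a=\sqrt{a/(-\mu_a)}\,u_a(x_a+y/\sqrt{-\mu_a})$ satisfies $\int v_a^2=a\to a^*=\int Q^2$. Hence $v_a\to Q$ strongly in $L^2(\mathbb R^2)$, and $v_a$ is uniformly bounded in $L^\infty$.
\item The rescaled $w_a$ solves $\big(-\Delta+1+\frac{V}{-\mu_a}\big)\tilde w_a=v_a$ in the rescaled domain. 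It satisfies $\|\tilde w_a\|_{H^1}\le\|v_a\|_{L^2}$ and converges weakly to $(-\Delta+1)^{-1}Q$.
\item Strong $L^2$ convergence of $v_a^3-2v_a$ then gives the limit of the scalar.
\end{itemize}
This replaces your appeal to exponential decay for the $\omega_a$ and $PQ$ error terms.
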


\begin{proof}
Suppose for contradiction that there exist $a_n\to a^*$, $(\varphi_{1n},\varphi_{2n})\in \mathcal{H}$ with $\|(\varphi_{1n},\varphi_{2n})\|_{-\mu_{a_n}}=1$ such that
\begin{equation}\label{4-3}
\|i\circ F_\eta(0,0)[(\varphi_{1n},\varphi_{2n})]\|_{-\mu_{a_n}}<\frac{1}{n}.
\end{equation}
Denote $\mu_n=\mu_{a_n}$ and $u_n=u_{a_n}$.  Then for any $(h_1,h_2)\in \mathcal{H}$, we have
\begin{equation}\label{5-5}
\begin{split}
\int_{\Omega}\Bigl[-\frac{1}{\mu_n}&\nabla \varphi_{1n}\nabla h_1+\left(1-\frac{1}{\mu_n}V(x)\right)\varphi_{1n}h_1\Bigr]+\int_{\Omega}\Bigl[-\frac{1}{\mu_n}\nabla \varphi_{2n}\nabla h_2+\left(1-\frac{1}{\mu_n}V(x)\right)\varphi_{2n}h_2  \Bigr]
\\
&+\frac{1}{\mu_n}\left(\int_\Omega u_{n}(h_1+h_2)\right)\int_\Omega \Big(\frac{a_n}{2}u_n^3+\mu_nu_n\Big)(\varphi_{1n}+\varphi_{2n})=O\left(\frac{1}{n}\right)\|(h_1,h_2)\|_{-\mu_n}.
\end{split}
\end{equation}

Let $x_n$ be the maximum point of $u_{n}$. Set
\[
\begin{split}
&v_n=\sqrt{\frac{a_n}{-\mu_n}}u_{n}\left(\frac{1}{\sqrt{-\mu_n}}y+x_n\right),\;\; \tilde{\varphi}_{in}=\sqrt{\frac{a_n}{-\mu_n}}\varphi_{in}\left(\frac{1}{\sqrt{-\mu_n}}y+x_n\right),
\\
& h_{1n}=\sqrt{\frac{-\mu_n}{a_n}} h(\sqrt{-\mu_n}(x-x_n)), \;\;\; h \in C_0^\infty(\mathbb{R}^2).
\end{split}
\]
Then by \eqref{5-5}, we have
\begin{equation}\label{5-6}
\begin{split}
O\left(\frac{1}{n}\right)\|(h_{1n},0)\|_{-\mu_n}
=&\int_{\Omega}\Bigl[-\frac{1}{\mu_n}\nabla \varphi_{1n}\nabla h_{1n}+\left(1-\frac{1}{\mu_n}V(x)\right)\varphi_{1n}h_{1n} \Bigr]
\\
&+\frac{1}{\mu_n}\left(\int_\Omega u_{n}h_{1n}\right)\int_\Omega \Big(\frac{a_n}{2}u_n^3+\mu_nu_n\Big)(\varphi_{1n}+\varphi_{2n})
\\
=&\frac{1}{a_n}\int_{\Omega_n}\left( \nabla \tilde{\varphi}_{1n} \nabla h+\left(1-\frac{1}{\mu_n}V(\frac{1}{\sqrt{-\mu_n}}y+x_n)\right)\tilde{\varphi}_{1n} h\right)
\\
&+\frac{1}{2a_n^2} \left(\int_{\Omega_n} v_n h\right)\int_{\Omega_n} (2v_n-v_n^3)(\tilde{\varphi}_{1n}+\tilde{\varphi}_{1n}),
\end{split}
\end{equation}
where  $\Omega_n=\big\{y\in\mathbb{R}^2:\frac{1}{\sqrt{-\mu_n}}y+x_n\in \Omega\big\}$.
Note that
\[
\int_{\Omega}\Bigl[-\frac{1}{\mu_n}|\nabla h_{1n}|^2+\left(1-\frac{1}{\mu_n}V(x)\right)h_{1n}^2
\Bigr]=\frac{1}{a_n} \int_{\Omega_n}\bigg( |\nabla h|^2+\left(1-\frac{1}{\mu_n}V(\frac{1}{\sqrt{-\mu_n}}y+x_n)\right)h^2 \bigg)\leq C.
\]
Moreover, since  $\|(\varphi_{1n},\varphi_{2n})\|_{-\mu_{a_n}}=1$, we have that
 $(\tilde{\varphi}_{1n},\tilde{\varphi}_{2n})$ is bounded in $H_{loc}^1(\mathbb{R}^2)\times H_{loc}^1(\mathbb{R}^2)$. Then,  we can assume that
\begin{align*}
&\tilde{\varphi}_{in}\rightharpoonup \varphi_i\hbox{ weakly in }H_{loc}^1(\mathbb{R}^2),\\
&\tilde{\varphi}_{in}\to \varphi_i\hbox{ strongly in }L_{loc}^p(\mathbb{R}^2), 1\leq p<\infty.
\end{align*}
It follows from \eqref{5-6} that
\begin{equation}\label{5-7}
-\Delta\varphi_1+\varphi_1=\frac{1}{2a^*}Q\int_{\mathbb{R}^2} (Q^3-2Q)(\varphi_1+\varphi_2).
\end{equation}
Similarly, we can show that
\begin{equation}\label{5-8}
-\Delta\varphi_2+\varphi_2=\frac{1}{2a^*}Q\int_{\mathbb{R}^2} (Q^3-2Q)(\varphi_1+\varphi_2).
\end{equation}
Testing \eqref{5-7} and \eqref{5-8} by $Q$ and adding together,
we get
\[
\begin{split}
\int_{\mathbb{R}^2} Q^3(\varphi_1+\varphi_2)=&\int_{\mathbb{R}^2}  (-\Delta Q+Q)(\varphi_1+\varphi_2)
\\
=&\int_{\mathbb{R}^2} \big( -\Delta(\varphi_1+\varphi_2)+(\varphi_1+\varphi_2)  \big)Q
\\
=&  \int_{\mathbb{R}^2} (Q^3-2Q)(\varphi_1+\varphi_2),
\end{split}
\]
which gives
\[
\int_{\mathbb{R}^2}Q(\varphi_1+\varphi_2)=0.
\]
Multiplying \eqref{5-7}  by $\varphi_1$, \eqref{5-8} by $\varphi_2$, integrating by parts over $\mathbb{R}^2$ and adding together, we obtain that
\[
\int_{\mathbb{R}^2}\left(|\nabla \varphi_1|^2+|\nabla \varphi_2|^2+\varphi_1^2+\varphi_2^2\right)=0.
\]
Thus, $(\varphi_1,\varphi_2)=(0,0)$.
Then taking $(h_1,h_2)=(\varphi_{1n}, \varphi_{2n})$    in \eqref{5-5},  we get   $\|(\varphi_{1n},\varphi_{2n})\|_{-\mu_n}\to 0$ This is  a contradiction to $\|(\varphi_{1n},\varphi_{2n})\|_{-\mu_n}=1$.
\end{proof}

Since  $F(0,0)=0$, by Proposition \ref{pro5-2} and the Implicit Function Theorem, we know that for any $\xi:=(\xi_1,\xi_2)\in B_{\epsilon}^a(0)$, there exists a unique $\eta(\xi):=(\eta_1(\xi),\eta_2(\xi))\in C\left(B_{\epsilon}^a(0),B_{\epsilon_1}^a(0)\right)$ such that
\[
F\left(\xi,\eta(\xi)\right)=0,  \hbox{ for any }\xi\in B_{\epsilon}^a(0).
\]
Here,  $\epsilon,\epsilon_1$ are two small positive constants.
Furthermore, we have the following estimate for $\eta(\xi)$.
\begin{lemma}\label{lem5-3}
For any $a$ close to $a^*$ and $\xi\in B_{\epsilon}^a(0)$, it holds that
\[
\|\eta(\xi)\|_{-\mu_a}\leq C\|\xi\|_{-\mu_a}.
\]
\end{lemma}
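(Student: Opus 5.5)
The plan is to expand the equation $F(\xi,\eta(\xi))=0$ around $(0,0)$ and invert the linear part using Proposition~\ref{pro5-2}. Write
\[
F(\xi,\eta)=F(0,0)+F_\eta(0,0)[\eta]+F_\xi(0,0)[\xi]+R(\xi,\eta),
\]
where $R$ collects the terms of order at least two in $(\xi,\eta)$. This includes the quadratic and cubic terms in $\xi$ coming from $|u_{i\beta}+\xi_i|^3$ and $|u_{i\beta}+\xi_i|(u_{j\beta}+\xi_j)^2$. It also includes the cross terms from the Lagrange multiplier: by \eqref{0-9-8}, $\mu-\mu_a$ is a sum of a term linear in $(\xi,\eta)$ and higher-order terms, and the product $(\mu-\mu_a)\eta_i$ is quadratic. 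Since $F(0,0)=0$, we get
\[
F_\eta(0,0)[\eta]=-F_\xi(0,0)[\xi]-R(\xi,\eta).
\]
Applying $i$ and Proposition~\ref{pro5-2} then gives
\[
\sigma_0\|\eta\|_{-\mu_a}\le \|i\circ F_\xi(0,0)[\xi]\|_{-\mu_a}+\|i\circ R(\xi,\eta)\|_{-\mu_a}.
\]

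First, I will bound the linear term, $\|i\circ F_\xi(0,0)[\xi]\|_{-\mu_a}\le C\|\xi\|_{-\mu_a}$, with $C$ independent of $a$. Testing against $(h_1,h_2)$ and normalizing by the factor $1/(-\mu_a)$ built into the $\|\cdot\|_{-\mu_a}$ inner product, the typical term is
\[
\frac{1}{-\mu_a}\int_\Omega u_a^2\xi_i h_j .
\]
By Theorem~B, $\|u_a\|_{L^\infty}\le C\sqrt{-\mu_a}$, so this is at most $C\|\xi_i\|_{L^2}\|h_j\|_{L^2}$, and the $L^2$ norms are controlled by the $\|\cdot\|_{-\mu_a}$ norm since that norm contains $\int(1-V/\mu_a)(\cdot)^2\ge \int (\cdot)^2$. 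The multiplier contributions have the form $u_{i\beta}\int(\ldots)\xi$. For these I will use the scaled estimates of Theorem~B: $\int u_a^2=1$, $\int u_a^4\sim -\mu_a$, and $\|u_a\|_{-\mu_a}\sim 1$. With these, each product of integrals is bounded by $C\|\xi\|_{-\mu_a}\|h\|_{-\mu_a}$ after the same scaling.

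Second, I will bound the remainder, $\|i\circ R(\xi,\eta)\|_{-\mu_a}\le C\bigl(\|\xi\|_{-\mu_a}+\|\eta\|_{-\mu_a}\bigr)^2$ for $\xi,\eta$ in the small balls. The terms cubic in $\xi$ need $\frac{1}{-\mu_a}\int|\xi|^3|h|$ and $\frac{1}{-\mu_a}\int u_a|\xi|^2|h|$. I will pass to the rescaled variables $y=\sqrt{-\mu_a}(x-x_a)$ used in the proof of Proposition~\ref{pro5-2}. In these variables the $\|\cdot\|_{-\mu_a}$ norm is comparable to the $H^1$ norm of the rescaled function times $(-\mu_a)^{-1/2}$, with the same powers of $-\mu_a$ on each side. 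Gagliardo--Nirenberg in $\mathbb R^2$ then applies uniformly in $a$, and the quadratic and cubic bounds follow. Since $\epsilon$ and $\epsilon_1$ are small, absorbing $C\epsilon_1\|\eta\|_{-\mu_a}$ into the left side gives $\|\eta\|_{-\mu_a}\le C\|\xi\|_{-\mu_a}$.

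The main obstacle is the uniformity in $a$, that is, tracking the powers of $-\mu_a$ consistently, especially in the nonlocal multiplier terms where factors such as $\int u_a^3\varphi$ appear together with $u_a$. I will handle all these terms in the rescaled variables, using the normalization $\|\tilde{\cdot}\|_{H^1}\sim\|\cdot\|_{-\mu_a}$ as in \eqref{5-6}, so that every term reduces to an $a$-independent Sobolev estimate on $\Omega_a$.
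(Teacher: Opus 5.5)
Your proposal is correct and is essentially the paper's argument. Both invert $F_\eta(0,0)$ uniformly in $a$ via Proposition~\ref{pro5-2}, bound the $\xi$-driven source by $C\|\xi\|_{-\mu_a}$ and the nonlinear remainder quadratically using $\|u_a\|_{L^\infty}\le C\sqrt{-\mu_a}$ and Gagliardo--Nirenberg, and then absorb using the smallness of $\epsilon,\epsilon_1$. The paper splits differently, expanding only in $\eta$ about $(\xi,0)$, so it needs just Lipschitz bounds in $\xi$ for $F(\xi,0)$ and $F_\eta(\xi,0)-F_\eta(0,0)$ and never differentiates $|u_{i\beta}+\xi_i|$ in $\xi$; your joint expansion at $(0,0)$ works as well, since only a linear bound on the pure-$\xi$ part of $R$ is actually needed.
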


\begin{proof}
Note that $F(\xi,\eta)=0$. We have
\begin{align*}
-F_\eta(0,0)[\eta]=&F(\xi,\eta)-F_\eta(0,0)[\eta]\\
=&F(\xi,0)+\left(F_\eta(\xi,0)[\eta]-F_\eta(0,0)[\eta]\right)+R(\xi,\eta),
\end{align*}
where $R=(R_1,R_2)=F(\xi,\eta)-F(\xi,0)-F_\eta(\xi,0)[\eta]$.
Direct calculation shows that
\begin{align*}
R_i(\xi,\eta)=&-(u_{i\beta}+\eta_i)\left(\int_\Omega|\nabla \eta_1|^2+V(x)\eta_1^2+|\nabla \eta_2|^2+V(x)\eta_2^2\right)\\
&-\eta_i\left(\int_\Omega 2\nabla u_{1\beta}\nabla \eta_1+2V(x)u_{1\beta}\eta_1-a_0|u_{1\beta}+\xi_1|^3\eta_1-\beta|u_{1\beta}+\xi_1|(u_{2\beta}
+\xi_2)^2\eta_1\right.\\
&\left.+\int_\Omega 2\nabla u_{2\beta}\nabla \eta_2+2V(x)u_{2\beta}\eta_2-a_0|u_{2\beta}+\xi_2|^3\eta_2-\beta|u_{2\beta}
+\xi_2|(u_{1\beta}+\xi_1)^2\eta_2\right).
\end{align*}
It follows from Proposition \ref{pro5-2} that
\begin{equation}\label{5-9}
\begin{aligned}
\|\eta\|_{-\mu_a}\leq& C\|i\circ F_\eta(0,0)[\eta]\|_{-\mu_a}\\
\leq& \|i\circ F(\xi,0)\|_{-\mu_a}+\|i\circ\left(F_\eta(\xi,0)[\eta]-F_\eta(0,0)[\eta]\right)\|_{-\mu_a}+ \|i\circ R(\xi,\eta))\|_{-\mu_a}.
\end{aligned}
\end{equation}

On the other hand,  for any $h\in\mathcal{H}$,
\begin{equation}\label{1-9-8}
\begin{split}
&\Bigl|\big\langle i\circ F(\xi,0),h\big\rangle_{-\mu_a}\Bigr|\\
 =  &\frac1{-\mu_a}   \Bigl|
\int_{\Omega}   \Bigl( a_0 \bigl(|u_{1\beta}+\xi_1|^3 -u_{1\beta}^3   \bigr)h_1  +
a_0 \bigl(|u_{2\beta}+\xi_2|^3 -u_{2\beta}^3   \bigr)h_2 \\ &\qquad+
\beta \bigl(|u_{1\beta}+\xi_1|(u_{2\beta}+\xi_2)^2- u_{1\beta}u_{2\beta}^2  \bigr)h_1
 +  \beta
\bigl(|u_{2\beta}+\xi_2|(u_{1\beta}+\xi_1)^2- u_{2\beta}u_{1\beta}^2  \bigr)h_2\Bigr)\\
&+ \mu\int_{\Omega} ( u_{1\beta} h_1+ u_{2\beta}h_2)\Bigr| \\
 \le &C\|\xi\|_{-\mu_a}\|h\|_{-\mu_a}.
\end{split}
\end{equation}
Similarly, we can also prove that 
\begin{equation}\label{2-9-8}
\begin{aligned}
\left| \big\langle i\circ \big(F_{\eta}(\xi,0)[\eta]-F_{\eta}(0,0)[\eta]\big),h\big\rangle_{-\mu_a}\right|
\leq C\|h\|_{-\mu_a}\|\eta\|_{-\mu_a}\|\xi\|_{-\mu_a},
\end{aligned}
\end{equation}
and

\begin{equation}\label{3-9-8}
\begin{aligned}
\left|\langle i\circ R(\xi,\eta),h\rangle_{-\mu_a}\right|
\leq& C\|h\|_{-\mu_a}\|\eta\|^2_{-\mu_a}.
\end{aligned}
\end{equation}

Combining \eqref{5-9}--\eqref{3-9-8}, we conclude that
\[
\|\eta(\xi)\|_{-\mu_a}\leq C\left(\|\xi\|_{-\mu_a}+\|\eta(\xi)\|_{-\mu_a}\|\xi\|_{-\mu_a}+\|\eta(\xi)\|^2_{-\mu_a}\right),
\]
which, together with $\|\xi\|_{-\mu_a}\leq \epsilon$ and $\|\eta(\xi)\|_{-\mu_a}\leq \epsilon_1$,  gives
\[
\big(1-C(\epsilon+\epsilon_1)\big) \|\eta(\xi)\|_{-\mu_a} \leq C\|\xi\|_{-\mu_a}.
\]
Since $\epsilon,\epsilon_1>0$ are small, the result follows.
\end{proof}

From Lemma \ref{lem5-3}, we find that
\begin{align*}
\mu=&\int_{\Omega}\Bigl(|\nabla (u_{1\beta}+\eta_1)|^2+V(x)(u_{1\beta}+\eta_1)^2-a_0|u_{1\beta}+\xi_1|^3(u_{1\beta}+\eta_1)\Bigr)\\
&+\int_{\Omega}\Bigl(|\nabla (u_{2\beta}+\eta_2)|^2+V(x)(u_{2\beta}+\eta_2)^2-a_0|u_{2\beta}+\xi_2|(u_{2\beta}+\eta_2)\Bigr)\\
&-\int_{\Omega}\Bigl(\beta|u_{1\beta}+\xi_1|(u_{2\beta}+\xi_2)^2(u_{1\beta}+\eta_1)+\beta|u_{2\beta}
+\xi_2|(u_{1\beta}+\xi_1)^2(u_{2\beta}+\eta_2)\Bigr)\\
=&\int_{\Omega}\Bigl(|\nabla u_{1\beta}|^2+V(x)u_{1\beta}^2-a_0u_{1\beta}^4-\beta u_{1\beta}^2u_{2\beta}^2
\Bigr)\\
&+\int_{\Omega}\Bigl(|\nabla u_{2\beta}|^2+V(x)u_{2\beta}^2-a_0u_{2\beta}^4-\beta u_{1\beta}^2u_{2\beta}^2\Bigr)+O\left(-\mu_a\|\xi\|_{-\mu_a}\right)\\
=&\mu_a+O\left(-\mu_a\|\xi\|_{-\mu_a}\right)\neq 0.
\end{align*}
Thus, we have proved \eqref{5-3}.

\begin{lemma}\label{lem5-4}
For $a$ close to $a^*$ and $\xi\in B_{\epsilon}^a(0)$, we have
\[
\|\mathcal{N}(\xi)\|_{-\mu_a}\leq C\|\xi\|_{-\mu_a}^2.
\]
\end{lemma}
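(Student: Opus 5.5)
We write $\mathcal N(\xi)=(\xi-\eta(\xi))-(\xi-K\xi)=K\xi-\eta(\xi)$, using \eqref{5-3}. So it suffices to show that $\eta(\xi)$ agrees with the linear map $K\xi$ up to a quadratic error in the $\|\cdot\|_{-\mu_a}$ norm. The natural route is to compare the equation $F(\xi,\eta(\xi))=0$ with the linear equation \eqref{eq4-9} defining $K\xi$. Both can be written in terms of the operator $F_\eta(0,0)$, whose inverse is uniformly bounded by Proposition~\ref{pro5-2}.

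\textbf{Step 1: rewrite the linear problem.} First we show that $K\xi$ is characterized by
\[
F_\eta(0,0)[K\xi]=-F_\xi(0,0)[\xi].
\]
This follows by differentiating $F(\xi,\eta(\xi))=0$ formally at $\xi=0$: it is exactly the linearization of $\mathbb T_{a_0,a_0,\beta}$ defining $K$. The Lagrange multiplier in \eqref{eq4-9} corresponds to the nonlocal rank-one term in $F_\eta(0,0)$ together with the $\mu$-variation in $F_\xi(0,0)$. This needs a short check that the constraint $\int_\Omega(u_1+u_2)u_a=0$ is consistent with the projection built into $F_\eta(0,0)$. Since $u_{1\beta}=u_{2\beta}=\frac{\sqrt2}{2}u_a$, we have $a_0u_{1\beta}^2+\beta u_{2\beta}^2=\frac{a_0+\beta}{2}u_a^2=a u_a^2$, so the coefficients in $F_\xi(0,0)$ match the right-hand side of \eqref{eq4-9}.

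\textbf{Step 2: the error identity.} Subtracting, and using $F(0,0)=0$,
\[
F_\eta(0,0)[\eta-K\xi]=-\Big(F(\xi,\eta)-F(0,0)-F_\xi(0,0)[\xi]-F_\eta(0,0)[\eta]\Big).
\]
By Proposition~\ref{pro5-2},
\[
\|\eta-K\xi\|_{-\mu_a}\le \sigma_0^{-1}\,\|i\circ(\text{second-order remainder})\|_{-\mu_a}.
\]
We split the remainder into three parts. The first is the Taylor remainder in $\xi$ of the nonlinearities $|u_{i\beta}+\xi_i|^3$ and $|u_{i\beta}+\xi_i|(u_{j\beta}+\xi_j)^2$. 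The second is the mixed term $F_\eta(\xi,0)[\eta]-F_\eta(0,0)[\eta]$, already bounded in \eqref{2-9-8} by $C\|\xi\|\|\eta\|$. The third is the pure $\eta$-quadratic term $R(\xi,\eta)$, bounded in \eqref{3-9-8} by $C\|\eta\|^2$. Lemma~\ref{lem5-3} then turns the last two into $C\|\xi\|^2_{-\mu_a}$.

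\textbf{Step 3: the main obstacle.} The hard part is the first piece: proving $|\langle i\circ R_\xi,h\rangle_{-\mu_a}|\le C\|\xi\|_{-\mu_a}^2\|h\|_{-\mu_a}$ uniformly as $\mu_a\to-\infty$. The same issue arises for the contribution of the quadratic variation of $\mu$ in \eqref{0-9-8}.

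For the nonlinearity we use the pointwise bound
\[
\big||u+\xi|^3-u^3-3u^2\xi\big|\le C(u|\xi|^2+|\xi|^3),
\]
and its analogues for the mixed terms, which hold even with absolute values since $u\ge0$. After the factor $\frac1{-\mu_a}$ in the inner product, we must control $\frac{1}{-\mu_a}\int u_a\xi^2|h|$ and $\frac1{-\mu_a}\int|\xi|^3|h|$. We rescale $y=\sqrt{-\mu_a}(x-x_a)$ as in the proof of Proposition~\ref{pro5-2}. Under this scaling $u_a\sim\sqrt{-\mu_a}\,Q$ by Theorem~B, and $\|\cdot\|_{-\mu_a}$ becomes, up to constants, the standard $H^1$ norm of the rescaled function with amplitude $\sqrt{-\mu_a}$. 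The 2D Sobolev embeddings $H^1\subset L^3,L^4$ on the rescaled functions then give exactly $C\|\xi\|^2\|h\|$, with all powers of $\mu_a$ cancelling; this is the mass-critical scaling.

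For the $\mu$-term we expand \eqref{0-9-8} to second order. Each quadratic piece carries a factor $-\mu_a$, and it is multiplied by $\frac1{-\mu_a}\int u_{i\beta}h_i=O(\|h\|)$, again by scaling. The terms with absolute values $|u+\xi|$ cause no loss, because the inequalities above are valid without differentiability. Combining the three pieces gives $\|\mathcal N(\xi)\|_{-\mu_a}=\|K\xi-\eta(\xi)\|_{-\mu_a}\le C\|\xi\|_{-\mu_a}^2$.
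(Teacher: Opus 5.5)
Your argument is correct but takes a different route from the paper's. The paper does not invert $F_\eta(0,0)$ at this step. It subtracts \eqref{5-10} from \eqref{5-11} to get the PDE \eqref{5-12} for $\mathcal N=K\xi-\eta$, tests it with $\mathcal N$, and uses only the trivial coercivity $\int\bigl(|\nabla\mathcal N|^2+(V-\mu_a)\mathcal N^2\bigr)=|\mu_a|\,\|\mathcal N\|_{-\mu_a}^2$.

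The price is explicit multiplier bookkeeping. The paper shows $|\mu-\mu_a|=O(|\mu_a|\|\xi\|_{-\mu_a})$ and $|\lambda-(\mu-\mu_a)|=O\bigl(|\mu_a|(\|\mathcal N\|_{-\mu_a}+\|\xi\|_{-\mu_a}^2)\bigr)$. It then uses the two constraints to get $\int(u_{1\beta}\mathcal N_1+u_{2\beta}\mathcal N_2)=\frac12\int(\eta_1^2+\eta_2^2)$. So the first-order multiplier mismatch only ever pairs with a quadratic quantity.

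You instead keep the multiplier inside $F$ through \eqref{0-9-8} and identify $K\xi$ by $F_\eta(0,0)[K\xi]=-F_\xi(0,0)[\xi]$. Proposition~\ref{pro5-2} then does the inversion, and what remains is a second-order Taylor bound for $F$; two of its three pieces are already \eqref{2-9-8}--\eqref{3-9-8}. This is the standard quantitative implicit-function argument and is more modular. It leans on the full nondegeneracy in Proposition~\ref{pro5-2}, whereas the paper needs only coercivity at this step (both use Lemma~\ref{lem5-3}).

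The identity in your Step 1, which you left as a check, does hold. Testing \eqref{eq4-9} against $u_a$ shows that its multiplier equals $\frac a2\int u_a^3(K_1+K_2)-\frac{3a}{2}\int u_a^3(\xi_1+\xi_2)$. Together with $\int u_a(K_1+K_2)=0$, this makes the rank-one terms of $F_\eta(0,0)[K\xi]+F_\xi(0,0)[\xi]$ cancel exactly.

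Two small bookkeeping points:
\begin{enumerate}
\item The quantity $\frac{1}{-\mu_a}\int u_{i\beta}h_i$ is $O(\|h\|_{-\mu_a}/|\mu_a|)$, by Cauchy--Schwarz and $\|u_a\|_{L^2}=1$, not merely $O(\|h\|_{-\mu_a})$. It is exactly this extra factor $1/|\mu_a|$ that cancels the factor $|\mu_a|$ carried by the quadratic part of $\mu$. As you wrote it, the powers of $\mu_a$ do not balance.
\item Your pointwise remainder bound for $|u_{1\beta}+\xi_1|(u_{2\beta}+\xi_2)^2$ needs more than $u\ge 0$. On the set $\{u_{1\beta}+\xi_1<0\}$ a leftover term of size $|\xi_1|u_{2\beta}^2$ appears. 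It is absorbed into $|\xi_1|^3$ only because $u_{2\beta}=u_{1\beta}<|\xi_1|$ there.
\end{enumerate}
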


\begin{proof}
By definition of $\mathcal{N}$, we have
\[
\mathcal{N}(\xi)=-\eta(\xi)+K(\xi),
\]
where $\eta(\xi)=(\eta_1(\xi_1,\xi_2),\eta_2(\xi_1,\xi_2))$ satisfies $\int_{\Omega}\bigl(|u_{1\beta}+\eta_1|^2+|u_{2\beta}+\eta_2|^2\bigr)=1$  and
\begin{equation}\label{5-10}
\left\{\begin{aligned}
-\Delta \eta_1+V(x)\eta_1-\mu_a\eta_1=&a_0\big(|u_{1\beta}+\xi_1|^3-u_{1\beta}^3\big)+\beta\big(|u_{1\beta}
+\xi_1|(u_{2\beta}+\xi_2)^2-u_{1\beta}^3\big) \\
&+(\mu-\mu_a)(u_{1\beta}+\eta_1),\\
-\Delta \eta_2+V(x)\eta_2-\mu_a\eta_2=&a_0\big( |u_{2\beta}+\xi_2|^3-u_{2\beta}^3\big)+\beta\big(|u_{2\beta}+\xi_2|(u_{1\beta}+\xi_1)^2-u_{2\beta}^3\big)\\
&+(\mu-\mu_a)(u_{2\beta}+\eta_2),
\end{aligned}\right.
\end{equation}
where $\mu$ is defined in \eqref{0-9-8}.

We recall that $K(\xi)=(K_1(\xi_1,\xi_2),K_2(\xi_1,\xi_2))$ satisfies $\int_\Omega (K_1+K_2)u_a=0$ and
\begin{equation}\label{5-11}
\left\{\begin{aligned}
-\Delta K_1+V(x)K_1-\mu_aK_1=&3a_0u_{1\beta}^2\xi_1+\beta u_{2\beta}^2\xi_1+2\beta u_{1\beta}u_{2\beta}\xi_2+\lambda u_{1\beta},  \\
-\Delta K_2+V(x)K_2-\mu_aK_2=&3a_0u_{2\beta}^2\xi_2+\beta u_{1\beta}^2\xi_2+2\beta u_{1\beta}u_{2\beta}\xi_1+\lambda u_{2\beta},
\end{aligned}\right.
\end{equation}
where $\lambda$ is a Lagrange multiplier.
See \eqref{eq4-9}.    Multiplying the first equation  by $u_{1\beta}$ and the second  equation  by $u_{2\beta}$  and integrating by parts,
using \eqref{eq4-2} and   $\int_\Omega (u_{1\beta}^2+u_{2\beta}^2)=1$, we get
\begin{equation}\label{60-9-8}
\begin{split}
\lambda=& (\beta+a_0)\int_\Omega \big(u_{1\beta}^3 K_1+u_{2\beta}^3 K_2\big)
-3a_0\int_\Omega (u_{1\beta}^3\xi_1+u_{2\beta}^3\xi_2)-3\beta \int_\Omega
\big(u_{1\beta}^2 u_{2\beta}\xi_2+u_{2\beta}^2 u_{1\beta}\xi_1\big).
\end{split}
\end{equation}

On the other hand, it follows from \eqref{5-10} and \eqref{5-11}  that $\mathcal{N}(\xi)=(\mathcal{N}_{1}(\xi_1,\xi_2),\mathcal{N}_{2}(\xi_1,\xi_2))$ satisfies
\begin{equation}\label{5-12}
\left\{\begin{aligned}
-\Delta \mathcal{N}_{1}+V(x)\mathcal{N}_{1}-\mu_a\mathcal{N}_{1}=&-a_0\big(|u_{1\beta}+\xi_1|^3-u_{1\beta}^3-
3u_{1\beta}^2\xi_1\big)-\beta\big(|u_{1\beta}+\xi_1|(u_{2\beta}+\xi_2)^2
\\
-u_{1\beta}^3&-u_{2\beta}^2\xi_1-2u_{1\beta}u_{2\beta}\xi_2\big)
+\big(\lambda-(\mu-\mu_a)\big)u_{1\beta}+(\mu_a-\mu)\eta_1
\\
-\Delta \mathcal{N}_{2}+V(x)\mathcal{N}_{2}-\mu_a\mathcal{N}_{2}=&-a_0\big(|u_{2\beta}+\xi_2|^3-u_{2\beta}^3
-3u_{2\beta}^2\xi_2\big)-\beta\big(|u_{2\beta}+\xi_2|(u_{1\beta}+\xi_1)^2
\\
-u_{2\beta}^3&-u_{1\beta}^2\xi_2-2u_{1\beta}u_{2\beta}\xi_1\big)+\big(\lambda-(\mu-\mu_a)\big)
u_{2\beta}+(\mu_a-\mu)\eta_2,
\end{aligned}\right.
\end{equation}
from which, we deduce
 \begin{equation}\label{30-9-8}
\begin{split}
\int_\Omega \Bigl(|\nabla \mathcal{N}_{1}|^2+&V(x)\mathcal{N}_{1}^2-\mu_a|\mathcal{N}_{1}|^2\Bigr)+
\int_\Omega \Bigl(|\nabla \mathcal{N}_{2}|^2+V(x)\mathcal{N}_{2}^2-\mu_a|\mathcal{N}_{2}|^2\Bigr)\\
=&-\beta\int_{\Omega}\left(|u_{1\beta}+\xi_1|(u_{2\beta}+\xi_2)^2
-u_{1\beta}^3-u_{2\beta}^2\xi_1-2u_{1\beta}u_{2\beta}\xi_2\right)\mathcal{N}_{1}\\
&-\beta\int_{\Omega}\left(|u_{2\beta}+\xi_2|(u_{1\beta}+\xi_1)^2
-u_{2\beta}^3-u_{1\beta}^2\xi_2-2u_{1\beta}u_{2\beta}\xi_1\right)\mathcal{N}_{2}\\
&-a_0\int_{\Omega}\big(|u_{1\beta}+\xi_1|^3-u_{1\beta}^3-3u_{1\beta}^2\xi_1\big)\mathcal{N}_{1}
-a_0\int_{\Omega}\big(|u_{2\beta}+\xi_2|^3-u_{2\beta}^3-3u_{2\beta}^2\xi_2\big)\mathcal{N}_{2}\\
&+\left(\lambda-(\mu-\mu_a)\right)\left(\int_{\Omega}u_{1\beta}\mathcal{N}_{1}
+\int_{\Omega}u_{2\beta}\mathcal{N}_{2}\right)+(\mu_a-\mu)\left(\int_\Omega \eta_1\mathcal{N}_{1}
+\int_\Omega \eta_2\mathcal{N}_{2}\right).
\end{split}
 \end{equation}

  First, we know    that

 \begin{equation}\label{31-9-8}
 \text{ LHS of \eqref{30-9-8}  }  \ge c'  \|\mathcal{N}\|_{-\mu_a}^2
\end{equation}
for some $c'>0$. Next, we estimate each term in the right hand side of
\eqref{30-9-8}.  By the Gagliardo-Nirenberg inequality, we find
\[
\int_{\Omega}|\xi_i|^4\leq C\int_{\Omega}|\nabla \xi_i|^2\int_{\Omega}\xi_i^2\leq C(-\mu_a)\|\xi_i\|_{-\mu_a}^4, \]
from which, we deduce

\begin{equation}\label{5-13}
\begin{aligned}
\Bigl|\int_{\Omega}\left(|u_{i\beta}+\xi_i|^3-u_{i\beta}^3-3u_{i\beta}^2\xi_i\right)\mathcal{N}_{i}
\Bigr|\leq C|\mu_a|\|\xi_i\|_{-\mu_a}^2\|\mathcal{N}_{i}\|_{-\mu_a},
\end{aligned}
\end{equation}
and
\begin{equation}\label{5-14}
\begin{aligned}
\Bigl|\int_{\Omega}\left(|u_{i\beta}+\xi_i|(u_{(3-i)\beta}+\xi_{3-i})^2-u_{i\beta}^3-u_{(3-i)\beta}^2\xi_i
-2u_{1\beta}u_{2\beta}\xi_{3-i}\right)\mathcal{N}_{i}\Bigr|\\
\leq C|\mu_a|\|\mathcal{N}_{i}\|_{-\mu_a}(\|\xi_{3-i}\|^2_{-\mu_a}+\|\xi_1\|_{-\mu_a}\|\xi_2\|_{-\mu_a}).
\end{aligned}
\end{equation}

Using $2a=a_0+\beta$ and
\[
\begin{split}
\mu_a=&\int_{\Omega}(|\nabla u_a|^2+V(x)u_a^2-au_a^4)\; dx
\\
=&\int_{\Omega}(|\nabla u_{1\beta}|^2+V(x)u_{1\beta}^2-2au_{1\beta}^4)\; dx+\int_{\Omega}(|\nabla u_{2\beta}|^2+V(x)u_{2\beta}^2-2au_{2\beta}^4)\; dx,
\end{split}
\]
 by direct calculations, we find from \eqref{0-9-8}   that
\begin{align*}
\mu-\mu_a=&2\mu_a\int_{\Omega}u_{1\beta}\eta_1+2a\int_\Omega u_{1\beta}^3\eta_1+\int_\Omega(|\nabla \eta_1|^2+V(x)\eta_1^2)\\
&-a_0\int_\Omega (|u_{1\beta}+\xi_1|^3(u_{1\beta}+\eta_1)-u_{1\beta}^4)
-\beta \int_\Omega (|u_{1\beta}+\xi_1|(u_{2\beta}+\xi_2)^2(u_{1\beta}+\eta_1)-u_{1\beta}^4)\\
&+2\mu_a\int_{\Omega}u_{2\beta}\eta_2+2a\int_\Omega u_{2\beta}^3\eta_2+\int_\Omega(|\nabla \eta_2|^2+V(x)\eta_2^2)\\
&-a_0\int_\Omega (|u_{2\beta}+\xi_2|^3(u_{2\beta}+\eta_2)-u_{2a}^4)
-\beta \int_\Omega (|u_{2\beta}+\xi_2|(u_{1\beta}+\xi_1)^2(u_{2\beta}+\eta_2)-u_{2\beta}^4)
\\
=&2\mu_a\int_{\Omega}(u_{1\beta}\eta_1+u_{2\beta}\eta_2)+\int_\Omega(|\nabla \eta_1|^2+V(x)\eta_1^2)+\int_\Omega(|\nabla \eta_2|^2+V(x)\eta_2^2)\\
&-a_0\int_\Omega (|u_{1\beta}+\xi_1|^3-u_{1\beta}^3)(u_{1\beta}+\eta_1)
-\beta \int_\Omega (|u_{1\beta}+\xi_1|(u_{2\beta}+\xi_2)^2-u_{1\beta}^3)(u_{1\beta}+\eta_1)\\
&-a_0\int_\Omega (|u_{2\beta}+\xi_2|^3-u_{2\beta}^3)(u_{2\beta}+\eta_2)
-\beta \int_\Omega (|u_{2\beta}+\xi_2|(u_{1\beta}+\xi_1)^2-u_{2\beta}^3)(u_{2\beta}+\eta_2).
\end{align*}
Thus,
\begin{equation}\label{5-15}
\begin{aligned}
|\mu-\mu_a|\leq C|\mu_a|\sum\limits_{i=1}^2\left(\|\eta_i\|_{-\mu_a}+\|\xi_i\|_{-\mu_a}\right).
\end{aligned}
\end{equation}
Since $\int_{\Omega}\bigl(|u_{1\beta}+\eta_1|^2+|u_{2\beta}+\eta_2|^2\bigr)=1$ and $\int_{\Omega}(u_{1\beta}^2+u_{2\beta}^2)=1$, we have
\[
2\int_{\Omega}(u_{1\beta}\eta_1+u_{2\beta}\eta_2)=-\int_{\Omega}(\eta_1^2+\eta_2^2).
\]
Then, using \eqref{60-9-8}, $\mathcal{N}(\xi)=-\eta(\xi)+K(\xi)$ and  $u_{1a}=u_{2a}$, we find  that
\begin{align*}
\lambda&-(\mu-\mu_a)=(\beta+a_0)\int_\Omega \big(u_{1\beta}^3 \mathcal{N}_1+u_{2\beta}^3 \mathcal{N}_2\big)+\mu_a\int_{\Omega}(\eta_1^2+\eta_2^2)
\\
&-\int_\Omega(|\nabla \eta_1|^2+V(x)\eta_1^2)-\int_\Omega(|\nabla \eta_2|^2+V(x)\eta_2^2)
\\
&+a_0\int_\Omega (|u_{1\beta}+\xi_1|^3-u_{1\beta}^3-3u_{1\beta}^2\xi_1)u_{1\beta}+a_0\int_\Omega (|u_{1\beta}+\xi_1|^3-u_{1\beta}^3)\eta_1
\\
&+\beta \int_\Omega (|u_{1\beta}+\xi_1|(u_{1\beta}+\xi_2)^2-u_{1\beta}^3-u_{1\beta}^2\xi_1-2
u_{1\beta}^2\xi_2)u_{1\beta}+\beta \int_\Omega (|u_{1\beta}+\xi_1|(u_{1\beta}+\xi_2)^2-u_{1\beta}^3)
\eta_1
\\
&+a_0\int_\Omega (|u_{2\beta}+\xi_2|^3-u_{2\beta}^3-3u_{2\beta}^2\xi_2)u_{2\beta}+a_0\int_\Omega (|u_{2\beta}+\xi_2|^3-u_{2\beta}^3)\eta_2
\\
&+\beta \int_\Omega (|u_{2\beta}+\xi_2|(u_{2\beta}+\xi_1)^2-u_{2\beta}^3-u_{2\beta}^2\xi_2
-2u_{2\beta}^2\xi_1)u_{2\beta}+\beta \int_\Omega (|u_{2\beta}+\xi_2|(u_{2\beta}+\xi_1)^2-u_{2\beta}^3)\eta_2.
\end{align*}
As a result,
\begin{equation}\label{5-16}
\begin{aligned}
|\lambda-(\mu-\mu_a)|\leq C|\mu_a|\sum\limits_{i=1}^2\big(\|\mathcal{N}_{i}\|_{-\mu_a}+\|\eta_i\|_{-\mu_a}^2+\|\xi_i\|_{-\mu_a}^2\big).
\end{aligned}
\end{equation}

 From  $\int_\Omega (u_{1\beta}K_1+u_{2\beta}K_2)=0$, we find \begin{equation}\label{5-17}
\left|\int_\Omega (u_{1\beta}\mathcal{N}_{1}+u_{2\beta}\mathcal{N}_{2})\right|=\left|\int_\Omega (u_{1\beta}\eta_1+u_{2\beta}\eta_2)\right|=\frac{1}{2}\int_{\Omega}(\eta_1^2+\eta_2^2)\leq C\big(\|\eta_1\|_{-\mu_a}^2+\|\eta_2\|_{-\mu_a}^2\big).
\end{equation}
We also have \begin{equation}\label{5-18}
\left|\int_\Omega ( \eta_1\mathcal{N}_{1}+\eta_2\mathcal{N}_{2})\right|\leq C\big(\|\eta_1\|_{-\mu_a}\|\mathcal{N}_1\|_{-\mu_a}+\|\eta_2\|_{-\mu_a}\|\mathcal{N}_2\|_{-\mu_a} \big).
\end{equation}

Combining  \eqref{31-9-8}-\eqref{5-18}  and Lemma \ref{lem5-3},  we conclude   that
\[
\|\mathcal{N}\|_{-\mu_a}^2\leq C(\|\xi\|_{-\mu_a}^2\|\mathcal{N}\|_{-\mu_a}+\|\xi\|_{-\mu_a}^4),
\]
which gives
\[
\|\mathcal{N}\|_{-\mu_a}\leq C\|\xi\|_{-\mu_a}^2.
\]

\end{proof}

\begin{proof}[Proof of Theorem \ref{thm5-1}]

Let
\[
H(\xi,t)=\xi-K(\xi)+t\mathcal{N} (\xi).
\]
We claim  there exists $\epsilon>0$ small enough such that for $\xi\in \partial B_\epsilon^a (0)$ and $t\in [0,1]$, $H(\xi,t)\neq 0$. For contradiction, suppose there exist $t_n\in [0,1]$ and $\xi_n\in \mathcal{H}$ with $\|\xi_n\|_{-\mu_a}\to 0$ such that $H(\xi_n,t_n)=0$. Then by Theorem \ref{thm4-1} and Lemma \ref{lem5-4}, we have
\[
C_a\|\xi_n\|_{-\mu_a}\leq \|\xi_n-K(\xi_n)\|_{-\mu_a}=t_n\|\mathcal{N}\|_{-\mu_a}\leq C\|\xi_n\|_{-\mu_a}^2,
\]
which is impossible.  Then the result  follows from the homotopy invariance of the degree.
\end{proof}


\section{Existence  results for $\beta>\max\{a_1, a_2\}$ }\label{6}

In this section, we prove Theorems~\ref{thm1-2} and \ref{thm1-3}. Recall that
\[
d_{a_1,a_2,\beta}^{\varepsilon}=deg (I-\mathbb{T}_{a_1,a_2,\beta}^\varepsilon,\bar{B}_R\setminus B_\delta,0),
\]
and $d_{a_1,a_2,\beta}=d_{a_1,a_2,\beta}^0$, $\mathbb{T}_{a_1,a_2,\beta}=\mathbb{T}_{a_1,a_2,\beta}^0$.

\begin{proof}[Proof of Theorem \ref{thm1-2} ]

First, by Theorem~\ref{thm1-1} and the homotopy invariance of degree,
to prove Theorem \ref{thm1-2}, it suffices to  prove that the results hold  for $V_1=V_2=V$.

$(i)$   By Theorem~\ref{thm1-1} and the homotopy invariance of degree, it remains to prove
that for $\varepsilon=0$ and small $a_1,a_2,\beta>0$,
\begin{equation}\label{eq6-1}
deg(I-\mathbb{T}_{a_1,a_2,\beta},\bar{B}_R\setminus B_\delta,0)=1.
\end{equation}

Define
\[
H_t(u_1,u_2)=T\big(t(a_1|u_1|^3+\beta u_2^2|u_1|)+(1-t)\lambda_1 e_1, t(a_2|u_2|^3+\beta u_1^2|u_2|)\big)
\]
where $T$ is defined in Section \ref{1},   $\lambda_1$ is  the first eigenvalue of $-\Delta+V$ in $\Omega$ with zero Dirichlet boundary condition, and $e_1$ is  the unique positive eigenfunction corresponding to $\lambda_1$ with $\int_\Omega e_1^2=1$.
Note that  $(I-H_t)(u_1,u_2)=0$ if and only if
\begin{equation}\label{eq6-2}
\left\{\begin{array}{ll}
-\Delta u_1+V(x)u_1=ta_1u_1^3+t\beta u_1u_2^2+(1-t)\lambda_1 e_1+\mu u_1&\hbox{ in }\Omega,\\
-\Delta u_2+V(x)u_2=ta_2u_2^3+t\beta u_2u_1^2+\mu u_2&\hbox{ in }\Omega,\\
0\leq u_1,u_2\in H_0^1(\Omega),
\\
\int_\Omega (u_1^2+u_2^2)=1,
\end{array}\right.
\end{equation}
for some $\mu<\lambda_1$. Testing the first equation  by $u_1$ and the second equation  by $u_2$, we derive from the Gagliardo-Nirenberg inequality that for any $t\in [0,1]$,
\[
\begin{split}
\|(u_1,u_2)\|_{\mathcal{H}}^2=&\int_\Omega \big(|\nabla u_1|^2+|\nabla u_2|^2+V(x)u_1^2+V(x)u_2^2
\big)
\\
=&t\int_\Omega \big( a_1 u_1^4+a_2 u_2^4+2\beta  u_1^2u_2^2\big)+(1-t)\lambda_1 \int_\Omega u_1e_1+\mu
\\
\leq& C\max\{a_1,a_2,\beta\} \int_\Omega \big(  u_1^4+u_2^4\big)+(1-t)\lambda_1 \int_\Omega u_1e_1+\lambda_1
\\
\leq& C\max\{a_1,a_2,\beta\}\left(  \int_\Omega ( |\nabla u_1|^2+ |\nabla u_2|^2)\right)  \left( \int_\Omega (
u_1^2+ u_2^2)
\right)+\lambda_1\Big(\int_\Omega  u_1^2\Big)^{\frac{1}{2}}\Big(\int_\Omega  e_1^2\Big)^{\frac{1}{2}}+\lambda_1
\\
\leq& C \max\{a_1,a_2,\beta\} \int_\Omega  (|\nabla u_1|^2+|\nabla u_2|^2)+C
\\
\leq & C \max\{a_1,a_2,\beta\}  \|(u_1,u_2)\|_{\mathcal{H}}^2+C,
\end{split}
\]
which shows that for sufficiently small $a_1,a_2,\beta >0$,  $\|(u_1,u_2)\|_{\mathcal{H}}\leq C$. Then, for large $R>0$ and small $\delta>0$, $I-H_t\neq 0$ on $\partial (\bar{B}_R\setminus B_\delta)$. By the homotopy invariance of degree, we have
\[
\begin{split}
deg(I-\mathbb{T}_{a_1,a_2,\beta},\bar{B}_R\setminus B_\delta,0)=&deg(I-H_1,\bar{B}_R\setminus B_\delta,0)
=deg(I-H_t,\bar{B}_R\setminus B_\delta,0)
\\
=&deg(I-H_0,\bar{B}_R\setminus B_\delta,0)
=deg(I-(e_1,0),\bar{B}_R\setminus B_\delta,0)
\\
=&1.
\end{split}
\]
Thus, \eqref{eq6-1} is proved.

 $(ii)$
By Theorem~\ref{thm1-1} and the homotopy invariance of degree,  to  compute the degree $d_{a_1,a_2,\beta}^\varepsilon$ for any $a_1,a_2\in (0,a^*)$, $\varepsilon\in (0,\varepsilon_0)$ and $\beta \in (\beta_1^*,\beta_2^*)$,
we may take   $\varepsilon=0$ and $a_1=a_2=a_0$  for some $\frac{a^*}{2}<a_0<a^*$. Moreover, we can further assume that  $V$ satisfies $(H_1)-(H_2)$.

Let  $P_1,\cdots,P_m$ be all critical points of $V$. By Theorem~B  in Section~4, there exist exactly $m$ peak solutions $u_{a}^{(1)},\cdots, u_{a}^{(m)}$ of \eqref{eq4-4} as $a\to a^*$,  where $a=(\beta+a_0)/2$.  From the discussion at the beginning of Section~4, all the blow-up
solutions for $\beta  \to \beta_1^*$ are given by

\[
(u_{1\beta}^{(j)},u_{2\beta}^{(j)})=\bigl(\frac{\sqrt{2}}{2} u_{a}^{(j)}, \frac{\sqrt{2}}{2} u_{a}^{(j)}
\bigr),
\quad j=1,\cdots,m.
\]
 In what follows,  we assume that $\beta>a_0$ is close to $\beta_1^*$ and $a=(\beta+a_0)/2$.

Denote
\[
B_\epsilon ^a(P_j)=\big\{(u_1,u_2)\in\mathcal{H}:\|(u_1,u_2)-(u_{1\beta}^{(j)},u_{2\beta}^{(j)})\|_{-\mu_a}<\epsilon\big\},\;\; j=1,\cdots, m.
\]

By \eqref{1-7-8} in Theorem~B, we deduce that all zeros of
$I-\mathbb{T}_{a_0,a_0,\beta_1^*}$ are uniformly bounded. Moreover,
 for $\beta_1^*<\beta<\beta_1^*+\theta$, namely $a^*<a<a^*+\theta/2$,  with $\theta>0$ small,
the degree can be counted by
\begin{equation}\label{6-3}
\begin{aligned}
d^+:=&deg(I-\mathbb{T}_{a_0,a_0,\beta}, \bar{B}_{C_a}\setminus B_\delta,0)\\
=&\sum\limits_{\Delta V(P_j)<0}deg (I-\mathbb{T}_{a_0,a_0,\beta},B_\epsilon^a(P_j),0)+deg(I-\mathbb{T}_{a_0,a_0,\beta},\bar{B}_{C^*}\setminus B_\delta,0),
\end{aligned}
\end{equation}
where $C^*$ is a constant independent of $\beta$.
Also for $\beta_1^*-\theta<\beta<\beta_1^*$, namely $a^*-\theta/2<a<a^*$, we have
\begin{equation}\label{6-4}
\begin{aligned}
d^-:=\sum\limits_{\Delta V(P_j)>0}deg (I-\mathbb{T}_{a_0,a_0,\beta},B_\epsilon^a(P_j),0)+deg(I-\mathbb{T}_{a_0,a_0,\beta},\bar{B}_{C^*}\setminus B_\delta,0).
\end{aligned}
\end{equation}

By the homotopy invariance of degree,  the degree of all bounded solutions is unchanged as  $\beta$ crosses  $\beta_1^*$. This implies that $deg(I-\mathbb{T}_{a_0,a_0,\beta},\bar{B}_{C^*}\setminus B_\delta,0)$ is equal to the same  integer for all $\beta_1^*-\theta<\beta<\beta_1^*+\theta$. Thus by
Corollary \ref{cor4-2} and Theorem \ref{thm5-1},
\begin{equation}\label{6-5}
\begin{aligned}
d^+-d^-=&\sum\limits_{\Delta V(P_j)<0}deg (I-\mathbb{T}_{a_0,a_0,\beta},B_\epsilon^a(P_j),0)-\sum\limits_{\Delta V(P_j)>0}deg (I-\mathbb{T}_{a_0,a_0,\beta},B_\epsilon^a(P_j),0)\\
=&\sum\limits_{\Delta V(P_j)<0}(-1)^{1+ind~P_j}-\sum\limits_{\Delta V(P_j)>0}(-1)^{ind~P_j}\\
=&-\sum\limits_{j=1}^m(-1)^{ind~P_j},
\end{aligned}
\end{equation}
where $ind~P_j$ is the number of negative eigenvalues of the Hessian of $V$ at $P_j$.

On the other hand, we have (see the proof of Theorem 1.2 in  \cite{cyy})
\[
\sum\limits_{j=1}^m(-1)^{ind~P_j}=1-k,
\]
which together with \eqref{6-5} gives  $d^+-d^-=k-1$. By (i), we have $d^-=1$.  As a result, $d^+=k$ and $(ii)$  follows.

 $(iii)$  By Theorem~\ref{thm1-1} and the homotopy invariance of degree,  to  compute the degree $d_{a_1,a_2,\beta}^\varepsilon$ for any $a_1\in (0,a^*)$, $a_2\in (a^*, 2a^*)$, $\varepsilon\in (0,\varepsilon_0)$ and $\beta \in [\min \{a_1,a_2\},\beta_2^*)$,
we may take  $\beta=a_1$.
Define
\[
H_t^\varepsilon(u_1,u_2)=T\big(t(a_1|u_1|^3+\varepsilon |u_2|+ a_1 u_2^2|u_1|), a_2|u_2|^3+ta_1 u_1^2|u_2|+\varepsilon |u_1|\big)
\]
Since $ta_1\in [0,a^*)$ for any $t\in [0,1]$,  similar to the proof of $(b)$
in Theorem~\ref{thm1-1}, we can
 show that there exists a constant $R>0$, independent of $t$, such that each solution $(u_1,u_2)$ of the following equation satisfies $\|(u_1,u_2)\|_{\mathcal{H}}<R$,
\[
\left\{
\begin{array}{ll}
-\Delta u_{1}+V(x)u_{1}=t(a_1|u_1|^3+\varepsilon |u_2|+ a_1 u_2^2|u_1|)+\mu u_{1}& \hbox{ in }\Omega,\\
-\Delta u_{2}+V(x)u_{2}=a_2|u_2|^3+ta_1 u_1^2|u_2|+\varepsilon |u_1|+\mu u_{2}&\hbox{ in }\Omega,\\
  u_{1},u_{2}\geq 0 &\hbox{ in }\Omega,
 \\
 u_1=u_2=0 &\hbox{ on }\partial\Omega,
 \\
 \int_{\Omega}(u_{1}^2+u_{2}^2)=1.
 \end{array}\right.
\]
Then the homotopy invariance of degree yields
\begin{equation}\label{6-6Y}
\begin{split}
deg(I-\mathbb{T}_{a_1,a_2,a_2}^\varepsilon,\bar{B}_R\setminus B_\delta,0)=&deg(I-H_1^\varepsilon,\bar{B}_R\setminus B_\delta,0)
\\
=&deg(I-H_0^\varepsilon,\bar{B}_R\setminus B_\delta,0).
\end{split}
\end{equation}
Note that $H_0^\varepsilon(u_1,u_2)=T\big(0,a_2|u_2|^3+\varepsilon |u_1|\big)$. Denote $(\xi_1,\xi_2)=H_0^\varepsilon(u_1,u_2)$. Then
\begin{equation}\label{6-7Y}
\left\{
\begin{array}{ll}
-\Delta \xi_1+V(x)\xi_1=\mu \xi_1& \hbox{ in }\Omega,\\
-\Delta \xi_2+V(x)\xi_2=a_2|u_2|^3+\varepsilon |u_1|+\mu \xi_2&\hbox{ in }\Omega,\\
 \xi_1,\xi_2\geq 0 &\hbox{ in }\Omega,
 \\
 \xi_1=\xi_2=0 &\hbox{ on }\partial\Omega,
 \\
 \int_{\Omega}(\xi_{1}^2+\xi_{2}^2)=1.
 \end{array}\right.
\end{equation}
If $(u_1,u_2)\neq (0,0)$, then $a_{2}|u_{2}|^3+\varepsilon |u_1|\neq 0$. By Lemma \ref{lem3-2}, $\mu<\lambda_1(V)$, and hence the first equation in \eqref{6-7Y} gives $\xi_1=0$. Thus, $H^\varepsilon_0$ is a map from $\mathcal{H}\setminus B_\delta$ to $\{0\}\times H_2(\Omega)\setminus B_\delta$.
Using the Leray-Schauder degree reduction theorem (see for
example Theorem~8.17 in \cite{D}), we have
\[
deg(I-H_0^\varepsilon,\bar{B}_R\setminus B_\delta,0)=deg\big( (I-H_0^\varepsilon)|_{H_1(\Omega)},(\bar{B}_R\setminus B_\delta)\cap H_2(\Omega),0\big),
\]
where
\[
(\bar{B}_R\setminus B_\delta)\cap H_2(\Omega):=\{u_2: \; (u_1,u_2)\in \bar{B}_R\setminus B_\delta\}.
\]

Note that $(I-H_0^\varepsilon)|_{H_2(\Omega)}=(I-H_0^0)|_{H_2(\Omega)}$,  and
$(I-H_0^0)|_{H_2(\Omega)}(u_1)=0$ if and only if $u_2$ is a positive solution of
\[
\begin{cases}
-\Delta u_2+V(x)u_2=a_2u_2^3+\mu u_1\;\;\;\text{in}\;\;\Omega,
\\
u_2=0\;\;\; \text{on}\;\;\Omega,
\\
\int_\Omega u_2^2=1.
\end{cases}
\]
By \cite[Theorem 1.2]{cyy},  we have
\begin{equation}\label{6-8Y}
deg(I-H_0^\varepsilon,\bar{B}_R\setminus B_\delta,0)=deg\big( (I-H_0^0)|_{H_2(\Omega)},(\bar{B}_R\setminus B_\delta)\cap H_2(\Omega),0\big)=k.
\end{equation}
Combining \eqref{6-6Y} and \eqref{6-8Y}, we  obtain that $d_{a_1,a_2,a_2}^\varepsilon=k$.
\end{proof}

We now prove Theorem \ref{thm1-3}.

\begin{proof}[Proof of Theorem \ref{thm1-3}]
  Under the assumptions  in Theorem \ref{thm1-3},  Theorem  \ref{thm1-2} gives
   $d_{a_1,a_2,\beta}^\varepsilon\neq 0$ for $\varepsilon\in (0,\varepsilon_0)$.
 Then  there exists a solution $(u_{1\varepsilon},u_{2\varepsilon};\mu_\varepsilon)$ of \eqref{1-9}. It is clear that $u_{1\varepsilon}\not\equiv 0$ and $u_{2\varepsilon}\not\equiv 0$.

By Theorem \ref{thm1-1}, $u_{i\varepsilon}$ is bounded in $H_i(\Omega)$ and $L^\infty(\Omega)$, $i=1,2$,  and $\mu_\varepsilon$ is bounded.  So up to a subsequence, we can assume that as $\varepsilon\to0$,  $u_{i\varepsilon}\rightharpoonup u_i$ in $H_i(\Omega)$,  and $u_{i\varepsilon}\to u_i$ in $L^p(\Omega)$  for $p\geq 2$, and $\mu_\varepsilon \to \mu$. It is clear that $(u_1,u_2;\mu)$ is a solution of \eqref{1-1}-\eqref{1-2}. By Proposition~\ref{p1-5-8}, we have
\begin{equation}\label{6-15}
\|u_{1}\|_{L^2(\Omega)}\geq \sigma,\quad \|u_{2}\|_{L^2(\Omega)}\geq \sigma.
\end{equation}
Hence, $(u_1,u_2;\mu)$ is a nontrivial solution of \eqref{1-1}-\eqref{1-2}.
\end{proof}

\section{Existence  results for $0\le \beta<\min\{a_1, a_2\}$ }\label{100}

This section is devoted to proving Theorem \ref{thm1-4}. Throughout, we  assume that $a_1,a_2,V_1,V_2$ satisfy the assumptions in Theorem \ref{thm1-4}. We begin by showing that there exists $\sigma>0$ such  that the following degree for all nontrivial solutions is well-defined:
\begin{equation}\label{6-17}
\begin{split}
d_{a_1,a_2,\beta}^{non-trivial}:=&\text{deg}(I-\mathbb{T}_{a_1,a_2,\beta}, 
\bar{B}_{R}\setminus (B_\delta \cup W_1\cup W_2), 0),
\end{split}
\end{equation}
where
\begin{equation}\label{6-18}
W_1=\{  (u_1,u_2)\in  \bar{B}_{R}\setminus B_\delta :\; \|u_2\|_{L^2(\Omega)}<\sigma  \},\;\;\; W_2=\{  (u_1,u_2)\in  \bar{B}_{R}\setminus B_\delta :\; \|u_1\|_{L^2(\Omega)}<\sigma  \},
\end{equation}
and $\sigma\in (0, \delta)$ is a  constant.

 Firstly, we know that though $\mathbb{T}_{a_1,a_2,\beta}$ may not
be well defined in $\bar{B}_{R}\setminus B_\delta$, it is well defined in
$\bar{B}_{R}\setminus (B_\delta \cup W_1\cup W_2)$. Secondly, 
we need to prove that $I-\mathbb{T}_{a_1,a_2,\beta}\ne 0$
on the boundary of $ W_1\cup W_2$.
To this end, we will show that
 if
$-\tilde c_2<\inf \{V_1-V_2\} \leq \sup \{V_1-V_2\} <\tilde  c_1 $ for some constants $\tilde c_1,\;
\tilde c_2>0$, there exists
a constant $\sigma>0$, such that any nontrivial solution $(u_1,u_2)$  of \eqref{1-1}--\eqref{1-2}
satisfies
\[
\|u_1\|_{L^2(\Omega)}>\sigma,\quad \|u_2\|_{L^2(\Omega)}>\sigma.
\]

Let $\lambda_1(V)$ be the first eigenvalue of $-\Delta +V$. To define the constants $\tilde c_1,\;
\tilde c_2>0$, we consider the following problem

\begin{equation}\label{6-20}
\begin{cases}
-\Delta u+V(x)u=au^3+\mu u\;\;\;\text{in}\;\;\Omega,
\\
u>0\;\;\; \text{in} \;\; \Omega; \quad u=0\;\;\; \text{on}\;\;\partial\Omega,
\\
\int_\Omega u^2=1.
\end{cases}
\end{equation}
Define
\begin{equation}\label{20-14-8}
\begin{split}
\mu^*(a, V)=\sup \Big\{\mu:\; \text{$(u;\mu)$ solves \eqref{6-20}}      \Big\}.
\end{split}
\end{equation}
It follows from Theorem~A in the introduction that if  \eqref{6-20}
has a solution, then there exists
$C>0$ such that for any  solution $u$ of \eqref{6-20}, it holds
$\int_{\Omega}(|\nabla u|^2 +V(x)u^2)\le C$ and $\|u\|_{L^\infty(\Omega)}\le C$
and $|\mu|\le C$.
 From

\[
\mu=\int_{\Omega}(|\nabla u|^2 +V(x)u^2) - a\int_{\Omega} u^4,
\]
we can prove that $\mu^*(a, V)$ is attained and $\mu^*(a, V)\in (-\infty, \lambda_1)$.
On the other hand,  if \eqref{6-20} has no solution,  we set $\mu^*(a, V)=-\infty$.

If $a\in (0, a^*-\delta]$, \eqref{6-20} always has a solution  and $\mu^*(a, V)<\lambda_1(V)$.
We define
\begin{equation}\label{n6-22}
c_1(a_1):=\frac{a_1-\beta}{a_1} ( \lambda_{1}(V_1)-\mu^*(a_1, V_1)).
\end{equation}
Then $c_1(a_1)>0$.

Similarly, for $a_2\in [m a^*+\delta, (m+1)a^*-\delta]$, we define

\begin{equation}\label{6-22}
 c_2(a_2):=\frac{a_2-\beta}{a_2} \inf_{V\in\mathcal E}( \lambda_{1}(V)-\mu^*(a_2,V)).
\end{equation}
Then $c_2(a_2)>0$.

Now we consider the following problem

\begin{equation}\label{6-19}
\left\{
\begin{array}{ll}
-\Delta u_{1}+V_{1}(x)u_{1}=a_{1}u_{1}^3+\beta u_{1}u_{2}^2+\mu u_{1}& \hbox{ in }\Omega,\\
-\Delta u_{2}+V_{2s}(x)u_{2}=a_{2}u_{2}^3+\beta u_{2}u_{1}^2+\mu u_{2}&\hbox{ in }\Omega,\\
  u_{1},u_{2}\geq 0 &\hbox{ in }\Omega,
 \\
 u_1=u_2=0 &\hbox{ on }\partial\Omega,
 \\
 \int_{\Omega}(u_{1}^2+u_{2}^2)=1,
 \end{array}\right.
\end{equation}
where  $V_{2s}=(1-s)V_2+sV_1$.

\begin{lemma}\label{lem6-2} Suppose that $-c_2(a_2)<\inf \{V_1-V_2\} \leq \sup \{V_1-V_2\} <c_1(a_1)$. Then there exists $\sigma>0$ such that for any  $s\in [0,1]$ and any nontrivial solution $(u_1,u_2;\mu)$ of \eqref{6-19},  it holds
\[
\|u_1\|_{L^2(\Omega)}>\sigma,\quad \|u_2\|_{L^2(\Omega)}>\sigma.
\]
\end{lemma}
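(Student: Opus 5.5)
Argue by contradiction, in the spirit of Proposition~\ref{p1-5-8}. Suppose there are $s_n\in[0,1]$ and nontrivial solutions $(u_{1n},u_{2n};\mu_n)$ of \eqref{6-19} with $s=s_n$ and, say, $\|u_{2n}\|_{L^2(\Omega)}\to 0$; the case $\|u_{1n}\|_{L^2}\to0$ is symmetric. Since $V_{2s}\in\mathcal E$ and $\beta<a_1$, the boundedness result Theorem~\ref{thm1-1}(c) applies with $\varepsilon=0$ and the interpolation $V_{2s_n}=s_nV_1+(1-s_n)V_2$. This is the step to double check: $a_2\in((m-1)a^*,ma^*)$ stays away from multiples of $a^*$, and the convex combination is of the form allowed there. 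It gives uniform bounds on $\|u_{in}\|_{L^\infty}$, $\|(u_{1n},u_{2n})\|_{\mathcal H}$ and $|\mu_n|$. Passing to a subsequence, $u_{1n}\to u_1$ in $L^p$, $\mu_n\to\mu$, and $s_n\to s$. The limit $u_1$ is a positive solution of \eqref{6-20} with $a=a_1$ and $V=V_1$, so $\mu\le\mu^*(a_1,V_1)$.

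Next normalize $v_n=u_{2n}/\|u_{2n}\|_{L^2}$. It solves
\[
-\Delta v_n+V_{2s_n}v_n=a_2\|u_{2n}\|_2^2v_n^3+\beta u_{1n}^2v_n+\mu_nv_n .
\]
By Gagliardo--Nirenberg, the cubic term is small relative to the gradient, so $v_n$ is bounded in $H_0^1$ with the weight. By compactness of the weighted embedding, $v_n\to v$ in $L^2$, where $\|v\|_2=1$, $v\ge0$, and
\[
-\Delta v+V_{2s}v=\beta u_1^2v+\mu v .
\]
Because $v\geq0$ is nontrivial, $\mu$ is the first eigenvalue of $-\Delta+V_{2s}-\beta u_1^2$. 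Testing its Rayleigh quotient with $u_1$ and using the identity $\int(|\nabla u_1|^2+V_1u_1^2)=a_1\int u_1^4+\mu$ gives
\[
\mu\le \mu+\int (V_{2s}-V_1)u_1^2-(\beta-a_1)\int u_1^4 .
\]
Here $\beta<a_1$, so the last term has the wrong sign for a contradiction. I therefore turn to eigenvalue comparisons instead. Use $\beta u_1^2\le \frac{\beta}{a_1}\cdot a_1u_1^2$ together with convexity. The first eigenvalue of $-\Delta+V_1-t a_1u_1^2$ is concave in $t$; it equals $\lambda_1(V_1)$ at $t=0$ and $\mu$ at $t=1$ (eigenfunction $u_1$). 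Hence at $t=\beta/a_1$,
\[
\lambda_1(-\Delta+V_1-\beta u_1^2)\ge \tfrac{\beta}{a_1}\mu+\bigl(1-\tfrac{\beta}{a_1}\bigr)\lambda_1(V_1).
\]
Also $V_{2s}\ge V_1-(1-s)\sup(V_1-V_2)\ge V_1-\sup(V_1-V_2)$. Combining,
\[
\mu=\lambda_1(-\Delta+V_{2s}-\beta u_1^2)\ge \mu+\tfrac{a_1-\beta}{a_1}(\lambda_1(V_1)-\mu)-\sup(V_1-V_2).
\]
Since $\mu\le\mu^*(a_1,V_1)$, this forces $\sup(V_1-V_2)\ge c_1(a_1)$, contradicting the hypothesis.

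For the symmetric case $\|u_{1n}\|_2\to0$, the same argument gives $u_2>0$ solving \eqref{6-20} with $a=a_2$ and $V=V_{2s}\in\mathcal E$, so $\mu\le\mu^*(a_2,V_{2s})$. The limit $v$ of the normalized $u_1$ is a nonnegative first eigenfunction of $-\Delta+V_1-\beta u_2^2$ with eigenvalue $\mu$. Concavity, now between $t=0$ and $t=1$ for $-\Delta+V_{2s}-ta_2u_2^2$, together with $V_1\ge V_{2s}+s\inf(V_1-V_2)\ge V_{2s}+\inf(V_1-V_2)$ when the infimum is negative, yields
\[
0\ge\tfrac{a_2-\beta}{a_2}\bigl(\lambda_1(V_{2s})-\mu^*(a_2,V_{2s})\bigr)+\inf(V_1-V_2)\ge c_2(a_2)+\inf(V_1-V_2),
\]
again a contradiction. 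If \eqref{6-20} has no solution for this $V_{2s}$, the limit $u_2$ cannot exist, which is already a contradiction.

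The main obstacle is the compactness step. One needs the uniform a priori bounds for the whole family $V_{2s}$; these come from Theorem~\ref{thm1-1}(c), which needs $\beta<a_1$ and $a_2$ away from $ma^*$. One also needs the nonvanishing of the normalized second component, which uses the compact embedding from $V\to\infty$. The concavity-in-$t$ comparison is elementary: the first eigenvalue is an infimum of affine functions of $t$.
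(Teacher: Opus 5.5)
Your argument is correct and is essentially the paper's. The two inequalities \eqref{Y1}--\eqref{Y2} come from testing the Rayleigh quotients of $-\Delta+V_1-a_1u^2$ and $-\Delta+V_1$ with the limit $v$ and combining them; this is exactly your concavity interpolation at $t=\beta/a_1$ evaluated at $v$, and your treatment of the case $\|u_{1n}\|_{L^2}\to0$ is the ``similar argument'' the paper omits, with the infimum over $\mathcal E$ in $c_2(a_2)$ absorbing the dependence on $V_{2s}$ (one harmless slip: $V_1-V_{2s}=(1-s)(V_1-V_2)$, not $s(V_1-V_2)$, which does not affect your bound).
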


\begin{proof}

We argue by contradiction.
Suppose  that there exists a family of nontrivial solutions $(u_{1n},u_{2n};\mu_n)$ of \eqref{6-19} with $s=s_n\in [0,1]$ such that as $n\to \infty$,
\[
\int_\Omega u_{1n}^2\to 0 \;\;\text{or}\;\; \int_\Omega u_{2n}^2\to 0,\;\;\; s_n\to s_0\in[0,1].
\]
First, we assume that $ \int_\Omega u_{2n}^2\to 0$. So,
$\int_\Omega u_{1n}^2\to 1$. Taking $\varepsilon_n= \|u_{2n}\|_{L^2(\Omega)}$, we have  $\varepsilon_n>0$ and $\varepsilon_n\to0$.  By Theorem \ref{thm1-1},
\begin{equation}\label{6-23}
\|u_{1n}\|_{H_1(\Omega)},\;\; \|u_{1n}\|_{L^\infty(\Omega)},\;\; |\mu_n|\leq C,
\end{equation}
where $C>0$ is a constant independent of $n$.

Let $v_n=u_{2n}/\varepsilon_n$. Then $\|v_n\|_{L^2(\Omega)}=1$. Moreover, $(u_{1n},v_n)$ satisfies
\begin{equation}\label{6-24}
\left\{
\begin{array}{ll}
-\Delta u_{1n}+V_{1}(x)u_{1n}=a_{1}u_{1n}^3+\beta \varepsilon_n^2 u_{1n}v_{n}^2+\mu_n u_{1n}& \hbox{ in }\Omega,\\
-\Delta v_n+V_{2s_n}(x)v_n=a_{2}\varepsilon_n^2 v_n^3+\beta  v_n u_{1n}^2+\mu_n v_{n}&\hbox{ in }\Omega.
 \end{array}\right.
\end{equation}
Multiplying the second equation of \eqref{6-24} by $v_n$ and integrating by parts, we get from \eqref{6-23} and the Gagliardo-Nirenberg inequality that $\{v_n\}$ is bounded in $H_2(\Omega)$. Then we may assume that up to a subsequence,
\[
\begin{split}
&u_{1n}\rightharpoonup u\;\;\text{in} \;\; H_1(\Omega), \quad u_{1n}\to u\;\;\text{in} \;\; L^2(\Omega),
\\
&v_{n}\rightharpoonup v\;\;\text{in} \;\; H_2(\Omega), \quad v_{n}\to v\;\;\text{in} \;\; L^2(\Omega),
\\
&\mu_n\to \mu,\quad s_n\to s_0.
\end{split}
\]
Moreover, $(u,v)$ is a solution of
\[
\begin{cases}
-\Delta u+V_{1}(x)u=a_1u^3+\mu u\;\;\;\text{in}\;\;\Omega,
\\
-\Delta v+V_{2s_0}(x)v=\beta vu^2+\mu v\;\;\;\text{in}\;\;\Omega,
\\
u,v>0\;\;\; \text{in} \;\; \Omega; \quad u,v=0\;\;\; \text{on}\;\;\partial\Omega,
\\
\int_\Omega u^2=\int_\Omega v^2=1,
\end{cases}
\]
As a result,
\[
\begin{split}
\mu=\int_{\Omega}\big( |\nabla u|^2+V_{1}(x)u^2-a_1u^4\big)=\int_{\Omega}\big( |\nabla v|^2+V_{2s_0}(x)v^2-\beta u^2 v^2\big).
\end{split}
\]
Since $u>0$, we get
\[
\begin{split}
\mu
=&\inf_{w\in H_1(\Omega),\; \|w\|_{L^2(\Omega)}=1} \int_{\Omega}\big( |\nabla w|^2+V_{1}(x)w^2-a_1u^2w^2\big)
\\
\leq&  \int_{\Omega}\big( |\nabla v|^2+V_{1}(x)v^2-a_1u^2v^2\big)
\\
=&\mu  +\int_{\Omega} (V_1-V_{2s_0}) v^2+(\beta-a_1)\int_{\Omega} u^2v^2,
\end{split}
\]
which gives
\begin{equation}\label{Y1}
(a_1-\beta)\int_{\Omega} u^2v^2\leq \int_{\Omega} (V_1-V_{2s_0}) v^2=(1-s_0)\int_{\Omega} (V_1-V_2) v^2
\end{equation}
Moreover,
\begin{equation}\label{Y2}
\begin{split}
\beta \int_{\Omega} u^2v^2=&\int_{\Omega}\big( |\nabla v|^2+V_{2s_0}(x)v^2\big)-\mu
=\int_{\Omega}\big( |\nabla v|^2+V_{1}v^2\big)+(1-s_0)\int_{\Omega} (V_2-V_1) v^2-\mu
\\
\geq &\lambda_1(V_1)\|v\|^2_{L^2(\mathbb{R}^2)}-\mu+(1-s_0)\int_{\Omega} (V_2-V_1) v^2
\\
\geq& \lambda_1(V_1)-\mu^*(a_1, V_1)+(1-s_0)\int_{\Omega} (V_2-V_1) v^2.
\end{split}
\end{equation}
By \eqref{Y1} and \eqref{Y2}, we get
\begin{equation}\label{Y3}
(1-s_0)\int_{\Omega} (V_1-V_2) v^2\geq \frac{a_1-\beta}{a_1}(\lambda_1(V_1)-\mu^*(a_1, V_1))= c_1(a_1),
\end{equation}
thus
\[
\sup\limits_{x\in\Omega}\{V_1-V_2\}\geq c_1(a_1).
\]
 This is a contradiction to $\sup_{x\in \Omega}\{V_1-V_{2}\}< c_1(a_1)$.

We can use a similar argument  to show that $\int_\Omega u_{1n}^2\to 0$  cannot occur either.
\end{proof}

By Lemma \ref{lem6-2}, the degree $d_{a_1,a_2,\beta}^{non-trivial}$ in \eqref{6-17} is well-defined. Furthermore, using Lemma \ref{lem6-2} and
the homotopy invariance of the degree,  we can  take $V_2=V_1=V$ to compute it.

Now, in view of $V_2=V_1=V$, $\mathbb{T}_{a_1,a_2,\beta}$ is well defined in 
$ W_1\cup W_2$.
By Lemma \ref{lem6-2} again, the following  degrees for semi-trivial solutions are well-defined:
\begin{equation}\label{eq6-25}
d_{a_1,a_2,\beta}^{1,semi-trivial}:=\text{deg}(I-\mathbb{T}_{a_1,a_2,\beta}, W_1, 0),\quad d_{a_1,a_2,\beta}^{2,semi-trivial}:=\text{deg}(I-\mathbb{T}_{a_1,a_2,\beta}, W_2, 0).
\end{equation}

Let  $d_{a_1,a_2,\beta}$  be the total degree. Then, we have
\begin{equation}\label{eq6-26}
d_{a_1,a_2,\beta}=d_{a_1,a_2,\beta}^{non-trivial}+d_{a_1,a_2,\beta}^{1,semi-trivial}+d_{a_1,a_2,\beta}^{2,semi-trivial}.
\end{equation}
From \eqref{eq6-26}, to compute $d_{a_1,a_2,\beta}^{non-trivial}$, it suffices to compute $d_{a_1,a_2,\beta}^{1,semi-trivial}, d_{a_1,a_2,\beta}^{2,semi-trivial}$ and $d_{a_1,a_2,\beta}$.

For the calculation of $d_{a_1,a_2,\beta}^{1,semi-trivial}$, we  consider the following system
\begin{equation}\label{6-27}
\left\{
\begin{array}{ll}
-\Delta u_{1}+V(x)u_{1}=a_{1}u_{1}^3+\beta h(t)u_{1}u_{2}^2+\mu u_{1}& \hbox{ in }\Omega,\\
-\Delta u_{2}+V(x)u_{2}=a_{2}g(t)u_{2}^3+\beta h(t) u_{2}u_{1}^2+\mu u_{2}&\hbox{ in }\Omega,\\
  u_{1},u_{2}\geq 0 &\hbox{ in }\Omega,
 \\
 u_1=u_2=0 &\hbox{ on }\partial\Omega,
 \\
 \int_{\Omega}(u_{1}^2+u_{2}^2)=1,
 \end{array}\right.
\end{equation}
where
\begin{equation}\label{6-28}
h(t)=\max\{2t-1,0\},\;\;\; g(t)=\min\{1,2t\}.
\end{equation}

\begin{lemma}\label{lem6-3} There exists $\sigma>0$ such that for any $t\in [0,1]$ and any nontrivial solution $(u_1,u_2;\mu)$ of \eqref{6-27}, we have
\[
\|u_2\|_{L^2(\Omega)}>\sigma.
\]
\end{lemma}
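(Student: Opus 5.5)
The plan is to argue by contradiction, in the same way as in Lemma~\ref{lem6-2}. Here $V_1=V_2=V$, and the coupling coefficient $\beta h(t)$ lies in $[0,\beta]\subset[0,a_1)$ for every $t$. Suppose there are $t_n\to t_0\in[0,1]$ and nontrivial solutions $(u_{1n},u_{2n};\mu_n)$ of \eqref{6-27} with $t=t_n$ and $\varepsilon_n:=\|u_{2n}\|_{L^2(\Omega)}\to0$. Then $\int_\Omega u_{1n}^2\to1$. Set $v_n=u_{2n}/\varepsilon_n$, so that $\|v_n\|_{L^2}=1$ and
\[
-\Delta v_n+Vv_n=a_2g(t_n)\varepsilon_n^2v_n^3+\beta h(t_n)u_{1n}^2v_n+\mu_nv_n .
\]

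\textbf{Step 1: uniform bounds.} I need $\|u_{1n}\|_{L^\infty}$, $\|u_{1n}\|_{H_1}$ and $|\mu_n|$ bounded. If $\max u_{1n}+\max u_{2n}\to\infty$, I would run the blow-up analysis of Section~\ref{2}, with $a_{1n}=a_1$, $a_{2n}=a_2g(t_n)$, $\beta_n=\beta h(t_n)\to\beta h(t_0)\in[0,a_1)$.
\begin{itemize}
\item The case $\rho_n\to+\infty$ is excluded exactly as in Step~1 of Theorem~\ref{th2-1}, since $a_1<a^*$.
\item The case $\rho_n\to\rho_0>0$ would produce a positive solution of the limiting system \eqref{2-21} at some point $x_{1n}$. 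That system has no positive solution when the coupling is below $\min\{a_1,a_2g\}$ or lies in the interval between $a_1$ and $a_2g$. The remaining subcase is handled by the mass count of Lemma~\ref{adlem2-3}, as in the proof of Theorem~\ref{adth2-2}. The degenerate coupling $0$ is handled by the same argument and is simpler.
\item The case $\rho_n\to0$ is handled by Theorem~\ref{adth2-2}: $u_{2n}$ carries asymptotically the whole mass, $\int_\Omega u_{2n}^2\to1$. This contradicts $\varepsilon_n\to0$. (If $a_2g(t_n)\to0$, the scaled $u_2$-profile cannot exist at all, so this case is immediate.)
\end{itemize}
Hence no blow-up occurs, and the identity obtained by testing the equations gives the $H_1$ bound and the bound on $\mu_n$, as in the proof of Theorem~\ref{thm1-1}. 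I expect this step to be the main obstacle, because Theorem~\ref{thm1-1}(c) is stated only for $a_2$ away from $ja^*$, whereas here $a_2g(t)$ sweeps all of $(0,a_2]$. The key observation is that every blow-up scenario forces $u_2$ to keep mass bounded below, and this is incompatible with $\|u_{2n}\|_{L^2}\to0$.

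\textbf{Step 2: compactness of $v_n$.} Testing the $v_n$-equation with $v_n$, and using the Gagliardo--Nirenberg inequality together with $\varepsilon_n\to0$ and the bounds of Step~1, shows that $v_n$ is bounded in $H_2(\Omega)$. By compact embedding I pass to limits $u_{1n}\to u$ and $v_n\to v$ in $L^2$, and $\mu_n\to\mu$. The limits satisfy
\[
-\Delta u+Vu=a_1u^3+\mu u,\qquad -\Delta v+Vv=\beta h(t_0)u^2v+\mu v,
\]
with $u,v\ge0$ and $\int_\Omega u^2=\int_\Omega v^2=1$. By the strong maximum principle, $u,v>0$.

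\textbf{Step 3: the contradiction.} Since $u>0$ is the ground state of $-\Delta+V-a_1u^2$ with eigenvalue $\mu$,
\[
\mu\le\int_\Omega\big(|\nabla v|^2+Vv^2-a_1u^2v^2\big)=\mu+(\beta h(t_0)-a_1)\int_\Omega u^2v^2 .
\]
This gives $(a_1-\beta h(t_0))\int_\Omega u^2v^2\le0$. Because $\beta h(t_0)\le\beta<a_1$ and $u,v>0$, this is impossible. Consequently there exists $\sigma>0$, uniform in $t\in[0,1]$, such that $\|u_2\|_{L^2(\Omega)}>\sigma$ for every nontrivial solution of \eqref{6-27}.
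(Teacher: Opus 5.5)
Your Steps~2 and~3 coincide with the paper's argument. The difference is Step~1, where you take a different route that also works, although your justification contains one false statement.

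\textbf{How the paper does Step~1.} The paper splits according to whether $t_n\ge\frac12$ or $t_n<\frac12$.
\begin{itemize}
\item If $t_n\ge\frac12$, then $g(t_n)=1$. So the $u_2$-coefficient is the fixed $a_2$, away from multiples of $a^*$, the coupling is $\beta h(t_n)\in[0,\beta]$, and Theorem~\ref{thm1-1}(c) applies directly.
\item If $t_n<\frac12$, then $h(t_n)=0$ and the $u_1$-equation decouples. Theorem~A applied to $u_{1n}/\|u_{1n}\|_{L^2(\Omega)}$ bounds $u_{1n}$ and $\mu_n$; this is a single equation with $a=a_1\|u_{1n}\|_{L^2(\Omega)}^2\to a_1<a^*$. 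These bounds are all that Steps~2--3 need.
\end{itemize}
The obstacle you anticipated, $a_2g(t)$ sweeping through multiples of $a^*$, is removed by how the homotopy is built: $a_2$ is only deformed after the coupling has been switched off.

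\textbf{Your route.} You use the contradiction hypothesis $\|u_{2n}\|_{L^2(\Omega)}\to0$ to exclude any concentration of $u_2$. This is valid and more self-contained. It does not depend on the shape of $h$ and $g$, nor on where $a_2g(t)$ lies; Step~1 then only uses $a_1<a^*$.

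It can be stated much more simply than your case analysis. After excluding $\rho_n\to+\infty$ via $a_1<a^*$, $\rho_n$ stays bounded, so $M_{2n}\to\infty$. Rescale at the maximum point of $u_{2n}$ as in Step~1 of Theorem~\ref{th2-1}: the maximum-point inequality bounds the rescaled potential, and gradient bounds keep the point away from $\partial\Omega$ at scale $M_{2n}^{-1}$. This gives a local limit with value $1$ at the origin, hence $\int_\Omega u_{2n}^2\ge c>0$, a contradiction. Neither the classification of the limit system nor Lemma~\ref{adlem2-3} is needed.

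\textbf{The false statement.} You claim the limit system has no positive solution when the coupling is below $\min\{a_1,a_2g\}$. This is wrong: Theorem~\ref{lemB-1} gives the synchronized positive solution exactly in that range. It is precisely the case that occurs, since $\beta h(t_0)>0$ forces $g(t_0)=1$ and then $\beta h(t_0)\le\beta<a_1\le a_2$. The error is harmless, because the $u_2$-component of that profile also carries positive $L^2$ mass, which your key observation already covers.
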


\begin{proof}We suppose for contradiction that there exists a family of nontrivial solutions $(u_{1n},u_{2n};\mu_n)$ of \eqref{6-27} with $t=t_n\in [0,1]$ such that as $n\to \infty$,
\[
\int_\Omega u_{1n}^2\to 1,\;\;\; \int_\Omega u_{2n}^2\to 0,\;\;\; t_n\to t_0\in[0,1].
\]
Taking $\varepsilon_n= \|u_{2n}\|_{L^2(\Omega)}$, then $\varepsilon_n>0$ and $\varepsilon_n\to0$.

Note that $0\leq \beta h(t_n)\leq \beta <\min \{a_1,a_2\}$. Moreover,
if $t_n\geq \frac{1}{2}$, then we have $a_2g(t_n)=a_2$. By Theorem \ref{thm1-1},
\begin{equation}\label{6-29}
\|u_{1n}\|_{H_1(\Omega)},\;\; \|u_{1n}\|_{L^\infty(\Omega)},\;\; |\mu_n|\leq C
\end{equation}
where $C>0$ is a constant independent of $n$. If $t_n<\frac{1}{2}$, then $\beta h(t_n)=0$. By Theorem~A
in the introduction, \eqref{6-29} holds.

Let $v_n=u_{2n}/\varepsilon_n$. Then $\|v_n\|_{L^2(\Omega)}=1$. Moreover, $(u_{1n},v_n)$ satisfies
\begin{equation}\label{6-30}
\left\{
\begin{array}{ll}
-\Delta u_{1n}+V(x)u_{1n}=a_{1}u_{1n}^3+\beta \varepsilon_n^2 h(t_n)u_{1n}v_{n}^2+\mu_n u_{1n}& \hbox{ in }\Omega,\\
-\Delta v_n+V(x)v_n=a_{2}\varepsilon_n^2 g(t_n)v_n^3+\beta h(t_n) v_n u_{1n}^2+\mu_n v_{n}&\hbox{ in }\Omega.
 \end{array}\right.
\end{equation}
Multiplying the second equation of \eqref{6-30} by $v_n$ and integrating by parts, we obtain from \eqref{6-29} and the Gagliardo-Nirenberg inequality that $\{v_n\}$ is bounded in $H_2(\Omega)$. Then we may assume that up to a subsequence,
\[
\begin{split}
&u_{1n}\rightharpoonup u\;\;\text{in} \;\; H_1(\Omega), \quad u_{1n}\to u\;\;\text{in} \;\; L^2(\Omega),
\\
&v_{n}\rightharpoonup v\;\;\text{in} \;\; H_2(\Omega), \quad v_{n}\to v\;\;\text{in} \;\; L^2(\Omega),
\\
&\mu_n\to \mu, \quad t_n\to t_0.
\end{split}
\]
Then,  $(u,v)$ is a positive solution of
\[
\begin{cases}
-\Delta u+V(x)u=a_1u^3+\mu u\;\;\;\text{in}\;\;\Omega,
\\
-\Delta v+V(x)v=\beta h(t_0)vu^2+\mu v\;\;\;\text{in}\;\;\Omega,
\\
u,v=0\;\;\; \text{on}\;\;\partial\Omega,
\\
\int_\Omega u^2=\int_\Omega v^2=1.
\end{cases}
\]
Hence
\[
\begin{split}
\mu=&\int_{\Omega}\big( |\nabla u|^2+V(x)u^2-a_1u^4\big)
\\
=&\inf_{w\in H_1(\Omega),\; \|w\|_{L^2(\Omega)}=1} \int_{\Omega}\big( |\nabla w|^2+V(x)w^2-a_1u^2w^2\big)
\\
\leq&  \int_{\Omega}\big( |\nabla v|^2+V(x)v^2-a_1u^2v^2\big)
\\
=&\mu  +(\beta h(t_0)-a_1)\int_{\Omega} u^2v^2,
\end{split}
\]
which gives $\beta h(t_0)-a_1\geq 0$. But we have $\beta h(t_0)\leq \beta < a_1$, contradiction.
\end{proof}

For the calculation of $d_{a_1,a_2,\beta}^{2,semi-trivial}$, we  consider the following problem
\begin{equation}\label{6-31}
\left\{
\begin{array}{ll}
-\Delta u_{1}+V(x)u_{1}=a_{1}g(t)u_{1}^3+\beta h(t)u_{1}u_{2}^2+\mu u_{1}& \hbox{ in }\Omega,\\
-\Delta u_{2}+V(x)u_{2}=a_{2}u_{2}^3+\beta h(t) u_{2}u_{1}^2+\mu u_{2}&\hbox{ in }\Omega,\\
  u_{1},u_{2}\geq 0 &\hbox{ in }\Omega,
 \\
 u_1=u_2=0 &\hbox{ on }\partial\Omega,
 \\
 \int_{\Omega}(u_{1}^2+u_{2}^2)=1,
 \end{array}\right.
\end{equation}
where $h(t),g(t)$ are defined in \eqref{6-28}.

\begin{lemma}\label{lem6-4} There exists $\sigma>0$ such that for any $t\in [0,1]$ and any nontrivial solution $(u_1,u_2;\mu)$ of \eqref{6-31}, we have
\[
\|u_1\|_{L^2(\Omega)}>\sigma.
\]
\end{lemma}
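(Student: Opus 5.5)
The argument runs by contradiction and mirrors the proof of Lemma~\ref{lem6-3}, with the roles of $u_1$ and $u_2$ exchanged. Suppose there are nontrivial solutions $(u_{1n},u_{2n};\mu_n)$ of \eqref{6-31} with $t=t_n\to t_0\in[0,1]$, $\int_\Omega u_{1n}^2\to 0$ and $\int_\Omega u_{2n}^2\to 1$. Set $\varepsilon_n=\|u_{1n}\|_{L^2(\Omega)}\to 0$ and $v_n=u_{1n}/\varepsilon_n$, so that $\|v_n\|_{L^2(\Omega)}=1$. Then $(v_n,u_{2n})$ satisfies
\[
-\Delta v_n+Vv_n=a_1g(t_n)\varepsilon_n^2v_n^3+\beta h(t_n)v_nu_{2n}^2+\mu_nv_n,\qquad -\Delta u_{2n}+Vu_{2n}=a_2u_{2n}^3+\beta h(t_n)\varepsilon_n^2u_{2n}v_n^2+\mu_nu_{2n}.
\]

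\textbf{Step 1: uniform bounds.} The first step is to obtain uniform bounds $\|u_{2n}\|_{H_2}+\|u_{2n}\|_{L^\infty}+|\mu_n|\le C$. If $t_n\ge \frac12$, the system \eqref{6-31} is exactly \eqref{1-9} with $\varepsilon=0$, coefficients $(a_1,a_2,\beta)$ and $V_1=V_2=V$. Since $a_1\in(0,a^*)$, $a_2\in((m-1)a^*,ma^*)$ is away from the critical values, and $\beta<a_1$, case (c) of Theorem~\ref{thm1-1} applies and gives the bound. If $t_n<\frac12$, then $h(t_n)=0$ and the equations decouple. In that case $u_{2n}/\|u_{2n}\|_{L^2}$ solves \eqref{6-20} with coefficient $a_2\|u_{2n}\|_{L^2}^2\to a_2$, which stays in a compact subinterval of $((m-1)a^*,ma^*)$ for large $n$. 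Theorem~A then yields the bounds, with only a harmless rescaling since $\|u_{2n}\|_{L^2}\to1$.

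I expect the main obstacle to be this bound, specifically the uniformity in $t$ when $t_n\to\frac12^-$ or when the coefficient is only approximately $a_2$. I would handle it by observing that Theorem~\ref{thm1-1}(c) and Theorem~A are stated with uniform constants over closed parameter ranges bounded away from the critical values. The blow-up analysis of Theorem~\ref{adth2-2} also applies verbatim with $\beta$ replaced by $\beta h(t_n)\in[0,\beta]$ and $a_1$ replaced by $a_1g(t_n)\le a_1<a^*$, because that analysis only used $\beta h<a_1g$ or $h=0$. One point needs care. When $t_n<\frac12$, we have $a_1g(t_n)=2t_na_1$ and $\beta h(t_n)=0$, so the condition $\beta h<a_1g$ still holds unless $t_n=0$, where both terms vanish. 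At $t_n=0$ the equations are decoupled, and $v_n$ solves a linear eigenvalue problem at level $\mu_n$, which is easily controlled.

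\textbf{Step 2: passing to the limit.} Testing the $v_n$ equation by $v_n$ and using Gagliardo--Nirenberg together with the uniform bounds shows that $v_n$ is bounded in $H_1(\Omega)$. Passing to a subsequence, $v_n\to v$ and $u_{2n}\to u$ strongly in $L^2$, and $\mu_n\to\mu$. The limits satisfy $\|u\|_2=\|v\|_2=1$, $u,v\ge0$ and
\[
-\Delta u+Vu=a_2u^3+\mu u,\qquad -\Delta v+Vv=\beta h(t_0)u^2v+\mu v .
\]

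\textbf{Step 3: the variational comparison.} Since $v>0$ solves its linear equation, $\mu$ is the first eigenvalue of $-\Delta+V-\beta h(t_0)u^2$. Using $u$ as a test function gives
\[
\mu\le\int_\Omega\big(|\nabla u|^2+Vu^2-\beta h(t_0)u^4\big)=\mu+(a_2-\beta h(t_0))\int_\Omega u^4 .
\]
This inequality is not contradictory on its own. The roles are reversed compared with Lemma~\ref{lem6-3}: there one used that $u$ was the principal eigenfunction of its own equation. Here $u$ is generally not the first eigenfunction of $-\Delta+V-a_2u^2$ when $a_2>a^*$.

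I would instead exploit the $u$-equation as a potential problem and test it with $v$:
\[
\mu\int_\Omega uv=\int_\Omega\big(\nabla u\cdot\nabla v+Vuv-a_2u^3v\big).
\]
The $v$-equation gives $\int_\Omega\big(\nabla u\cdot\nabla v+Vuv\big)=\beta h(t_0)\int_\Omega u^3v+\mu\int_\Omega uv$. Subtracting the two identities yields
\[
(a_2-\beta h(t_0))\int_\Omega u^3v=0.
\]
Since $u,v>0$ and $\beta h(t_0)\le\beta<a_2$, this is a contradiction, which proves the lemma.
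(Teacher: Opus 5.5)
Your proof is correct. Outside Step~3 it is exactly the argument the paper intends, namely the mirror image of Lemma~\ref{lem6-3}. In particular, applying Theorem~A to $u_{2n}/\|u_{2n}\|_{L^2(\Omega)}$, with coefficient $a_2\|u_{2n}\|_{L^2(\Omega)}^2\to a_2$, in the decoupled range $t_n<\frac12$ is the right replacement for the $a_1<a^*$ bound used there. Two small points on Step~1. For $t_n\ge\frac12$ the coupling constant is $\beta h(t_n)\in[0,\beta]$, not $\beta$. Your separate discussion of $t_n=0$ is superfluous, because that case is already covered by the decoupled Theorem~A argument.

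The one thing to correct is your reason for leaving the template of Lemma~\ref{lem6-3}. It is false that $u$ may fail to be the first eigenfunction of $-\Delta+V-a_2u^2$ when $a_2>a^*$. With $u$ frozen in the potential, this is a linear Schr\"odinger operator whose potential is bounded below and tends to $+\infty$. Any positive eigenfunction of such an operator is the principal one, whatever the value of $a_2$; you use exactly this fact for $v$ one line earlier. The size of $a_2$ matters only for nonlinear questions, such as energy minimization or the linearized operator $-\Delta+V-3a_2u^2-\mu$, not here.

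So the direct mirror works:
\[
\mu=\inf_{\|w\|_{L^2(\Omega)}=1}\int_\Omega\big(|\nabla w|^2+Vw^2-a_2u^2w^2\big)\le \mu+(\beta h(t_0)-a_2)\int_\Omega u^2v^2 .
\]
This is a contradiction since $\beta h(t_0)\le\beta<a_2$. Your cross-testing identity $(a_2-\beta h(t_0))\int_\Omega u^3v=0$ reaches the same contradiction under the same hypotheses ($u>0$, $v\ge 0$, $v\not\equiv 0$, $\beta h(t_0)<a_2$) and is just as short. It is in effect the usual argument that two positive eigenfunctions at the same level $\mu$ cannot belong to operators with strictly ordered potentials. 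The proof stands; only the remark motivating the detour is wrong.
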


\begin{proof}

Since the proof is
 similar to that in Lemma \ref{lem6-3}, we omit it.

\end{proof}

Now we are able to prove Theorem \ref{thm1-4}.

\begin{proof}[Proof of Theorem \ref{thm1-4}] We only consider the case $a_1\in (0,a^*)$ and $a_2\in ((m-1)a^*,ma^*)$.  By Lemma \ref{lem6-2} and  the homotopy invariance of degree, we further assume that $V_1=V_2=V$.

First, we define
\[
\tilde{H}_t(u_1,u_2)=T\big(a_{1}|u_{1}|^3+\beta h(t)|u_{1}|u_{2}^2,  a_{2}g(t)|u_{2}|^3+\beta h(t) |u_{2}|u_{1}^2\big).
\]
By Lemma \ref{lem6-3} and  the homotopy invariance of degree, we have
\begin{equation}\label{6-32}
\begin{split}
d_{a_1,a_2,\beta}^{1,semi-trivial}=&\text{deg}(I-\mathbb{T}_{a_1,a_2,\beta}, W_1, 0)
\\
=&\text{deg}(I-\tilde{H}_1, W_1, 0)
\\
=&\text{deg}(I-\tilde{H}_0, W_1, 0)
=\text{deg}\big((I-\tilde{H}_0)|_{H_1(\Omega)}, (\bar{B}_{R}\setminus B_\delta)\cap H_1(\Omega), 0\big)
=1.
\end{split}
\end{equation}
In this last equality, we have used (1.16) in  \cite{cyy}.

Second, we define
\[
H_t(u_1,u_2)=T\big(a_{1}g(t)|u_{1}|^3+\beta h(t)|u_{1}|u_{2}^2,  a_{2}|u_{2}|^3+\beta h(t) |u_{2}|u_{1}^2\big).
\]
By Lemma \ref{lem6-4} and  the homotopy invariance of degree,
\begin{equation}\label{6-33}
\begin{split}
d_{a_1,a_2,\beta}^{2,semi-trivial}=&\text{deg}(I-\mathbb{T}_{a_1,a_2,\beta}, W_2, 0)
\\
=&\text{deg}(I-H_1, W_2, 0)
\\
=&\text{deg}(I-H_0, W_2, 0)
\\
=&\text{deg}((I-H_0)|_{H_2(\Omega)}, (\bar{B}_{R}\setminus B_\delta)\cap H_2(\Omega), 0).
\end{split}
\end{equation}

On the other hand, we define
\[
H_t^\varepsilon(u_1,u_2)=T\big(ta_1|u_1|^3+t \beta u_2^2|u_1|, a_2|u_2|^3+(1-t)\varepsilon |u_1| + t \beta u_1^2|u_2|)\big),
\]
where $\varepsilon>0$ is a small constant.
Since $ta_1\in [0,a^*)$ and  $t\beta\leq \min\{a_1, a_2\}$ for any $t\in [0,1]$, we can use a similar blow-up argument  as in Theorem \ref{thm1-1} to show that there exists a constant $R>0$, independent of $t$, such that each solution $(u_1,u_2)$ of the following equation satisfies $\|(u_1,u_2)\|_{\mathcal{H}}<R$,
\[
\left\{
\begin{array}{ll}
-\Delta u_{1}+V(x)u_{1}=ta_{1}u_{1}^3+t\beta u_{1}u_{2}^2+\mu u_{1}& \hbox{ in }\Omega,\\
-\Delta u_{2}+V(x)u_{2}=a_{2}u_{2}^3+t\beta u_{2}u_{1}^2+(1-t)\varepsilon u_1+\mu u_{2}&\hbox{ in }\Omega,\\
  u_{1},u_{2}\geq 0 &\hbox{ in }\Omega,
 \\
 u_1=u_2=0 &\hbox{ on }\partial\Omega,
 \\
 \int_{\Omega}(u_{1}^2+u_{2}^2)=1.
 \end{array}\right.
\]
The homotopy invariance of degree yields
\begin{equation}\label{6-34}
\begin{split}
d_{a_1,a_2,\beta}=&\text{deg}(I-\mathbb{T}_{a_1,a_2,\beta},\bar{B}_R\setminus B_\delta,0)
\\
=&\text{deg}(I-H_1^\varepsilon,\bar{B}_R\setminus B_\delta,0)
=\text{deg}(I-H_0^\varepsilon,\bar{B}_R\setminus B_\delta,0).
\end{split}
\end{equation}
Note that $H_0^\varepsilon(u_1,u_2)=T\big(0,a_2|u_2|^3+\varepsilon |u_1|\big)$. Let $(\xi_1,\xi_2)=H_0^\varepsilon(u_1,u_2)$. Then
\begin{equation}\label{6-35}
\left\{
\begin{array}{ll}
-\Delta \xi_1+V(x)\xi_1=\mu \xi_1&\hbox{ in }\Omega,
\\
-\Delta \xi_2+V(x)\xi_2=a_{2}|u_{2}|^3+\varepsilon |u_1|+\mu \xi_2& \hbox{ in }\Omega,
\\
 \xi_1,\xi_2\geq 0 &\hbox{ in }\Omega,
 \\
 \xi_1=\xi_2=0 &\hbox{ on }\partial\Omega,
 \\
 \int_{\Omega}(\xi_{1}^2+\xi_{2}^2)=1.
 \end{array}\right.
\end{equation}
If $(u_1,u_2)\neq (0,0)$, then $a_2|u_2|^3+\varepsilon |u_1|\neq 0$. By Lemma \ref{lem3-2}, $\mu<\lambda_1$. It follows from the first  equation in \eqref{6-35} that $\xi_1=0$. Thus, $H_0^\varepsilon$ is a map from $\mathcal{H}\setminus B_\delta$ to $H_2(\Omega)$. Since $(I-H_0^\varepsilon)|_{H_2(\Omega)}=(I-H_0)|_{H_2(\Omega)}$, by the Leray-Schauder degree reduction theorem, it can be reduced as follows
\[
\begin{split}
\text{deg}(I-H_0^\varepsilon,\bar{B}_R\setminus B_\delta,0)=&\text{deg}\big( (I-H_0^\varepsilon)|_{H_2(\Omega)},(\bar{B}_R\setminus B_\delta)\cap H_2(\Omega),0\big)
\\
=&\text{deg}\big( (I-H_0)|_{H_2(\Omega)},(\bar{B}_R\setminus B_\delta)\cap H_2(\Omega),0\big),
\end{split}
\]
which, together with \eqref{6-34}, gives
\begin{equation}\label{6-36}
d_{a_1,a_2,\beta}=\text{deg}\big( (I-H_0)|_{H_2(\Omega)},(\bar{B}_R\setminus B_\delta)\cap H_2(\Omega),0\big).
\end{equation}

By \eqref{eq6-26}, \eqref{6-32}, \eqref{6-33} and \eqref{6-36}, we finally obtain that
\begin{equation}\label{6-37}
d_{a_1,a_2,\beta}^{non-trivial}=-1.
\end{equation}
As a result, \eqref{1-1}-\eqref{1-2} has a nontrivial solution.
\end{proof}

\section{Some further results}\label{8}

In this section, we briefly discuss other conditions ensuing the existence of
nontrivial solutions for \eqref{1-1}--\eqref{1-2}. Throughout this section, we assume
that $\lambda_1(V_1)=\lambda_1(V_2)$, where $\lambda_1(V)$ is the first eigenvalue of $-\Delta +V$.
With this assumption, by Lemma~\ref{l1-4-8},  $\mathbb{T}_{a_1,a_2,\beta}$ 
is well defined in $B_R\setminus B_\delta $.

\bigskip

First, we consider  the case $0\le \beta<a_1$,
$a_1\in (0,a^*)$ and $a_2\in ((m-1)a^*,ma^*)$.
Recall that $\mu^*(a_1, V_1)$ is defined in 
\eqref{20-14-8}. We also define

\begin{equation}\label{7-4}
 M_1(a_1)=\sup \big\{ \|u\|_{L^\infty(\Omega)} \; \text{$(u;\mu)$ solves \eqref{6-20}
 with $V=V_1$ and
$a=a_1$}\big\}.
\end{equation}
 Then, 
$\mu^*(a_1, V_1)<\lambda_1(V_1)$ and  $M_1(a_1)<\infty$. 

On the other hand, we let

\begin{equation}\label{n7-4}
 M_2(a_2)=\sup \big\{ \|u\|_{L^\infty(\Omega)} \; \text{$(u;\mu)$ solves \eqref{6-20}  with $V=V_2$,
 $a=a_2$}\big\}.
\end{equation}
If \eqref{6-20} with $a=a_2$ and $V=V_2$ has no solution, we regard 
 $M_2(a_2)=0$. Otherwise,  $M_2(a_2)<+\infty$.

We define

\begin{equation}\label{7-5}
\beta_0:=\min \Big\{  \frac{\lambda_1(V_1)-\mu^*(a_1, V_1)}{M_1^2(a_1)},  \frac{\lambda_1(V_2)-\mu^*(a_2, V_2)}{M_2^2(a_2)},  \min\{a_1,a_2\}       \Big\}>0,
\end{equation}
Then, we have

\begin{theorem}\label{pro7-1} Suppose that $V_1$ and $V_2$ satisfy $(V_1)$--$(V_4)$, 
$\lambda_1(V_1)=\lambda_1(V_2)$, and $k\ge 0$.
Assume  that  $a_1\in (0,a^*)$ and $a_2\in ((m-1)a^*,ma^*)$,  where $m\in \mathbb{Z}_+$. Then 
 for any $\beta\in (0,\beta_0)$,  \eqref{1-1}-\eqref{1-2} has a non-trivial solution. 
\end{theorem}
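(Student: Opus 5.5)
The plan is to repeat the degree argument of Section~\ref{100}, but without deforming $V_2$ into $V_1$. Since $\lambda_1(V_1)=\lambda_1(V_2)$, Lemma~\ref{l1-4-8} shows that $\mathbb{T}_{a_1,a_2,\beta}$ is well defined on $\bar B_R\setminus B_\delta$, and it is compact by the argument of Proposition~\ref{pro3-4}. We then set
\[
d_{a_1,a_2,\beta}=d^{non-trivial}_{a_1,a_2,\beta}+d^{1,semi-trivial}_{a_1,a_2,\beta}+d^{2,semi-trivial}_{a_1,a_2,\beta},
\]
with $W_1,W_2$ as in \eqref{6-18}, and aim to show $d^{non-trivial}_{a_1,a_2,\beta}=-1$.

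\textbf{Step 1: separation of nontrivial solutions from semi-trivial ones.} We need that every nontrivial solution satisfies $\|u_i\|_{L^2}>\sigma$. The natural place for this is the homotopy families \eqref{6-27} and \eqref{6-31}, with $V_1\ne V_2$ allowed. Suppose $\|u_{2n}\|_{L^2}=\varepsilon_n\to0$. Theorem~\ref{thm1-1}(c) gives a priori bounds; for $t<1/2$ they come from Theorem~A. The blow-up limit is then $u_{1n}\to u$, a solution of \eqref{6-20} with $V=V_1$, $a=a_1$. Normalise $v_n=u_{2n}/\varepsilon_n$; its limit $v$ solves $-\Delta v+V_2v=\beta h u^2v+\mu v$ with $\int v^2=1$. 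Testing with $v$ gives
\[
\lambda_1(V_2)-\mu\le \beta h\int u^2v^2\le \beta\|u\|_\infty^2\le\beta M_1^2(a_1).
\]
On the other hand $\lambda_1(V_2)-\mu=\lambda_1(V_1)-\mu\ge \lambda_1(V_1)-\mu^*(a_1,V_1)$, so $\beta\ge(\lambda_1(V_1)-\mu^*(a_1,V_1))/M_1^2(a_1)\ge\beta_0$, a contradiction. This is exactly where the hypothesis $\lambda_1(V_1)=\lambda_1(V_2)$ enters. The symmetric case $\|u_{1n}\|_{L^2}\to0$ uses $M_2(a_2)$ and $\mu^*(a_2,V_2)$ in the same way. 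If \eqref{6-20} has no solution for $a_2$, that case cannot occur at all. In that case we must avoid the division by $M_2(a_2)=0$ in $\beta_0$, and we interpret that term as $+\infty$.

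\textbf{Step 2: the three degrees.} The homotopy $\tilde H_t$ of \eqref{6-32}, with Step~1 supplying admissibility on $\partial W_1$, deforms to the decoupled problem. The reduction theorem together with (1.16) of \cite{cyy} then gives $d^{1,semi-trivial}=1$. Likewise the homotopy $H_t$ gives
\[
d^{2,semi-trivial}=\deg\big((I-H_0)|_{H_2},(\bar B_R\setminus B_\delta)\cap H_2,0\big).
\]
For the total degree we use $H^\varepsilon_t$ as in \eqref{6-34}. The a priori bounds come from Theorem~\ref{thm1-1}(c) applied with $V_1,V_2$ fixed, since $t\beta<a_1$ and $ta_1<a^*$. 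At $t=0$, Lemma~\ref{lem3-2} forces $\xi_1=0$, and the reduction theorem gives the same $H_2$-degree as $d^{2,semi-trivial}$. Subtracting yields $d^{non-trivial}=-1\neq0$, so a fixed point exists outside $W_1\cup W_2$, that is, a nontrivial solution.

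\textbf{Main obstacle.} I expect the uniformity in Step~1 to be the hardest part. The bound $\|u\|_\infty\le M_1(a_1)$ applies only to exact limits of \eqref{6-20}. We therefore need, first, that the limit $u$ is a genuine solution, which follows from strong $L^2$ convergence and $\int u^2=1$. Second, we must check that the homotopies $\tilde H_t$, $H_t$, $H^\varepsilon_t$ are well defined on $W_i$ when $V_1\ne V_2$. This again rests on Lemma~\ref{l1-4-8}, because all of these operators are built from the same $T$ with the same pair of potentials.
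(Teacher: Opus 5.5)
Your proposal is correct and is essentially the paper's own proof. Step~1 is exactly Lemmas~\ref{lem7-2}--\ref{lem7-3}: you pass to the limit in $v_n=u_{2n}/\varepsilon_n$ and compare $\lambda_1(V_2)=\lambda_1(V_1)$ with $\mu^*(a_1,V_1)+\beta M_1^2(a_1)$. Step~2 is the degree splitting from the proof of Theorem~\ref{thm1-4}, carried out directly with $V_1\neq V_2$ instead of first deforming $V_2$ to $V_1$, which is what the paper means by ``a similar argument''; your reading of the $M_2(a_2)=0$ term in $\beta_0$ as $+\infty$ is the right convention.
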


To prove Theorem~\ref{pro7-1}, we need to derive some results similar to those
in Lemmas~\ref{lem6-3} and \ref{lem6-4}. 
Firstly, we consider the following problem 
\begin{equation}\label{7-6}
\left\{
\begin{array}{ll}
-\Delta u_{1}+V_1(x)u_{1}=a_{1}u_{1}^3+\beta h(t)u_{1}u_{2}^2+\mu u_{1}& \hbox{ in }\Omega,\\
-\Delta u_{2}+V_2(x)u_{2}=a_{2}g(t)u_{2}^3+\beta h(t) u_{2}u_{1}^2+\mu u_{2}&\hbox{ in }\Omega,\\
  u_{1},u_{2}\geq 0 &\hbox{ in }\Omega,
 \\
 u_1=u_2=0 &\hbox{ on }\partial\Omega,
 \\
 \int_{\Omega}(u_{1}^2+u_{2}^2)=1, 
 \end{array}\right.
\end{equation}
where 
\begin{equation}\label{7-7}
h(t)=\max\{2t-1,0\},\;\;\; g(t)=\min\{1,2t\}.
\end{equation}

\begin{lemma}\label{lem7-2} Suppose that $\beta\in (0,\beta_0)$. Then there exists $\sigma>0$ such that for any $t\in [0,1]$ and any non-trivial solution $(u_1,u_2;\mu)$ of \eqref{7-6}, we have 
\[
\|u_2\|_{L^2(\Omega)}>\sigma. 
\]
\end{lemma}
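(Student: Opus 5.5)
We argue by contradiction, following the proof of Lemma~\ref{lem6-3}. Suppose there are nontrivial solutions $(u_{1n},u_{2n};\mu_n)$ of \eqref{7-6} with $t=t_n\to t_0\in[0,1]$ and $\varepsilon_n:=\|u_{2n}\|_{L^2(\Omega)}\to 0$, so that $\int_\Omega u_{1n}^2\to 1$.

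\emph{Uniform bounds.} We first obtain bounds on $\|u_{1n}\|_{H_1}$, $\|u_{1n}\|_{L^\infty}$ and $|\mu_n|$. For $t_n\ge\frac12$ the coefficients are $(a_1,a_2,\beta h(t_n))$ with $\beta h(t_n)\le\beta<\min\{a_1,a_2\}$, so Theorem~\ref{thm1-1}(c) applies with $\varepsilon=0$. This is legitimate because here $\lambda_1(V_1)=\lambda_1(V_2)$, so $\mathbb{T}$ is well defined by Lemma~\ref{l1-4-8}; alternatively, the blow-up analysis of Theorem~\ref{adth2-2} works directly. For $t_n<\frac12$ the system decouples and Theorem~A bounds $u_{1n}$; if needed, $u_{2n}$ is bounded by the same blow-up argument. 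Since $u_{2n}\to 0$ in $L^2$, a blow-up of $u_{2n}$ could only happen with mass $a^*/(a_2g)$ concentrating, which is excluded by the $L^2$ smallness.

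\emph{Limit problem.} Set $v_n=u_{2n}/\varepsilon_n$. Testing the second equation with $v_n$ and using Gagliardo--Nirenberg shows that $v_n$ is bounded in $H_2(\Omega)$, since the term $a_2g\varepsilon_n^2\int v_n^4$ is small relative to $\|\nabla v_n\|^2$. Passing to limits, we get $u_{1n}\to u$ and $v_n\to v$ in $L^2$ and $\mu_n\to\mu$, where $u>0$ solves \eqref{6-20} with $(a,V)=(a_1,V_1)$, $\|u\|_{L^\infty}\le M_1(a_1)$, and $v\ge0$ with $\|v\|_2=1$ solves
\[
-\Delta v+V_2v=\beta h(t_0)u^2v+\mu v .
\]

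\emph{Contradiction.} This is where the argument differs from Lemma~\ref{lem6-3}, since now $V_1\ne V_2$ and the comparison trick of that lemma no longer works. Instead we use the first eigenvalue directly. Testing the equation for $v$ with $v$ and using the Rayleigh quotient gives
\[
\lambda_1(V_2)\le\int_\Omega\big(|\nabla v|^2+V_2v^2\big)=\mu+\beta h(t_0)\int_\Omega u^2v^2\le \mu+\beta M_1^2(a_1).
\]
Since $\mu\le\mu^*(a_1,V_1)$ and $\lambda_1(V_2)=\lambda_1(V_1)$, this yields
\[
\lambda_1(V_1)-\mu^*(a_1,V_1)\le\beta M_1^2(a_1).
\]
By \eqref{7-5}, $\beta<\beta_0\le(\lambda_1(V_1)-\mu^*(a_1,V_1))/M_1^2(a_1)$, a contradiction. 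If $h(t_0)=0$, the inequality reads $\lambda_1(V_2)\le\mu\le\mu^*(a_1,V_1)<\lambda_1(V_1)$, which is immediate.

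The main obstacle is the first step: obtaining uniform bounds along the homotopy when $V_1\ne V_2$ and $t_n<\frac12$, where Theorem~\ref{thm1-1} does not literally cover $a_2g(t)$ crossing multiples of $a^*$. The plan there is to use the smallness of $\|u_{2n}\|_2$ to rule out concentration of $u_{2n}$, and the decoupling together with Theorem~A to control $u_{1n}$ and $\mu_n$. Only the bound on $u_{1n}$ and $\mu_n$ is actually needed for the limit.
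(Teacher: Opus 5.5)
Your proposal is correct and follows essentially the same route as the paper: the same contradiction setup with $v_n=u_{2n}/\|u_{2n}\|_{L^2(\Omega)}$, the same passage to the limit borrowed from Lemma~\ref{lem6-3}, and the same Rayleigh-quotient estimate $\lambda_1(V_2)\le \mu+\beta h(t_0)\int_\Omega u^2v^2\le \mu^*(a_1,V_1)+\beta M_1^2(a_1)$, which together with $\lambda_1(V_1)=\lambda_1(V_2)$ contradicts \eqref{7-5}. Your extra remarks simply spell out what the paper covers with ``similar to Lemma~\ref{lem6-3}'': you bound $u_{1n}$ and $\mu_n$ for $t_n<\frac12$ by applying Theorem~A to the first equation, and you treat the case $h(t_0)=0$ separately.
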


\begin{proof}We suppose  that there exists a family of non-trivial solutions $(u_{1n},u_{2n};\mu_n)$ of \eqref{7-6} with $t=t_n\in [0,1]$ such that as $n\to \infty$, 
\[
\int_\Omega u_{1n}^2\to 1,\;\;\; \int_\Omega u_{2n}^2\to 0,\;\;\; t_n\to t_0\in[0,1]. 
\]
Taking $\varepsilon_n= \|u_{2n}\|_{L^2(\Omega)}$, then $\varepsilon_n>0$ and $\varepsilon_n\to0$. 
Let $v_n=u_{2n}/\varepsilon_n$. Then $\|v_n\|_{L^2(\Omega)}=1$.
Similar to Lemma~\ref{lem6-3}, we can prove that up to a subsequence,

\[
\begin{split}
&u_{1n}\rightharpoonup u\;\;\text{in} \;\; H_1(\Omega), \quad u_{1n}\to u\;\;\text{in} \;\; L^2(\Omega),
\\
&v_{n}\rightharpoonup v\;\;\text{in} \;\; H_2(\Omega), \quad v_{n}\to v\;\;\text{in} \;\; L^2(\Omega),
\\
&\mu_n\to \mu,\quad t_n\to t_0.
\end{split}
\]
and   $u$ is a positive solution of  \eqref{6-20} with $V=V_1$ and $a=a_1$. Thus, we have
\begin{equation}\label{7-10}
\mu\leq \mu^*(a_1, V_1), \quad \|u\|_{L^\infty(\Omega)}\leq M_1(a_1). 
\end{equation}
Moreover,  $v$ is a positive solution of 
\[
\begin{cases}
-\Delta v+V_2(x)v=\beta h(t_0)vu^2+\mu v\;\;\;\text{in}\;\;\Omega,
\\
v=0\;\;\; \text{on}\;\;\partial\Omega,
\\
\int_\Omega v^2=1,
\end{cases}
\]
which gives 
\[
\mu=\int_\Omega \big( |\nabla v|^2+V_2(x)v^2-\beta h(t_0)v^2u^2\big). 
\]
Then by \eqref{7-10} we have 
\[
\begin{split}
\lambda_1(V_1)=\lambda_1(V_2)=&\inf_{w\in H_2(\Omega),\; \|w\|_{L^(\Omega)}=1}\int_\Omega \big( |\nabla w|^2+V_2(x)w^2\big)
\\
\leq& \int_\Omega \big( |\nabla v|^2+V_2(x)v^2\big)
= \mu+\beta h(t_0)\int_\Omega v^2u^2
\\
\leq&  \mu +\beta \|u\|_{L^\infty(\Omega)}^2  \|v\|_{L^2(\Omega)}^2  \leq  \mu^*(a_1,V_1)+M_1^2(a_1)\beta. 
\end{split}
\]
This gives $\beta \geq \frac{\lambda_1(V_1)-\mu^*(a_1,V_1)}{M_1^2(a_1)}$ and it  contradicts to  $\beta<\beta_0\leq \frac{\lambda_1(V_1)-\mu^*(a_1,V_1)}{M_1^2(a_1)}$. 
\end{proof}

Now we consider the following problem 
\begin{equation}\label{7-11}
\left\{
\begin{array}{ll}
-\Delta u_{1}+V_1(x)u_{1}=a_{1}g(t)u_{1}^3+\beta h(t)u_{1}u_{2}^2+\mu u_{1}& \hbox{ in }\Omega,\\
-\Delta u_{2}+V_2(x)u_{2}=a_{2}u_{2}^3+\beta h(t) u_{2}u_{1}^2+\mu u_{2}&\hbox{ in }\Omega,\\
  u_{1},u_{2}\geq 0 &\hbox{ in }\Omega,
 \\
 u_1=u_2=0 &\hbox{ on }\partial\Omega,
 \\
 \int_{\Omega}(u_{1}^2+u_{2}^2)=1, 
 \end{array}\right.
\end{equation}
where $h(t),g(t)$ are defined in \eqref{7-7}. Using a similar argument as in Lemma \ref{lem7-2}, we can prove the following result. 
\begin{lemma}\label{lem7-3} Suppose that $\beta\in (0,\beta_0)$. Then there exists $\sigma>0$ such that for any $t\in [0,1]$ and any non-trivial solution $(u_1,u_2;\mu)$ of \eqref{7-11}, we have 
\[
\|u_1\|_{L^2(\Omega)}>\sigma. 
\]
\end{lemma}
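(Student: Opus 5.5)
We argue by contradiction, following Lemma~\ref{lem7-2} with the roles of the two components exchanged. Suppose there are nontrivial solutions $(u_{1n},u_{2n};\mu_n)$ of \eqref{7-11} with $t=t_n\to t_0\in[0,1]$ such that $\int_\Omega u_{1n}^2\to 0$, and hence $\int_\Omega u_{2n}^2\to 1$. Put $\varepsilon_n=\|u_{1n}\|_{L^2(\Omega)}>0$ and $v_n=u_{1n}/\varepsilon_n$, so that $\|v_n\|_{L^2(\Omega)}=1$. The pair $(v_n,u_{2n})$ then satisfies
\[
-\Delta v_n+V_1v_n=a_1g(t_n)\varepsilon_n^2v_n^3+\beta h(t_n)v_nu_{2n}^2+\mu_nv_n,\qquad
-\Delta u_{2n}+V_2u_{2n}=a_2u_{2n}^3+\beta h(t_n)\varepsilon_n^2u_{2n}v_n^2+\mu_nu_{2n}.
\]

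The first step is the a priori bound on $u_{2n}$ in $H_2(\Omega)\cap L^\infty(\Omega)$ and on $\mu_n$. This is the delicate point, because $a_2$ may exceed $a^*$. Since $a_2\in((m-1)a^*,ma^*)$ stays away from the critical values $ja^*$ and $0\le\beta h(t_n)<\min\{a_1,a_2\}$, I would invoke the blow-up classification of Theorem~\ref{adth2-2} (case (c) of Theorem~\ref{thm1-1}), which allows $a_1g(t_n)\in[0,a_1]\subset[0,a^*)$. Blow-up can only occur at $a_2=ja^*$, and that is excluded here. When $h(t_n)=0$ the equations decouple, and Theorem~A applied to $u_{2n}/\|u_{2n}\|_{L^2(\Omega)}$ gives the bound directly. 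If $\beta h(t_n)\le\beta_0$ does not quite fit the hypotheses of case (c) because of the choice $V_1\neq V_2$, I would use that $V_1,V_2$ are fixed, so the local arguments of Section~\ref{2} apply verbatim.

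With these bounds, testing the $v_n$-equation by $v_n$ together with Gagliardo--Nirenberg shows that $v_n$ is bounded in $H_1(\Omega)$. Passing to a subsequence, using the compact embedding, we get $u_{2n}\to u$ and $v_n\to v$ in $L^2(\Omega)$, and $\mu_n\to\mu$. The limit $u$ is a positive solution of \eqref{6-20} with $V=V_2$ and $a=a_2$, so $\mu\le\mu^*(a_2,V_2)$ and $\|u\|_{L^\infty(\Omega)}\le M_2(a_2)$; in particular a solution exists, so $M_2(a_2)>0$. The limit $v\ge0$ has $\|v\|_{L^2(\Omega)}=1$ and solves $-\Delta v+V_1v=\beta h(t_0)u^2v+\mu v$.

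Now use $\lambda_1(V_1)=\lambda_1(V_2)$:
\[
\lambda_1(V_2)=\lambda_1(V_1)\le\int_\Omega(|\nabla v|^2+V_1v^2)=\mu+\beta h(t_0)\int_\Omega u^2v^2\le\mu^*(a_2,V_2)+\beta M_2^2(a_2).
\]
This gives $\beta\ge(\lambda_1(V_2)-\mu^*(a_2,V_2))/M_2^2(a_2)\ge\beta_0$, which contradicts $\beta<\beta_0$. So a uniform $\sigma>0$ exists. The main obstacle is the uniform bound in the first step; everything after it is a direct transcription of Lemma~\ref{lem7-2}.
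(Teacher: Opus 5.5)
Your proposal is correct and follows the paper's intended argument. The paper proves this lemma only by pointing to Lemma~\ref{lem7-2}, and your proof is that argument with the two components exchanged, ending in $\lambda_1(V_1)=\lambda_1(V_2)\le \mu^*(a_2,V_2)+\beta M_2^2(a_2)$, which contradicts $\beta<\beta_0$. Your handling of the uniform bound on $(u_{2n},\mu_n)$ (Theorem~A when $h(t_n)=0$, the case (c) blow-up analysis when $t_n\ge \frac12$) mirrors Lemma~\ref{lem6-3}. Your remark that the limit $u$ forces $M_2(a_2)>0$ correctly handles the degenerate convention in the definition of $\beta_0$.
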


\begin{proof}[Proof of Theorem~\ref{pro7-1}] 
From Lemmas \ref{lem7-2} and  \ref{lem7-3}, we can use a similar argument as in the proof of Theorem \ref{thm1-4} to prove that 
\begin{equation}\label{7-14}
d_{a_1,a_2,\beta}^{non-trivial}=-1. 
\end{equation}
\end{proof}

Now we turn to the discussion for  the case $a_1,a_2\in (0,a^*)$,  $\beta\in  (\beta_1^*,\beta_2^*)$.

  Let $\lambda_1^*(u,V)$ be the first eigenvalue of
$-\Delta +V(x)-\beta  u^2$. Then for each fixed $u\ne 0$, it holds
$\lambda_1^*(u, V)<\lambda_1(V)$. Define

\begin{equation}\label{ap-3}
\lambda^*_1:=\inf_{a\in[0,a^*/2]}\inf\{\lambda_1^*(u, V_2):\;\; \text{$(u;\mu)$ solves \eqref{6-20} with $V=V_1$ and $a\in[0,a^*/2]$}\},
\end{equation}
  and
 \begin{equation}\label{nap-3}
\lambda^*_2:=\inf_{a\in[0,a^*/2]}\inf\{\lambda_1^*(u, V_1):\;\; \text{$(u;\mu)$ solves \eqref{6-20} with $V=V_2$ and $a\in[0,a^*/2]$}\},
\end{equation}

By Theorem~A in the introduction,  we can prove that $\lambda^*_i$ can be achieved by some $u_i>0$ and $a_i\in[0,a^*/2]$. Hence $\lambda^*_i< \lambda_1(V_i) = \lambda_1(V_1) $ by our assumption. 

Similar to Proposition~\ref{p1-5-8}, we have the following result.

 \begin{proposition}\label{ap-pro-1}
 Assume that $a_1$, $a_2$ and $\beta$ satisfy the following conditions 
 \begin{equation}\label{ap-4}
 0<a_1,\; a_2<\min \{\frac{ a^*(\lambda_1(V_1)- \lambda^*_1) }{2 \lambda_1(V_1) }, 
 \frac{ a^*(\lambda_1(V_1)- \lambda^*_2) }{2 \lambda_1(V_1) },\frac{a^*}2\},\quad \beta_1^*<\beta<\beta_2^*.
 \end{equation}
Then there exists $\sigma>0$ such that for any sufficiently small $\varepsilon\in (0,\varepsilon_0)$ and any solution $(u_{1\varepsilon}, u_{2\varepsilon})$
  of \eqref{1-9}, it holds
\begin{equation}\label{ap-5}
\|u_{1\varepsilon}\|_{L^2(\Omega)}> \sigma,\quad \|u_{2\varepsilon}\|_{L^2(\Omega)}> \sigma.
\end{equation}

\end{proposition}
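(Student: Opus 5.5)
Following the proof of Proposition~\ref{p1-5-8}, I argue by contradiction. Suppose that along a sequence $\varepsilon\to 0$ there are solutions of \eqref{1-9} with $\int_\Omega u_{2\varepsilon}^2\to 0$ and $\int_\Omega u_{1\varepsilon}^2\to 1$; the case $u_{1\varepsilon}\to 0$ is symmetric, with $\lambda_2^*$ in place of $\lambda_1^*$. Since $\beta\in(\beta_1^*,\beta_2^*)$ and $a_1,a_2\in(0,a^*)$, Theorem~\ref{thm1-1}(a) gives uniform bounds on $\|u_{i\varepsilon}\|_{H_i}$, $\|u_{i\varepsilon}\|_{L^\infty}$ and $|\mu_\varepsilon|$. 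Passing to a subsequence, $u_{1\varepsilon}\to u_1$ in $L^p$ and $\mu_\varepsilon\to\mu$, where $u_1$ is a positive solution of \eqref{6-6} with $V=V_1$ and $a=a_1\in(0,a^*/2)$. In particular $u_1$ is admissible in the definition \eqref{ap-3} of $\lambda_1^*$, so $\lambda_1^*(u_1,V_2)\ge \lambda_1^*$.

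Next I rescale the second component. The case $\varepsilon/\|u_{2\varepsilon}\|_{L^2}\to\infty$ is excluded exactly as in Proposition~\ref{p1-5-8}, since it forces $u_1\equiv 0$. Otherwise $v_\varepsilon=u_{2\varepsilon}/\|u_{2\varepsilon}\|_{L^2}$ converges to some $v\ge 0$ with $\int_\Omega v^2=1$, and $v$ solves \eqref{6-12}:
\[
-\Delta v+V_2v-\beta u_1^2v=\mu v+\sigma_0u_1,\qquad \sigma_0\ge 0 .
\]
Testing this equation with the positive first eigenfunction of $-\Delta+V_2-\beta u_1^2$ gives $\lambda_1^*(u_1,V_2)\le\mu$; the term $\sigma_0\int u_1 e\ge 0$ only helps. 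Hence
\[
\lambda_1^*\le \mu .
\]

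The remaining step is an upper bound for $\mu$ that is strictly below $\lambda_1^*$. From \eqref{6-9a}, $\mu=\int_\Omega(|\nabla u_1|^2+V_1u_1^2)-a_1\int_\Omega u_1^4$. The smallness of $a_1$ in \eqref{ap-4} has to be used here. On its own the formula only gives $\mu\le\lambda_1(V_1)$, through the comparison $\mu<\lambda_1(V_1)$, so this is where more work is needed. The inequality \eqref{ap-4} reads $\lambda_1(V_1)-\tfrac{2a_1}{a^*}\lambda_1(V_1)>\lambda_1^*$, which suggests aiming for
\[
\mu\le \Bigl(1-\frac{2a_1}{a^*}\Bigr)\lambda_1(V_1).
\]
My first attempt would be a Gagliardo--Nirenberg argument, $\int u_1^4\le \frac{2}{a^*}\int|\nabla u_1|^2\int u_1^2$, combined with the identity $\mu=\int(|\nabla u_1|^2+V_1u_1^2)-a_1\int u_1^4$. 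But that inequality bounds $\int u_1^4$ from above, which pushes $\mu$ in the wrong direction. So instead I would bound $\mu$ from above through the variational characterization
\[
\mu=\inf_{\|w\|_{L^2}=1}\int_\Omega\bigl(|\nabla w|^2+V_1w^2-a_1u_1^2w^2\bigr),
\]
using a test function $w$ that exploits the structure $\beta>\beta_1^*>a^*$. Pinning down this bound, that is, finding the right combination of the constants in \eqref{ap-4} which forces $\mu<\lambda_1^*$, is the main obstacle. If the bound cannot be obtained this way, I would fall back on directly comparing $\lambda_1^*(u_1,V_2)$ with $\mu$ via a $\frac{a_1}{\beta}$-weighted test function. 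Once $\mu<\lambda_1^*$ is established, it contradicts $\lambda_1^*\le\mu$ and the proposition follows.
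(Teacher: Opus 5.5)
There is a genuine gap, and it comes from a sign error in the eigenfunction test. Let $e>0$ be the first eigenfunction of $-\Delta+V_2-\beta u_1^2$. Pairing $(-\Delta+V_2-\beta u_1^2)v=\mu v+\sigma_0u_1$ with $e$ gives
\[
\bigl(\lambda_1^*(u_1,V_2)-\mu\bigr)\int_\Omega ve=\sigma_0\int_\Omega u_1e\ge 0 .
\]
So $\mu\le\lambda_1^*(u_1,V_2)$, which is the reverse of what you wrote: the $\sigma_0$-term works against your inequality, not for it. Since this is an upper bound on $\mu$, the contradiction needs a \emph{lower} bound on $\mu$. That is exactly what the Gagliardo--Nirenberg estimate you discarded provides. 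From \eqref{6-9a}, $\int_\Omega u_1^4\le\frac{2}{a^*}\int_\Omega|\nabla u_1|^2$ and $a_1<a^*/2$, you get
\[
\mu\ge\Bigl(1-\frac{2a_1}{a^*}\Bigr)\int_\Omega\bigl(|\nabla u_1|^2+V_1u_1^2\bigr)\ge\Bigl(1-\frac{2a_1}{a^*}\Bigr)\lambda_1(V_1).
\]
Combined with $\mu\le\lambda_1^*$, this yields $a_1\ge a^*(\lambda_1(V_1)-\lambda_1^*)/(2\lambda_1(V_1))$, contradicting \eqref{ap-4}. This is precisely the paper's argument.

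The bound you set as your target, $\mu\le(1-\frac{2a_1}{a^*})\lambda_1(V_1)$, is the reverse of this display and never holds. The first inequality above is strict, because $\int_\Omega V_1u_1^2>0$. So no test function can produce it, and neither can any use of $\beta>\beta_1^*$, which does not even enter the equation for $u_1$. The ``main obstacle'' you identify is a symptom of the sign error, not a step to be overcome.

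Once the sign is corrected, one more point needs care. Passing from $\mu\le\lambda_1^*(u_1,V_2)$ to $\mu\le\lambda_1^*$ requires $\lambda_1^*(u_1,V_2)\le\lambda_1^*$. That is, $\lambda_1^*$ must dominate $\lambda_1^*(u,V_2)$ over all admissible $u$, so it must be a supremum. This supremum is still attained and lies below $\lambda_1(V_2)$, by the compactness coming from Theorem~A. The paper's step ``$\mu\le\lambda_1^*$'' uses $\lambda_1^*$ in this sense, even though \eqref{ap-3} is written with $\inf$. If the infimum is taken literally, your (correct) observation $\lambda_1^*\le\lambda_1^*(u_1,V_2)$ points the wrong way and the argument does not close.
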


\begin{proof}
The proof of this proposition is similar to that of Proposition~\ref{p1-5-8}. Thus,
we just sketch it.

Suppose that up to a subsequence, we have
\[
\int_\Omega u_{1\varepsilon}^2\to 1, \;\;\; \int_\Omega u_{2\varepsilon}^2\to 0, \;\;\; \text{as}\;\; \varepsilon \to 0.
\]
Define $v_{\varepsilon}=u_{2\varepsilon}/ \|u_{2\varepsilon}\|_{L^2(\Omega)}$. 
As in  the proof of Proposition \ref{p1-5-8}, we can prove that 
$u_{1\varepsilon}$ is bounded in $H_1(\Omega)$ and $v_\varepsilon$  is bounded in $H_2(\Omega)$. So up to a subsequence, we can assume that $u_{1\varepsilon}\rightharpoonup u_1$ in $H_1(\Omega)$, and $v_{\varepsilon}\rightharpoonup v$ in $H_2(\Omega)$. Similar to \eqref{6-9} and \eqref{6-12}, we have 
\begin{equation}\label{ap-6}
\begin{cases}
-\Delta u_1+V_1(x)u_1=a_1u^3_1+\mu u_1\;\;\;\text{in}\;\;\Omega,
\\
-\Delta v+V_2(x)v=\beta v u_1^2+\sigma_0 u_1+\mu v\;\;\;\text{in}\;\;\Omega,
\\
u_1>0,v>0\;\; \text{in}\;\; \Omega,\quad u_1=v=0\;\;\; \text{on}\;\;\partial\Omega,
\\
\int_\Omega u_1^2=\int_\Omega v^2=1,
\end{cases}
\end{equation}
where $\sigma_0\geq 0$ is a constant. 
This shows that 
\[
\mu=\int_\Omega \big(|\nabla u_1|^2+V_1(x) u_1^2- a_1u_1^4\big)=\int_\Omega \big(|\nabla v|^2+V_2(x) v^2- \beta u_1^2v^2-\sigma_0 u_1v\big). 
\]

 From the first equation of \eqref{ap-6}, we have
 
 \begin{equation}\label{1-17-8}
\begin{split}
\Bigl( 1-\frac {2a_1}{a^*}\Bigr)\lambda_1(V_1)
\leq & \Bigl( 1-\frac {2a_1}{a^*}\Bigr)\int_\Omega \big(|\nabla u_1|^2+V_1(x) u_1^2\big)\\
\le &\int_\Omega \big(|\nabla u_1|^2+V_1(x) u_1^2\big)-a_1 \int_\Omega u_1^4=
\mu,
\end{split}
\end{equation}
which gives

\begin{equation}\label{2-17-8}
\begin{split}
a_1\ge \frac{a^*(\lambda_1(V_1)-\mu)}{2\lambda_1(V_1)}.
\end{split}
\end{equation}

On the other hand, in view of  $v>0$, from the second equation of \eqref{ap-6},  we have 
$
\mu\le \lambda^*_1.
$
Thus,  

\[
a_1\ge \frac{a^*(\lambda_1(V_1)-\mu) }{2\lambda_1(V_1)}\ge \frac{a^*(\lambda_1(V_1)-\lambda^*_1)}{2\lambda_1(V_1)}.
\]
This is a contradiction.

Hence, we have proved that $\|u_{2\varepsilon}\|_{L^2(\Omega)}> \sigma$.
We can also prove $\|u_{1\varepsilon}\|_{L^2(\Omega)}> \sigma$ in a similar way.
\end{proof}

With Proposition~\ref{ap-pro-1}   and Theorem~\ref{thm1-2}, we obtain
the following existence result.

\begin{theorem}\label{th8-1} Suppose that $V_1$ and $V_2$ satisfy $(V_1)$--$(V_4)$, 
$\lambda_1(V_1)=\lambda_1(V_2)$, and $k>0$.
Assume  that   

\[
0<a_1,\; a_2<\min \{\frac{ a^*(\lambda_1(V_1)- \lambda^*_1) }{2 \lambda_1(V_1) }, 
 \frac{ a^*(\lambda_1(V_1)- \lambda^*_2) }{2 \lambda_1(V_1) },\frac{a^*}2\},\quad \beta_1^*<\beta<\beta_2^*.
\]
Then 
   \eqref{1-1}-\eqref{1-2} has a non-trivial solution. 
\end{theorem}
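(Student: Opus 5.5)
The plan follows the scheme used for Theorem~\ref{thm1-3}: combine the degree computation of Theorem~\ref{thm1-2}(ii) with the non-degeneracy estimate of Proposition~\ref{ap-pro-1}, then pass to the limit $\varepsilon\to0$.

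The first step is to check that Theorem~\ref{thm1-2}(ii) applies. Its parameter assumptions $a_1,a_2\in(0,a^*)$ and $\beta\in(\beta_1^*,\beta_2^*)$ hold, since $a_i<a^*/2$. That theorem was stated for $V_2\in\mathcal E$, so we must make sure the homotopy argument still works here. The proof reduces everything to $V_1=V_2$ by deforming along $V_{2t}=tV_2+(1-t)V_1$, and the a priori bounds of Theorem~\ref{thm1-1} must hold uniformly along this path. Theorem~\ref{thm1-1} is formulated for convex combinations of potentials in $\mathcal E$. So I would either verify that $V_2\in\mathcal E$ relative to $V_1$ (for instance, when $V_1$ and $V_2$ are asymptotically comparable as required), or simply rerun the blow-up analysis of Section~\ref{2}. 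In the rerun, Step~5 uses $(V_2)$--$(V_4)$ for each $V_{2t}$ together with boundedness of the blow-up points, and these properties are preserved under convex combination.

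This gives $d^{\varepsilon}_{a_1,a_2,\beta}=k\neq0$, because $k>0$. Hence for every $\varepsilon\in(0,\varepsilon_0)$ the operator $\mathbb T^{\varepsilon}_{a_1,a_2,\beta}$ has a fixed point in $\bar B_R\setminus B_\delta$, that is, a solution $(u_{1\varepsilon},u_{2\varepsilon},\mu_\varepsilon)$ of \eqref{1-9}. By Lemma~\ref{l1-4-8}, the assumption $\lambda_1(V_1)=\lambda_1(V_2)$ ensures the limiting operator $\mathbb T_{a_1,a_2,\beta}$ is well defined. This is not strictly needed, but it confirms that the degree is continuous down to $\varepsilon=0$.

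Next, Theorem~\ref{thm1-1}(a) gives bounds on $\|(u_{1\varepsilon},u_{2\varepsilon})\|_{\mathcal H}$, on the $L^\infty$ norms and on $|\mu_\varepsilon|$ that are uniform in $\varepsilon$, provided $\beta$ stays at distance $\delta$ from $\beta_1^*$ and $\beta_2^*$. Using the compact embedding $H_i(\Omega)\hookrightarrow L^2(\Omega)$, we extract a subsequence with $u_{i\varepsilon}\to u_i$ in $L^p$ for $p\ge2$ and $\mu_\varepsilon\to\mu$. The limit $(u_1,u_2,\mu)$ solves \eqref{1-1}--\eqref{1-2}, and the constraint passes to the limit by strong $L^2$ convergence. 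Proposition~\ref{ap-pro-1}, whose hypothesis \eqref{ap-4} is exactly our assumption, gives $\|u_{i\varepsilon}\|_{L^2}>\sigma$, and hence $\|u_i\|_{L^2}\ge\sigma$. The strong maximum principle then yields $u_1,u_2>0$, so the solution is nontrivial.

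The main obstacle is the first step: justifying the homotopy from $V_2$ to $V_1$ when $V_2$ is not assumed to lie in $\mathcal E$. What is needed is that blow-up along the path occurs only at the critical values $\beta_m^*$. Theorem~\ref{th2-1} obtains this from the local classification of profiles (Theorem~\ref{lemB-1}) together with the Pohozaev argument excluding blow-up at infinity. In that argument $V_{1n}$ and $V_{2n}$ must be comparable at infinity, since $c_j$ has to be common to both components. If comparability fails, an alternative is to avoid the deformation altogether: compute the degree directly, using that the proof of Theorem~\ref{thm1-2}(ii) only needs the blow-up classification for the coupled system with potentials $V_1$ and $V_2$ at the common scale $-\mu_n$.
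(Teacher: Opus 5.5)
Your proposal matches the paper's argument. The paper derives Theorem~\ref{th8-1} in one line from Theorem~\ref{thm1-2}(ii), which gives degree $k\neq 0$ for small $\varepsilon>0$, and from Proposition~\ref{ap-pro-1}, passing to the limit $\varepsilon\to 0$ with the uniform bounds of Theorem~\ref{thm1-1} exactly as in the proof of Theorem~\ref{thm1-3}. Your worry that $V_2$ need not lie in $\mathcal E$ is legitimate, since the paper applies Theorem~\ref{thm1-2} without addressing it. However, your last fallback, computing the degree ``directly'' for $V_1\neq V_2$ without deforming the potential, would not work: the computation in Theorem~\ref{thm1-2}(ii) genuinely uses $V_1=V_2$, through the reduction to the scalar problem \eqref{eq4-4} and Theorems~B and~C.
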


 \appendix \section{A lower bound of $L^2$-norm of NLS system}\label{7}

In this section, we consider the lower bound of $L^2$-norm of  solutions to the following nonlinear Schr\"odinger equations
\begin{equation}\label{A-1}
\begin{cases}
-\Delta u+c_1u=a_1u^3+\beta uv^2\quad \text{in}\;\; \mathbb{R}^2,
\\
-\Delta v+c_2v=a_2v^3+\beta vu^2\quad \text{in}\;\; \mathbb{R}^2,
\end{cases}
\end{equation}
where $a_1,a_2,c_1,c_2,\beta$ are positive constants.

\begin{lemma}\label{lemA-1} Let  $a_1,a_2,c_1,c_2,\beta$ be positive constants. If $(u,v)\in H^1(\mathbb{R}^2)\times H^1(\mathbb{R}^2)$ is a nontrivial  solution of \eqref{A-1}, then we have
\begin{equation}\label{A-2}
\int_{\mathbb{R}^2} (u^2+v^2)\geq \frac{a^*}{\max\{a_1,a_2,\beta\}}.
\end{equation}
\end{lemma}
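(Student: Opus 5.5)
The plan is to combine the Pohozaev identity with the sharp Gagliardo--Nirenberg inequality, where $a^*=\int_{\mathbb R^2}Q^2$ is the optimal constant:
\[
\int_{\mathbb R^2} w^4\le \frac{2}{a^*}\int_{\mathbb R^2}|\nabla w|^2\int_{\mathbb R^2} w^2,\qquad w\in H^1(\mathbb R^2).
\]
The key structural fact is that in dimension two the Pohozaev identity eliminates the kinetic term in a way that links $\int(|\nabla u|^2+|\nabla v|^2)$ directly to the quartic terms.

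\textbf{Step 1 (two integral identities).} Testing the first equation of \eqref{A-1} with $u$ and the second with $v$ and adding gives
\[
\int(|\nabla u|^2+|\nabla v|^2)+\int(c_1u^2+c_2v^2)=\int(a_1u^4+a_2v^4+2\beta u^2v^2).
\]
Since $(u,v)$ is a finite-energy solution of a gradient system in $\mathbb R^2$, the Pohozaev identity (obtained by testing with $x\cdot\nabla u$ and $x\cdot\nabla v$) yields
\[
\int(c_1u^2+c_2v^2)=\frac12\int(a_1u^4+a_2v^4+2\beta u^2v^2).
\]
Subtracting the second identity from the first, we get
\[
\int(|\nabla u|^2+|\nabla v|^2)=\frac12\int(a_1u^4+a_2v^4+2\beta u^2v^2).
\]
Justifying Pohozaev requires decay of $u$, $v$ and their gradients. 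This follows from standard elliptic regularity and exponential decay, since $c_i>0$; this is the only technical point.

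\textbf{Step 2 (bound by the scalar problem).} Set $M=\max\{a_1,a_2,\beta\}$ and $w=\sqrt{u^2+v^2}\in H^1$. Then
\[
a_1u^4+a_2v^4+2\beta u^2v^2\le M(u^2+v^2)^2=Mw^4.
\]
The diamagnetic-type inequality $|\nabla w|^2\le|\nabla u|^2+|\nabla v|^2$ also holds pointwise a.e., by Cauchy--Schwarz.

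\textbf{Step 3 (conclusion).} Combining Steps 1 and 2 with the Gagliardo--Nirenberg inequality,
\[
\int|\nabla w|^2\le\int(|\nabla u|^2+|\nabla v|^2)\le\frac M2\int w^4\le\frac M{a^*}\int|\nabla w|^2\int w^2.
\]
Since $(u,v)$ is nontrivial, $w\not\equiv0$, so $\int|\nabla w|^2>0$. Dividing by it gives $\int(u^2+v^2)=\int w^2\ge a^*/M$, which is \eqref{A-2}.

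The only step needing care is the Pohozaev justification. I would handle it by the usual cutoff argument: test with $\phi_R\,x\cdot\nabla u$ and let $R\to\infty$, using $u,v\in H^1\cap L^\infty$ together with exponential decay.
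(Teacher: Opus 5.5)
Your proposal is correct and follows the paper's proof essentially step by step. Both combine the Nehari identity with the Pohozaev identity to get $\int(|\nabla u|^2+|\nabla v|^2)=\frac12\int(a_1u^4+a_2v^4+2\beta u^2v^2)$, then apply the pointwise bounds for $w=\sqrt{u^2+v^2}$ and the sharp Gagliardo--Nirenberg inequality. The only differences are cosmetic: you divide by $\int|\nabla w|^2$ where the paper divides by $\int(|\nabla u|^2+|\nabla v|^2)$, and you spell out the cutoff and decay justification of the Pohozaev identity, which the paper takes for granted.
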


\begin{proof}Multiplying the first equation of \eqref{A-1} by $u$ and the second equation by $v$ and integrating by parts, we get
\[
\int_{\mathbb{R}^2} (|\nabla u|^2+ |\nabla v|^2)+\int_{\mathbb{R}^2}(c_1u^2+c_2v^2)=\int_{\mathbb{R}^2}(a_1u^4+a_2v^4+2\beta u^2v^2).
\]
Multiplying the first equation of \eqref{A-1} by $x\cdot \nabla u$ and the second equation by $x\cdot \nabla v$ and integrating by parts, we obtain the following Pohozaev identity
\[
2\int_{\mathbb{R}^2}(c_1u^2+c_2v^2)=\int_{\mathbb{R}^2}(a_1u^4+a_2v^4+2\beta u^2v^2).
\]
So we obtain
\[
\int_{\mathbb{R}^2} (|\nabla u|^2+ |\nabla v|^2)=\frac{1}{2}\int_{\mathbb{R}^2}(a_1u^4+a_2v^4+2\beta u^2v^2).
\]

Note that
\[
a_1u^4+a_2v^4+2\beta u^2v^2\leq \max\{a_1,a_2,\beta\} (u^4+v^4+2u^2v^2)\leq \max\{a_1,a_2,\beta\} (u^2+v^2)^2.
\]
Let $U=\sqrt{u^2+v^2}$. We have
\[
|\nabla U|^2=\Big|\frac{u}{\sqrt{u^2+v^2}} \nabla u+\frac{v}{\sqrt{u^2+v^2}} \nabla v\Big|^2\leq |\nabla u|^2+|\nabla v|^2.
\]
By the Gagliardo-Nirenberg inequality,
\[
\begin{split}
\int_{\mathbb{R}^2}(u^2+v^2)^2=\int_{\mathbb{R}^2}U^4\leq \frac{2}{a^*} \int_{\mathbb{R}^2} |\nabla U|^2\int_{\mathbb{R}^2} U^2
\leq \frac{2}{a^*} \left(\int_{\mathbb{R}^2}(|\nabla u|^2+|\nabla v|^2) \right) \left(\int_{\mathbb{R}^2}(u^2+v^2)\right).
\end{split}
\]
So we get
\[
\begin{split}
\int_{\mathbb{R}^2} (|\nabla u|^2+ |\nabla v|^2)=&\frac{1}{2}\int_{\mathbb{R}^2}(a_1u^4+a_2v^4+2\beta u^2v^2)
\\
\leq& \frac{1}{2} \max\{a_1,a_2,\beta\}\int_{\mathbb{R}^2}  (u^2+v^2)^2
\\
\leq& \frac{\max\{a_1,a_2,\beta\}}{a^*} \left(\int_{\mathbb{R}^2}(|\nabla u|^2+|\nabla v|^2) \right) \left(\int_{\mathbb{R}^2}(u^2+v^2)\right),
\end{split}
\]
which gives \eqref{A-2}.
\end{proof}

\section{Classification of positive solutions of  NLS system}\label{7}

In this section, we recall  the uniqueness of   solutions of the following nonlinear Schr\"odinger 
system \cite{clwy,wy}:

\begin{equation}\label{B-1}
\begin{cases}
-\Delta u_1+u_1=a_1u_1^3+\beta u_1u_2^2\quad \text{in}\;\; \mathbb{R}^2,
\\
-\Delta u_2+u_2=a_2u_2^3+\beta u_2u_1^2\quad \text{in}\;\; \mathbb{R}^2,
\\
u_1,u_2\ge 0,
\end{cases}
\end{equation}
where $a_1,a_2,\beta$ are positive constants.

\begin{theorem}\label{lemB-1} Suppose that either $\beta>\max(a_1, a_2)$, or $0<\beta<\min(a_1, a_2)$.
Then
up to a translation, any solution $(u_1,u_2)$ of \eqref{B-1} with $u_1>0$ and
$u_2>0$ has  the form
\[
(u_1,u_2)=\left(\sqrt{\frac{\beta-a_2}{\beta^2-a_1a_2}}Q(x),\sqrt{\frac{\beta-a_1}{\beta^2-a_1a_2}}Q(x)\right).
\]
\end{theorem}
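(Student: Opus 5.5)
The plan is to reduce the system to a scalar equation for a common profile. I will show that when $\beta$ lies outside $[\min(a_1,a_2),\max(a_1,a_2)]$, any positive solution must have $u_1$ and $u_2$ proportional. Once that holds, the system collapses to a single copy of \eqref{1-4}, and the coefficients are forced.

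Let $(u_1,u_2)$ be a solution of \eqref{B-1} with both components positive. Standard elliptic regularity and exponential decay then hold. By the moving plane method for cooperative systems ($\beta>0$), both $u_1$ and $u_2$ are radially symmetric and decreasing about a common point, in the spirit of Troy and Busca--Sirakov. After translating, I take that point to be the origin.

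The key step is to prove that $w=u_1/u_2$ is constant. Writing $u_1=wu_2$ and subtracting the equations gives
\[
-\operatorname{div}(u_2^2\nabla w)=u_2^4\,w\big[(a_1-\beta)w^2-(a_2-\beta)\big].
\]
I would test this identity against suitable functions of $w$, or run a maximum principle argument on the radial function $w$. Consider first the case $\beta>\max(a_1,a_2)$. The right-hand side is then $u_2^4w\,f(w)$ with $f(w)=(\beta-a_2)-(\beta-a_1)w^2$, and $f$ vanishes exactly at $w_0^2=(\beta-a_2)/(\beta-a_1)$. Suppose $w$ has an interior maximum where $w>w_0$. There the right-hand side is strictly negative, while the left-hand side equals $-u_2^2\Delta w\ge0$, a contradiction. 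The same argument at an interior minimum with $w<w_0$ gives a contradiction. Hence $w-w_0$ cannot have an interior extremum of the wrong sign.

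The main obstacle is the behaviour of $w$ at infinity. There the extremum may not be attained, so I need an asymptotic statement. Both components decay like $r^{-1/2}e^{-r}$ times constants, and I expect to show that $w$ has a limit as $r\to\infty$. To handle this, I would instead integrate: multiply the $w$-equation by $(w-w_0)_+$ and integrate over $\mathbb R^2$. Because $u_2^2\nabla w$ decays, this yields
\[
\int u_2^2|\nabla(w-w_0)_+|^2=\int u_2^4 w f(w)(w-w_0)_+\le0,
\]
which forces $(w-w_0)_+$ to be constant and hence zero. Symmetrically, $(w-w_0)_-=0$, and therefore $w\equiv w_0$. The decay of $u_2^2 w\nabla w$ needs justification, and I would obtain it from the exponential asymptotics of the ODE. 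The case $0<\beta<\min(a_1,a_2)$ reverses the sign of $f$. I would handle it with the same energy identity tested against $(w-w_0)_\pm$, noting that $w_0$ is now $\sqrt{(a_2-\beta)/(a_1-\beta)}$. If the sign is wrong, I would instead use a variational characterization (a Nehari-type minimization over the ratio), following \cite{wy}.

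Once $u_1=w_0u_2$, the second equation becomes $-\Delta u_2+u_2=(a_2+\beta w_0^2)u_2^3$. Uniqueness of positive solutions of \eqref{1-4} then gives $u_2=(a_2+\beta w_0^2)^{-1/2}Q$. A direct computation shows $a_2+\beta w_0^2=(\beta^2-a_1a_2)/(\beta-a_1)$, which yields the stated formula for both components.
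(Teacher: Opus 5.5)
The paper gives no proof of this theorem; it quotes it from \cite{clwy,wy}.

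Your argument for $\beta>\max(a_1,a_2)$ is sound. The identity $-\mathrm{div}(u_2^2\nabla w)=u_2^4w[(a_1-\beta)w^2-(a_2-\beta)]$ is correct and has the coercive sign in this range, and testing with $(w-w_0)_\pm$ forces $w\equiv w_0$. The boundary terms are harmless. You can even avoid ratio asymptotics by testing with a bounded increasing function of $w$, since $u_2^2\nabla w=u_2\nabla u_1-u_1\nabla u_2$ decays exponentially. Your final computation of the coefficients is also correct.

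The genuine gap is the case $0<\beta<\min(a_1,a_2)$, which you dispose of in one sentence. Write $(a_1-\beta)w^2-(a_2-\beta)=(a_1-\beta)(w^2-w_0^2)$ and test the $w$-equation with $\psi(w)-\psi(w_0)$ for an arbitrary increasing $\psi$. This gives
\[
\int_{\mathbb R^2}u_2^2\,\psi'(w)\,|\nabla w|^2=(a_1-\beta)\int_{\mathbb R^2}u_2^4\,w\,(w^2-w_0^2)\bigl(\psi(w)-\psi(w_0)\bigr).
\]
For $\beta>\max(a_1,a_2)$ the right side is $\le 0$, which is exactly your argument. For $\beta<\min(a_1,a_2)$ the right-hand integrand is nonnegative pointwise. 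Both sides are then $\ge 0$, so every such test, including $(w-w_0)_\pm$, is a tautology.

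The maximum principle fails for the same reason: at an interior maximum with $w>w_0$, both sides of the $w$-equation are $\ge 0$. So the obstruction is structural. It is not a sign problem you can fix by a cleverer choice of test function of $w$.

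Your fallback does not close the gap either. A Nehari-type minimization, over the ratio or otherwise, can only identify least-energy or minimizing solutions. The theorem concerns every positive solution, and showing that an arbitrary positive solution is such a minimizer is exactly what has to be proved.

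As it stands, your proposal proves only the $\beta>\max(a_1,a_2)$ half. The other half is not cosmetic: the paper uses it in the blow-up analysis for $\beta<a_1$ (Theorem~\ref{adth2-2}). It needs an ingredient beyond the ratio equation, and that is what the paper imports from \cite{clwy,wy}.
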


 For the existence of least energy solutions for nonlinear Schr\"odinger 
system, the readers can  refer to \cite{s}. Early uniqueness of   solutions
for \eqref{B-1} in the case $\beta<\min(a_1, a_2)$ can be found in \cite{cz}.
We also mention the result in \cite{bw}. If $0\le \min(a_1, a_2)\le \beta \le \max(a_1, a_2)$,
\eqref{B-1} has a solution $(u_1,u_2)$  with $u_1>0$ and
$u_2>0$ if and only if $a_1=a_2=\beta$.

\end{document}